\def\p{\partial}
\def\R{{ \mathbb{R}}}
\def\N{{\mathbb{N}}}
\def\Z{{\mathbb{Z}}}
\def\C{{\mathbb{C}}}
\def\di{\diamond}
\def\di{\diamond}
\def\br{{\bf r}}
\newtheorem{theorem}{Theorem}[section]
\newtheorem{lemma}[theorem]{Lemma}
\newtheorem{proposition}[theorem]{Proposition}
\newtheorem{corollary}[theorem]{Corollary}
\theoremstyle{definition}
\newtheorem{definition}[theorem]{Definition}
\newtheorem{example}[theorem]{Example}
\newtheorem{remark}[theorem]{Remark}
\newtheorem*{remark*}{Remark}
\numberwithin{figure}{section} \numberwithin{equation}{section}
\title{
Comparison of spectral radii and Collatz-Wielandt numbers for homogeneous maps,
and other applications of the monotone companion norm on ordered normed vector spaces}
\author{Horst R. Thieme\thanks{({\tt hthieme@asu.edu})}
\\
 School of Mathematical and Statistical Sciences
 \\
Arizona State University, Tempe, AZ 85287-1804, USA
}
\begin{document}
\date{March 7, 2014}
\smallskip

\maketitle

\begin{abstract}

It is well known that an  ordered normed vector space $X$ with normal cone $X_+$ has
an order-preserving norm that is equivalent to the original norm. Such an
equivalent order-preserving norm is given by
\begin{equation}
\sharp x \sharp = \max \{ d(x, X_+), d(x, - X_+)\}, \qquad x \in X.
\end{equation}
This paper explores the properties of this norm and of the
half-norm $\psi(x) = d(x,-X_+)$ independently of whether or not
the cone is normal. We use $\psi$ to derive comparison
principles for the solutions of abstract integral equations,
derive conditions for  point-dissipativity of nonlinear
positive maps,
compare Collatz-Wielandt numbers, bounds, and order spectral radii
for bounded homogeneous maps and
give conditions for a local upper Collatz-Wielandt radius
to have a lower positive eigenvector.
\end{abstract}

{\bf Keywords:} Ordered normed vector space, homogeneous map, normal cone,
cone spectral radius, Collatz-Wielandt numbers,
Krein-Rutman theorem, nonlinear eigenvectors, comparison theorems,
basic reproduction number


\section{Introduction and expos\'e of concepts and results}
\label{sec:intro}


For models in the biological, social, or economic sciences, there
is a natural  interest in solutions that are positive in an
appropriate sense, i.e., they take their values in the cone
of an ordered normed vector space.

\subsection{Cones and their properties}
\label{subsec-cone-expose}

 A closed subset $X_+$ of a normed real  vector space $X$ is called a
 {\em  wedge} if

\begin{itemize}

\item[(i)] $X_+$ is convex,

\item[(ii)] $\alpha x \in X_+$ whenever $x \in X_+$ and $\alpha \in \R_+$.

\end{itemize}

A  wedge is called a {\em cone} if

\begin{itemize}
\item[(iii)] $X_+ \cap (-X_+) = \{0\}$.
\end{itemize}

Nonzero points in a cone or wedge are called {\em positive}.

A wedge is called {\em solid} if it contains interior points.

A wedge is called {\em generating}  if
\begin{equation}
\label{eq:reproducing}
X = X_+ - X_+ ,
\end{equation}
and {\em total} if $X$ is the closure of $X_+- X_+$.

We introduce the properties of cones that will be needed
in this expos\'e. Details and more properties will be discussed in
Section \ref{sec:cones}.

A  cone $X_+$ is called {\em normal}, if
there exists some $\delta > 0$ such that
\begin{equation}
\label{eq:normal}
 \|x +z \| \ge \delta  \hbox{ whenever } x \in X_+, z \in X_+, \|x\|=1 = \|z\|.
\end{equation}
Equivalent conditions for a cone to be normal are given in Theorem
\ref{re:cone-normal}.

 $X_+$ is called an {\em $\inf$-semilattice}
 \cite{AGN} (or {\em minihedral} \cite{Kra})
 if $ x \land y = \inf \{x,y\}$
exist for all $x, y \in X_+$.

$X_+$ is called a {\em $\sup$-semilattice}
if  $x \lor y = \sup\{x,y\}$ exist for all $x,y \in X_+$.

$X_+$ is called a {\em lattice}
if $x \land y$ and $x \lor y $ exist
for all $x,y \in X_+$.

$X$ is called a {\em lattice} if $x \lor y$ exist
for all $x,y \in X$.

Since $x \land y = - ((-x) \lor (-y))$, also $x \land y$
exist for all $x,y$ in a lattice $X$.

In function spaces, typical cones are  formed by the nonnegative functions.

\bigskip

If $X_+$ is a cone in $X$, we introduce a partial order on $X$ by
$x \le y $ if $y-x \in X_+$ for $x,y \in X$ and call $X$ an {\em ordered
normed vector space}.

\bigskip

Every ordered normed vector space carries the monotone companion half-norm
$\psi(x) = d(x, -X_+)$,
\[
\psi(x) \le \psi(y), \qquad x,y \in X, x \le y,
\]
and the monotone companion norm
$\sharp x \sharp = \max \{ \psi(x), \psi(-x)\}$.
See \cite{ArCeKa}, {\cite[(4.2)]{KrLiSo}}, {\cite[L.4.1]{Bon62}}. The cone $X_+$ is normal
if and only if the monotone companion norm is equivalent to the original
norm. By the open mapping theorem, $X$ cannot be complete with respect
to both norms unless $X_+$ is normal.
The properties of  the companion (half) norm are
studied in Section 3.

\subsection{Positive and order-preserving maps}

Throughout this
paper, let $X$ be an ordered normed vector space with cone $X_+$.
We use the notation
\[
\dot X = X \setminus \{0\} \quad \hbox{ and } \quad \dot X_+= X_+ \setminus \{0\}.
\]

\begin{definition}
\label{def:order-pres}
Let $X$ and $Z$ be ordered vector space with cones $X_+$ and $Z_+$ and  $ U \subseteq X$.

Let $B: U \to Z$. $B$ is called {\em positive} if $B(U \cap X_+) \subseteq Y_+$.

$B$ is called
{\em order-preserving}  (or monotone or increasing) if $B x \le By$ whenever $x,y \in U$ and $x \le y$.
\end{definition}

Positive linear maps from $X$ to $Y$ are order-preserving.
In the following, we describe some applications of the monotone companion
(half-) norm. Other applications can be found in \cite{ArCeKa}.

\subsection{Positivity of solutions to abstract integral inequalities}

We consider integral inequalities of the following kind on an interval
$[0,b]$, $0 < b < \infty$,
\begin{equation}
\label{eq:int-ineq-intro}
u(t) \ge \int_0^t K(t,s) u(s) ds, \qquad t \in [0,b].
\end{equation}
Here $u:[0,b] \to X$ is a continuous function, $K(t,s)$, $ 0 \le s \le t \le b$,
are bounded linear positive operators such that, for each $x \in X$,
$K(t,s) x$ is a continuous function of $(t,s)$, $0 \le s \le t \le b$.
The monotone companion half-norm makes it possible to prove the following
result without assuming that the cone $X_+$ is normal (Section \ref{sec:integral}).

\begin{theorem}
Let $X$ be an ordered Banach space.
Let $u: [0,b] \to X$ be a continuous solution of the inequality (\ref{eq:int-ineq-intro}).
Then $u(t) \in X_+$ for all $ t \in [0,b]$.
\end{theorem}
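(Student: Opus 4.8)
The plan is to control the distance of $u(t)$ from the cone through the monotone companion half-norm $\psi$ and to close the argument with a Gronwall estimate. Set $\eta(t) = \psi(-u(t))$; since $\psi(-x) = d(-x,-X_+) = d(x,X_+)$, we have $\eta(t) = d(u(t),X_+)$, so $\eta$ is nonnegative and, because $X_+$ is closed, the goal $u(t)\in X_+$ is equivalent to $\eta\equiv 0$ on $[0,b]$. As $\psi$ is subadditive with $\psi(x)\le\|x\|$, it is Lipschitz, so $\eta$ is continuous because $u$ is. Rewriting (\ref{eq:int-ineq-intro}) as $-u(t)\le -\int_0^t K(t,s)u(s)\,ds$ and using that $\psi$ is order-preserving, I would first obtain
\[
\eta(t)=\psi(-u(t))\le \psi\Big(\int_0^t K(t,s)\big(-u(s)\big)\,ds\Big),\qquad t\in[0,b],
\]
where linearity of $K(t,s)$ has been used to move the sign inside.

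Next I would pass $\psi$ under the integral sign. For fixed $t$ the integrand $s\mapsto K(t,s)(-u(s))$ is continuous on $[0,t]$, so the integral is a limit of Riemann sums; applying subadditivity and positive homogeneity of the half-norm to each sum (the weights being nonnegative) and then using continuity of $\psi$ yields the Jensen-type bound $\psi(\int_0^t g\,ds)\le\int_0^t\psi(g)\,ds$. This gives
\[
\eta(t)\le \int_0^t \psi\big(K(t,s)(-u(s))\big)\,ds.
\]

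The crucial step is to estimate $\psi(K(t,s)x)$ in terms of $\psi(x)$, and this is where positivity of the operators enters. For any bounded positive linear $K$ and any $x$, choosing $z\in X_+$ with $\|x+z\|\le \psi(x)+\varepsilon$ (possible since $\psi(x)=\inf_{z\in X_+}\|x+z\|$) gives $Kx=K(x+z)-Kz$ with $Kz\in X_+$, hence $\psi(Kx)\le\|K(x+z)\|\le\|K\|(\psi(x)+\varepsilon)$; letting $\varepsilon\to0$ yields $\psi(Kx)\le\|K\|\,\psi(x)$. Since $(t,s)\mapsto K(t,s)x$ is continuous on the compact triangle $\{0\le s\le t\le b\}$, it is pointwise bounded, so by the uniform boundedness principle (here completeness of $X$ is used) $\kappa:=\sup\|K(t,s)\|<\infty$. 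Combining the last three displays gives $\eta(t)\le\kappa\int_0^t\eta(s)\,ds$ with $\eta$ continuous and nonnegative, whence Gronwall's inequality forces $\eta\equiv 0$, and therefore $u(t)\in X_+$ for all $t\in[0,b]$.

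I expect the main obstacle to be the two ``commutation'' estimates rather than the concluding Gronwall step: justifying that $\psi$ may be moved inside the integral, and the positivity estimate $\psi(Kx)\le\|K\|\,\psi(x)$, which is precisely what lets the whole scheme dispense with any normality assumption on $X_+$. The uniform bound $\kappa$ via Banach--Steinhaus is the other point at which completeness of $X$ is genuinely needed.
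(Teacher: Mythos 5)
Your proposal is correct and follows essentially the same route as the paper: apply the order-preserving half-norm $\psi$ to $-u$, push $\psi$ under the integral by subadditivity and homogeneity, use the estimate $\psi(K(t,s)x)\le\|K(t,s)\|\,\psi(x)$ for positive operators together with Banach--Steinhaus, and conclude that $\psi(-u(t))$ vanishes identically. The only differences are presentational: you re-derive the positivity estimate that the paper cites as a separate theorem, and you invoke standard Gronwall where the paper runs the equivalent exponential-weight argument ($\alpha(\lambda)\le c\,\alpha(\lambda)/\lambda$) by hand.
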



\subsection{Towards a global compact attractor for nonlinear positive maps}

The monotone companion half-norm can also be useful for normal cones.
We use it to prove the following result which is important for discrete
dynamical systems on cones (Section \ref{sec:attractor}).

\begin{theorem}
\label{re:attractor}
Let $X_+$ be a normal cone and $F : X_+ \to X_+$.
Assume that there exist real constants  $c, \tilde c > 0$ and a linear, bounded, positive map $A:X \to X$ with spectral radius $\br(A) < 1$ and these properties:
\begin{equation*}
\begin{split}
& \hbox{
For any } x \in X_+ \hbox{ with } \|x\| \ge c \hbox{ there exists some }y \in X
\\
& \hbox{ with } \|y\|  \le \tilde c \hbox{ and }F(x) \le A(x) + y.
\end{split}
\end{equation*}
Then, for any bounded subset $B$ of $X_+$ there exists a bounded subset
$\tilde B$ of $X_+$ such that $F^n(B) \subseteq \tilde B$ for all $n \in \N$.

Further, $F$ is point-dissipative: There exists some $\hat c>0$ such that
\[
\limsup_{n\rightarrow \infty} \|F^n(x)\| \le \hat c,\ \ x\in X_+.
\]
If, in addition, $F$ is continuous and power compact
(or, more generally, asymptotically smooth \cite[Def.2.25]{SmTh}), then
there exists a compact subset $K$ of $X_+$ such that $F(K) =K$
and, for every bounded subset $B$ of $X_+$,  $d (F^n(x), K) \to 0$
uniformly for $x\in B$.
\end{theorem}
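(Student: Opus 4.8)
The plan is to run the entire argument through the monotone companion half-norm $\psi(x)=d(x,-X_+)$, using only that it is positively homogeneous, subadditive, order-preserving, and bounded above by the norm, together with the fact that it recovers the norm on the cone: for $w\in X_+$ one has $\psi(-w)=0$, hence $\sharp w\sharp=\psi(w)$, and since $X_+$ is normal the companion norm is equivalent to the original one, so there is $a>0$ with $\psi(w)\ge a\|w\|$ for all $w\in X_+$. Because $\br(A)<1$, I fix $q\in(\br(A),1)$ and $M\ge 1$ with $\|A^n\|\le Mq^{n}$ for all $n$.

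First I would record the one-step estimate. If $x\in X_+$ with $\|x\|\ge c$, pick $y$ with $\|y\|\le\tilde c$ and $F(x)\le Ax+y$; then $F(x)\le Ax+y$ is a genuine vector inequality that I can iterate along a segment of the orbit $x_{k+1}=F(x_k)$ on which every $x_k$ stays outside the ball of radius $c$, using that $A$ is positive (hence order-preserving) to propagate the inequalities. This gives
\[
x_{m+n}\le A^{n}x_m+\sum_{k=0}^{n-1}A^{\,n-1-k}y_{m+k},
\]
and applying the order-preserving, subadditive $\psi$ together with $\psi\le\|\cdot\|$ yields
\[
a\|x_{m+n}\|\le \psi(x_{m+n})\le Mq^{n}\|x_m\|+\tilde c\,M\sum_{j=0}^{n-1}q^{j}\le Mq^{n}\|x_m\|+\frac{\tilde cM}{1-q}.
\]
This single geometric estimate is the engine for both dissipativity statements: the transient term decays and the forcing term is a fixed constant.

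From here Parts~1 and~2 follow by bookkeeping on when the orbit visits the central ball $\{x\in X_+:\|x\|<c\}$. On any maximal segment avoiding this ball the displayed estimate applies with $x_m$ its starting point, and each such segment starts either at a point of the given bounded set $B$ or one step after a visit to the central ball. Taking $\hat c:=\tilde cM/\big(a(1-q)\big)$ and letting $n\to\infty$ gives $\limsup_n\|F^n(x)\|\le\hat c$, i.e. point-dissipativity, while bounding the few initial iterates produces the forward-invariant bounded set $\tilde B$ of Part~1. The step I expect to be the main obstacle is exactly the excursions into $\{\|x\|<c\}$: there the drift hypothesis gives no control on $F(x)$, so to keep the image of a bounded set bounded and to get a $\hat c$ independent of the starting point one must know that $F$ is bounded on the central ball. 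The drift condition already makes $F$ bounded on bounded subsets of $\{\|x\|\ge c\}$, so this is the one place where local boundedness of $F$ has to be invoked and the segment estimates reassembled.

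Finally, for Part~3 I would verify the hypotheses of the standard compact-attractor theorem and quote it. The set $X_+$ is closed in the Banach space $X$, hence a complete metric space; $F$ is continuous by assumption; Part~2 gives point-dissipativity and Part~1 gives that $F^{n}(B)$ is bounded uniformly in $n$ for bounded $B$; and power compactness gives asymptotic smoothness in the sense of \cite[Def.2.25]{SmTh}. Combining point-dissipativity with boundedness of the orbits of bounded sets in the usual way produces a single bounded set that absorbs every bounded set, after which the cited theory yields a compact $K\subseteq X_+$ with $F(K)=K$ attracting every bounded set uniformly. The only substantive work in this last part is assembling the bounded absorbing set; the attractor itself is then delivered by the black-box result.
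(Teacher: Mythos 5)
Your overall strategy is correct, but it takes a genuinely different route from the paper's. The paper proves the theorem via Proposition \ref{re:attr-exist}: it rewrites the drift hypothesis as $\psi(F(x)-A(x))\le\tilde c$ for $\|x\|\ge c$ and then, instead of iterating $A$ with constants $\|A^n\|\le Mq^n$ as you do, it passes to an equivalent \emph{monotone} norm built from $\sharp A^k(\cdot)\sharp$ (Krasnosel'skii's renorming, \cite[2.5.2]{Kra}) under which $\|A\|^\sim<1$. After that, a single one-step estimate $\|F(x)\|=\psi(F(x))\le\psi(F(x)-A(x))+\psi(A(x))\le\tilde c+\|A\|\,\|x\|$ yields $\|F(x)\|\le\xi\|x\|$ for $\|x\|$ large, so every sufficiently large closed ball intersected with $X_+$ is forward invariant; Parts 1 and 2 then require no orbit-segment bookkeeping at all, and Part 3 is quoted from \cite[Thm.2.33]{SmTh}, exactly as you propose. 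Your telescoped inequality $x_{m+n}\le A^n x_m+\sum_{k=0}^{n-1}A^{n-1-k}y_{m+k}$, followed by $\psi$ and normality, is a legitimate substitute; what the renorming buys is that your segment analysis collapses into the invariance of large balls.

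The obstacle you single out --- the absence of any control of $F$ on $\{x\in X_+:\|x\|<c\}$ --- is not a defect of your write-up but a genuine gap in the theorem as stated, and the paper's own proof stumbles at exactly that point: the estimate (\ref{eq:dissip}) is derived only for $\|x\|\ge R$, yet the proof then asserts $\|F(x)\|\le\max\{\tilde R,\xi\|x\|\}$ for \emph{all} $x\in X_+$. Indeed, with the drift condition assumed only outside the central ball the conclusion is false: take $X=\R$, $X_+=\R_+$, $A=0$, $c=\tilde c=1$, and define $F(x)=1/(1-x)$ for $0\le x<1$ and $F(x)=x/(x+1)$ for $x\ge1$. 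The hypothesis holds (for $x\ge 1$ one has $F(x)\le 1=A(x)+y$ with $\|y\|\le 1$), but the orbit of $0$ is $0,\,1,\,\tfrac12,\,2,\,\tfrac23,\,3,\,\tfrac34,\,4,\dots$, which is unbounded, so both Part 1 and Part 2 fail. The statement must therefore be repaired, either by assuming the drift inequality for all $x\in X_+$ (then both the paper's argument and yours close, and your segment bookkeeping becomes unnecessary), or by adding the local boundedness of $F$ on the central ball that you invoke. In the latter case, note that your constant $\hat c=\tilde cM/\bigl(a(1-q)\bigr)$ is not quite sufficient when the orbit re-enters the central ball infinitely often: immediately after each re-entry the orbit can be as large as $K=\sup\{\|F(z)\|:z\in X_+,\ \|z\|\le c\}$, so $\hat c$ must be taken of the order $\bigl(MK+\tilde cM/(1-q)\bigr)/a$.
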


$F$ is power compact if $F^m$ is a compact map for some $m \in \N$.


\subsection{Spectral radii for homogeneous, bounded, order-preserving maps}


For a linear bounded  map $B$ on a complex Banach space,
the spectral radius of $B$ is defined as
\begin{equation}
\label{eq:specrad-spec}
\br(B) = \sup \{|\lambda |; \lambda \in \sigma (B)\},
\end{equation}
where $\sigma (B)$ is the spectrum of $B$,
\begin{equation}
\sigma(B) = \C \setminus \rho (B),
\end{equation}
and $\rho(B)$ the resolvent set of $B$, i.e., the set of those
$\lambda \in \C$ for which $\lambda - B$ has a bounded everywhere
defined inverse. The following alternative formula holds,
\begin{equation}
\label{eq:specrad-norm}
\br(B) = \inf_{n\in \N} \|B^n\|^{1/n} = \lim_{n\to \infty} \|B^n\|^{1/n},
\end{equation}
which is also meaningful in a real Banach space. If $B$ is a compact linear map
on a complex Banach space
and $\br(B) >0$, then there exists some $\lambda \in \sigma(B)$ and $v \in X$ such
that  $|\lambda| = \br (B)$ and $Bv = \lambda v\ne0$. Such an $\lambda $ is called
an eigenvalue of $B$. This raises the question whether $\br(B)$  could be an eigenvalue itself. There is a positive answer, if $B$ is a positive operator
and satisfies some generalized compactness assumption.

Positive linear maps are order-preserving.
They have the remarkable property that their spectral radius is a spectral
value \cite{Bon58} \cite[App.2.2]{Sch} if $X$ is a Banach space and
$X_+$ a normal generating cone.

The celebrated Krein-Rutman theorem \cite{KrRu},
which generalizes parts of the Perron-Frobenius theorem to
infinite dimensions,
establishes that a compact positive linear map
$B$ with $\br(B) > 0$ on an ordered Banach space $X$ with total  cone $X_+$ has an
eigenvector
$v \in \dot X_+$ such that $Bv = \br(B)v$ and a
positive bounded linear eigenfunctional
$v^*: X \to \R$, $v^* \ne 0$, such that $v^* \circ B = \br(B) v^*$.

This theorem has been generalized into various directions by Bonsall \cite{Bon55}
and Birkhoff \cite{Bir}, Nussbaum \cite{Nus81, Nus},
and Eveson and Nussbaum \cite{EvNu}
 (see these papers for additional references).


\subsubsection{Homogenous maps}

In the following, $X$, $Y$ and $Z$ are ordered normed vector spaces
with cones $X_+$, $Y_+$ and $Z_+$ respectively.

\begin{definition}
 $B:X_+ \to Y$ is called {\em  (positively) homogeneous (of degree one)},
 if  $B(\alpha x) = \alpha B (x)$
for all $\alpha \in \R_+$, $x \in X_+$.
\end{definition}

Since we do not consider maps that are homogeneous in other ways,
we will simply call them homogeneous maps.
If follows from the definition that
\[
B(0) = 0.
\]
Homogeneous maps are not Frechet differentiable at 0 unless $B(x+y) = B(x) + B(y)$
for all $x,y \in X_+$. For the following holds.

\begin{proposition}
Let $B: X_+ \to Y$ be homogeneous. Then the directional derivatives of $B$
exist at 0 in all directions of the cone and
\[
\p B (0,x) = \lim_{t \to 0+}\frac{ B(t x) - B(0)}{t} = B(x), \qquad x \in X_+.
\]
\end{proposition}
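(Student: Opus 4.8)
The plan is to compute the directional derivative directly from its definition, exploiting homogeneity to make the limit trivial. The quantity to evaluate is
\[
\p B(0,x) = \lim_{t \to 0+}\frac{B(0 + tx) - B(0)}{t}, \qquad x \in X_+.
\]
First I would note that since $B$ is homogeneous and $B(0) = 0$, the difference quotient simplifies at once. For any $t > 0$ and $x \in X_+$, we have $tx \in X_+$ (because $X_+$ is a cone, hence closed under multiplication by nonnegative scalars), so $B(tx)$ is defined. Homogeneity of degree one then gives $B(tx) = t\,B(x)$.

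Substituting this into the difference quotient, and using $B(0) = 0$ as already recorded in the excerpt, yields
\[
\frac{B(0 + tx) - B(0)}{t} = \frac{B(tx)}{t} = \frac{t\,B(x)}{t} = B(x)
\]
for every $t > 0$. Thus the difference quotient is not merely convergent but in fact constant in $t$, equal to $B(x)$. Taking the limit as $t \to 0+$ therefore gives $\p B(0,x) = B(x)$, which is exactly the claim.

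I do not anticipate a genuine obstacle here, since the result is essentially a restatement of the defining property of homogeneity combined with the observation $B(0)=0$ (which the excerpt has already established). The only point requiring a word of care is that the directional derivative at $0$ is only asserted ``in all directions of the cone'': one must take the one-sided limit $t \to 0+$ and restrict to $x \in X_+$, precisely because $B$ is defined only on $X_+$ and need not extend to negative multiples of $x$. The argument shows that the limit exists as a genuine (one-sided) limit because the difference quotient is independent of $t$, so no subtlety about the mode of convergence arises.
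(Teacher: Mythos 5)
Your proof is correct and is exactly the argument the paper intends: the paper records $B(0)=0$ immediately before the proposition and then states the result without further proof, since, as you show, homogeneity makes the difference quotient $\frac{B(tx)-B(0)}{t}=B(x)$ constant in $t>0$. Your added remark about why the limit is one-sided and restricted to directions $x\in X_+$ is a sensible clarification, not a deviation.
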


There are good reasons to consider homogeneous maps. Here is a mathematical one.

\begin{theorem} Let $F: X_+ \to Y$ and $u \in X$. Assume that
the directional derivatives of $F $ at $u$ exist in all directions of
the cone. Then the map $B: X_+ \to Y_+$, $B = \p F(u, \cdot)$,
\[
B (x) = \p F(u,x) = \lim_{t \to 0+} \frac{F(u+ t x) - F(u)}{t}, \qquad x \in X_+,
\]
is homogeneous.
\end{theorem}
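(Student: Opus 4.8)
The plan is to verify the homogeneity identity $B(\alpha x) = \alpha B(x)$ for every $\alpha \in \R_+$ and $x \in X_+$ straight from the limit that defines the directional derivative, treating the cases $\alpha = 0$ and $\alpha > 0$ separately. For $\alpha = 0$ there is nothing to compute: since $F(u + t\cdot 0) = F(u)$, every difference quotient vanishes identically, so $B(0) = 0 = 0\cdot B(x)$, consistent with the fact already recorded for homogeneous maps that $B(0)=0$.

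For $\alpha > 0$ I would first observe that $\alpha x \in X_+$, because $X_+$ is a wedge and hence closed under multiplication by nonnegative scalars (property (ii)); thus $\alpha x$ is an admissible direction and $B(\alpha x) = \p F(u,\alpha x)$ exists by hypothesis. The computation is then a single change of variable. Starting from
\[
B(\alpha x) = \lim_{t \to 0+} \frac{F(u + t\alpha x) - F(u)}{t}
\]
and substituting $s = \alpha t$, so that $t = s/\alpha$ and $s \to 0+$ precisely when $t \to 0+$ (here $\alpha > 0$ is essential), the quotient becomes $\alpha\,\frac{F(u + s x) - F(u)}{s}$. Passing to the limit and pulling the scalar $\alpha$ out of the limit in the normed space $Y$ yields $B(\alpha x) = \alpha B(x)$, which is exactly the claim.

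In truth there is no hard step: the argument is simply the rescaling $s = \alpha t$, and the only two points needing care are the admissibility of the rescaled direction $\alpha x$ (supplied by the wedge axiom) and the degenerate case $\alpha = 0$, where the substitution breaks down and must be handled by hand. The one genuinely extra ingredient concerns the codomain: to conclude that $B$ takes values in $Y_+$ rather than merely in $Y$, one needs $F$ to be order-preserving, for then $u + tx \ge u$ forces $F(u+tx) - F(u) \in Y_+$ for $t>0$, and since $Y_+$ is a wedge (hence closed) the limit $B(x)$ remains in $Y_+$; without monotonicity the image can leave $Y_+$, so I would flag this as the point where the hypotheses, as literally stated, carry only the homogeneity conclusion.
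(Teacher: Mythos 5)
Your proof is correct and takes essentially the same route as the paper's: the split into the trivial case $\alpha=0$ and the rescaling substitution $t\alpha$ for $\alpha>0$ is exactly the paper's argument. Your closing remark is also apt — the hypotheses as stated only yield values in $Y$, so the codomain $Y_+$ claimed in the theorem is not justified without an extra assumption such as monotonicity of $F$, a point the paper's own proof passes over silently.
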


\begin{proof}
Let $\alpha \in \R_+$. Obviously, if $\alpha=0$, $B(\alpha x) =0 = \alpha B(x)$.
So we assume $\alpha \in (0,\infty)$. Then
\[
\frac{F (u + t [\alpha x]) - F(u)}{t}
=
\alpha \frac{F(u +  [t\alpha] x) - F(u)}{t\alpha } .
\]
As $t \to 0$, also $\alpha t \to 0$ and so the directional derivative
in direction $\alpha x$ exists and
\[
\p F (u , \alpha x) = \alpha F(u,x). \qedhere
\]
\end{proof}

Another good reason are mathematical population models that take into
account that, for many species, reproduction involves a mating process
between two sexes.
The maps involved therein are not only homogeneous but also
order-preserving.

Actually, the eigenvector problem  for homogeneous maps is quite different
from the one for linear maps. Consider the following simple two sex
population model $x_n = B x_{n-1}$ where $x_n = (f_n, m_n)$ and
$B x = (B_1 x, B_2 x)$
\begin{equation}
\label{eq:two-sex-model}
\begin{split}
B_1 (f,m) = & p_f f + \beta_f \frac{f m }{f + m},
\\
B_2 (f,m) = & p_m m + \beta_m \frac{f m }{f + m}.
\end{split}
\end{equation}
This system models a population of females and males
which reproduce once a year with $f_n$ and $m_n$
representing the number of females and males at the
beginning of year $n$. The numbers $p_f, p_m$ are
the respective probabilities of surviving one year.
The harmonic mean describes the mating process and
the parameters $\beta_f$ and $\beta_m$ scale
with the resulting amount of offspring per mated pair.

$B$ always has the eigenvectors $(1,0)$ and $(0,1)$ associated
with $p_f$ and $p_m$ respectively. Looking for a different eigenvalue, $\lambda$, of $B$, we can assume that $f +m =1$ and obtain
\[
\frac{\lambda - p_f}{\beta_f} = m,
\qquad
\frac{\lambda - p_m}{\beta_m} =f,
\qquad m+f =1.
\]
We add and solve for $\lambda$,
\[
\lambda = \frac{\beta_f \beta_m + \beta_m p_f + \beta_f p_m }{\beta_f + \beta_m}
\]
and then for $m$ and $f$,
\[
m =  \frac{\beta_m  + p_m -  p_f }{\beta_f + \beta_m},
\qquad
f =  \frac{\beta_f   +  p_f -  p_m }{\beta_f + \beta_m}.
\]
The eigenvector $(m,f)$ lies in the biological relevant
positive quadrant if and only if the eigenvalue $\lambda$ is
larger than the two other eigenvalues $p_f$ and $p_m$.

We notice that we generically have  three linearly independent eigenvectors
(rather than at most two as for a $2 \times 2$ matrix)
with the third being biological relevant if and only if it is
associated with the largest eigenvalue.

The spectral radius of a positive linear map has gained
considerable notoriety because of its relation to
the basic reproduction number of population
models which have a highly dimensional structure but implicitly
assume a one to one sex ratio \cite{BaDa, CuZh, DiHeMe,Thi09, DrWa}. A spectral radius
for homogeneous order-preserving maps should play
a similar role as an extinction versus persistence  threshold parameter for
structured populations with two sexes \cite{JiTh}.

\subsubsection{Cone norms for homogeneous bounded maps}

For a homogeneous map $B:X_+ \to Y$, we define
\begin{equation}
\label{eq:operator-norm}
\|B\|_+ = \sup \{ \|Bx \|; x \in X_+, \|x\| \le 1 \}
\end{equation}
and call $B$ {\em bounded} if this supremum is a real number.
Since $B$ is  homogeneous,
\begin{equation}
\label{eq:operator-norm-est}
\|Bx \| \le \|B\|_+\, \|x\| , \qquad x \in X_+.
\end{equation}

Let $H(X_+,Y)$ denote the set of bounded homogeneous maps $B:X_+ \to Y$
and $H(X_+,Y_+)$ denote the set of bounded homogeneous maps $B:X_+ \to Y_+$
  and {\rm HM}$(X_+, Y_+)$ the set of those maps in $H(X_+,Y_+)$
  that are also order-preserving.

 $H(X_+, Y)$ is a real vector space
and $\|\cdot \|_+$ is a norm on $H(X_+,Y)$ called the cone-norm.

$H(X_+,Y_+)$ and {HM}$(X_+, Y_+)$ are cones in $H(X_+,Y)$.
We write $H(X_+) = H(X_+, X_+)$ and HM$(X_+) = $HM$(X_+,X_+)$.

It follows for  $B \in H(X_+, Y_+)$ and
 $C\in H(Y_+, Z_+)$ that
$C B \in H(X_+, Z_+)$ and
\[
\|  C B\|_+ \le \|C\|_+\, \|B\|_+.
\]

\subsubsection{Cone and orbital spectral radius}

Let $B \in H(X_+)$ and define $\phi: \Z_+ \to \R$ by $\phi (n) = \ln \|B^n \|_+$.
Then $\phi (m+n) \le \phi(m) + \phi (n)$ for all $m,n \in \Z_+$,
and a well-known result implies the following formula for the
{\em cone spectral radius}
\begin{equation}
\label{eq:cone-spec-rad-intro}
\br_+(B):= \inf_{n \in \N} \|B^n\|_+^{1/n} = \lim_{n \to \infty} \|B^n\|_+^{1/n},
\end{equation}
which is analogous to (\ref{eq:specrad-norm}).
Mallet-Paret and Nussbaum \cite{MPNu02, MPNu} suggest an alternative definition of
a spectral radius for  homogeneous (not necessarily bounded) maps
$B : X_+ \to X_+$. First, define  asymptotic least upper bounds
for the geometric growth factors of $B$-orbits,
\begin{equation}
\label{eq:growth-factor-intro}
\gamma(x,B) := \gamma_B(x) := \limsup_{n\to \infty} \|B^n (x) \|^{1/n}, \qquad x \in X_+,
\end{equation}
and then
\begin{equation}
\label{eq:spec-rad-orb-intro}
\br_o(B) = \sup_{x \in X_+} \gamma_B (x).
\end{equation}
Here $\gamma_B(x) := \infty$ if the sequence $(\|B^n (x) \|^{1/n})$
is unbounded and $\br_o(B) = \infty$ if $\gamma_B(x) = \infty$
for some $x \in X_+$ or the set $\{\gamma_B(x); x \in X_+\}$
is unbounded.

The number $\br_+(B)$ has been called {\em partial spectral radius}
by Bonsall \cite{Bon58},
$X_+$ spectral radius by Schaefer \cite{Sch59,Sch}, and
{\em cone spectral radius} by Nussbaum \cite{Nus}.
Mallet-Paret and Nussbaum \cite{MPNu02, MPNu} call $\br_+(B)$
the {\em Bonsall cone spectral radius} and $\br_o(B)$  the cone spectral radius.
For $x \in X_+$, the number $\gamma_B(x)$ has been called {\em local spectral radius} of $B$
at $x $ by F\"orster and Nagy \cite{FoNa}.

We will
follow Nussbaum's older terminology which  shares the spirit  with Schaefer's
\cite{Sch59} term
$X_+$ {\em spectral radius} and
stick with {\em cone spectral radius} for $\br_+(B)$
and call $\br_o(B)$ the {\em orbital spectral radius} of $B$.
Later, we will also introduce a {\em Collatz-Wielandt} spectral radius.
We refer  to all of them as {\em order spectral radii}.

One readily checks that
\begin{equation}
\br_+(\alpha B) = \alpha \br_+ (B), \quad \alpha \in \R_+,
\qquad
\br_+ (B^m ) = (\br_+ (B))^m , \qquad m \in \N.
\end{equation}
The same properties hold for $\br_o(B)$ though proving the
second property takes some more effort \cite[Prop.2.1]{MPNu02}.
Actually, as we show in Section \ref{sec:geometric}
for bounded $B$,
\begin{equation}
\label{eq:geom-fac-power}
\gamma(x, B^m) = (\gamma(x,B))^m, \qquad m \in \N, x \in X_+,
\end{equation}
which readily implies
\begin{equation}
\label{eq:orbital-power}
\br_o(B) = (\br_o(B))^m, \qquad m \in \N.
\end{equation}

The cone spectral
radius and the orbital spectral radius are meaningful if $B$ is just positively homogeneous and bounded,
but as in \cite{MPNu02, MPNu} we will be mainly interested in the case
that $B$ is also order-preserving and continuous.

Though the two concepts  coincide for many practical purposes,
they are both useful.

\begin{theorem}
\label{re:spec-rad-alt-char}
Let $X$ be an ordered normed vector with  cone $X_+$
and $B: X_+ \to X_+$ be continuous, homogeneous and
order-preserving.

Then $\br_+(B)\ge \br_o(B) \ge \gamma_B(x)$, $x \in X_+$.

\smallskip

\noindent
Further $\br_o(B) = \br_+(B)$ if one of the following
hold:
\begin{itemize}
\item[(i)] $X_+$ is complete and normal.

\item[(ii)]  $B$ is power compact.

\item[(iii)] $X_+$ is normal and a power of $B$ is uniformly order-bounded.

\item[(iv)] $X_+$ is complete and $B$ is additive ($B(x+y) = B(x) + B(y)$ for all
$x \in X_+$).
\end{itemize}
\end{theorem}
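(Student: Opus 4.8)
The plan is to first record the inequalities and reduce everything to a single direction. That $\br_o(B)\ge\gamma_B(x)$ is immediate from the definition of $\br_o(B)$ as a supremum over $x\in X_+$. For $\br_+(B)\ge\br_o(B)$ I would start from the submultiplicative estimate (\ref{eq:operator-norm-est}), $\|B^n x\|\le\|B^n\|_+\|x\|$; raising to the power $1/n$, taking $\limsup_{n\to\infty}$, and using $\|B^n\|_+^{1/n}\to\br_+(B)$ from (\ref{eq:cone-spec-rad-intro}) together with $\|x\|^{1/n}\to1$ gives $\gamma_B(x)\le\br_+(B)$ for every $x\in X_+$, hence $\br_o(B)\le\br_+(B)$. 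Thus in each of (i)--(iv) it remains only to prove $\br_o(B)\ge\br_+(B)$. I write $r=\br_+(B)$ and assume $r>0$, since otherwise there is nothing to show.

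For (iii) I would exhibit an explicit witness. Using the power formulas $\br_+(B^p)=r^p$ and $\br_o(B^p)=\br_o(B)^p$ coming from (\ref{eq:geom-fac-power})--(\ref{eq:orbital-power}), it suffices to treat $p=1$, so I assume the image of the unit ball of $X_+$ is order-bounded, say $Bx\le u$ for all $x\in X_+$ with $\|x\|\le1$. Order-preservation propagates this along the orbit: for such $x$ one obtains $0\le B^n x\le B^{n-1}u$ by induction. Normality of $X_+$ supplies a constant $M$ with $\|B^n x\|\le M\|B^{n-1}u\|$, hence $\|B^n\|_+\le M\|B^{n-1}u\|$; taking $1/n$ powers and $\limsup$ yields $r\le\gamma_B(u)\le\br_o(B)$.

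For (i) and (iv) I would argue by contradiction via the Baire category theorem, which applies because $X_+$ is complete. Suppose $\br_o(B)<s<r$. Then $\gamma_B(x)<s$ for every $x$, so $\sup_n\|B^n x\|/s^n<\infty$, and $X_+=\bigcup_k A_k$ with $A_k=\{x\in X_+:\|B^n x\|\le k s^n\text{ for all }n\}$ closed by continuity of the iterates. Baire gives a relative ball $\{x\in X_+:\|x-x_0\|<\rho\}\subseteq A_k$, so for $x\in X_+$ with $\|x\|<\rho$ we have $x_0+x\in A_k$. In case (i) the relation $0\le B^n x\le B^n(x_0+x)$ and normality give $\|B^n x\|\le Mk s^n$; in case (iv) additivity gives $B^n x=B^n(x_0+x)-B^n x_0$ and hence $\|B^n x\|\le 2k s^n$. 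In either case homogeneity upgrades the estimate to $\|B^n\|_+\le C s^n$, whence $r=\lim\|B^n\|_+^{1/n}\le s$, a contradiction.

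For (ii) the soft arguments above are unavailable, as neither completeness, normality, nor an a priori order bound is assumed, and this is where the genuine work lies. Using $\br_+(B^m)=r^m$ and $\br_o(B^m)=\br_o(B)^m$ I would reduce to the case that $B$ itself is compact. The difficulty is that the supremum over $x$ defining $\|B^n\|_+$ and the limit in $n$ cannot be interchanged by a boundedness argument; one must instead produce a single direction realizing the growth rate $r$. I would therefore invoke (or adapt) the Mallet--Paret--Nussbaum construction for compact homogeneous order-preserving maps \cite{MPNu02,MPNu,Nus} to obtain $v\in\dot X_+$ with $Bv=rv$; then $\|B^n v\|^{1/n}=r\|v\|^{1/n}\to r$, so $\gamma_B(v)=r$ and $\br_o(B)\ge r$. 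I expect this eigenvector existence, resting on a Schauder-type fixed point and hence on compactness in an essential way, to be the main obstacle: unlike (i), (iii), (iv), case (ii) requires a true fixed-point input rather than elementary order and category arguments.
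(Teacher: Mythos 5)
Your proposal is correct, but it takes a genuinely different, and more self-contained, route than the paper, whose own treatment of this theorem is largely by citation: the paper derives the inequality from the definitions, proves (iii) through its companion-norm/order-bound machinery (Theorem \ref{re:Col-Wie-companion2}, combined with the fact that in a normal cone every point is a normal point), refers (i), (ii), (iv) to \cite[Sec.2]{MPNu02}, and notes that (i) also follows from Theorem \ref{re:spec-rad-cone-orb}. Your Baire-category argument for (i) is essentially the proof of Theorem \ref{re:spec-rad-cone-orb}, except that you apply normality (semi-monotonicity of the norm) inside the category argument, whereas the paper runs the argument with the monotone companion norm and uses normality only at the end to identify $\br_+^\sharp(B)$ with $\br_+(B)$; the paper's version buys the normality-free conclusion $\br_+^\sharp(B)\le\br_o(B)$ (part of Theorem \ref{re:inequalities-radii}), yours buys brevity. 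Your argument for (iv) (Baire plus additivity of the iterates) and your direct proof of (iii) (propagating $Bx\le u$ for $\|x\|\le 1$ to $B^nx\le B^{n-1}u$ and invoking semi-monotonicity to get $\br_+(B)\le\gamma_B(u)$) are both sound and replace, respectively, a pure citation and a detour through upper Collatz--Wielandt numbers by short elementary computations. The largest divergence is (ii): the source the paper cites proves the compact case directly, without eigenvector existence, whereas you reduce to a compact iterate via $\br_+(B^m)=\br_+(B)^m$ and $\br_o(B^m)=\br_o(B)^m$ and then invoke the Krein--Rutman-type theorem (Theorem \ref{re:eigen-hist} with $r=\br_+$) for $B^m$. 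This reduction is a nice observation, since it correctly sidesteps the open problem, emphasized in the paper, of finding eigenvectors of $B$ when only a power of $B$ is compact: you need an eigenvector only of the compact map $B^m$ itself, and the power laws transfer the resulting equality of radii back to $B$. Two caveats are worth recording: this makes (ii) rest on a much heavier input than necessary, and the eigenvector theorem is established in the literature for Banach spaces, while (ii) assumes no completeness of $X$ or $X_+$; so to obtain the stated generality one must either check that compactness of $B^m$ can substitute for completeness in that proof (plausible, since the fixed-point constructions can be confined to compact sets, and the paper itself states Theorem \ref{re:eigen-hist} without completeness), or fall back on the direct argument of \cite[Sec.2]{MPNu02}, as the paper does.
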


The inequality is a straightforward consequence of the respective definitions.
For the concepts and the proof of (iii) see Section \ref{sec:order-bounded}
and Theorem \ref{re:Col-Wie-companion2}.
The other three conditions for equality have been verified in \cite[Sec.2]{MPNu02},
(the overall assumption
of \cite{MPNu02} that $X$ is a Banach space is not used
in the proofs.) Statement (i) also follows from Theorem \ref{re:spec-rad-cone-orb}.

For a bounded positive linear operator on an ordered Banach space,
the spectral radius and cone spectral radius coincide provided that
the cone is generating \cite[Thm.2.14]{MPNu}. This is not true if the cone is only total
\cite[Sec.2,8]{Bon58}.

\subsubsection{Lower Collatz-Wielandt numbers}

For $x \in \dot X_+$,
the {\em lower Collatz-Wielandt number} of $x$ is defined as \cite{FoNa}
\begin{equation}
\label{eq:Col-Wie-num-low-intro}
[B]_x =  \sup \{\lambda \ge 0; Bx \ge \lambda x \}.
\end{equation}
By (\ref{eq:order-min-form}), $[B]_x$ is a lower eigenvalue,
\begin{equation}
B(x) \ge [B]_x x , \qquad x \in \dot X_+.
\end{equation}

\noindent
The {\em lower local Collatz-Wielandt  radius} of $x$ is defined
as
\begin{equation}
\label{eq:spec-rad-Col-Wie-loc-intro}
\eta_x(B) = :\sup_{n\in \N} [B^n]_x^{1/n} .
\end{equation}
This implies
\begin{equation}
\label{eq:loc-CW-powers-intro}
\eta_x (B^n) \le (\eta_x(B))^n, \qquad n \in \N.
\end{equation}
The {\em lower Collatz-Wielandt bound} is defined as
\begin{equation}
\label{eq:low-CW-bound}
cw(B) = \sup_{x \in \dot X_+} [B]_x,
\end{equation}
and
the {\em Collatz-Wielandt  radius} of $B$ is defined
as
\begin{equation}
\label{eq:spec-rad-Col-Wie-intro}
\br_{cw} (B) =\sup_{ x \in \dot X_+} \eta_x(B).
\end{equation}
From the definitions, $cw(B) \le \br_{cw}(B)$ and, by (\ref{eq:loc-CW-powers-intro}),
 $\br_{cw}(B^n) \le (\br_{cw}(B))^n$ for all $n \in \N$.

A homogeneous, bounded, order-preserving map $B:X_+ \to X_+$
is also bounded with respect to the monotone companion norm
and $\sharp B \sharp_+ \le \|B\|_+$. See Section \ref{sec:order-pre-companion}.
So we can define the  companion cone spectral radius, $\br_+^\sharp (B)$,
and the  companion orbital spectral radius, $\br_o^\sharp (B)$,
in full analogy to (\ref{eq:cone-spec-rad-intro}) and (\ref{eq:spec-rad-orb-intro}).
If $X_+$ is normal (and the original and the companion norm
are equivalent), the companion radii coincide with the original ones.

The monotonicity of the companion norm makes it possible to connect
the Collatz-Wielandt radius and the other spectral radii by inequalities
(Section \ref{sec:CW-lower}).

\begin{theorem}
\label{re:inequalities-radii}
Let $B:X_+ \to X_+$ be homogeneous, bounded, and order-preserving. Then
\[
cw(B) \le \br_{cw}(B) \le \br_o^\sharp(B) \le
\left\{
\begin{array}{c} \br_o(B) \\ \br_+^\sharp(B)
\end{array} \right \}
\le \br_+(B).
\]
If $X_+$ is complete and $B$ is continuous (all with respect to the original
norm), then $\br_+^\sharp(B) \le \br_o(B) $.
\end{theorem}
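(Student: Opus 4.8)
The plan is to treat the displayed chain as a collection of essentially independent comparisons and to reserve the real work for the final claim $\br_+^\sharp(B)\le\br_o(B)$, which alone invokes completeness and continuity. Throughout I would use two facts about the companion objects established in Section~3: that $\sharp x\sharp\le\|x\|$ for every $x\in X$ (since $0\in -X_+$ gives $\psi(x)=d(x,-X_+)\le\|x\|$), and that on the cone the companion norm agrees with the half-norm and is monotone there, i.e. $\sharp x\sharp=\psi(x)$ for $x\in X_+$ and $\sharp x\sharp\le\sharp y\sharp$ whenever $0\le x\le y$. Four of the six inequalities are then immediate. The bound $cw(B)\le\br_{cw}(B)$ is the definitional remark $\eta_x(B)\ge[B]_x$ followed by a supremum over $\dot X_+$. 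The bound $\br_o^\sharp(B)\le\br_o(B)$ follows from $\sharp B^nx\sharp\le\|B^nx\|$, taking $n$-th roots and $\limsup$. The bound $\br_o^\sharp(B)\le\br_+^\sharp(B)$ follows from the companion operator estimate $\sharp B^nx\sharp\le\sharp B^n\sharp_+\,\sharp x\sharp$ together with $\sharp x\sharp^{1/n}\to1$, and $\br_o(B)\le\br_+(B)$ is the inequality of Theorem~\ref{re:spec-rad-alt-char}. Finally $\br_+^\sharp(B)\le\br_+(B)$ comes from applying the stated fact $\sharp B\sharp_+\le\|B\|_+$ to each power $B^n$ and passing to the limit in $(\sharp B^n\sharp_+)^{1/n}\le(\|B^n\|_+)^{1/n}$.

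The one nontrivial inequality in the main chain is $\br_{cw}(B)\le\br_o^\sharp(B)$, and here I would first record the supermultiplicativity $b_{n+m}\ge b_nb_m$ for $b_n:=[B^n]_x$, which follows because $B^nx\ge[B^n]_x\,x$ is attained and, applying the order-preserving homogeneous $B^m$, one gets $B^{m+n}x\ge[B^n]_x\,B^mx\ge[B^n]_x[B^m]_x\,x$. By Fekete's lemma this gives $\eta_x(B)=\sup_n b_n^{1/n}=\lim_n b_n^{1/n}$, so that the early large terms cannot dominate. Next, monotonicity of the companion norm on the cone turns the lower eigenvalue relation into a norm estimate: from $B^nx\ge b_n\,x\ge0$ I get $\sharp B^nx\sharp=\psi(B^nx)\ge\psi(b_nx)=b_n\sharp x\sharp$, hence $b_n^{1/n}\le(\sharp B^nx\sharp)^{1/n}/(\sharp x\sharp)^{1/n}$. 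Since $\sharp x\sharp>0$ for $x\in\dot X_+$ (as $x\notin-X_+$ and $-X_+$ is closed) and $(\sharp x\sharp)^{1/n}\to1$, taking $\limsup$ yields $\eta_x(B)\le\gamma_B^\sharp(x)\le\br_o^\sharp(B)$, and a supremum over $x\in\dot X_+$ finishes this step.

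For the final inequality I would assume $\br_o(B)=r<\infty$ (otherwise nothing to prove), fix $\lambda>r$, and set $A_N:=\{x\in X_+:\ \|B^nx\|\le N\lambda^n\ \text{for all }n\in\N\}$. Since $\gamma_B(x)=\limsup_n\|B^nx\|^{1/n}<\lambda$ for every $x$, we have $X_+=\bigcup_N A_N$; continuity of $B$ makes each $x\mapsto\|B^nx\|$ continuous, so each $A_N$ is closed, and completeness of $X_+$ lets Baire's theorem produce $N_0$, $x_0\in X_+$, and $\rho>0$ with $\{x\in X_+:\|x-x_0\|<\rho\}\subseteq A_{N_0}$. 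For $v\in\dot X_+$ and $0<t<\rho/\|v\|$ the point $x_0+tv$ lies in this relatively open ball, so $\|B^n(x_0+tv)\|\le N_0\lambda^n$; since $tv\le x_0+tv$, monotonicity of the companion norm and homogeneity give $t\,\sharp B^nv\sharp=\sharp B^n(tv)\sharp\le\sharp B^n(x_0+tv)\sharp\le\|B^n(x_0+tv)\|\le N_0\lambda^n$, whence $\sharp B^nv\sharp\le(N_0\|v\|/\rho)\lambda^n$. Now the companion-norm scaling is brought in: given $y\in X_+$ with $\sharp y\sharp=\psi(y)\le1$, choose $z\in X_+$ with $\|y+z\|<1+\varepsilon$, so $0\le y\le v:=y+z$ and $\sharp B^ny\sharp\le\sharp B^nv\sharp\le(N_0(1+\varepsilon)/\rho)\lambda^n$. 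Letting $\varepsilon\to0$ and taking the supremum over such $y$ gives $\sharp B^n\sharp_+\le(N_0/\rho)\lambda^n$, hence $\br_+^\sharp(B)=\lim_n(\sharp B^n\sharp_+)^{1/n}\le\lambda$; as $\lambda>r$ was arbitrary, $\br_+^\sharp(B)\le\br_o(B)$.

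The main obstacle is precisely this last passage: the Baire argument naturally produces a bound only for vectors of small \emph{original} norm, whereas $\br_+^\sharp$ is governed by the \emph{companion} unit ball, which contains vectors of arbitrarily large original norm, so a crude substitution fails. The resolution is the interplay of two companion-norm features with homogeneity: every companion-unit $y\in X_+$ is order-dominated by some $v\in X_+$ of controlled original norm, and the companion norm is monotone on the cone, so the original-norm estimate transfers to a companion-norm estimate through the order relation. Verifying that each $A_N$ is closed (using continuity of $B$) and that the scaling $x_0+tv$ stays inside the relatively open ball are the routine points I would check carefully but do not expect to cause difficulty.
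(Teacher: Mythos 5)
Your proposal follows essentially the same route as the paper. The main chain is obtained, as in the paper's Proposition \ref{re:Col-Wie-lower}, by applying the monotone companion half-norm to the iterated lower-eigenvalue inequality $B^nx \ge [B^n]_x\,x$ together with $\psi(x)>0$ on $\dot X_+$ and $\sharp x\sharp\le\|x\|$; and your proof of the last statement is the paper's Theorem \ref{re:spec-rad-cone-orb}: a Baire category argument in the complete cone followed by the order-domination transfer (every $y\in X_+$ with $\psi(y)\le 1$ is dominated by some $z\ge y$ with $\|z\|\le 1+\varepsilon$, and $\sharp\cdot\sharp$ is monotone on $X_+$), which is exactly the device that lets the original-norm bound from Baire control the companion operator norm. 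Your packaging of the Baire step (sets $A_N$ demanding $\|B^nx\|\le N\lambda^n$ for \emph{all} $n$, indexed by $N$) differs inessentially from the paper's (sets $C_n=\{x:\|B^n(x)\|\le r^n\}$ for $n\ge m$, with $m$ sent to infinity); yours yields a single uniform-in-$n$ estimate and avoids the tail-index bookkeeping, but the mechanism is identical.

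One step is false as stated: the claim that, by Fekete's lemma, $\eta_x(B)=\sup_n b_n^{1/n}=\lim_n b_n^{1/n}$ for the supermultiplicative sequence $b_n=[B^n]_x$. The superadditive form of Fekete's lemma needs $\log b_n$ to be real; when some $b_n=0$ the limit can fail to exist. Concretely, take $X=\R^2$, $X_+=\R^2_+$, $B(x_1,x_2)=(x_2,x_1)$, $x=(1,0)$: then $[B^n]_x=0$ for odd $n$ and $=1$ for even $n$, so $b_n^{1/n}$ oscillates between $0$ and $1$. The slip is harmless for your argument, because supermultiplicativity already gives $b_{km}\ge b_m^{\,k}$, hence $\limsup_n b_n^{1/n}\ge b_m^{1/m}$ for every $m$, i.e. $\limsup_n b_n^{1/n}=\sup_n b_n^{1/n}=\eta_x(B)$; your subsequent passage to $\limsup$ in $b_n^{1/n}\le \sharp B^nx\sharp^{1/n}/\sharp x\sharp^{1/n}$ then delivers $\eta_x(B)\le\gamma_B^\sharp(x)$ as needed. (The paper avoids this issue by applying the one-step inequality to $B^n$ and invoking the power inequality $\gamma^\sharp(x,B^n)\le(\gamma^\sharp(x,B))^n$ from Section \ref{sec:geometric}, which amounts to the same iteration.) So: replace ``$\lim$'' by ``$\limsup$'' and justify it by the iterate $b_{km}\ge b_m^{\,k}$ rather than by Fekete, and your proof is complete.
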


If $B$ is also compact, the following progressively more general results
have been proved over the years which finally lead to an extension
of the Krein-Rutman theorem from linear to homogeneous maps.

\begin{theorem}
\label{re:eigen-hist}
Let $B$ be compact, continuous, homogeneous and order-preserving and
let $r$ be any of the numbers $cw(B),  \br_{cw} (B), \br_+(B)$.
Then, if $r >0$, there exists some $v \in \dot X_+$ with $B( v) =r v$.
More specifically,
\begin{center}
\begin{tabular}{rcl}
 $r = cw(B)$ & {\em :} & {\em  Krein-Rutman 1948 \cite{KrRu}},
\\
$r = \br_{cw}(B)$ & {\em :} & {\em  Krasnosel'skii 1964 \cite[Thm.2.5]{Kra}},
\\
$r = \br_+(B)$ & {\em : }& {\em Nussbaum 1981 \cite{Nus81},
Lemmens Nussbaum \cite{LeNu13}}.
\end{tabular}
\end{center}
Actually, if $r =\br_+(B)$, then $cw(B) = \br_{cw}(B) =\br_+(B)$.
\end{theorem}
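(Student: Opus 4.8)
The plan is to read Theorem~\ref{re:eigen-hist} as a compilation of three classical existence theorems, completed by a short coherence argument for the closing assertion. For $r=cw(B)$ the eigenvector is produced by the Krein--Rutman construction~\cite{KrRu}; for $r=\br_{cw}(B)$ by Krasnosel'skii's extension~\cite[Thm.2.5]{Kra}; and for $r=\br_+(B)$ by the argument of Nussbaum~\cite{Nus81} (see also~\cite{LeNu13}). The standing hypotheses required in each of those sources---that $B$ be compact, continuous, homogeneous and order-preserving on $X_+$---coincide with the ones assumed here, so my first move is simply to invoke the appropriate result to obtain $v\in\dot X_+$ with $B(v)=r\,v$ whenever $r>0$.

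If a self-contained treatment were wanted in place of citation, the common mechanism behind all three cases is an approximation-plus-compactness scheme. I would fix $u_0\in\dot X_+$, perturb $B$ to the homogeneous order-preserving map $B_\varepsilon(x)=B(x)+\varepsilon\|x\|u_0$ so that $B_\varepsilon(x)\neq0$ for every $x\in\dot X_+$, and then produce, by a fixed point argument on a suitable bounded closed convex subset of $X_+$, approximate eigenvectors $v_\varepsilon$ with $\|v_\varepsilon\|=1$ and $B_\varepsilon(v_\varepsilon)=\lambda_\varepsilon v_\varepsilon$. Compactness of $B$ then lets me pass to a convergent subsequence as $\varepsilon\to0$, and the Collatz--Wielandt characterization pins the limiting eigenvalue to the prescribed radius. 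The genuinely hard case is $r=\br_+(B)$: since $\br_+(B)=\lim_n\|B^n\|_+^{1/n}$ is defined through operator norms rather than through an eigenvalue-type supremum, showing that this limit is attained as an eigenvalue is exactly the delicate content of Nussbaum's theorem, which I would not attempt to reprove.

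For the final assertion I argue directly and independently of the existence machinery. Suppose $r=\br_+(B)>0$ and let $v\in\dot X_+$ satisfy $B(v)=\br_+(B)\,v$. Then $B(v)\ge\br_+(B)\,v$, so $\br_+(B)$ is admissible in the supremum defining the lower Collatz--Wielandt number, whence $[B]_v\ge\br_+(B)$ and
\[
cw(B)=\sup_{x\in\dot X_+}[B]_x\ge[B]_v\ge\br_+(B).
\]
On the other hand, Theorem~\ref{re:inequalities-radii} furnishes the reverse chain $cw(B)\le\br_{cw}(B)\le\br_+(B)$. Chaining the two inequalities gives $\br_+(B)\le cw(B)\le\br_{cw}(B)\le\br_+(B)$, forcing equality throughout, so $cw(B)=\br_{cw}(B)=\br_+(B)$. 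The only real subtlety is that this collapse depends on actually possessing the $\br_+$-eigenvector, which is precisely why the statement is conditioned on $r=\br_+(B)$.
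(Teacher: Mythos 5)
Your proposal is correct and follows essentially the same route as the paper: the three existence cases are handled by citing Krein--Rutman, Krasnosel'skii, and Nussbaum/Lemmens--Nussbaum, and the closing equality is obtained exactly as in the paper, namely from $\br_+(B)\le[B]_v\le cw(B)$ for the eigenvector $v$ together with the chain $cw(B)\le\br_{cw}(B)\le\br_+(B)$ of Theorem~\ref{re:inequalities-radii}.
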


A proof for $r=cw(B)$ can also been found in \cite{Bon62}.
\cite[Thm.2.5]{Kra} is only formulated for the case that $B$ is
defined and linear on $X$, but the proof also works under
the assumptions made above. The case $r= \br_{cw}(B)$
can also been found in \cite[Cor.2.1]{Nus81}. The case $r=\br_+(B)$
is basically proved in \cite[Thm.2.1]{Nus81}, but some finishing touches
are contained in the introduction of \cite{LeNu13}. If $B(v) =r v$ with $v \in \dot
X_+$ and $r =\br_+(B)$, then $r \le [B]_v \le cw(B)$ and equality of all three numbers
follows.

It is well-known that the compactness of $B$ can be substantially relaxed
though not completely dropped if $B$ is linear \cite{Nus81}. This is also possible
(to a lesser degree)
if $B$ is just homogeneous using  homogeneous measures of noncompactness
\cite{MPNu02, MPNu}. We only mention two special cases of \cite[Thm.3.1]{MPNu02}
and \cite[Thm.4.9]{MPNu}, respectively.

\begin{theorem}
\label{re:eigen-noncomp-special}
Let $X_+$ be complete. Let $B:X_+ \to X_+$ and  $B = K + H$ where $K:X_+ \to X_+$ is homogeneous, continuous,
order preserving and compact and $H: X_+ \to X_+$ is homogeneous,
continuous, and order preserving.

Then there exists some $v \in \dot X_+$ with $B(v) = \br_+(B) v$
if $\br_+(B) > 0$ and one of the two following conditions is
satisfied in addition:

\begin{itemize}

\item[(a)] $H$ is Lipschitz continuous on $X_+$, $\|H(x) - H(y)\| \le
\Lambda \|x - y \|$ for all $x,y \in X_+$, with $\Lambda < \br_+(B)$.

\item[(b)] $X_+$ is normal, $H$ is cone-additive $(H(x+y ) = H(x) + H(y)$
for all $x,y \in X_+)$, and  $\br_+(H) < \br_+ (B)$.

\end{itemize}
\end{theorem}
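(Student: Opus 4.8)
The plan is to read this off from the general measure-of-noncompactness versions of the Krein--Rutman theorem, \cite[Thm.3.1]{MPNu02} and \cite[Thm.4.9]{MPNu}: those results produce an eigenvector $v \in \dot X_+$ with $B(v) = \br_+(B)\, v$ as soon as $\br_+(B) > 0$ and the noncompactness generated by the iterates of $B$ grows at a geometric rate strictly below $\br_+(B)$. Concretely, I would fix the Kuratowski measure of noncompactness $\alpha$ on bounded subsets of $X_+$ and use its standard calculus: $\alpha$ is monotone, vanishes exactly on relatively compact sets, is positively homogeneous ($\alpha(\lambda M)=\lambda\,\alpha(M)$ for $\lambda\ge 0$), subadditive ($\alpha(M+N)\le\alpha(M)+\alpha(N)$), and satisfies $\alpha(T(M))=0$ whenever $T$ is continuous and compact. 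Writing $\hat\alpha(T)=\inf\{k\ge 0:\alpha(T(M))\le k\,\alpha(M)\text{ for all bounded }M\subseteq X_+\}$, the task reduces to verifying, separately under (a) and (b), that the homogeneous measure-of-noncompactness spectral radius $\gamma_\alpha(B):=\lim_{n\to\infty}\hat\alpha(B^n)^{1/n}$ satisfies $\gamma_\alpha(B) < r$, where $r:=\br_+(B)$; I would then invoke the cited theorems.

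In case (a) I would argue directly. For any bounded $M\subseteq X_+$ one has $B(M)\subseteq K(M)+H(M)$, so by subadditivity, compactness of $K$ (giving $\alpha(K(M))=0$), and the $\Lambda$-Lipschitz bound $\alpha(H(M))\le\Lambda\,\alpha(M)$,
\[
\alpha(B(M)) \le \alpha(K(M)) + \alpha(H(M)) \le \Lambda\,\alpha(M).
\]
Hence $\hat\alpha(B)\le\Lambda$, and by submultiplicativity of $\hat\alpha$, $\gamma_\alpha(B)\le\hat\alpha(B)\le\Lambda<r$, so the general theorem applies.

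In case (b) the key structural observation is that cone-additivity of $H$ makes the noncompact part iterate cleanly: I claim $B^n = C_n + H^n$ with each $C_n:X_+\to X_+$ continuous, homogeneous, order-preserving and \emph{compact}. This holds for $n=1$ with $C_1=K$, and inductively
\[
B^{n+1} = (K+H)\circ(C_n + H^n) = \big[K\circ B^n + H\circ C_n\big] + H^{n+1},
\]
where additivity of $H$ was used to split $H\circ(C_n+H^n)=H\circ C_n + H^{n+1}$, and where $K\circ B^n$ and $H\circ C_n$ are compact (an image under the compact map $K$, respectively the continuous image under $H$ of the relatively compact sets produced by $C_n$). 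Consequently $\alpha(B^n(M))\le\alpha(H^n(M))$. The remaining point is the estimate $\alpha(H^n(M))\le c\,\|H^n\|_+\,\alpha(M)$ for a constant $c$ independent of $n$; granting it, $\hat\alpha(B^n)\le c\,\|H^n\|_+$, whence
\[
\gamma_\alpha(B)=\lim_{n\to\infty}\hat\alpha(B^n)^{1/n}\le\lim_{n\to\infty}\big(c\,\|H^n\|_+\big)^{1/n}=\br_+(H)<r,
\]
and again the general theorem yields the eigenvector.

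The genuinely hard content is carried entirely by the general measure-of-noncompactness Krein--Rutman machinery (a fixed-point-index or generalized-degree argument for homogeneous condensing maps on the cone), which I take from \cite{MPNu02,MPNu}; the present statement is only its specialization, so the work is to check hypotheses. Within that verification, (a) is routine, and the main obstacle is the additive case (b): establishing the decomposition $B^n=C_n+H^n$ and, above all, the normality-dependent bound $\alpha(H^n(M))\le c\,\|H^n\|_+\,\alpha(M)$. The delicate feature is that cone-additivity yields the Lipschitz estimate $\|H(x)-H(y)\|\le\|H\|_+\,\|x-y\|$ only for \emph{ordered} pairs $y\le x$; normality of $X_+$ is exactly what is needed to promote this into control of $\alpha$ on arbitrary bounded sets, either by extending $H$ to a bounded linear operator on the span of the cone or by building order-adapted finite nets with respect to the monotone companion norm, thereby converting the orbital growth factor $\|H^n\|_+$ into a bound on the growth of noncompactness of $B$.
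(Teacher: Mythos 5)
The paper offers no proof of Theorem \ref{re:eigen-noncomp-special}: it is presented there as ``two special cases of \cite[Thm.3.1]{MPNu02} and \cite[Thm.4.9]{MPNu}, respectively,'' so your overall plan --- reduce everything to the Mallet-Paret--Nussbaum machinery --- is exactly the intended one. Your treatment of case (a) is correct and is precisely how (a) falls under \cite[Thm.3.1]{MPNu02}: from $B(M) \subseteq K(M)+H(M)$ one gets $\alpha(B(M)) \le \alpha(K(M)) + \alpha(H(M)) \le \Lambda\,\alpha(M)$, so the noncompactness growth rate is at most $\Lambda < \br_+(B)$.

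Case (b), as you execute it, has a genuine gap, and it sits exactly where you flag the ``main obstacle'': the estimate $\alpha(H^n(M)) \le c\,\|H^n\|_+\,\alpha(M)$ is never proved, and normality does not supply it in the way you suggest. Cone-additivity yields $\|H(x)-H(y)\| \le \|H\|_+\|x-y\|$ only for ordered pairs. To convert this into control of $\alpha(H(M))$ for an arbitrary bounded $M \subseteq X_+$ you need, for \emph{arbitrary} $x,y \in X_+$, some $w \in X_+$ with $w \ge x-y$ and $\|w\| \le C\|x-y\|$; then indeed $H(x)-H(y) \le H(w)$, symmetrically for $H(y)-H(x)$, and the companion norm (equivalent to the original one by normality) gives a genuine Lipschitz bound. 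But this domination property is a Krein--\v{S}mulian/And\^o-type decomposition property of the subspace $X_+ - X_+$, and it is \emph{not} a consequence of normality: normality gives the opposite implication (order-bounded sets are norm-bounded), while here the cone is not assumed generating and $X_+ - X_+$ need not be complete in the original norm. Both of your proposed repairs run into this same wall: the linear extension of $H$ to $X_+ - X_+$ is automatically bounded for the decomposition norm $\inf\{\|u\|+\|v\| :\ z=u-v,\ u,v \in X_+\}$, but its boundedness for the \emph{original} norm is precisely the missing decomposition property; and since under normality the monotone companion norm is equivalent to the original norm, covering by ``order-adapted nets'' in that norm cannot manufacture the needed order relations either. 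This missing estimate is the actual hard content carried by \cite[Thm.4.9]{MPNu}. The fix consistent with the paper is simply to invoke that theorem for case (b) --- its hypotheses are what condition (b) is tailored to --- rather than re-derive it through the Kuratowski measure of noncompactness and leave the decisive step unproven.
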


The following  observation is worth mentioning
\cite[Thm.9.3]{KrLiSo} \cite[Thm.2.2]{Nus}.

\begin{proposition}
\label{re:eigen-power-self}
Let $B: X_+ \to X_+$ be {\em positively linear}, i.e., $B(\alpha x +y) = \alpha B(x) + B(y)$
for all $x,y \in X_+$ and $\alpha \ge 0$. If $r > 0$, $w \in \dot X_+$, $m\in \N$
and $B^m (w) =r^m  w$, then $B(v) =r v$ for some $v \in \dot X_+$.
\end{proposition}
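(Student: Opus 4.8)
The plan is to build an eigenvector of $B$ for the eigenvalue $r$ directly out of the given eigenvector $w$ of $B^m$, by taking the explicit telescoping combination
\[
v = \sum_{k=0}^{m-1} r^{\,m-1-k}\, B^k(w).
\]
This is the positive analogue of the classical linear fact that an eigenvalue $r^m$ of $A^m$ produces an $m$-th root as an eigenvalue of $A$; positivity is what lets us single out the positive root $r$ together with a positive eigenvector. First I would record the elementary consequences of positive linearity. Setting $\alpha = 1$ in the defining identity shows $B$ is additive on $X_+$; and from $B(0) = B(\alpha \cdot 0 + 0) = (\alpha+1)B(0)$ with $\alpha = 1$ one gets $B(0) = 0$, whence $B(\alpha x) = B(\alpha x + 0) = \alpha B(x)$, so $B$ is homogeneous. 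Together these give the only algebraic fact I need, namely that $B$ distributes over nonnegative combinations of cone elements: $B\big(\sum_i c_i x_i\big) = \sum_i c_i B(x_i)$ for $c_i \ge 0$ and $x_i \in X_+$.

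Next I would verify $Bv = rv$. Applying $B$ termwise yields $Bv = \sum_{k=0}^{m-1} r^{\,m-1-k} B^{k+1}(w)$, and reindexing by $j = k+1$ gives $\sum_{j=1}^{m} r^{\,m-j} B^j(w)$. Splitting off the top term $j = m$ and substituting the hypothesis $B^m(w) = r^m w$ turns this into $r^m w + \sum_{j=1}^{m-1} r^{\,m-j} B^j(w)$, which is precisely $r v$. Hence $v$ is an eigenvector of $B$ for the eigenvalue $r$ the moment we know that $v \in X_+$ and $v \ne 0$. Membership in the cone is immediate: each $B^k(w) \in X_+$ since $B$ maps $X_+$ into itself, the coefficients $r^{\,m-1-k}$ are nonnegative (here $r > 0$ enters), and $X_+$ is closed under addition and nonnegative scaling.

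The only step I would treat with care — and the sole place where the cone axiom $X_+ \cap (-X_+) = \{0\}$, rather than mere wedge structure, is essential — is showing $v \ne 0$. For this I would use the cone cancellation fact that a finite sum of cone elements vanishes only if every summand vanishes: if $\sum_i a_i = 0$ with $a_i \in X_+$, then $a_1 = -\sum_{i \ge 2} a_i$ lies in both $X_+$ and $-X_+$, hence $a_1 = 0$, and one repeats inductively. Since the $k = 0$ summand of $v$ is $r^{\,m-1} w$, which is nonzero because $w \ne 0$ and $r > 0$, this rules out $v = 0$. Therefore $v \in \dot X_+$ with $B(v) = r v$, as required. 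I do not anticipate any genuine obstacle here; the entire content of the argument is the choice of the telescoping combination and the elementary cancellation property used to certify $v \ne 0$.
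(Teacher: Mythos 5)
Your proof is correct and is essentially the paper's own argument: the paper sets $v=\sum_{j=0}^{m-1} r^{-j}B^j(w)$ (the source's lower limit ``$j=10$'' is a typo for $j=0$), which is exactly your $v$ divided by the positive constant $r^{m-1}$, and the verification proceeds by the same telescoping computation. Your added care in checking $v\neq 0$ via the cone property $X_+\cap(-X_+)=\{0\}$ is a reasonable way to fill in the detail the paper leaves implicit.
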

Simply set
\begin{equation}
\label{eq:eigen-power-self}
v = \sum_{j=10}^{m-1} r^{-j} B^j (w).
\end{equation}
It seems to be an open problem (with a beer barrel scent \cite{MPNu-beer, Tro})
whether any of the results in Theorem
\ref{re:eigen-hist} holds if $B^2$ or some higher power of $B$ is compact
rather than $B$ itself. For additional conditions to make such a result hold,
 see \cite[Thm.7.3]{AGN}, conditions (i) and (iii),  and \cite[Sec.7]{ThiArX}.

The construction in (\ref{eq:eigen-power-self}) can be modified
to yield the following result.

\begin{proposition}
\label{re:subeigen-power-self}
Let $X_+$ be a sup-semilattice and $B:X_+\to X_+$ be homogeneous and order-preserving.
If $r > 0$, $w \in \dot X_+$, $m\in \N$
and $B^m (w) \ge r^m  w$, then $B(v) \ge r v$ for some $v \in \dot X_+$.
\end{proposition}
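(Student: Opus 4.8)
The plan is to imitate the construction $(\ref{eq:eigen-power-self})$ used in Proposition \ref{re:eigen-power-self} for the positively linear case, but with the sum replaced by a finite supremum so that only the sup-semilattice structure and the monotonicity of $B$ are used. Concretely, I would set
\[
v = \bigvee_{j=0}^{m-1} r^{-j} B^j(w).
\]
This supremum exists and lies in $\dot X_+$: each term $r^{-j} B^j(w)$ belongs to $X_+$ because $r>0$ and $B$ maps $X_+$ into $X_+$, so the finite supremum exists in $X_+$ by the sup-semilattice assumption; and since the $j=0$ term is $w$, we have $v \ge w$, whence $v \ne 0$ by the cone property $X_+ \cap (-X_+) = \{0\}$.

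Next I would record the three elementary facts about finite suprema in $X_+$ that drive the computation: (1) suprema are order-preserving in each argument; (2) multiplication by the positive scalar $r$ commutes with $\vee$, i.e. $r(x\vee y) = (rx)\vee(ry)$, because $x \mapsto rx$ is an order isomorphism of $X_+$; and (3) since $B$ is order-preserving, $B\big(\bigvee_j x_j\big) \ge \bigvee_j B(x_j)$, as the image of the supremum is an upper bound for the images. Using (3) and the homogeneity of $B$, I obtain
\[
B(v) \ge \bigvee_{j=0}^{m-1} r^{-j} B^{j+1}(w) = \bigvee_{j=1}^{m} r^{-(j-1)} B^{j}(w).
\]

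The heart of the argument is a comparison of this lower bound with
\[
rv = \bigvee_{j=0}^{m-1} r^{-(j-1)} B^{j}(w),
\]
the latter identity coming from fact (2). The two families agree in all indices $j=1,\dots,m-1$; the bound for $B(v)$ carries the extra top term $r^{-(m-1)}B^m(w)$, while $rv$ carries the extra bottom term $r\,w$ (its $j=0$ term). The hypothesis $B^m(w) \ge r^m w$, together with monotonicity of scalar multiplication, gives $r^{-(m-1)}B^m(w) \ge r\,w$, so the lower bound for $B(v)$ is an upper bound for \emph{every} term appearing in $rv$; by fact (1) this yields $B(v) \ge rv$. The degenerate case $m=1$ is covered directly, since then $v=w$ and the claim reduces to the hypothesis $B(w) \ge r\,w$.

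The step I expect to be the main obstacle is not any single inequality but ensuring that the supremum behaves as a well-defined algebraic operation—staying inside $X_+$, commuting with positive scalars, and being monotone—justified purely from the sup-semilattice axiom rather than from a full lattice structure on $X$. The essential conceptual point, and the reason the conclusion is an inequality rather than the equality of Proposition \ref{re:eigen-power-self}, is that replacing additivity by monotonicity turns the identity $B(\textstyle\sum) = \sum B$ into the one-sided relation $B(\bigvee) \ge \bigvee B$, which points in exactly the direction needed to produce a lower (sub-)eigenvector.
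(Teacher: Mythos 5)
Your proof is correct and is precisely the paper's own approach: the paper takes the very same element $v = \sup_{j=0}^{m-1} r^{-j} B^j(w)$ (see (\ref{eq:subeigen-power-self}), with the verification delegated to \cite[Thm.5.1]{AGN}), and the check you carry out---monotonicity and homogeneity giving $B(v) \ge r^{-j}B^{j+1}(w)$ for each $j$, with the hypothesis $B^m(w) \ge r^m w$ supplying the missing bottom term $rw$ so that $B(v)$ dominates every term of $rv$---is exactly the intended argument. Nothing to add.
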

This time, choose $v$ as in the proof of \cite[Thm.5.1]{AGN},
\begin{equation}
\label{eq:subeigen-power-self}
v = \sup_{j=0}^{m-1} r^{-j} B^j (w).
\end{equation}
Notice that the number $r$ in Proposition \ref{re:subeigen-power-self}
satisfies $r \le [B]_v \le cw(B)$.

This observation provides conditions for equality to hold in
Theorem \ref{re:inequalities-radii}.
In order not to burden our representation with technical language
or a list of various special cases and to include possible further
developments, we make the following definition.

\begin{definition}
\label{def:KRprop}
A homogeneous bounded map $B: X_+ \to X_+$ has the {\em KR property}  (the Krein-Rutman
property) if there is some $v\in \dot X_+$ with $B(v) = \br_+(B) v$
whenever $\br_+(B)>0$.

$B$ has the {\em lower KR property} if there is some $v\in \dot X_+$ with $B(v) \ge \br_+(B) v$
whenever $\br_+(B)>0$.
\end{definition}

The maps in Theorem \ref{re:eigen-noncomp-special} are examples
that satisfy the KR property while power-compact, homogeneous,
order-preserving, continuous maps on sup-semilattices are examples that satisfy
the lower KR property (Proposition \ref{re:subeigen-power-self}).
$B:X_+ \to X_+$ is called {\em power-compact} if $B^m$ is compact
for some $m \in \N$.

 We emphasize
the lower KR property in addition to the KR property because
of our interest in population dynamics. If $B$ is the homogeneous
approximation of an nonlinear map $F$ at 0,  the condition $\br_+(B) < 1$
is often enough to guarantee the local (and some sometime the global)
 stability of 0 for the
dynamical system induced by $F$ whether or not $\br_+(B)$ is an eigenvalue
associated with a positive eigenvector. To prove persistence of the
dynamical system, it is very helpful to have a positive lower eigenvector
$B(v) \ge r v$ with $r > 1$ \cite{JiTh}.

The lower KR property turns some of the inequalities in Theorem
\ref{re:inequalities-radii} into equalities.

\begin{theorem}
\label{re:almost-all-equal}
Let $B$ be  bounded, homogeneous, order-preserving and $B^m$
have the lower KR property  for some $m \in \N$.
 Then
\[
\br_{cw} (B) =  \br_o^\sharp (B)
=
\br_+^\sharp (B) = \br_o (B) = \br_+(B).
\]
If $X_+$ is a sub-semilattice or $m=1$, then also $cw(B) = \br_+(B)$
and $B$ has the lower KR property.
\end{theorem}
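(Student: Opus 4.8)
The plan is to prove everything by pushing $\br_{cw}(B)$ \emph{up} to $\br_+(B)$ via the lower KR property of $B^m$, and then letting the inequality chain of Theorem~\ref{re:inequalities-radii} collapse. Write $r=\br_+(B)$. First I would dispose of the trivial case $r=0$: by Theorem~\ref{re:inequalities-radii} every quantity appearing in the asserted string lies between $0$ and $\br_+(B)=0$, so all are $0$, and the lower KR property is vacuous there. Hence assume $r>0$ from now on.

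The core step is short. By the power rule $\br_+(B^m)=(\br_+(B))^m=r^m>0$, so the lower KR property of $B^m$ yields some $w\in\dot X_+$ with $B^m(w)\ge r^m w$. From the definition \eqref{eq:Col-Wie-num-low-intro} of the lower Collatz-Wielandt number this gives $[B^m]_w\ge r^m$, and therefore, taking $n=m$ in the supremum \eqref{eq:spec-rad-Col-Wie-loc-intro} defining $\eta_w$,
\[
\eta_w(B)=\sup_{n\in\N}[B^n]_w^{1/n}\ge [B^m]_w^{1/m}\ge r .
\]
Hence $\br_{cw}(B)=\sup_{x\in\dot X_+}\eta_x(B)\ge\eta_w(B)\ge r=\br_+(B)$. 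Combining this with Theorem~\ref{re:inequalities-radii}, which supplies $\br_{cw}(B)\le\br_o^\sharp(B)\le\br_o(B),\br_+^\sharp(B)\le\br_+(B)=r$, squeezes all five numbers between $r$ and $r$, establishing the first displayed chain of equalities.

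For the second statement I would split into the two listed cases. If $m=1$, the lower KR property is assumed for $B$ itself, giving $v\in\dot X_+$ with $B(v)\ge r v$, whence $[B]_v\ge r$ and $cw(B)\ge r$; since $cw(B)\le\br_{cw}(B)=r$ by the above, equality $cw(B)=\br_+(B)$ follows, and the lower KR property of $B$ is exactly the hypothesis. If instead $X_+$ is a sup-semilattice (I read the ``sub-semilattice'' in the statement as sup-semilattice, to match the hypothesis of Proposition~\ref{re:subeigen-power-self}), I apply Proposition~\ref{re:subeigen-power-self} to the very point $w$ obtained above, which satisfies $B^m(w)\ge r^m w$: it produces $v\in\dot X_+$ with $B(v)\ge r v$, i.e.\ the lower KR property of $B$, and by the remark following that proposition $r\le[B]_v\le cw(B)$; together with $cw(B)\le\br_{cw}(B)=r$ this gives $cw(B)=\br_+(B)$.

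I do not expect a real obstacle here: the argument is essentially bookkeeping that assembles the $m$-th root estimate, the power rule $\br_+(B^m)=(\br_+(B))^m$, Theorem~\ref{re:inequalities-radii}, and Proposition~\ref{re:subeigen-power-self}. The two points demanding care are extracting the $m$-th root cleanly in the bound $\eta_w(B)\ge r$ (so that the single eigen-inequality for $B^m$ already forces $\br_{cw}$ to the top), and, for the $cw$-equality, making sure the sup-semilattice hypothesis is invoked only through Proposition~\ref{re:subeigen-power-self} while the pure spectral-radius equalities are proved without it.
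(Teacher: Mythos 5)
Your proposal is correct and follows essentially the same route as the paper: dispose of $\br_+(B)=0$ via the inequality chain, use the lower KR property of $B^m$ together with $\br_+(B^m)=(\br_+(B))^m$ to force $\br_{cw}(B)\ge\br_+(B)$ (the paper phrases this as $(\br_+(B))^m\le cw(B^m)\le\br_{cw}(B^m)\le(\br_{cw}(B))^m$, which is your $m$-th root extraction in disguise), and then invoke Proposition~\ref{re:subeigen-power-self} for the $cw$-equality in the sup-semilattice case. Your reading of ``sub-semilattice'' as ``sup-semilattice'' is also the intended one, since that is the hypothesis of Proposition~\ref{re:subeigen-power-self}, which the paper's own proof uses.
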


The significance of $cw(B) = \br_+(B)$ is that the lower Collatz-Wielandt
numbers $[B]_x$ provide lower estimates for $\br_+(B)$ that
get arbitrarily sharp by choosing $x \in \dot X_+$ in the right way. See
Section \ref{sec:model} for some crude attempts in that direction
for a rank-structured population model with mating.

\begin{proof}
The equalities follow from Theorem \ref{re:inequalities-radii} if $\br_+(B) =0$.
So we can assume that $\br_+(B) >0$.
If $B$ has the lower KR property, it follows immediately from the definitions
that $\br_+(B) \le cw(B)$ and equality follows from Theorem \ref{re:inequalities-radii}.
If $B^m$ has the lower KR property,
\[
(\br_+(B)^m = \br_+(B^m) \le cw(B^m) \le \br_{cw}(B^m) \le (\br_{cw}(B) )^m,
\]
and the equalities follow again from Theorem \ref{re:inequalities-radii}.
If $r= \br_+(B)>0$, there exists some $w \in \dot X_+$ with $B^m(w) \ge r^m w$.
By Proposition \ref{re:subeigen-power-self}, $B(v) \ge rv$ for some $v \in
\dot X_+$ and $r \ge cw(B)$.
\end{proof}

\smallskip

The following monotone dependence of the various spectral radii on the map
can be shown without assuming normality of the cone.

\begin{theorem}
\label{re:spec-rad-increasing2}
Let $A,B: X_+ \to X_+$ be bounded and  homogeneous.
Assume that $ A(x) \le B (x) $ for all $x\in X_+$.

If $A$
is order-preserving and has the lower KR property,
then $\br_+(A) = {cw}(A) \le {cw}(B)$.

\end{theorem}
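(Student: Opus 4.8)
The plan is to prove the two assertions separately, each being a short consequence of the definitions together with Theorem \ref{re:inequalities-radii}. I would begin with the equality $\br_+(A) = cw(A)$. If $\br_+(A) = 0$, then Theorem \ref{re:inequalities-radii} already forces $cw(A) \le \br_+(A) = 0$, and since $cw(A) \ge 0$ by definition, equality holds. If $\br_+(A) > 0$, the lower KR property of $A$ supplies a vector $v \in \dot X_+$ with $A(v) \ge \br_+(A)\, v$. By the very definition of the lower Collatz-Wielandt number, $\br_+(A)$ is then an admissible value of $\lambda$ in $[A]_v = \sup\{\lambda \ge 0;\ A v \ge \lambda v\}$, whence $[A]_v \ge \br_+(A)$ and therefore $cw(A) = \sup_{x \in \dot X_+}[A]_x \ge [A]_v \ge \br_+(A)$. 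Combined with the reverse inequality $cw(A) \le \br_+(A)$ furnished by Theorem \ref{re:inequalities-radii}, this yields $\br_+(A) = cw(A)$.

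Next I would establish the inequality $cw(A) \le cw(B)$, which relies only on the pointwise domination $A \le B$ and uses neither order-preservation nor the KR property of $A$. Fix any $x \in \dot X_+$ and suppose $\lambda \ge 0$ satisfies $A(x) \ge \lambda x$. Since $B(x) \ge A(x) \ge \lambda x$, the same $\lambda$ is admissible for $B$, so $\{\lambda \ge 0;\ A(x) \ge \lambda x\} \subseteq \{\lambda \ge 0;\ B(x) \ge \lambda x\}$ and hence $[A]_x \le [B]_x$. Taking the supremum over $x \in \dot X_+$ gives $cw(A) = \sup_{x}[A]_x \le \sup_x [B]_x = cw(B)$. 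Chaining the two results then produces $\br_+(A) = cw(A) \le cw(B)$, as claimed.

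I do not anticipate a genuine obstacle here: both halves reduce to bookkeeping with the definitions of $[\cdot]_x$ and $cw$, and the sole external input is the inequality $cw(A) \le \br_+(A)$ from Theorem \ref{re:inequalities-radii}, which supplies the one direction that is not immediate from first principles. The only point demanding a moment's care is that the lower KR property is stated only under the hypothesis $\br_+(A) > 0$, so the degenerate case $\br_+(A) = 0$ must be dispatched separately, exactly as above. It is also worth noting, for later reference, that the monotonicity $[A]_x \le [B]_x$ derived in the second step holds verbatim for \emph{every} $x \in \dot X_+$ and for \emph{any} pair of homogeneous maps with $A \le B$, so the inequality $cw(A)\le cw(B)$ is entirely structural and the real content of the theorem is the conversion of $cw(A)$ into $\br_+(A)$ via the lower KR property.
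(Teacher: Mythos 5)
Your proof is correct and takes essentially the same route as the paper: the paper's proof consists of citing Theorem \ref{re:spec-rad-increasing} (whose argument, specialized to first powers, is exactly your observation that $A(x) \le B(x)$ gives $[A]_x \le [B]_x$ and hence $cw(A) \le cw(B)$) and Theorem \ref{re:almost-all-equal} (whose $m=1$ case is exactly your combination of the lower KR eigenvector $A(v) \ge \br_+(A)v$ with the inequality $cw(A) \le \br_+(A)$ from Theorem \ref{re:inequalities-radii}, including the separate treatment of $\br_+(A)=0$). You have merely inlined the proofs of those two cited results, so the mathematical content is identical.
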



\subsubsection{Upper Collatz-Wielandt numbers}

For $x \in \dot X_+$, the upper {\em Collatz-Wielandt numbers} of $x$ \cite{FoNa} is defined as
\[
\|B\|_x = \inf \{ \lambda \ge 0; B(x) \le \lambda x\},
\]
with the convention that $\inf \emptyset = \infty$, and
a {\em local upper Collatz-Wielandt radius} $\eta^u(B)$ at $u \in \dot X_+$,
\[
\eta^u(B) = \inf_{n \in \N} \|B^n\|_u^{1/n}.
\]

{\em Collatz-Wielandt numbers} \cite{FoNa}, without  this name,
became more widely known when Wielandt used them for a new proof of the Perron-Frobenius
theorem \cite{Wie}. We will use them closer to Collatz' original purpose,
namely to prove  inclusion theorems
(Einschlie\ss ungss\"atze) for $\br_+(B)$
 which generalize those in \cite{Col1, Col2}.

If the cone $X_+$ is solid, with nonempty interior $\breve X_+$,
one can define  an {\em upper Collatz-Wielandt bound} (cf. \cite[Sec.7]{AGN})
by
\[
CW (B) = \inf_{x \in \breve X_+} \|B\|_x.
\]
These new numbers, bound, and radius relate to the former bounds and radii
in the following way and show that upper Collatz-Wielandt numbers
taken at interior points
provide upper estimates of the cone spectral radius. For this expos\'e,
we illustrate our results for the special case of a solid cone.

\begin{theorem}
\label{re:upperCW-solid}
Let $X_+$ be solid with nonempty interior $\breve X_+$
and $B: X_+ \to X_+$ be homogeneous, bounded and order-preserving. Then,
for all $u \in \breve X_+$,
\[
\gamma_B(u) = \lim_{n\to \infty} \|B^n(u)\|^{1/n},
\qquad
\eta^u(B) = \lim_{n\to \infty} \|B^n\|_u^{1/n}
\]
and
\[
cw(B) \le \br_{cw} \le \br_+^\sharp (B) \le \eta^u(B)
\le \begin{cases} \gamma_u(B) \\  CW(B) \end{cases} .
\]
We obtain some equalities in the following cases.
\begin{itemize}

\item If $X_+$ is normal, $\br_+(B )= \gamma_u(B) = \eta^u(B) \le CW(B)$
for all $u \in \breve X_+$.

\item If $B^m$ has the lower KR property for some $m \in \N$,
$\br_{cw}(B) = \br_+(B )= \gamma_u(B) = \eta^u(B) \le CW(B)$
for all $u \in \breve X_+$.

\item If $B$ has the lower KR property,
$cw(B) = \br_{cw}(B) = \br_+(B )= \gamma_u(B) = \eta^u(B) \le CW(B)$
for all $u \in \breve X_+$.

\end{itemize}
\end{theorem}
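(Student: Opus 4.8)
The linchpin is the order-unit estimate supplied by solidity. Fix $u\in\breve X_+$ and choose $\epsilon>0$ with $u+v\in X_+$ whenever $\|v\|\le\epsilon$; substituting $v=-\epsilon w/\|w\|$ gives
\[
w \le \tfrac{1}{\epsilon}\,\|w\|\,u, \qquad w \in X .
\]
Writing $\psi(x)=\inf_{z\in X_+}\|x+z\|$ and applying this to $x+z$ (note $x\le x+z$) sharpens it to
\begin{equation*}
x \le \tfrac{1}{\epsilon}\,\psi(x)\,u, \qquad x \in X_+ . \tag{$\ast$}
\end{equation*}
Recall $\psi(x)=\sharp x\sharp$ for $x\in X_+$ and that $X_+$ is normal for $\sharp\cdot\sharp$. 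Since $X_+$ is closed, the infimum defining $\|B^n\|_u$ is attained, so $B^nu\le\|B^n\|_u\,u$, and $(\ast)$ with $x=B^nu$ gives $\|B^n\|_u\le\frac1\epsilon\|B^nu\|<\infty$; thus every quantity below is finite.

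First I would settle the two limits. As $B^n$ is order-preserving and homogeneous, $B^{n+m}u=B^n(B^mu)\le\|B^m\|_uB^nu\le\|B^n\|_u\|B^m\|_u\,u$, so $\|B^{n+m}\|_u\le\|B^n\|_u\|B^m\|_u$ and Fekete's lemma yields $\eta^u(B)=\inf_n\|B^n\|_u^{1/n}=\lim_n\|B^n\|_u^{1/n}$. Comparability of interior points ($c_1u\le u'\le c_2u$ for $u'\in\breve X_+$, both via $(\ast)$) shows $\|B^n\|_{u'}$ and $\|B^n\|_u$ differ by the fixed factor $c_2/c_1$, whence $\eta^u(B)$ is independent of $u\in\breve X_+$; evaluating at $x\in\breve X_+$ gives $\eta^u(B)=\eta^x(B)\le\|B\|_x$, so $\eta^u(B)\le CW(B)$. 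The companion orbit behaves identically: $(\ast)$ applied to $B^mu$ gives $B^{n+m}u\le\frac1\epsilon\sharp B^mu\sharp\,B^nu$, and monotonicity of $\sharp\cdot\sharp$ yields $\sharp B^{n+m}u\sharp\le\frac1\epsilon\sharp B^nu\sharp\,\sharp B^mu\sharp$, so $\lim_n\sharp B^nu\sharp^{1/n}$ exists; the two-sided bound $\epsilon\|B^n\|_u\le\sharp B^nu\sharp\le\|B^n\|_u\sharp u\sharp$ identifies it as $\eta^u(B)$. The \emph{original}-norm limit $\gamma_B(u)=\lim_n\|B^nu\|^{1/n}$ I would import from the geometric growth-factor analysis of Section~\ref{sec:geometric} (equation~(\ref{eq:geom-fac-power})); I explain below why it cannot be reduced to the companion computation.

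For the chain, the segment $cw(B)\le\br_{cw}(B)\le\br_o^\sharp(B)\le\br_+^\sharp(B)$ is Theorem~\ref{re:inequalities-radii}. The new inequality $\br_+^\sharp(B)\le\eta^u(B)$ comes from $(\ast)$: if $x\in X_+$ with $\sharp x\sharp\le1$, then $x\le\frac1\epsilon u$, hence $B^nx\le\frac1\epsilon\|B^n\|_u u$ and $\sharp B^nx\sharp\le\frac{\sharp u\sharp}{\epsilon}\|B^n\|_u$; taking the supremum gives $\sharp B^n\sharp_+\le\frac{\sharp u\sharp}{\epsilon}\|B^n\|_u$, and $n$-th roots yield $\br_+^\sharp(B)\le\eta^u(B)$. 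Finally $\|B^n\|_u\le\frac1\epsilon\|B^nu\|$ gives $\eta^u(B)\le\gamma_B(u)$, and $\eta^u(B)\le CW(B)$ was shown above. For the three equality cases I would run a squeeze using the universal bound $\gamma_B(u)=\limsup_n\|B^nu\|^{1/n}\le\limsup_n(\|B^n\|_+\|u\|)^{1/n}=\br_+(B)$, which turns the chain into $cw(B)\le\br_{cw}(B)\le\br_+^\sharp(B)\le\eta^u(B)\le\gamma_B(u)\le\br_+(B)$. If $X_+$ is normal, $\sharp\cdot\sharp$ and $\|\cdot\|$ are equivalent, so $\br_+^\sharp(B)=\br_+(B)$ and the block collapses to $\br_+(B)=\eta^u(B)=\gamma_B(u)$. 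If $B^m$ has the lower KR property, Theorem~\ref{re:almost-all-equal} gives $\br_{cw}(B)=\br_+(B)$ and the same squeeze forces $\br_{cw}(B)=\br_+(B)=\eta^u(B)=\gamma_B(u)$; if $B$ itself has it, Theorem~\ref{re:almost-all-equal} adds $cw(B)=\br_+(B)$, extending the equalities. In each case $\eta^u(B)\le CW(B)$ persists, and since $\eta^u(B)\le\liminf_n\|B^nu\|^{1/n}$, the equality $\eta^u(B)=\gamma_B(u)$ automatically delivers the original-norm limit.

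The main obstacle is exactly the unconditional identity $\gamma_B(u)=\lim_n\|B^nu\|^{1/n}$. Every step above transfers an order inequality such as $B^{n+m}u\le\frac1\epsilon\sharp B^mu\sharp\,B^nu$ into a norm inequality through the \emph{monotone} companion norm, which is legitimate precisely because $\sharp\cdot\sharp$ is order-preserving. The original norm, however, is monotone on the order interval $[0,\|B^n\|_u u]$ only when $X_+$ is normal, and for a solid non-normal cone such intervals are genuinely unbounded. Consequently $\gamma_B(u)$ may strictly exceed $\eta^u(B)$, the companion submultiplicativity does not descend to $\|B^nu\|$, and Fekete's lemma does not apply to the original-norm orbit; its convergence must be secured by the separate geometric-factor machinery. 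This is the one point at which solidity alone, without normality, does not let the companion norm carry the whole argument.
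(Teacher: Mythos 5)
Apart from one point, your argument is correct and is in substance the paper's own route: your estimate $(\ast)$ says exactly that solidity makes $B$ uniformly $u$-bounded for every interior $u$, and from there your derivation of $\eta^u(B)=\lim_n\|B^n\|_u^{1/n}$ (the paper's Lemma \ref{re:Col-Wie-upp-loc}), of $\br_+^\sharp(B)\le\eta^u(B)$, of $\eta^u(B)\le\liminf_n\|B^n(u)\|^{1/n}$ (the paper's inequality (\ref{eq:upperCWrad-growth})), of $\eta^u(B)\le CW(B)$ via comparability of interior points, and of the three collapse cases via normality or Theorem \ref{re:almost-all-equal}, reproduces what the paper does in Section \ref{sec:CWupper} (Theorem \ref{re:Col-Wie-companion2} specialized to interior $u$).

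The gap is the unconditional identity $\gamma_B(u)=\lim_{n\to\infty}\|B^n(u)\|^{1/n}$. You propose to import it from Section \ref{sec:geometric}, but that section proves only the power law (\ref{eq:geom-fac-power}), $\gamma(u,B^m)=(\gamma(u,B))^m$; it nowhere shows that the limit superior defining $\gamma_B(u)$ is a limit, and no other part of the paper does so without further hypotheses. Your own closing paragraph explains why no such argument is available from solidity alone: the original norm is not monotone, so the order inequality $B^{n+m}(u)\le\|B^n\|_u\,B^m(u)$ does not descend to a submultiplicative inequality for $\|B^n(u)\|$, and Fekete's lemma cannot be applied — yet you then defer the convergence to ``the separate geometric-factor machinery,'' which does not exist. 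So this claim remains unproven in your write-up. It is worth noting that the same is true of the paper's own machinery: in Theorem \ref{re:Col-Wie-companion2} the conclusion $\gamma_B(u)=\lim_n\|B^n(u)\|^{1/n}$ is obtained only when $u$ is a \emph{normal point} (via Corollary \ref{re:orb-rad-ColWie}), and an interior point of a solid cone need not be a normal point — in $bv$ the constant sequence is interior in $bv_+$ but is not a normal point by Theorem \ref{re:bv-normal-points}. What is provable unconditionally is the squeeze $\eta^u(B)\le\liminf_n\|B^n(u)\|^{1/n}\le\limsup_n\|B^n(u)\|^{1/n}=\gamma_B(u)$, and hence the limit does exist in each of the three bulleted cases, where $\gamma_B(u)=\eta^u(B)$ — a point you make correctly. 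The accurate resolution would have been to prove exactly that and to flag the unconditional first identity as not following from these arguments, rather than to cite Section \ref{sec:geometric} for it.
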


Notice that the equality $\br_+(B) = \gamma_u(B)= \lim_{n\to \infty} \|B^n(u)\|^{1/n}$ means that the cone spectral radius can be
determined by following the growth factors of an arbitrarily
chosen $u \in \breve X_+$.

The estimates from above by upper Collatz-Wielandt numbers $\|B\|_u$
with $u \in \breve X_+$ can
be arbitrarily sharp by appropriate choice of $u$ if the map is compact.
The next result should be compared to \cite[Thm.7.3]{AGN}.
Notice that $CW(B)$ here is $cw(B)$ in \cite[Thm.7.3]{AGN}.
Use of the companion half-norm makes it possible to drop
the normality of the cone.

\begin{theorem}
\label{re:upperCW-eigen-solid}
Let $X_+$ be solid and complete.
Let
$B:X_+ \to X_+$ and $B = K +A$
where $K:X_+ \to X_+$ is homogeneous, continuous,
order preserving and compact and $A: X \to X$ is linear, positive,
and bounded and $\br(A) < \br_+(B)$.

Then $cw(B) = \br_{cw}(B) = \br_+(B) = \gamma_B(u) = \eta^u(B) = CW(B)$
for all $u \in \breve X_+$.

If $r = CW(B) > 0$, then there exists some $v \in \dot X_+$ such that
$B(v) =r v$.
\end{theorem}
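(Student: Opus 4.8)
The plan is to reduce the eigenvalue problem for $B=K+A$ to a fixed-point problem for a genuinely compact map and invoke the homogeneous Krein--Rutman theorem (Theorem~\ref{re:eigen-hist}), using the monotone companion half-norm to supply the order-to-norm estimates that normality would otherwise provide. First I record two preliminaries. Since $X_+$ is solid and complete, $X$ itself is complete: choosing $e\in\breve X_+$ with $\{z:\|z\|\le 2\delta\}\subseteq e+X_+$ forces $-e\le z\le e$ for $\|z\|\le 2\delta$, and telescoping a Cauchy sequence into a decreasing sequence in $X_+$ then converges by completeness of $X_+$. Hence resolvents of the bounded linear map $A$ exist on $X$. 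Also $\br(A)<\br_+(B)$ gives $r:=\br_+(B)>0$, and every interior point of $X_+$ is an order unit.

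The core is an eigenvector-existence lemma: \emph{for any $C=\widetilde K+A$ with $\widetilde K:X_+\to X_+$ compact, continuous, homogeneous, order-preserving and $\br(A)<\br_+(C)$, there is $v\in\dot X_+$ with $C(v)=\br_+(C)v$, and moreover $\br_+(C)=\br_+^\sharp(C)=cw(C)$.} For $\lambda>\br(A)$ set $T_\lambda=(\lambda-A)^{-1}\widetilde K$; the resolvent is positive (Neumann series with $A\ge 0$), so $T_\lambda:X_+\to X_+$ is compact, continuous, homogeneous, order-preserving, and $T_\lambda(v)=v\iff C(v)=\lambda v$. I would show $g(\lambda):=\br_+(T_\lambda)$ is non-increasing (from $T_{\lambda'}\le T_\lambda$ for $\lambda'>\lambda$ together with Theorem~\ref{re:spec-rad-increasing2}, each $T_\lambda$ having the lower KR property by Theorem~\ref{re:eigen-hist}), continuous, and $g(\lambda)\to 0$ as $\lambda\to\infty$. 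The key is the crossing identity $g(r_C)=1$ for $r_C=\br_+(C)$, after which Theorem~\ref{re:eigen-hist} (the cone is total since it is solid) yields $v\in\dot X_+$ with $T_{r_C}(v)=v$, i.e. $C(v)=r_Cv$; then $r_C\le[C]_v\le cw(C)\le\br_+^\sharp(C)\le\br_+(C)=r_C$ collapses the chain of Theorem~\ref{re:inequalities-radii}.

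One inequality of the crossing is clean: iterating $C\ge A$ and $C\ge\widetilde K$ with monotonicity gives $A^{j}\widetilde K(x)\le C^{j+1}(x)$ for $x\in X_+$, so from $T_\lambda(x)=\sum_{j\ge 0}\lambda^{-j-1}A^{j}\widetilde K(x)$ and monotonicity of $\sharp\cdot\sharp$ one gets $\sum_{n}\sharp T_\lambda^{\,n}(x)\sharp\le\sum_{m\ge 1}\lambda^{-m}\|C^{m}\|_+\,\|x\|<\infty$ whenever $\lambda>r_C$; since $T_\lambda$ is compact, $\br_+(T_\lambda)=\br_+^\sharp(T_\lambda)$ (for a compact map $cw=\br_+$ by Theorem~\ref{re:eigen-hist}, while $cw\le\br_+^\sharp\le\br_+$ by Theorem~\ref{re:inequalities-radii}), whence $g(\lambda)\le 1$ for $\lambda>r_C$ and $g(r_C)\le 1$ by continuity. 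The hard direction, $g(\lambda)\ge 1$ for $\lambda<r_C$ --- equivalently $\br_+(C)=\br_+^\sharp(C)$ for $C=\widetilde K+A$ with $\br(A)<\br_+(C)$ --- is the main obstacle: here the \emph{original} cone spectral radius must be bounded by companion-norm data, which is exactly where non-normality bites. I would establish it by a homogeneous measure-of-noncompactness argument in the spirit of \cite{MPNu02,MPNu}: the compact part $\widetilde K$ confines the discrepancy between original and companion growth of $C^{n}$ to the purely linear part $A$, whose radius $\br(A)$ lies strictly below $\br_+(C)$, so a value $\lambda<\br_+(C)$ cannot satisfy $g(\lambda)<1$.

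With the core lemma in hand the theorem follows. Applying it to $B=K+A$ gives $v_0\in\dot X_+$ with $B(v_0)=rv_0$ and, via Theorems~\ref{re:inequalities-radii} and \ref{re:upperCW-solid}, $cw(B)=\br_{cw}(B)=\br_+^\sharp(B)=\br_+(B)=\gamma_B(u)=\eta^u(B)=r$ for every $u\in\breve X_+$; in particular $CW(B)\ge\eta^u(B)=r$. For the reverse bound $CW(B)\le r$ I would push eigenvectors into the interior: fix $e\in\breve X_+$ and set $B_\epsilon(x)=B(x)+\epsilon\,\sharp x\sharp\,e=K_\epsilon(x)+A(x)$ with $K_\epsilon(x)=K(x)+\epsilon\,\sharp x\sharp\,e$, which is again compact (its added term has one-dimensional range), order-preserving and homogeneous, and satisfies $\br(A)<r\le\br_+(B_\epsilon)$. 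Since $B_\epsilon(\dot X_+)\subseteq X_++\breve X_+\subseteq\breve X_+$, the eigenvector $v_\epsilon$ produced by the core lemma is interior, and $B(v_\epsilon)\le B_\epsilon(v_\epsilon)=\br_+(B_\epsilon)v_\epsilon$ gives $CW(B)\le\|B\|_{v_\epsilon}\le\rho_\epsilon:=\br_+(B_\epsilon)$. Finally $\rho_\epsilon\to r$: monotonicity of $\sharp\cdot\sharp$ yields $\br_+^\sharp(B_\epsilon)\ge\br_+^\sharp(B)$, so $\rho_\epsilon\ge r$ by the equalities applied to both maps, while normalizing $\|v_\epsilon\|=1$ and using compactness of $K$ in $v_\epsilon=(\rho_\epsilon-A)^{-1}\bigl[K(v_\epsilon)+\epsilon\sharp v_\epsilon\sharp e\bigr]$ extracts a subsequential limit $v_*\in\dot X_+$ with $B(v_*)=(\lim\rho_\epsilon)v_*$, forcing $\lim\rho_\epsilon=\gamma_B(v_*)\le\br_+(B)=r$ by Theorem~\ref{re:spec-rad-alt-char}. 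Hence $CW(B)=r$, all six quantities equal $r=\br_+(B)=CW(B)$, and $v_0$ is the asserted eigenvector whenever $r=CW(B)>0$.
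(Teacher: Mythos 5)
Your outer architecture (perturb by $\epsilon\,\sharp x\sharp\, e$ to force eigenvectors into the interior, then let $\epsilon\to 0$ using compactness of $K$ and the Neumann series for $(\rho-A)^{-1}$) genuinely parallels the paper, which proves this theorem as the solid-cone special case of Theorem \ref{re:upperCW-eigen2}. But everything in your proposal funnels through the ``core lemma'' (existence of an eigenvector for $C=\widetilde K+A$ whenever $\br(A)<\br_+(C)$, together with $\br_+(C)=\br_+^\sharp(C)=cw(C)$), and the decisive half of that lemma --- the crossing inequality $g(\lambda)\ge 1$ for $\lambda<\br_+(C)$, which you yourself call ``the main obstacle'' --- is never proved; it is only referred to a hoped-for measure-of-noncompactness argument ``in the spirit of'' \cite{MPNu02,MPNu}. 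This is not a routine missing detail: it is exactly the point where companion-norm (order) data must control the \emph{original} cone spectral radius over a possibly non-normal cone, i.e.\ it is essentially the lower-KR-type assertion the theorem is designed to deliver, and no quotable result covers it. Theorem \ref{re:eigen-noncomp-special}(a) requires the Lipschitz constant of the non-compact summand to be below $\br_+$, whereas here only $\br(A)<\br_+(B)$ is assumed and $\|A\|$ may be far larger; this is precisely why the paper's proof of Theorem \ref{re:upperCW-eigen2} applies Theorem \ref{re:eigen-noncomp-special}(a) not to $B_\epsilon=K_\epsilon+A$ but to its powers $B_\epsilon^n=K_{n,\epsilon}+A^n$ (where $\|A^n\|<(\br_+(B_\epsilon))^n$ for large $n$), and then must descend from an eigenvector of $B_\epsilon^n$ to one of $B_\epsilon$ --- a descent that is an open (``beer barrel'') problem for general homogeneous maps and works here only because the perturbed map is strictly $\psi$-increasing (Proposition \ref{re:Perron}); after that the paper still needs the semicontinuity machinery of Theorems \ref{re:specrad-semicon2} and \ref{re:robust} to transfer the equalities to $B$ before the limit $\epsilon\to0$. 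Your proposal contains no substitute for any of this, so the core lemma, and with it the theorem, remains unproven. (Note also that your core lemma, stated for an arbitrary complete cone with no order-boundedness hypothesis, is more general than anything the paper establishes, and it is unclear that it is true in that generality; in your application you could at least assume uniform $u$-boundedness, which solidity provides for free.)

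Two further defects sit inside the same lemma. First, the ``clean'' direction is not clean as written: the bound $\sum_{n}\sharp T_\lambda^{\,n}(x)\sharp\le\sum_{m\ge 1}\lambda^{-m}\|C^{m}\|_+\|x\|$ does not follow, because $\widetilde K$ is nonlinear, so the powers $T_\lambda^{\,n}$ do not expand into the iterated series this estimate presupposes; pointwise domination gives $T_\lambda\le S:=\sum_{j\ge0}\lambda^{-j-1}C^{j+1}$ and hence $T_\lambda^{\,n}\le S^n$, but $S^n$ cannot be expanded termwise either. The conclusion $g(\lambda)\le1$ for $\lambda>\br_+(C)$ is nevertheless correct and has a short order-theoretic proof: if $\mu:=\br_+(T_\lambda)\ge1$, Theorem \ref{re:eigen-hist} gives $w\in\dot X_+$ with $\widetilde Kw=\mu(\lambda-A)w$; adding $\mu Aw$, using $Aw\le Cw$ and dividing by $\mu$ yields $Cw\ge\lambda w$, so $\lambda\le[C]_w\le cw(C)\le\br_+(C)$ by Theorem \ref{re:inequalities-radii}. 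Second, the continuity of $g(\lambda)=\br_+(T_\lambda)$ is asserted without proof; since $g$ is non-increasing, your crossing argument needs upper semicontinuity from the right at $\lambda=\br_+(C)$, which is again a nontrivial input of the type of Theorem \ref{re:specrad-semicon2} (whose uniform order-bound hypothesis is available here only because the cone is solid).
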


In Section \ref{sec:CWupper},
we prove more general versions of these theorems
replacing solidity of the cone by
uniform order-boundedness of the map.

 Using
the monotone companion half-norm,  we show
 for certain classes of homogeneous maps $B$ for which it is not
 clear whether they have the lower KR property
 that there is some $v \in \dot X_+$  such
that $B v \ge r v $ for $r = \eta^u(B)$ provided
that $r >0$ (Section \ref{sec:mono-comp}).

In Section \ref{sec:model},
 we illustrate some of our results in a discrete model for a rank-structured
population with mating.

A  version of this report will be published
under a slightly different title \cite{Thi-Pos}.


\section{More about cones}
\label{sec:cones}


We start with some more cone properties.

The cone $X_+$ is called {\em regular} if any decreasing sequence
in $X_+$ converges.

$X_+$ is called {\em fully regular} if any increasing
bounded sequence in $X_+$ converges.

The norm of $X$ is called {\em additive} on $X_+$ if $\|x + z\|= \|x\| +\|z\|$
for all $x, z \in X_+$. If the norm is additive, then the cone $X_+$ is normal.

\subsection{Normal cones}

The following result is well-known \cite[Sec.1.2]{Kra}.

\begin{theorem}
\label{re:cone-normal}
The following three properties are equivalent:
\begin{itemize}

\item[(i)] $X_+$ is normal:
There exists some $\delta > 0$ such that $\|x +z \|\ge \delta$
whenever $x \in X_+$, $z \in X_+$ and $\| x \| =1 = \|z\|$.

\item[(ii)] The norm is semi-monotonic: There exists some $M \ge 0$
such that $\|x\| \le M \|x + z\|$ for all $x,z \in X_+$.

\item[(iii)] There exists some $\tilde M \ge 0$ such that
$\|x\| \le \tilde M \|y\|$ whenever $x \in X$, $y \in X_+$,
and $- y \le x \le y$.

\end{itemize}
\end{theorem}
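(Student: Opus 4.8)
The plan is to prove the three properties equivalent through the cycle of implications $(i) \Rightarrow (ii) \Rightarrow (iii) \Rightarrow (i)$. Two of these three steps are purely order-algebraic and should be short, relying only on the cone structure and the triangle inequality; the remaining step, $(i) \Rightarrow (ii)$, requires a normalization/contradiction argument and is where I expect the real work to lie.

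For $(ii) \Rightarrow (iii)$, I would start from $x \in X$ and $y \in X_+$ with $-y \le x \le y$ and observe that both $y+x$ and $y-x$ lie in $X_+$ (the first from $-y \le x$, the second from $x \le y$). Feeding the pair $(y+x,\, y-x)$ into the semi-monotonicity inequality of (ii) gives $\|y+x\| \le M \|(y+x)+(y-x)\| = 2M\|y\|$, and by symmetry $\|y-x\| \le 2M\|y\|$. Since $x = \tfrac12[(y+x)-(y-x)]$, the triangle inequality yields $\|x\| \le 2M\|y\|$, so (iii) holds with $\tilde M = 2M$. For $(iii) \Rightarrow (i)$, given $x,z \in X_+$ with $\|x\| = \|z\| = 1$, I would set $y = x+z \in X_+$ and note $x \le y$ (because $z \ge 0$) and $-y \le x$ (because $2x+z \ge 0$), so $-y \le x \le y$. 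Applying (iii) gives $\|x\| \le \tilde M \|x+z\|$, hence $\|x+z\| \ge 1/\tilde M$, which is exactly normality with $\delta = 1/\tilde M$; since any valid $\tilde M$ may be enlarged, we may assume $\tilde M > 0$, so this is harmless.

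The main obstacle is $(i) \Rightarrow (ii)$, and here I would argue by contradiction. If (ii) fails, then because the inequality $\|x\| \le M\|x+z\|$ is invariant under scaling $x,z$ by a common positive factor, I can find $x_n, z_n \in X_+$ with $\|x_n\| = 1$ and $\|x_n + z_n\| \to 0$. The triangle inequality then forces $\|z_n\| \to 1$, so the renormalized vectors $z_n/\|z_n\|$ differ from $z_n$ by a vanishing amount; consequently $\|x_n + z_n/\|z_n\|\| \to 0$ as well, contradicting (i), which requires this sum of unit vectors to stay bounded below by $\delta$. The delicate point — and the reason this direction is harder than the other two — is that (i) only controls sums of \emph{unit} vectors, so one cannot apply it directly to $x_n + z_n$; one must first extract the estimate $\|z_n\| \to 1$ and then transfer the smallness of $\|x_n+z_n\|$ to the renormalized sum. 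The two algebraic implications never need such a renormalization, which is why I expect them to go through with almost no friction while this step carries the analytic content of the theorem.
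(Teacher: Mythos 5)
Your proof is correct in all three steps, but note that the paper itself offers no proof of this theorem at all: it states the result as ``well-known'' and refers to Krasnosel'skii \cite[Sec.1.2]{Kra}, so there is no in-paper argument to compare against. What you have done is supply the missing classical argument, and it checks out. In $(ii) \Rightarrow (iii)$, the decomposition $x = \tfrac12\bigl[(y+x)-(y-x)\bigr]$ with both brackets in $X_+$ is exactly the right move, and the constant $\tilde M = 2M$ is fine. In $(iii) \Rightarrow (i)$, taking $y = x+z$ and observing $-y \le x \le y$ gives $\|x+z\| \ge 1/\tilde M$ directly, and your remark that $\tilde M$ may be enlarged to be positive disposes of the degenerate case. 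In $(i) \Rightarrow (ii)$, the contradiction argument is sound: scaling by $1/\|x_n\|$ reduces failure of semi-monotonicity to $\|x_n\|=1$, $\|x_n+z_n\| \to 0$; the triangle inequality gives $\bigl| \|z_n\| - 1 \bigr| \le \|x_n+z_n\| \to 0$, hence $\|z_n/\|z_n\| - z_n\| = \bigl|1-\|z_n\|\bigr| \to 0$, and so the sum of the two \emph{unit} vectors $x_n$ and $z_n/\|z_n\|$ also tends to $0$, contradicting (i). You correctly identified the renormalization as the one nontrivial analytic point; this is essentially the argument found in the standard references the paper cites, so your proposal fills the gap the paper leaves to the literature rather than diverging from anything the paper actually does.
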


\begin{remark} If $X_+$ were just a wedge, property (iii) would be
rewritten as
\begin{quote}
There exists some $\tilde M \ge 0$ such that
$\|x\| \le \tilde M \|y\|$ whenever $x \in X$, $y \in X_+$,
and $ y+x \in X_+$, $y -x \in X_+$.
\end{quote}
Notice that this property implies that $X_+$ is cone:
If $x \in X_+$ and $-x \in X_+$, then $0+x \in X_+$
and $0 -x \in X_+$ and (iii) implies $\|x\| \le \tilde M \|0\|=0$.
\end{remark}

\begin{definition}
\label{def:normal-point}
An element $u \in X_+$ is called a {\em normal point} if
the set $\{\|x\|; x \in X_+, x \le u\}$ is a bounded subset of $\R$.
\end{definition}

The following is proved in \cite[Thm.4.1]{KrLiSo}.

\begin{theorem}
\label{re:normal-cones-points}
Let $X$ be an ordered normed vector space with cone $X_+$.
If $X_+$ is a normal cone, then all elements of $X_+$ are normal points
and all sets $\{\|x\| \in X; u \le x \le v\}$ with $u,v \in X$
are bounded subsets of $\R$.

If all elements of $X_+$ are normal points and $X_+$ is complete
or fully regular,
then $X_+$ is a normal cone.
\end{theorem}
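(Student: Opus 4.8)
The plan is to treat the two implications separately, relying throughout on the semi-monotonicity characterization of normality from Theorem~\ref{re:cone-normal}(ii). For the forward direction I would first fix $u \in X_+$ and take any $x \in X_+$ with $x \le u$; then $x \in X_+$ and $u - x \in X_+$ with $x + (u-x) = u$, so semi-monotonicity yields $\|x\| \le M\|u\|$, which bounds $\{\|x\|; x \in X_+, x \le u\}$ by $M\|u\|$ and shows $u$ is a normal point. For a general order interval, given $u \le x \le v$ I would apply the same estimate to $0 \le x - u \le v - u$, obtaining $\|x - u\| \le M\|v - u\|$ and hence $\|x\| \le \|u\| + M\|v-u\|$, so the interval is bounded. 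This direction is routine.

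The converse is the substantive part, and I would argue by contraposition: assuming $X_+$ is not normal, I would produce a point of $X_+$ that fails to be a normal point. Since normality fails, semi-monotonicity fails, so for every $n$ there exist $a_n, b_n \in X_+$ with $\|a_n\| > n^3 \|a_n + b_n\|$. Writing $c_n = a_n + b_n$, which is necessarily nonzero (otherwise $a_n = b_n = 0$ and the strict inequality $0 > 0$ would fail), I rescale by setting $w_n = c_n/(n^2\|c_n\|)$ and $y_n = a_n/(n^2\|c_n\|)$. Then $y_n \le w_n$ (scaling $a_n \le c_n$ by a positive constant), $\|w_n\| = 1/n^2$, and $\|y_n\| = \|a_n\|/(n^2\|c_n\|) > n$. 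In particular $\sum_n w_n$ is absolutely summable since $\sum_n 1/n^2 < \infty$.

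The final step is to realize $u = \sum_n w_n$ as an honest point of $X_+$ that dominates every $y_n$, and this is precisely where the extra hypothesis enters; it is the main obstacle. If $X_+$ is complete, the partial sums $u_N = \sum_{n=1}^N w_n$ are Cauchy and converge to some $u \in X_+$; if $X_+$ is only fully regular, the $u_N$ form an increasing bounded sequence and so again converge to some $u \in X_+$. In either case, because $X_+$ is closed, $u - w_n = \sum_{m \ne n} w_m \in X_+$, so $w_n \le u$ and therefore $y_n \le u$ for all $n$. But $\|y_n\| > n \to \infty$, so $\{\|x\|; x \in X_+, x \le u\}$ is unbounded and $u$ is not a normal point, contradicting the hypothesis; hence $X_+$ must be normal. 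The delicate point is calibrating the scaling so that the dominated lower elements $y_n$ blow up in norm while the dominating series still converges; the choice of $n^3$ against $1/n^2$ is exactly what secures both.
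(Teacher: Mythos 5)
Your proof is correct and follows essentially the same route as the paper: contraposition, using the failure of semi-monotonicity to produce dominated pairs with exploding norm ratios, rescaling so that the dominating elements form a summable series while the dominated ones blow up, and invoking completeness or full regularity (plus closedness of the cone) to sum the series into a point that then fails to be a normal point. The only cosmetic differences are your polynomial scaling ($n^3$ versus $1/n^2$) in place of the paper's geometric one ($4^n$ versus $2^{-n}$), and your explicit write-up of the forward direction, which the paper dismisses as obvious.
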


\begin{proof} The first statement is obvious. Assume that $X_+$ is not
a normal cone but complete or fully regular. Then there exist sequences $(x_n)$ and $(y_n)$ in $X_+$
such that $x_n \le y_n$  and $\|x_n\| \ge 4^{n} \|y_n\|$
for all $n \in \N$. Set $v_n = \frac{y_n}{2^n \|y_n\|}$
and $u_n = \frac{x_n}{2^n \|y_n\|}$. Then $u_n \le v_n$ and
$\|v_n\| \le 2^{-n} $ and $\|u_n\| \ge 2^n $. Since $X_+$
is complete or fully regular, the series $v = \sum_{n=1}^\infty v_n$
converges in $X_+$ and $u_n \le v $ for all $n \in \N$. So $v$ is
not a normal point.
\end{proof}

Completeness or full regularity of $X_+$ are necessary for
the normality of $X_+$ as shown by the forthcoming
Example \ref{exp:complete-normal-necess}.

Connections between completeness, normality, regularity and full regularity of
cones are spelt out in the next result.

\begin{theorem}
\label{re:regular}
 Let $X$ be an ordered normed vector space with cone $X_+$.
\begin{itemize}

\item[(a)] If $X_+$ is complete and regular, then $X_+$ is normal.

\item[(b)] If $X_+$ is complete and fully regular, then $X_+$ is normal.

\item[(c)] If $X_+$ is normal and fully regular, then $X_+$ is regular.

\item[(d)] If $X_+$ is complete and fully regular, then $X_+$ is regular.
\end{itemize}
\end{theorem}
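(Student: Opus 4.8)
The plan is to prove (a) and (c) directly, to obtain (b) by reducing it to (a), and to get (d) for free by combining (b) and (c). A guiding observation is that full regularity plays, for increasing sequences, exactly the role completeness plays for absolutely summable ones: a norm-bounded increasing sequence converges. This is why Theorem \ref{re:normal-cones-points} is content to assume ``complete \emph{or} fully regular,'' and I will exploit the same interchangeability throughout.

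For (a) I would argue by contraposition, using the semi-monotonicity form of normality in Theorem \ref{re:cone-normal}. If $X_+$ is not normal, then for every $n$ there are $a_n,b_n\in X_+$ with $0\le a_n\le b_n$, $\|a_n\|=1$ and $\|b_n\|\le 2^{-n}$ (take $a_n=x/\|x\|$, $b_n=(x+z)/\|x\|$ for a pair violating $\|x\|\le 2^n\|x+z\|$). Since $\sum_n\|b_n\|<\infty$ and $X_+$ is complete, the partial sums of $(b_n)$ are Cauchy and converge to some $b\in X_+$ with $\sum_{k\le n}b_k\le b$. Put $s_n=\sum_{k\le n}a_k$; then $0\le s_n\le\sum_{k\le n}b_k\le b$, so $b-s_n$ is a decreasing sequence in $X_+$. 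Regularity forces $b-s_n$ to converge, hence $s_n$ converges and $a_n=s_n-s_{n-1}\to 0$, contradicting $\|a_n\|=1$.

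For (c) I would use that normality makes order-boundedness imply norm-boundedness (Theorem \ref{re:cone-normal}(ii)). Given a decreasing sequence $x_n$ in $X_+$, the sequence $y_n=x_1-x_n$ is increasing and satisfies $0\le y_n\le x_1$; by semi-monotonicity $\|y_n\|\le M\|x_1\|$, so $(y_n)$ is norm-bounded and increasing, and full regularity yields its convergence. Then $x_n=x_1-y_n$ converges, proving $X_+$ regular. Granting (b), statement (d) is immediate: complete and fully regular gives normal by (b), and normal and fully regular gives regular by (c).

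The real work is (b), and I expect the norm/order gap to be the main obstacle. The natural reduction is to show that under completeness and full regularity \emph{every} $u\in X_+$ is a normal point, after which Theorem \ref{re:normal-cones-points} closes the argument. If $u$ failed to be a normal point, there would be $x_n\in X_+$ with $x_n\le u$ and $\|x_n\|\to\infty$; normalizing, $e_n=x_n/\|x_n\|$ has $\|e_n\|=1$ and $e_n\le u/\|x_n\|$, so along a fast subsequence $\sum_{k\le n}e_k\le u$ is increasing and order-bounded with increments of norm $1$. Full regularity would finish \emph{if} this sequence were norm-bounded, but that is exactly the unclear point: full regularity constrains only norm-bounded monotone sequences, whereas the order bound by itself gives no norm control in a non-normal cone. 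Overcoming this---manufacturing a norm-bounded, increasing, non-convergent sequence, i.e.\ transferring the order bound into a norm bound---is the crux. I would attempt it by combining the summation supplied by completeness (which realizes order-dominated families as genuine sums) with full regularity applied to a suitably rescaled increasing sequence; this is the delicate step on which (b), and hence (d), ultimately rests.
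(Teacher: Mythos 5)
Your arguments for (a) and (c) are correct, and deducing (d) from (b) and (c) is fine; in substance these are the classical proofs that the paper itself merely cites (its entire proof is the remark that \cite[1.5.2]{Kra} and \cite[1.5.3]{Kra} go through when only the cone, not the whole space, is complete). The genuine gap is (b), exactly where you flag it: your reduction to ``every $u\in X_+$ is a normal point'' plus Theorem \ref{re:normal-cones-points} stalls because the increasing sequence $\sum_{k\le n}e_k\le u$ you construct is only \emph{order}-bounded, and order-boundedness yields no norm bound precisely when normality --- the very thing being proved --- is unavailable. So full regularity cannot be applied to it, (b) remains unproved, and with it (d).

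The missing idea is not to sum the unit vectors by themselves, but to interleave them with their almost-cancelling partners. If $X_+$ is not normal, then by the failure of (\ref{eq:normal}) there are $u_m,v_m\in X_+$ with $\|u_m\|=\|v_m\|=1$ and $\|u_m+v_m\|\le 2^{-m}$. Set
\[
t_{2m-1}=\sum_{j=1}^{m-1}(u_j+v_j)+u_m,\qquad t_{2m}=\sum_{j=1}^{m}(u_j+v_j),\qquad m\in\N .
\]
This sequence lies in $X_+$ and is increasing, since the successive increments are $u_m$ and $v_m$; and it is norm-bounded, since $\|t_{2m}\|\le\sum_{j}2^{-j}\le 1$ while $\|t_{2m-1}\|\le 1+\|u_m\|\le 2$. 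Full regularity then forces $(t_k)$ to converge, hence to be Cauchy, contradicting $\|t_{2m-1}-t_{2m-2}\|=\|u_m\|=1$. Note that this argument uses no completeness whatsoever: full regularity alone already implies normality, after which your (c) yields (d). Completeness is genuinely needed only in (a), where the dominating series $\sum_n b_n$ must be summed; the ``delicate combination of completeness and full regularity'' you anticipated as the crux of (b) turns out not to be needed at all.
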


\begin{proof} Notice that the proofs in \cite[1.5.2]{Kra} and \cite[1.5.3]{Kra} only
need completeness of $X_+$ and not of $X$.
\end{proof}

\begin{theorem}
Let $X$ be an ordered normed vector space with cone $X_+$.
If $X_+$ is complete with additive norm, then $X_+$ is fully regular.

\end{theorem}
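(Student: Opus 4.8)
The plan is to show that an arbitrary increasing, norm-bounded sequence in $X_+$ must be Cauchy, so that completeness of $X_+$ automatically supplies its limit. The crucial leverage is that additivity of the norm converts order differences into norm differences, which collapses the convergence question to a statement about a scalar sequence.

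First I would record that additivity forces the norm to be monotone on $X_+$: if $x \le y$ with $x,y \in X_+$, then $y - x \in X_+$ and $y = x + (y-x)$, whence $\|y\| = \|x\| + \|y-x\| \ge \|x\|$. Consequently, for an increasing sequence $(x_n)$ in $X_+$, the scalar sequence $(\|x_n\|)$ is nondecreasing; since it is bounded by hypothesis, it converges and is therefore Cauchy in $\R$.

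Next, for indices $m > n$ I would note that $x_m - x_n \in X_+$, since it telescopes as $(x_m - x_{m-1}) + \cdots + (x_{n+1} - x_n)$, a finite sum of positive increments, and $X_+$ is closed under addition. Applying additivity to $x_m = x_n + (x_m - x_n)$ then yields the exact identity $\|x_m - x_n\| = \|x_m\| - \|x_n\|$. Because $(\|x_n\|)$ is Cauchy, the right-hand side tends to $0$ as $m,n \to \infty$, so $(x_n)$ is a Cauchy sequence in $X_+$. Completeness of $X_+$ now furnishes a limit $x \in X_+$ with $x_n \to x$, which is exactly full regularity.

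There is no genuine obstacle in this argument: the only step that warrants a moment of care is verifying $x_m - x_n \in X_+$ for $m > n$, which is immediate once one telescopes the increments, and the decisive simplification is the additivity identity $\|x_m - x_n\| = \|x_m\| - \|x_n\|$, after which the whole claim reduces to the convergence of a bounded monotone sequence of reals.
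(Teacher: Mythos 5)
Your proof is correct and follows essentially the same route as the paper: both use additivity of the norm on the telescoped positive difference $x_m - x_n$ to reduce Cauchyness of $(x_n)$ to convergence of a bounded monotone scalar sequence, and then invoke completeness of $X_+$. The paper phrases this via bounded partial sums of the increment series $\sum \|x_{k+1}-x_k\|$, while you use the equivalent identity $\|x_m - x_n\| = \|x_m\| - \|x_n\|$; the difference is purely cosmetic.
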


\begin{proof}
Let $(x_n)$ be an increasing sequence in $X_+$ such that
there is some $c >0$ such that $\|x_n\| \le c$ for all $n \in N$.
Define $y_n= x_{n+1} - x_n$. Then $y_n \in X_+$ and, for $m \ge j$,
 $\sum_{k=j}^m y_n = x_{m+1} - x_j$. Since the norm is
 additive on $X_+$,
 \[
 \sum_{k=1}^m \|y_n\| = \Big \| \sum_{k=1}^m y_n \Big \|
 = \|x_{m+1} - x_1\| \le 2c, \qquad m \in \N.
 \]
This implies that $(x_n)$ is a Cauchy sequence in the complete
cone $X_+$ and thus converges.
\end{proof}

The standard cones of nonnegative functions
of the Banach spaces $L^p(\Omega)$, $1 \le p < \infty$, are
regular and completely regular, while the cones of $BC(\Omega)$, the Banach
space of bounded continuous functions,  and of $L^\infty(\Omega)$
are neither regular nor completely regular. All these cones are normal.
Forthcoming examples will present a cone that is regular, but
neither completely regular, normal, nor complete.

\subsection{An example where the cone is not normal: The space of sequences of bounded variation}


Recall the Banach sequence spaces $\ell^\infty, c, c_0$  of bounded sequences,
converging sequences and sequences converging to 0
with the supremum norm and the space $\ell^1$ of summable sequences with the sum-norm.

The subsequent example for an ordered Banach space whose cone
is not normal follows a suggestion by Wolfgang Arendt.

A sequence $(x_n)$ in $\R^\N$ is called of {\em bounded variation} if the following series
converges
\begin{equation}
|x_1 | + \sum_{n=1}^\infty |x_{n+1} - x_n| =: \|(x_n)\|_{bv} .
\end{equation}
The sequences of bounded variation form a vector space, $bv$,  over $\R$ with
$\|\cdot\|_{bv}$ being a norm called the {\em variation-norm} \cite[IV.2.8]{DS}.
Notice
\begin{equation}
\label{eq:bv-containments}
\ell^1 \subseteq bv \subseteq c,
\qquad
\left \{
\begin{array}{rl}
\|x\|_\infty \le \|x\|_{bv} , & \quad   x \in bv,
\\
 \|x\|_{bv} \le 2 \|x\|_1, & \quad x \in \ell_1.
\end{array}
\right.
\end{equation}

\begin{lemma}
\label{re:bv-banach}
$bv$ with the variation-norm is a Banach space.
\end{lemma}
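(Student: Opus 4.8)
The plan is the standard completeness argument for a sequence space whose norm is built from an $\ell^1$-type sum: extract a coordinatewise limit, show it lies in $bv$, and then upgrade the convergence to the variation norm. That $\|\cdot\|_{bv}$ is a norm is immediate (homogeneity and the triangle inequality hold termwise, and $\|x\|_{bv}=0$ forces $x_1=0$ and $x_{n+1}=x_n$ for all $n$, hence $x=0$), so the entire content is completeness. First I would take a Cauchy sequence $(x^{(k)})_{k \in \N}$ in $bv$, writing $x^{(k)} = (x^{(k)}_n)_{n \in \N}$. The containment estimate $\|x\|_\infty \le \|x\|_{bv}$ from (\ref{eq:bv-containments}) shows that $(x^{(k)})_k$ is also Cauchy in the sup-norm, hence coordinatewise Cauchy in $\R$; so the pointwise limit $x_n := \lim_{k \to \infty} x^{(k)}_n$ exists for every $n$, and I set $x = (x_n)$.

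The heart of the argument is to control the variation norm of $x^{(k)} - x$ using only finitely many coordinates at a time. Given $\varepsilon > 0$, choose $N$ so that $\|x^{(k)} - x^{(j)}\|_{bv} < \varepsilon$ for all $k, j \ge N$. For any fixed truncation level $M \in \N$, the finite partial sum then satisfies
\[
|x^{(k)}_1 - x^{(j)}_1| + \sum_{n=1}^{M} \big|(x^{(k)}_{n+1} - x^{(j)}_{n+1}) - (x^{(k)}_n - x^{(j)}_n)\big| < \varepsilon, \qquad k, j \ge N.
\]
Now I would fix $k \ge N$ and let $j \to \infty$. Because this expression involves only the finitely many coordinates $x^{(j)}_1, \dots, x^{(j)}_{M+1}$, each of which converges to the corresponding $x_n$, the limit passes term by term and yields
\[
|x^{(k)}_1 - x_1| + \sum_{n=1}^{M} \big|(x^{(k)}_{n+1} - x_{n+1}) - (x^{(k)}_n - x_n)\big| \le \varepsilon, \qquad k \ge N.
\]

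Since this bound is uniform in $M$, letting $M \to \infty$ gives $\|x^{(k)} - x\|_{bv} \le \varepsilon$ for all $k \ge N$; in particular the defining series for $x^{(k)} - x$ converges, so $x^{(k)} - x \in bv$. Because $bv$ is a vector space and $x^{(k)} \in bv$, it follows that $x = x^{(k)} - (x^{(k)} - x) \in bv$, and the very same estimate shows $x^{(k)} \to x$ in the variation norm, proving completeness. The only delicate point is the interchange of the two limits $j \to \infty$ and $M \to \infty$: I sidestep any appeal to dominated convergence by truncating to a finite sum \emph{before} sending $j \to \infty$, and only afterward letting $M \to \infty$, so that at each stage one handles either a finite sum or a supremum of finite sums already bounded by $\varepsilon$.
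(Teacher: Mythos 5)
Your proof is correct and is essentially the paper's argument written out in full: the paper simply remarks that completeness follows from the embedding $\|x\|_\infty \le \|x\|_{bv}$ of (\ref{eq:bv-containments}) together with the completeness of $\ell^\infty$, leaving exactly the details you supply (coordinatewise limit, truncation to finite partial sums before letting $j \to \infty$, then $M \to \infty$) to the reader. No gap; the two-stage limit interchange you flag is handled correctly.
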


This is easily seen from (\ref{eq:bv-containments}) and the fact that $\ell^\infty$
is complete under the sup-norm.

Notice that $bv$ contains all constant sequences and $\|(x_n)\|_{bv}
= \|x_1\| = \|(x_n)\|_\infty$ if $(x_n)$ is a constant sequence.
Actually, all monotone bounded nonnegative sequences are of bounded variation.

\begin{lemma}
\label{re:bv-increasing}
If $(x_n)$ is a nonnegative bounded increasing sequence, then $(x_n)$ is of bounded variation
and $\|(x_n)\|_{bv} = \|(x_n)\|_\infty = \lim_{n\to \infty} x_n$.

If $(x_n)$ is a nonnegative decreasing sequence, then $(x_n)$ is of bounded variation
and $\|(x_n)\|_{bv} = 2 x_1 - \lim_{n\to \infty} x_n$.
\end{lemma}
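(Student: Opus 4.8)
The plan is to exploit the fact that for a monotone sequence the sign of each consecutive difference $x_{n+1}-x_n$ is constant, so that the series defining $\|\cdot\|_{bv}$ telescopes and can be summed in closed form. No machinery beyond the definition of the variation-norm and the monotone convergence of bounded monotone sequences is needed.

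First I would treat the increasing case. Since $(x_n)$ is increasing we have $x_{n+1}-x_n\ge 0$, so $|x_{n+1}-x_n|=x_{n+1}-x_n$ and the partial sums telescope,
\[
\sum_{n=1}^{N}|x_{n+1}-x_n|=\sum_{n=1}^{N}(x_{n+1}-x_n)=x_{N+1}-x_1 .
\]
Because $(x_n)$ is increasing and bounded it converges to $L:=\lim_{n\to\infty}x_n=\sup_{n\in\N} x_n$, which for a nonnegative increasing sequence is precisely $\|(x_n)\|_\infty$. Letting $N\to\infty$ gives $\sum_{n=1}^\infty|x_{n+1}-x_n|=L-x_1$, and since $x_1\ge 0$,
\[
\|(x_n)\|_{bv}=|x_1|+(L-x_1)=x_1+L-x_1=L=\|(x_n)\|_\infty .
\]

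The decreasing case is symmetric. Now $x_n-x_{n+1}\ge 0$, so $|x_{n+1}-x_n|=x_n-x_{n+1}$ and the partial sums telescope to $x_1-x_{N+1}$. Here I would note that no separate boundedness hypothesis is required: a nonnegative decreasing sequence is bounded below by $0$ and above by $x_1$, hence automatically converges to $L:=\lim_{n\to\infty}x_n=\inf_{n\in\N} x_n\ge 0$. Passing to the limit yields $\sum_{n=1}^\infty|x_{n+1}-x_n|=x_1-L$, and using $x_1\ge 0$ once more,
\[
\|(x_n)\|_{bv}=|x_1|+(x_1-L)=2x_1-L .
\]

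There is no genuine obstacle in this lemma; the only points that warrant care are two. First, the increasing case really does need the stated boundedness, since an unbounded increasing sequence fails to be of bounded variation, whereas in the decreasing case convergence is free from nonnegativity. Second, one must simplify the leading term $|x_1|$ to $x_1$ via nonnegativity before combining it with the telescoped tail, which is exactly what produces the asymmetry between the two formulas ($\|(x_n)\|_\infty$ versus $2x_1-\lim_n x_n$).
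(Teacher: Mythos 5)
Your proof is correct, and it is the natural telescoping argument: the paper itself states this lemma without proof, evidently regarding exactly this computation as routine. Both cases, including your observations that boundedness is only needed in the increasing case and that nonnegativity is what turns $|x_1|$ into $x_1$, are handled correctly and completely.
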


There are several cones we can consider in $bv$. The  one we are going to
consider here is the cone
of nonnegative sequences of bounded variation, $bv_+$. Others are the cone
of nonnegative increasing sequences and the cone of nonnegative
decreasing sequences.

\begin{proposition}
$bv_+$ is generating, solid, but not normal (and not regular and not fully
regular). $X$ is a lattice and the lattice operations are continuous: If $x= (x_j) \in bv$, then $|x|= (|x_j|) \in bv$
and $\big \| \, |x|  \big\|_{bv} \le \|x\|_{bv}$ with strict inequality being
possible.
\end{proposition}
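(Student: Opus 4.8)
The plan is to handle the lattice structure first, since both the generating property and the interior point are easy consequences of it, and then to read off non-normality and the failure of (full) regularity at the end. Throughout, the order is the pointwise one induced by $bv_+$, so that $x\le y$ means $x_j\le y_j$ for all $j$.

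First I would show $bv$ is a vector lattice. For $x,y\in bv$ the pointwise supremum $(x\vee y)_j=\max\{x_j,y_j\}$ is clearly the least upper bound, so it suffices to check $x\vee y\in bv$; writing $x\vee y=\tfrac12(x+y+|x-y|)$ and $x\wedge y=\tfrac12(x+y-|x-y|)$, everything reduces to the single claim $|x|=(|x_j|)\in bv$ for $x\in bv$. This is immediate from the reverse triangle inequality $\big||x_{j+1}|-|x_j|\big|\le|x_{j+1}-x_j|$ together with $|x_1|=|x_1|$, and summing term by term gives $\||x|\|_{bv}\le\|x\|_{bv}$. For strict inequality I would take $x=(1,-1,0,0,\dots)$, for which $\|x\|_{bv}=1+2+1=4$ while $\||x|\|_{bv}=1+0+1=2$.

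The main obstacle is the continuity of the lattice operations, because $\|\cdot\|_{bv}$ is \emph{not} a lattice norm and the example above already shows $x\mapsto|x|$ is not nonexpansive, so no naive contraction estimate is available. Since $x\vee y$ and $x\wedge y$ are affine combinations of $x$, $y$, and $|x-y|$, it is enough to prove $x\mapsto|x|$ is continuous, and I would do this by a head/tail split. Given $x\in bv$ and $\epsilon>0$, choose $M$ with tail variation $\sum_{j\ge M}|x_{j+1}-x_j|<\epsilon$. For a perturbation $y=x+u$, the first $M$ coordinates of $|y|-|x|$ depend only on the finite block $(x_1,\dots,x_M)$ and its perturbation, so the head part of $\||y|-|x|\|_{bv}$ tends to $0$ as $u\to0$ by continuity of the absolute value on $\R^M$; the tail is estimated, using $\big||y_{j+1}|-|y_j|\big|\le|y_{j+1}-y_j|$ and the triangle inequality, by $\sum_{j\ge M}\big(|y_{j+1}-y_j|+|x_{j+1}-x_j|\big)\le\|u\|_{bv}+2\sum_{j\ge M}|x_{j+1}-x_j|<\|u\|_{bv}+2\epsilon$. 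Letting $u\to0$ and then $\epsilon\to0$ yields continuity.

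For the generating property, given $x\in bv\subseteq c$ I would set $M=\|x\|_\infty<\infty$ and $e=(1,1,\dots)$; then $x+Me\in bv_+$ since $x_j+M\ge x_j+|x_j|\ge0$, and $Me\in bv_+$, so $x=(x+Me)-Me\in bv_+-bv_+$. For solidity I would check that $e$ is interior: if $\|y-e\|_{bv}<1$ then $\|y-e\|_\infty<1$ by \eqref{eq:bv-containments}, hence $y_j>0$ for all $j$ and $y\in bv_+$. For non-normality I would invoke the semi-monotonicity criterion of Theorem~\ref{re:cone-normal}(ii) and exhibit, for each $N$, the finitely supported nonnegative sequences $x^{(N)}$ with ones in the odd positions up to $2N-1$ and $z^{(N)}$ with ones in the even positions up to $2N$; then $\|x^{(N)}\|_{bv}=2N$ while $x^{(N)}+z^{(N)}=(1,\dots,1,0,\dots)$ has $\|x^{(N)}+z^{(N)}\|_{bv}=2$, so no constant $M$ can satisfy $\|x\|_{bv}\le M\|x+z\|_{bv}$ and $bv_+$ is not normal. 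Finally, $bv$ is a Banach space by Lemma~\ref{re:bv-banach} and $bv_+$ is closed in it (convergence in $\|\cdot\|_{bv}$ forces coordinatewise convergence via \eqref{eq:bv-containments}), so $bv_+$ is complete; were it regular or fully regular, Theorem~\ref{re:regular}(a),(b) would force normality, contradicting the previous step, so $bv_+$ is neither regular nor fully regular.
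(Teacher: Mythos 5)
Your proof is correct, and it is actually more complete than what the paper itself offers: the paper states this proposition but only sketches its justification, disposing of solidity with the remark that monotone nonnegative sequences bounded away from zero (such as your $e$) are interior points by (\ref{eq:bv-containments}), deducing non-normality from the separate Theorem~\ref{re:bv-normal-points} (which characterizes the normal points of $bv_+$ as exactly the elements of $\ell^1$, so that e.g.\ the constant sequence witnesses failure of normality), and obtaining the failure of regularity and full regularity exactly as you do, from completeness of $bv_+$ and Theorem~\ref{re:regular}; the generating, lattice, and continuity claims are asserted without proof. Where your route differs is non-normality: instead of the normal-point characterization you give a direct violation of the semi-monotonicity criterion of Theorem~\ref{re:cone-normal}(ii), but the construction (alternating $0$--$1$ sequences of variation norm $2N$ whose sum is a constant block of variation norm $2$) is the same device that drives both the proof of Theorem~\ref{re:bv-normal-points} and Arendt's Example~\ref{exp:complete-normal-necess}, so this is a more elementary packaging of the same idea, at the cost of not getting the sharper statement that \emph{every} element of $bv_+\setminus\ell^1$ fails to be a normal point. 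The genuinely new content in your write-up is the head/tail argument for continuity of $x\mapsto |x|$: this step really needs an argument, since $\|\cdot\|_{bv}$ is not a lattice norm (your example $(1,-1,0,\dots)$ shows $|\cdot|$ is not nonexpansive, and the paper cites the Lipschitz space \cite{MPNu02} as a case where the analogous operations are discontinuous), and your estimate --- the finitely many head terms controlled by continuity of the absolute value on $\R^M$ via $\|u\|_\infty\le\|u\|_{bv}$, the tail bounded by $\|u\|_{bv}+2\epsilon$ --- is sound and closes a gap the paper leaves to the reader.
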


Notice that every monotone nonnegative sequence that is bounded away from zero
is the interior of $bv_+$ as can be seen from (\ref{eq:bv-containments}).
The space of Lipschitz continuous functions with Lipschitz norm is an example
of an ordered normed vector space and lattice where the cone of nonnegative
functions is not
normal and  the lattice operations are not continuous \cite[p.535]{MPNu02}.

That $bv_+$ is not normal follows from the following result
that characterizes the normal points. Since $bv_+$
is complete, $bv_+$ is then also not regular and not fully regular (Theorem
\ref{re:regular}).

\begin{theorem}
\label{re:bv-normal-points}
$x=(x_n)\in bv_+$ is a normal point if and only if $x \in \ell_1$.
If $x \in \ell_1$, then
\[
 \|x\|_1 - (1/2)x_1 \le \sup \{\|v\|_{bv}; v \in bv_+, x-v \in bv_+\} \le 2 \|x\|_1.
\]
\end{theorem}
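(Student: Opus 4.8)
The plan is to translate the theorem into a single optimization over sequences and then estimate the extremal value from both sides. First I would unwind the order relation: for $x \in bv_+$, the condition that $v$ competes in the defining set, namely $v \in bv_+$ and $x - v \in bv_+$, says exactly that $v \in bv$ with $0 \le v_n \le x_n$ for every $n$ (the requirement $x - v \in bv$ is automatic once $v \in bv$). Writing $\sigma(x) = \sup\{\|v\|_{bv} : v \in bv,\ 0 \le v_n \le x_n \text{ for all } n\}$, the assertion ``$x$ is a normal point'' is precisely ``$\sigma(x) < \infty$,'' and the two displayed inequalities are a two-sided bound on $\sigma(x)$ in the case $x \in \ell_1$.

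For the upper bound (and hence for the implication $x \in \ell_1 \Rightarrow$ normal point), I would estimate $\|v\|_{bv}$ termwise. Since $0 \le v_n \le x_n$ we have $|v_1| \le x_1$ and $|v_{n+1} - v_n| \le v_n + v_{n+1} \le x_n + x_{n+1}$, so summing gives
\[
\|v\|_{bv} \le x_1 + \sum_{n=1}^\infty (x_n + x_{n+1}) = x_1 + (2\|x\|_1 - x_1) = 2\|x\|_1 .
\]
Thus $\sigma(x) \le 2\|x\|_1 < \infty$, which both proves the right-hand bound and shows that every $x \in \ell_1$ is a normal point.

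The lower bound and the converse both come from the same \emph{zig-zag} construction, which maximizes variation by forcing $v$ to alternate between the extremes $0$ and $x_n$. Let $v$ equal $x_n$ on the odd indices and $0$ on the even ones, and let $v'$ be its complement $x - v$ (equal to $x_n$ on even indices, $0$ on odd). A direct computation gives $\|v\|_{bv} = 2\sum_{k\ \mathrm{odd}} x_k$ and $\|v'\|_{bv} = 2\sum_{k\ \mathrm{even}} x_k$, and each lies in $bv$ exactly when the corresponding sum is finite. If $x \in \ell_1$, both $v$ and $v'$ are feasible, and since the two subsums add up to $\|x\|_1$, the larger of the two norms is at least $\|x\|_1 \ge \|x\|_1 - \tfrac12 x_1$, giving the left-hand bound. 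If instead $x \notin \ell_1$, then at least one of the two subsums diverges; taking the finite truncation $v^{(N)}$ of the corresponding zig-zag (equal to the zig-zag on the first $N$ indices and $0$ thereafter, hence eventually constant and so in $bv$) produces feasible competitors with $\|v^{(N)}\|_{bv} \to \infty$, so $\sigma(x) = \infty$ and $x$ is not a normal point.

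The one genuinely nontrivial point --- and the reason the criterion is membership in $\ell_1$ rather than a pointwise condition --- is that membership of the competitor $v$ in $bv$ is a real constraint, \emph{not} implied by the sandwich $0 \le v_n \le x_n$: a sequence trapped beneath a bounded-variation sequence can still oscillate enough to leave $bv$. The care in the proof is therefore (i) to check that each constructed competitor, and its complement $x-v$, genuinely has finite variation, and (ii) in the divergent case to use \emph{finite} truncations so that every $v^{(N)}$ stays in $bv$ while its variation grows without bound. The small parity observation --- that at least one of the odd/even subsums must diverge when $x \notin \ell_1$ --- is exactly what keeps the truncated zig-zag available in every case.
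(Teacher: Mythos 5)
Your proof is correct and takes essentially the same route as the paper's: the easy upper bound coming from $0 \le v \le x$ and the embedding $\ell^1 \subseteq bv$ with $\|v\|_{bv} \le 2\|v\|_1$, and the odd/even zig-zag competitors $(x_1,0,x_3,0,\dots)$, $(0,x_2,0,x_4,\dots)$ for the lower bound and the converse. If anything, your handling of the case $x \notin \ell^1$ via finite truncations is more careful than the paper's argument, which asserts outright that the zig-zag sequences lie in $bv$ --- something that is false for general $x \in bv_+$ and is only justified, under the normality hypothesis, by exactly the truncation device you spell out.
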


\begin{proof} One half of the statement is easy to see. Recall (\ref{eq:bv-containments}).
For the other part,
assume that $x=(x_n)\in bv_+$ is a normal element.
Let $y$ be the sequence $( x_1, 0, x_3, 0 , x_5 , \ldots )$. Then $0 \le y \le x$
and $y \in bv$
and $\|y\|_{bv} = |x_1| + 2 \sum_{j=1}^\infty |x_{2j+1} |$.
Further let $z$ be the sequence $(0, x_2, 0, x_4, 0 , \ldots ) $.
Then $0 \le z \le x$ and $z \in bv$ and $\|z\|_{bv} = 2 \sum_{j=1}^\infty \|x_{2j}\|$.
Hence $x \in \ell^1$ and
\[
2 \|x\|_1 - x_1 = \|y\|_{bv} + \|z\|_{bv}\le 2 \sup\{\|v\|_{bv}; 0 \le v \le x \}. \qedhere
\]
\end{proof}

\begin{example}[Wolfgang Arendt]
 \label{exp:complete-normal-necess}
The cone $\ell^1_+$ in $\ell^1$ with the variation norm is not normal
though all points in $\ell^1_+$ are normal elements. It is regular,
but not fully regular.
\end{example}

\begin{proof}
Let $x^m $ be the sequence where $x^m_j = 0$ for $j > 2m$, $x^m_j =0$
for all odd indices and $x^m_j =1$ otherwise. Let $u^m$ be the sequence
where $u^m_j = 0$ for $j > 2m$ and $u^m_j =1$ otherwise. Then $\|u^m\|_{bv}=2$
and $\|x^m\|_{bv} = 2m $. For all $m \in \N$, $x^m \le u^m$ but
$\|x^m\|_{bv} = m \|u^m\|_{bv}$. So $\ell^1_+$ is not normal under the
variation norm. Theorem \ref{re:normal-cones-points} implies that $\ell^1_+$
is not fully regular under the variation norm. Let $(x_n)$ be a decreasing
sequence in $\ell^1_+$. Since $\ell^1_+$ is regular under the sum-norm,
there exists some $x \in \ell^1_+$ with $\|x_n - x \|_1 \to 0$.
By (\ref{eq:bv-containments}), $\|x_n -x \|_{bv} \to 0$.
\end{proof}

\subsection{A convergence result}

The following convergence principle, which has been distilled from
the proof of \cite[Thm.5.2]{AGN}, will be applied several times.

\begin{proposition}
\label{re:monotone-converge}
 Let $X$ be an ordered normed vector space.
Let $S \subseteq X^\N$ be a set of sequences with terms in $X$ with the
property that with $(x_n) \in S$ also all subsequences  $(x_{n_j}) \in S$.
Assume that every increasing (decreasing) sequence $(x_n) \in S$ has a convergent subsequence.
The every increasing (decreasing) sequence  $(x_n) \in S$ converges.
\end{proposition}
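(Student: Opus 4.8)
The plan is to treat the increasing case in detail, the decreasing case being identical with every order inequality reversed. Fix an increasing sequence $(x_n) \in S$. By hypothesis it has a convergent subsequence; call its limit $x$. The naive route---extracting a subsequence near $x$ and squeezing $0 \le x - x_n \le x - x_{n_j}$ for large $n$---fails here, since without normality of $X_+$ an order-bounded difference need not be norm-small. So instead I would exploit the hypothesis in full: not only $(x_n)$ but every subsequence lies in $S$ and is again increasing, hence every subsequence itself has a convergent sub-subsequence.

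The first key step is to show that any two convergent subsequences of $(x_n)$ have the same limit. Suppose $(x_{p_i}) \to x$ and $(x_{q_i}) \to y$. To see $x \le y$, fix $i$ and note that for every $q_{i'} \ge p_i$ one has $x_{q_{i'}} - x_{p_i} \in X_+$; letting $i' \to \infty$ along those indices with $q_{i'} \ge p_i$ and using that $X_+$ is closed gives $y - x_{p_i} \in X_+$, i.e. $x_{p_i} \le y$. Letting $i \to \infty$ and invoking closedness once more yields $x \le y$. The symmetric argument gives $y \le x$, and since $X_+$ is a genuine cone, $X_+ \cap (-X_+) = \{0\}$, we conclude $x = y$. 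It is this antisymmetry, rather than normality, that pins down the limit.

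With uniqueness in hand the conclusion follows from the standard metric-space fact that a sequence converges to $x$ as soon as every subsequence admits a further subsequence converging to $x$: if $x_n \not\to x$, some subsequence stays at distance $\ge \varepsilon$ from $x$ and can have no sub-subsequence tending to $x$. Here every subsequence of $(x_n)$ is increasing and lies in $S$, so it has a convergent sub-subsequence, whose limit must equal $x$ by the previous step; hence $x_n \to x$. I expect the main obstacle to be that first step: one must order the two limits carefully, taking the inner limit along the tail $q_{i'} \ge p_i$ before the outer limit in $i$, and apply closedness of $X_+$ at both stages. Everything else is either a cone axiom or a routine subsequence argument.
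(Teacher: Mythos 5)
Your proposal is correct and follows essentially the same route as the paper: both arguments extract a second convergent subsequence from a putative non-convergent tail, compare the two limits in both directions using monotonicity together with closedness of $X_+$ (inner limit along the tail first, then the outer limit), and conclude equality by the antisymmetry $X_+ \cap (-X_+) = \{0\}$. Your packaging of this as a uniqueness-of-subsequential-limits lemma followed by the standard ``every subsequence has a sub-subsequence converging to $x$'' principle is only a cosmetic reorganization of the paper's direct contradiction argument.
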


\begin{proof} Let $(x_n) \in S$ be increasing (the decreasing case
is similar). Then there exist a subsequence $(x_{n_j})$
and some $x \in X$ such that $x_{n_j} \to x$. Suppose that $(x_n)$ does
not converge to $x$. Then there exist some $\epsilon > 0$ and
a strictly increasing sequence $(k_i)\in \N$ such that $\| x_{k_i} - x \|
\ge \epsilon $ for all $i \in \N$. By assumption, $(x_{k_i})\in S$;
further it is an increasing sequence. So, after choosing a subsequence,
$x_{k_i} \to y$ for some $y \ne x$. Fix $n_j$. If $i$ is sufficiently
large, $k_i \ge n_j$ and $x_{n_j} \le x_{k_i}$. Taking the limit $i \to
\infty$ yields $x_{n_j} \le y$ because the cone is closed.
Now we let $j \to \infty$ and obtain $x \le y$. By symmetry, $y \le x$,
a contradiction.
\end{proof}


\section{Local spectral radii}
\label{sec:geometric}


Let $B:X_+ \to X_+$ be homogeneous and   $x \in X_+$.  We prove (\ref{eq:geom-fac-power}),
\begin{equation}
\label{eq:eq:geom-fac-power-in}
\gamma (x,B^m) \le (\gamma(x,B))^m , \qquad m \in \N,
\end{equation}
with equality holding if $B$ is bounded.

From the properties of the limit superior,
\[
\begin{split}
\gamma(x,B^m) & = \limsup_{k \to \infty} \|(B^m)^k(x)\|^{1/k} =
\lim_{j \to \infty} \sup_{k \ge j} \|B^{mk}(x)\|^{m/(mk)}
\\
\le &
\lim_{j \to \infty} \sup_{n \ge mj} (\|(B^{n}(x)\|^{1/n})^m
=
\Big (\limsup_{n\to \infty}\|(B^{n}(x)\|^{1/n} \Big )^m= (\gamma(x,B))^m .
\end{split}
\]
To prove the opposite inequality if $B$ is bounded,
suppose that $\gamma (x,B^m) < (\gamma(x,B))^m $. Then there exists some
$s\in (0,1) $ such that
\[
\gamma (x,B^m) < s^m (\gamma(x,B))^m .
\]
By definition of $\gamma (x,B^m)$, there exists some $N \in \N$ such that
\begin{equation}
\label{eq:geometric-proof}
\|B^{mn}(x)\|^{1/(mn)} < s \gamma(x,B), \qquad n \ge N .
\end{equation}
Choose a sequence $(k_j)$ such that $k_j \to \infty$ and
$\|B^{k_j}(x)\|^{1/{k_j}} \to \gamma (x,B)$.
Then there exist sequences $n_j $ and $p_j$ such that $n_j \to \infty$
and $0 \le p_j < m$  and $k_j = m n_j  + p_j$. By (\ref{eq:operator-norm-est}),
\[
\|B^{k_j}(x)\|^{1/{k_j}} \le \|B^{p_j}\|_+^{1/k_j}\; \| B^{m n_j} (x) \|^{1/k_j}.
\]
By the properties of the limit superior,
\[
\gamma (x, B) \le  \limsup_{j\to \infty} (\|B^{p_j}\|_+^{1/k_j})
\limsup_{j \to \infty} ( \| B^{m n_j} (x) \|^{1/(mn_j)})^{(k_j - p_j)/k_j}.
\]
By (\ref{eq:geometric-proof}) and $p_j/k_j \to 0$,
\[
\begin{split}
\gamma (x, B)
\le & \limsup_{j\to \infty}  (s \gamma(x,B))^{(k_j - p_j)/k_j}
\\
\le &  \limsup_{j \to \infty} (s\gamma(x, B))^{1 - (p_j/k_j)}
= s \gamma (x, B),
\end{split}
\]
a contradiction.



\section{Monotone companion norm and half-norm}
\label{sec:companion}


Every ordered normed vector space carries an order-preserving
half-norm which we call the {\em (monotone)  companion half-norm}
(called the {\em canonical half-norm} in \cite{ArCeKa}).

\begin{proposition}[cf. \cite{ArCeKa}, {\cite[(4.2)]{KrLiSo}}, {\cite[L.4.1]{Bon62}}]
\label{re:normal-mon-funct}
 We define the (monotone) companion half-norm $\psi: X \to \R_+$ by
\begin{eqnarray}
\psi(x)& = &\inf \{ \|x +z \|; z \in X_+\}= d(x, -X_+)
\label{eq:mon-compan-funct}
\\
&= &
\inf \{ \|y\|; x \le y \in X\}, \qquad \qquad x \in X.
\label{eq:mon-compan-funct-incr}
\end{eqnarray}
Then the following hold:

\begin{itemize}

\item[(a)]
$\psi$ is positively homogenous and order-preserving on $X$.

\item[(b)] $\psi$ is subadditive on $X$
($\psi(x+y) \le \psi(x)+ \psi(y)$, $x,y \in X$),
\[
|\psi(x) - \psi(y)|\le  \;\|x-y\|, \qquad x,y \in X.
\]

\item[(c)]
For $x \in X$,  $\psi(x) = 0$ if and only if $ x \in -X_+$.
In particular $\psi $ is strictly positive: $\psi(x) > 0$ for all $x \in \dot  X_+$.

 \item[(d)] $X_+$ is normal if and only if there exists some $\delta > 0$ such that $\delta \|x\| \le \psi(x)
$ for all $x \in X_+$.

\item[(e)] If the original norm $\|\cdot\|$ is order-preserving on $X_+$, then $\|x\| = \psi(x)$
for all $x \in X_+$.
\end{itemize}
\end{proposition}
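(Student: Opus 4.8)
The plan is to establish the two alternative formulas for $\psi$ first, and then prove (a)--(e) in order, drawing on just three structural facts about $X_+$: it is closed, it is a wedge (hence convex and closed under addition and under multiplication by $\alpha\ge 0$), and it satisfies the cone axiom $X_+\cap(-X_+)=\{0\}$. The identity $d(x,-X_+)=\inf\{\|x+z\|;z\in X_+\}$ is immediate once every point of $-X_+$ is written as $-z$ with $z\in X_+$, and the identity with $\inf\{\|y\|;x\le y\}$ follows from the substitution $y=x+z$ together with the fact that $x\le y$ means exactly $y-x\in X_+$. I would record at the outset the bound $\psi(w)\le\|w\|$, obtained by taking $z=0$ (legitimate since $0\in X_+$), as it is used repeatedly.

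Parts (a) and (b) are the purely algebraic ones. For positive homogeneity with $\alpha>0$, the map $z\mapsto\alpha z$ is a bijection of $X_+$ onto itself, so $\psi(\alpha x)=\alpha\psi(x)$; the case $\alpha=0$ reduces to $\psi(0)=0$. Monotonicity is cleanest from the representation $\psi(x)=\inf\{\|y\|;x\le y\}$: when $x\le y$ the admissible set for $y$ is contained in that for $x$, so the infimum cannot decrease. For subadditivity I would use that $X_+$ is closed under addition, so $\|x+y+z_1+z_2\|\le\|x+z_1\|+\|y+z_2\|$ for $z_1,z_2\in X_+$, and then take infima over $z_1$ and $z_2$ separately. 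The Lipschitz estimate follows formally from subadditivity and $\psi(w)\le\|w\|$: $\psi(x)\le\psi(x-y)+\psi(y)\le\|x-y\|+\psi(y)$, and symmetrically.

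For (c) the essential input is that $X_+$, and hence $-X_+$, is closed, so the distance $\psi(x)=d(x,-X_+)$ vanishes exactly when $x\in-X_+$; strict positivity on $\dot X_+$ is then immediate from the cone axiom, since $x\in X_+$ with $\psi(x)=0$ would force $x\in X_+\cap(-X_+)=\{0\}$. For (e), combining $\psi(x)\le\|x\|$ with the reverse inequality obtained from order-preservation of the norm (for $x\in X_+$ and any $z\in X_+$ one has $0\le x\le x+z$, so $\|x\|\le\|x+z\|$, whence $\|x\|\le\psi(x)$) gives the equality.

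The least automatic part is (d), and this is where I expect the only real friction. For $x\in X_+$, the asserted inequality $\delta\|x\|\le\psi(x)$ unwinds, via the infimum formula, to $\|x\|\le\delta^{-1}\|x+z\|$ for all $z\in X_+$, which is precisely the semi-monotonicity condition of Theorem~\ref{re:cone-normal}(ii); invoking that theorem converts semi-monotonicity into normality in both directions. The one thing to be careful about is matching the quantifier ranges---semi-monotonicity is demanded only for $x,z\in X_+$, exactly as in Theorem~\ref{re:cone-normal}---so that the translation between the $\delta$ of the present statement and the constant $M$ there (with $\delta=1/M$) is clean.
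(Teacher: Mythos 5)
Your proposal is correct and follows essentially the same route as the paper's proof: the same identification of the three formulas for $\psi$, the same subadditivity-plus-$\psi(w)\le\|w\|$ derivation of the Lipschitz bound, the same closedness argument for (c), and the same reduction of (d) to the semi-monotonicity condition of Theorem~\ref{re:cone-normal}(ii). You merely spell out details (notably for (e)) that the paper leaves to the reader, so there is nothing to flag.
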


Here $d(x, -X_+)$ denotes the distance of $x$ from $-X_+$.

\begin{proof}
The functional $\psi$ inherits positive homogeneity  from
the norm. That $\psi$ is order-preserving is immediate from
(\ref{eq:mon-compan-funct-incr}). For all $x \in X$,  $x \le x$ and so $\|x\| \ge \psi (x)$.

Most of the other properties follow from (\ref{eq:mon-compan-funct})
and the assumption that $X_+$ is a cone.

Since $\psi$ is subadditive,
\[
| \psi (x) - \psi(y)| \le \psi(x-y) \le  \|x-y\|.
\]
If $-x \in X_+$, then $ x \le 0 $ and $\psi(x) \le \|0\|=0$.

Assume that $x\in X$ and  $\psi(x) =0$.
By definition, there exists a sequence $(y_n)$ in $X$ with $\|y_n\| \to 0$
and $y_n \ge x $ for all $n \in \N$. Then $y_n -x  \in X_+$. Since $X_+$
is closed, $-x = \lim_{n\to \infty} (y_n -x) \in X_+$.

The strict positivity of $\psi$ follows from $X_+ \cap (-X_+) = \{0\}$.

(d) Assume that $X_+$ is normal. By Theorem \ref{re:cone-normal} (ii),
 there exists some $c > 0$ such that
$\|x\| \le c \|y\|$ whenever $x,y \in X_+$ and $x \le y$.
Hence $\|x\| \le c \psi(x) $ for all $x \in X_+$. Set $\delta = 1/c$.

The converse follows from $\psi$ being order-preserving and $\psi(x) \le \|x\|$
for all $x\in X_+$.
\end{proof}

One readily checks that the monotone companion half-norm inherits
equivalence of norms.

\begin{remark}
\label{re:mon-compan-equiv}
 If $\|\cdot \|_\sim$ is a norm on $X$ that is equivalent
to $\|x\|$, i.e., there exists some $c > 0$ such that $(1/c) \|x\|
\le \|x\|_\sim \le c \|x\|$ for all $x \in X$, then then
respective monotone companion half-norms satisfy $(1/c) \psi(x)
\le \tilde \psi(x) \le c \psi(x)$ for all $x \in X$.
\end{remark}

The functional $\psi$ induces a monotone norm on $X$ which is equivalent
to the original norm if and only if $X_+$ is a normal cone.

\begin{theorem}[cf. {\cite[Thm.4.4]{KrLiSo}}]
\label{re:normal-equiv-norm}
Define
\[
\sharp x\sharp = \max \{ \psi(x), \psi (-x) \}, \qquad x \in X,
\]
with $\psi$ from Proposition \ref{re:normal-mon-funct}. Then
$\sharp \cdot\sharp $ is a norm on $X$ with the following properties.

\begin{itemize}

\item  $\sharp x \sharp \le \|x\|$ for all $x \in X$,
$\sharp x\sharp = \psi(x)$
if $x \in X_+$, and $\sharp x\sharp = \psi(-x)$ if $-x \in X_+$.

 \item $\sharp \cdot\sharp$ is order-preserving on $X_+$: $\sharp x\sharp \le \sharp y\sharp $ for all $x, y \in X_+$
with $x \le y$. Moreover, for all $x,y, z \in X$ with $x\le y\le z$,
\begin{equation}
\label{eq:mod-norm}
\sharp y \sharp \le \max\{ \sharp x\sharp, \sharp z\sharp \}.
\end{equation}

 \item $X_+$ is a normal cone if and only if the norm $\sharp \cdot\sharp$  is equivalent to
the original norm.

\item If the original norm $\|\cdot\|$ is order-preserving on $X_+$, then
$\|x\| = \sharp x \sharp$ for all $x \in X_+ \cup (-X_+)$.

\end{itemize}
\end{theorem}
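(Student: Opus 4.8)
The plan is to derive every assertion from the properties of the companion half-norm $\psi$ recorded in Proposition \ref{re:normal-mon-funct}, so that the work reduces to bookkeeping on the two quantities $\psi(x)$ and $\psi(-x)$. First I would confirm that $\sharp\cdot\sharp$ is genuinely a norm. Nonnegativity is clear; definiteness follows because $\sharp x\sharp=0$ forces $\psi(x)=0=\psi(-x)$, whence $x\in(-X_+)\cap X_+=\{0\}$ by part (c) and the cone axiom. Absolute homogeneity splits into the cases $\alpha\ge 0$ and $\alpha<0$: for negative $\alpha$ the positive homogeneity of $\psi$ swaps the roles of $\psi(x)$ and $\psi(-x)$, leaving the maximum, times $|\alpha|$, unchanged. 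The triangle inequality is immediate from the subadditivity in part (b), since both $\psi(x+y)$ and $\psi(-x-y)$ are bounded by $\sharp x\sharp+\sharp y\sharp$.

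For the first bullet, the bounds $\psi(\pm x)\le\|x\|$ (from $\pm x\le\pm x$) give $\sharp x\sharp\le\|x\|$; and if $x\in X_+$ then $-x\in -X_+$, so $\psi(-x)=0$ by (c) and $\sharp x\sharp=\psi(x)$, with the mirror statement when $-x\in X_+$. The second bullet is the conceptual heart. Monotonicity on $X_+$ is then just monotonicity of $\psi$ together with the first bullet: $x\le y$ in $X_+$ gives $\sharp x\sharp=\psi(x)\le\psi(y)=\sharp y\sharp$. For the sandwich inequality (\ref{eq:mod-norm}) with $x\le y\le z$, I would exploit the order-preservation of $\psi$ twice in opposite directions: $y\le z$ yields $\psi(y)\le\psi(z)\le\sharp z\sharp$, while $x\le y$ gives $-y\le -x$ and hence $\psi(-y)\le\psi(-x)\le\sharp x\sharp$; taking the maximum of the two bounds produces $\sharp y\sharp\le\max\{\sharp x\sharp,\sharp z\sharp\}$.

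The main obstacle is the forward direction of the third bullet, namely that normality of $X_+$ forces $\|\cdot\|\le C\sharp\cdot\sharp$ on all of $X$, not merely on the cone. Here I would fix $\varepsilon>0$ and, using $\psi(x)=d(x,-X_+)$, pick $z_1,z_2\in X_+$ with $\|x+z_1\|\le\psi(x)+\varepsilon$ and $\|-x+z_2\|\le\psi(-x)+\varepsilon$. Writing $a=x+z_1$ and $b=-x+z_2$, the sum $a+b=z_1+z_2$ is a sum of two cone elements, so the semi-monotonicity of the norm (Theorem \ref{re:cone-normal}(ii), with constant $M$) gives $\|z_1\|\le M\|a+b\|\le M(\|a\|+\|b\|)\le 2M(\sharp x\sharp+\varepsilon)$; since $x=a-z_1$, I obtain $\|x\|\le(1+2M)(\sharp x\sharp+\varepsilon)$, and letting $\varepsilon\to 0$ establishes equivalence together with the already-proved $\sharp x\sharp\le\|x\|$.

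The converse direction of the third bullet is routine: equivalence gives $\delta\|x\|\le\sharp x\sharp=\psi(x)$ on $X_+$, which is precisely the normality criterion of Proposition \ref{re:normal-mon-funct}(d). Finally, the fourth bullet follows directly from part (e): an order-preserving original norm satisfies $\|x\|=\psi(x)$ on $X_+$, so $\sharp x\sharp=\psi(x)=\|x\|$ for $x\in X_+$ and, applying (e) to $-x\in X_+$, also $\sharp x\sharp=\psi(-x)=\|{-x}\|=\|x\|$ for $x\in -X_+$.
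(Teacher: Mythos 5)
Your proposal is correct, and for most of the theorem it follows the same route as the paper: the norm axioms are read off from the homogeneity, subadditivity, and definiteness properties of $\psi$ in Proposition \ref{re:normal-mon-funct}; the inequality $\sharp x \sharp \le \|x\|$ comes from $\psi(\pm x)\le \|x\|$; and the sandwich inequality (\ref{eq:mod-norm}) is proved exactly as in the paper, by applying the monotonicity of $\psi$ to $y\le z$ and to $-y\le -x$ and taking the maximum of the two resulting bounds. The one place where you genuinely diverge is the forward direction of the third bullet (normality of $X_+$ implies equivalence of the two norms): the paper does not prove this at all but delegates it to \cite[Thm.4.4]{KrLiSo}, whereas you supply a short self-contained argument — choose $z_1,z_2\in X_+$ nearly realizing $\psi(x)$ and $\psi(-x)$, observe that $(x+z_1)+(-x+z_2)=z_1+z_2$, and use semi-monotonicity (Theorem \ref{re:cone-normal}(ii)) to bound $\|z_1\|$ by $2M(\sharp x\sharp+\varepsilon)$, hence $\|x\|\le(1+2M)(\sharp x\sharp +\varepsilon)$. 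That argument is sound and even yields the explicit constant $1+2M$; what it buys is a proof that stands on its own without the external reference, at the modest cost of a few lines. Your treatment of the converse (via Proposition \ref{re:normal-mon-funct}(d)) and of the fourth bullet (via part (e), applied also to $-x$) fills in details the paper compresses into ``as do most of the remaining assertions,'' and both are correct.
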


\begin{proof} It is easy to see from the properties of $\psi$
that $\sharp \alpha x\sharp = |\alpha| \sharp x\sharp$
for all $\alpha \in \R$, $x\in X$, and that $\sharp \cdot \sharp$ is subadditive.
Now $\psi(x) \le \|x\|$ and $\psi(-x) \le \|-x\|= \|x\|$ and
so $\sharp x\sharp \le \|x\|$.

To prove (\ref{eq:mod-norm}), let $x,y,z \in X$ and $x \le y \le z$.
Then $y \le z $ and $-y \le -x$. Since $\psi$ is order-preserving on $X$,
\[
\psi(y ) \le \psi (z)  \le \sharp z\sharp,
\qquad
\psi(-y) \le \psi(-x) \le \sharp x\sharp.
\]
(\ref{eq:mod-norm}) now follows from the definition of $\psi$
as do most of the remaining assertions.

That $\|\cdot\|$ and $\sharp \cdot \sharp$ are equivalent  norms
if the cone is normal
is shown in \cite[Thm.4.4]{KrLiSo}. The converse follows from
Theorem \ref{re:cone-normal} (ii).
\end{proof}

\begin{definition}
\label{def:mon-compan}
 The norm $\sharp \cdot\sharp$ is called the
{\it (monotone) companion norm} on the ordered normed vector space $X$.
\end{definition}

\begin{corollary}
\label{re:compare}
Let  $X_+$ be a normal cone.
Then there exists some $c \ge 0$ such that $\|y \| \le c\max\{ \|x\|, \|z\|\}$
for all $x,y,z \in X$ with $x \le y \le z$.
\end{corollary}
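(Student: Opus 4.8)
The plan is to derive the Corollary directly from the monotone companion norm $\sharp \cdot \sharp$ constructed in Theorem \ref{re:normal-equiv-norm}, exploiting its two key features: the order-sandwich inequality \eqref{eq:mod-norm} and the equivalence of $\sharp \cdot \sharp$ with the original norm $\|\cdot\|$ when $X_+$ is normal. The whole point of the preceding theorem is that $\sharp \cdot \sharp$ is simultaneously order-monotone (in the sandwich sense) and, for normal cones, comparable to $\|\cdot\|$; the Corollary is just the translation of the sandwich inequality back into the original norm.

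First I would invoke the normality of $X_+$: by the last bullet of Theorem \ref{re:normal-equiv-norm}, the norms $\|\cdot\|$ and $\sharp \cdot \sharp$ are equivalent, so there exists some $c_0 \ge 1$ such that
\[
\sharp w \sharp \le \|w\| \le c_0 \, \sharp w \sharp, \qquad w \in X.
\]
(The left inequality is the first bullet of the theorem and holds for every ordered normed space; the right inequality is where normality enters.)

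Next I would take arbitrary $x, y, z \in X$ with $x \le y \le z$ and chain the inequalities. Using the right-hand estimate above, then the sandwich inequality \eqref{eq:mod-norm}, and finally the left-hand estimate applied to both $x$ and $z$, I obtain
\[
\|y\| \le c_0 \, \sharp y \sharp
\le c_0 \max\{ \sharp x \sharp, \sharp z \sharp \}
\le c_0 \max\{ \|x\|, \|z\| \}.
\]
Setting $c = c_0$ gives exactly the claimed inequality, completing the proof.

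Honestly, there is no real obstacle here: the entire difficulty has already been absorbed into Theorem \ref{re:normal-equiv-norm}, whose proof established both \eqref{eq:mod-norm} and the norm equivalence (the latter citing \cite[Thm.4.4]{KrLiSo}). The only thing to be careful about is the direction of the two equivalence inequalities and making sure I apply the monotone companion norm's sandwich property, not some naive monotonicity of $\|\cdot\|$ itself, which need not hold. The constant $c$ produced is precisely the equivalence constant between the two norms, so no optimization or extra construction is needed.
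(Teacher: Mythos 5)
Your proof is correct and is essentially identical to the paper's own argument: both chain $\|y\| \le c\,\sharp y \sharp \le c\max\{\sharp x \sharp, \sharp z \sharp\} \le c\max\{\|x\|,\|z\|\}$ using the sandwich inequality (\ref{eq:mod-norm}) together with the norm equivalence that normality provides (your only slip is calling the equivalence the ``last'' bullet of Theorem \ref{re:normal-equiv-norm}, when it is the third). No further comment is needed.
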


\begin{proof} Let $\sharp \cdot\sharp$ be the monotone companion norm  from Theorem \ref{re:normal-equiv-norm}, which is equivalent to the
 original norm because $X_+$ is normal.  Choose $c \ge 0$ such
that $\sharp x\sharp \le \|x\| \le c \sharp x\sharp$ for all $x \in X$.
Let $x \le y \le z$.
By Theorem \ref{re:normal-equiv-norm},
\[
\|y\| \le c \sharp y \sharp \le c\max\{ \sharp x\sharp , \sharp z\sharp \} \le
c \max\{ \|x\| , \|z\| \}. \qedhere
\]
\end{proof}

\begin{corollary}[Squeezing theorem {\cite[Thm.4.3]{KrLiSo}}]
\label{re:squeeze}
Let  $X_+$ be a normal cone.
Let $y \in X$ and $(x_n)$, $(y_n)$, $(z_n)$ be
sequences in $X$ with $x_n \le y_n \le z_n $ for all $n \in \N$
and $x_n \to y$ and $z_n \to y$. Then $y_n \to y$.
\end{corollary}

\begin{proof} Notice that $x_n - y \le y_n -y \le z_n -y$.
By Corollary \ref{re:compare}, with some $c\ge 0$ that does not depend on $n$,
\[
\|y_n - y\| \le c  \max\{\|z_n - y\|, \|x_n -y\|\} \to 0. \qedhere
\]
\end{proof}

\begin{example} (a) Let $X =\R$ with the absolute value. Then the
monotone companion norm is also the absolute value and the companion half-norm
is the positive part,
$\psi(x) = \max\{x,0\}$ for all $x \in \R$.

(b) Let $X = \R^2$ with the maximum norm and $X_+ = \R_+^2$. Then the monotone companion norm
is also the maximum norm.

(c) Let $X = \R^2$ with $\|\cdot\|$ being either the Euclidean norm or the sum norm and $X_+= \R_+^2$.
Then $\sharp x \sharp = \|x\|$ if  $x \in X_+ \cup (-X_+)$
while $\sharp x \sharp$ is the maximum norm of $x$ otherwise.
\end{example}

\begin{example} Let $X$ be a normed vector lattice \cite[II.5]{Sch74}.
 Since $x \le x^+$, $\psi(x) \le \|x^+\|$.
Let $x \le y$. Then $x^+ \le y^+$, and $\|x^+\| \le \|y^+\|\le \|y\|$.
Hence $\|x^+\| \le \psi(x) $. In combination,
\[
\psi(x) = \|x^+\|, \qquad x \in X.
\]
Further $\psi(-x) = \|(-x)^+ \|= \|x^-\|$. So
\[
\sharp x \sharp = \max\{ \|x^+\|, \|x^-\|\}, \qquad x \in X,
\]
and $\sharp x \sharp = \|x\|$ for all $x \in X_+ \cup (-X_+)$.
If $X$ is an abstract M space \cite[II.7]{Sch74},
\[
\sharp x \sharp = \|x^+ \lor x^- \| = \big \| |x| \big \| = \|x\|.
\]
\end{example}

We turn to ordered Banach space the cone of which may be not normal.

\begin{example}
Let $X \subseteq \tilde X$ where $X, \tilde X$ are ordered normed vector spaces
with norms $\|\cdot \|$ and $\|\cdot\|^\sim$ and $\|x\| \ge \|x\|^\sim$ for
all $x \in X$. Let $\psi, \tilde \psi$ be the respective monotone companion
half-norms. Then $\psi(x) \ge \tilde \psi (x)$ for all $x \in X$.

Assume that $(\tilde X, \|\cdot\|^\sim)$  is an abstract M space. By the previous
example,
\[
\sharp x\sharp  \ge \|x\|^\sim, \qquad x \in X.
\]
Now assume that there exists some $u \in X$ with $\|u\|\le 1 $ and $|x| \le \|x\|^\sim u$
for all $x \in X$.
Then $\psi (\pm x) \le \|x\|^\sim $ and $\sharp x \sharp \le \|x\|^\sim$.
In combination,
\[
\sharp x \sharp = \| x \|^\sim, \qquad x \in X.
\]
\end{example}

\begin{example}[Space of sequences of bounded variation]
To determine the monotone companion (half-) norm in a concrete case where  the cone
is not normal we revisit the space $bv$ of sequences of bounded variation
with the variation-norm.

Recall that $bv \subseteq \ell^\infty$ and $\|x\|_{bv} \ge \|x\|_\infty$
for all $x \in bv$. $\ell^\infty$ is an abstract M space under the sup-norm.
We apply the previous example with $u$ being the sequence all the terms of which
are 1. Then $\|u\|_{bv} =1 $ and $ |x | \le \|x\|_\infty u $.
We obtain that $\sharp x \sharp = \|x\|_\infty$ for all $x \in bv$.

Notice that $bv$ is not complete under the monotone companion norm.
By the open mapping theorem, an ordered Banach space is complete
under the monotone companion norm if and only if the cone is normal.
\end{example}


\section{Order-preserving maps and the companion norm}
\label{sec:order-pre-companion}

In the following, let $X$ and $Y$ be ordered normed vector spaces.
We use the same symbols $\|\cdot\|$, $\psi$ and $\sharp \cdot \sharp$ for
the norms and the monotone companion (half-) norms on $X$ and $Y$.

\begin{theorem}
\label{re:mon-func-maps}
Let $B: X_+ \to Y_+$ be bounded, homogeneous, and order-preserving. Then
$\psi(B(x)) \le \|B\|_+ \psi(x)$ for all $x \in X_+$. In particular, $B$
is bounded with respect to the monotone companion norms and $\sharp B \sharp_+
\le \|B\|_+$.
\end{theorem}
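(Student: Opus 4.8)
The plan is to establish the pointwise inequality $\psi(B(x)) \le \|B\|_+ \psi(x)$ for every $x \in X_+$, since the statement about the companion norm $\sharp B\sharp_+$ follows almost immediately from it. First I would fix $x \in X_+$ and unpack the definition of $\psi(x)$ via the formula (\ref{eq:mon-compan-funct-incr}), namely $\psi(x) = \inf\{\|y\| : x \le y\}$. The idea is that any majorant $y \ge x$ controls the size of $\psi(x)$, and I want to transport such a majorant through $B$ to produce a majorant of $B(x)$.

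The key step is the following. Let $y \in X$ with $y \ge x$ be arbitrary. Since $x \in X_+$, I would like to apply $B$ to $y$, but $B$ is only defined on $X_+$, so I cannot directly write $B(y)$ unless $y \in X_+$. To get around this, I would use the representation (\ref{eq:mon-compan-funct}), $\psi(x) = \inf\{\|x+z\| : z \in X_+\}$: for $z \in X_+$ the element $x + z$ lies in $X_+$ (as the sum of two cone elements), so $B(x+z)$ is defined, and order-preservation gives $B(x) \le B(x+z)$. Then, using (\ref{eq:mon-compan-funct-incr}) applied to the point $B(x) \in Y_+$ together with the bound (\ref{eq:operator-norm-est}),
\[
\psi(B(x)) \le \|B(x+z)\| \le \|B\|_+\, \|x+z\|.
\]
Taking the infimum over $z \in X_+$ on the right-hand side yields $\psi(B(x)) \le \|B\|_+\, \psi(x)$, which is the desired pointwise estimate. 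The only subtlety here, which I expect to be the main (though minor) obstacle, is the bookkeeping about where $B$ is defined: one must phrase the argument through majorants of the form $x+z$ with $z \in X_+$ rather than arbitrary majorants $y \ge x$, precisely so that the argument of $B$ stays in the cone $X_+$.

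Finally, I would deduce the norm statement. Given $x \in X_+$ with $\sharp x\sharp \le 1$, note $\sharp x\sharp = \psi(x)$ by Theorem \ref{re:normal-equiv-norm} (since $x \in X_+$), so $\psi(x) \le 1$. Because $B(x) \in Y_+$, again $\sharp B(x)\sharp = \psi(B(x))$, and the pointwise inequality gives $\sharp B(x)\sharp = \psi(B(x)) \le \|B\|_+\, \psi(x) \le \|B\|_+$. Taking the supremum over all such $x$ shows $B$ is bounded with respect to the companion norms with $\sharp B\sharp_+ \le \|B\|_+$, completing the proof.
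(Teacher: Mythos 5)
Your proposal is correct and follows essentially the same route as the paper: both arguments push a majorant of $x$ (written in the paper as $z \in X_+$ with $x \le z$, in your version as $x+z$ with $z \in X_+$, which parametrizes the same set) through $B$ using order-preservation, apply the bound $\|B(w)\| \le \|B\|_+\|w\|$, and take the infimum; your handling of the domain issue is exactly the paper's restriction to majorants in the cone (indeed, since $x \in X_+$, every majorant $y \ge x$ lies in $X_+$ automatically). The deduction of $\sharp B \sharp_+ \le \|B\|_+$ via $\sharp \cdot \sharp = \psi$ on the cone matches the intended conclusion as well.
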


\begin{proof}
Since $B$ is order-preserving, for $x\in X_+$,
\[
\{ y \in Y; B(x) \le y\} \supseteq \{ B(z); x \le z \in X_+\}.
\]
By definition of $\psi$,
\[
\begin{split}
\psi(B(x) ) \le & \inf \{ \|B(z)\| ; z \in X_+, x \le z \}
\le
\inf \{ \|B\|_+ \|z\| ;  x \le z \in X\}
\\
&  = \|B\|_+ \psi(x). \qedhere
\end{split}
\]
\end{proof}

\begin{theorem}
\label{re:bounded-linear-maps-compan}
Let $B: X \to Y$ be bounded, linear and positive. Then
$B$
is bounded with respect to the monotone companion norm, $\psi(Bx) \le \|B\| \psi(x)$
for all $x \in X$, and
$\sharp B \sharp \le \|B\|$.
\end{theorem}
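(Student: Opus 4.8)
The plan is to reduce the statement to the infimum characterization (\ref{eq:mon-compan-funct-incr}) of $\psi$, exactly as in the proof of Theorem \ref{re:mon-func-maps}, but now exploiting linearity to pass from the cone $X_+$ to the whole space $X$. The one new ingredient is the observation that a positive \emph{linear} map is order-preserving on all of $X$: if $x \le y$ in $X$, then $y - x \in X_+$, so $B(y-x) = By - Bx \in Y_+$ by positivity, whence $Bx \le By$. This is the feature that distinguishes the present situation from the merely homogeneous case of Theorem \ref{re:mon-func-maps} and lets the argument run over all of $X$ rather than only over $X_+$.

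First I would prove the half-norm estimate $\psi(Bx) \le \|B\|\,\psi(x)$ for every $x \in X$. Fixing $x$ and taking any $y \in X$ with $x \le y$, order-preservation gives $Bx \le By$, so the set $\{By; \, x \le y \in X\}$ is contained in $\{w \in Y; \, Bx \le w\}$. Using (\ref{eq:mon-compan-funct-incr}) on both $X$ and $Y$ together with the operator bound $\|By\| \le \|B\|\,\|y\|$ then yields
\[
\psi(Bx) = \inf\{\|w\|; Bx \le w \in Y\} \le \inf\{\|By\|; x \le y \in X\} \le \|B\| \inf\{\|y\|; x \le y \in X\} = \|B\|\,\psi(x).
\]

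To obtain the companion-norm bound, I would apply this half-norm estimate to both $x$ and $-x$. Since $B$ is linear, $\psi(-Bx) = \psi(B(-x)) \le \|B\|\,\psi(-x)$, and combining this with $\psi(Bx) \le \|B\|\,\psi(x)$ gives
\[
\sharp Bx \sharp = \max\{\psi(Bx), \psi(-Bx)\} \le \|B\| \max\{\psi(x), \psi(-x)\} = \|B\|\,\sharp x \sharp, \qquad x \in X.
\]
Since this holds for all $x$, the map $B$ is bounded for the companion norm and, taking the supremum over $\sharp x \sharp \le 1$, $\sharp B \sharp \le \|B\|$. There is no real obstacle here; the only point requiring care is the extension of order-preservation from $X_+$ to all of $X$, which is precisely what linearity provides and which makes the infimum computation legitimate for arbitrary $x \in X$.
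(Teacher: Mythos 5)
Your proof is correct and follows essentially the same route as the paper: the paper likewise runs the infimum argument of Theorem \ref{re:mon-func-maps} over all of $X$ (using that a positive linear map is order-preserving on $X$, with $\|B\|$ in place of $\|B\|_+$) and then applies the half-norm estimate to $-x$ via $\psi(-Bx)=\psi(B(-x))$ to conclude $\sharp B \sharp \le \|B\|$. Your write-up merely makes explicit the details the paper leaves implicit.
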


\begin{proof}
By a similar proof as for Theorem \ref{re:mon-func-maps}, since $B$ is order-preserving,
$\psi (Bx) \le \|B \| \psi(x)$ for all $x \in X$. Then
$\psi(- B x) = \psi (B(-x)) \le \|B\| \psi (-x).$
The assertion now follows from $\sharp x \sharp = \max \{\psi(x), \psi(-x)\}$.
\end{proof}

Various concepts of continuity are preserved if one switches from the
original norm to the monotone companion norm. We only look at
the most usual concept here.

\begin{proposition}
\label{re:right-conti-mon-comp}
Let  $B: X \to Y$ be order-preserving and continuous at $x$ with respect
to the original norms.
Then $B$ is continuous at $x$ with respect to the
monotone companion norms.
\end{proposition}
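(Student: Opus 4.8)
The plan is to verify the conclusion sequentially: since both the original and the companion topologies are metrizable, it suffices to show that $\sharp y_n - x \sharp \to 0$ forces $\sharp B(y_n) - B(x) \sharp \to 0$ for every sequence $(y_n)$ in $X$. The key observation is that companion-norm convergence at $x$ is really a two-sided order sandwich of $y_n$ about $x$ measured in the \emph{original} norm. Indeed, since $\sharp z \sharp = \max\{\psi(z),\psi(-z)\}$, the hypothesis $\sharp y_n - x\sharp \to 0$ yields both $\psi(y_n - x)\to 0$ and $\psi(x - y_n)\to 0$. Using the representation $\psi(z) = \inf\{\|w\|;\, z \le w\}$ from (\ref{eq:mon-compan-funct-incr}), I would choose for each $n$ elements $w_n \ge y_n - x$ and $v_n \ge x - y_n$ with $\|w_n\|\to 0$ and $\|v_n\|\to 0$, giving
\[
x - v_n \le y_n \le x + w_n, \qquad v_n, w_n \to 0 \text{ in } \|\cdot\|.
\]

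First I would feed this chain through the order-preservation of $B$ to get $B(x - v_n) \le B(y_n) \le B(x + w_n)$. Because $x - v_n \to x$ and $x + w_n \to x$ in the original norm, continuity of $B$ at $x$ with respect to the original norms gives $B(x - v_n) \to B(x)$ and $B(x + w_n)\to B(x)$ in $\|\cdot\|$. Subtracting the fixed element $B(x)$ preserves order, so
\[
B(x - v_n) - B(x) \le B(y_n) - B(x) \le B(x + w_n) - B(x),
\]
where the two outer sequences converge to $0$ in the original norm.

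Finally I would invoke the monotonicity inequality (\ref{eq:mod-norm}) of the companion norm in $Y$ (Theorem \ref{re:normal-equiv-norm}). Applied to the last chain it gives
\[
\sharp B(y_n) - B(x)\sharp \le \max\{\,\sharp B(x - v_n) - B(x)\sharp,\ \sharp B(x + w_n) - B(x)\sharp\,\}.
\]
Since $\sharp \cdot \sharp \le \|\cdot\|$, the right-hand side is dominated by $\max\{\|B(x-v_n)-B(x)\|,\ \|B(x+w_n)-B(x)\|\}$, which tends to $0$. Hence $\sharp B(y_n) - B(x)\sharp \to 0$, establishing companion-norm continuity at $x$.

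The point to watch—the step that would trap a naive argument—is that when the cone is not normal the companion norm is only dominated by, not equivalent to, the original norm, so companion convergence of $(y_n)$ does \emph{not} imply original-norm convergence, and one cannot simply apply the original-norm continuity hypothesis to $(y_n)$ itself. The whole force of the argument lies in upgrading the weak companion convergence to the genuine original-norm order bounds $x - v_n \le y_n \le x + w_n$ supplied by the infimum representation (\ref{eq:mon-compan-funct-incr}) of $\psi$, and then transferring the resulting sandwich on the image side back to the companion norm through (\ref{eq:mod-norm}); order-preservation of $B$ is exactly what makes these two squeezes compatible.
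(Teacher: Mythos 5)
Your proof is correct and follows essentially the same route as the paper's: both convert companion-norm smallness of $y-x$ into two-sided order bounds with small original norm via the representation (\ref{eq:mon-compan-funct-incr}), push these through $B$ using order-preservation and original-norm continuity at $x$, and return to the companion norm via its monotonicity. The only differences are cosmetic: you argue sequentially and combine the two one-sided estimates through (\ref{eq:mod-norm}), whereas the paper runs an $\epsilon$--$\delta$ argument treating $\psi(B(y)-B(x))$ and $\psi(B(x)-B(y))$ separately before taking the maximum.
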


\begin{proof}
Let $x\in X$ and $B$ be continuous at $x$. Let $y \in X$ as well.

For any $z \ge y-x$, $z \in X$, we have
\[
B(y) - B(x) = B (x+y-x)- B(x) \le B(x+z) - B(x).
\]
By definition of the monotone companion half-norm,
\[
\psi (B(y) - B(x) ) \le \|B(x+z) - B(x)\|, \qquad y-x \le z \in X.
\]
Let $\epsilon > 0$. Then there exists some $\delta_+ > 0$
such that $\|B(x+z) - B(x)\|< \epsilon$ for all $z\in X$,
$\|z \|< \delta_+$. Now let $y \in X$ and $\psi(y-x) < \delta_+$. Then there exists some
$z \in X$ such that $y-x \le z $ and $\|z\| < \delta_+$. Then $\|B(x+z) - B(x) \|< \epsilon$ and
$\psi (B(y) - B(x) ) < \epsilon$.

Also, for any $z \in X$ with $x- y \le z$, we have
\[
B(x) - B(y) = B(x) - B(x-(x-y)) \le B(x) - B(x-z).
\]
By definition of the monotone companion half-norm
\[
\psi (B(x) - B(y) ) \le \|B(x) - B(x-z)\|, \qquad z \in X,  x-y \le z.
\]
Let $\epsilon > 0$. Then there exists some $\delta_- > 0$
such that $\|B(x) - B(x-z)\|< \epsilon$ for all $z\in X$,
$\|z \|< \delta_-$. Now let $y \in Y$ and $\psi(x-y) < \delta_-$. Then there exists some
$z \in X$ such that $x-y \le z $ and $\|z\|=\|-z\| < \delta_-$.
Thus $\psi (B(x) - B(y) ) < \epsilon$.

Set $\delta= \min\{\delta_+, \delta_-\}$ and $\sharp y -x \sharp < \delta$.
Then $\psi(\pm(y-x)) < \delta_\pm$
and $\psi( \pm (B(y) - B(x)) < \epsilon$. This implies
$\sharp B(y) - B(x) \sharp < \epsilon$.
\end{proof}

\begin{proposition}
\label{re:bounded-func-compan}
Let $\phi: X_+ \to \R_+$ be homogeneous.
\begin{itemize}

\item[(a)] If $\phi$ is bounded with respect to the monotone companion
norm, then it is bounded with respect to the original norm and
$\| \phi \|_+ \le \sharp \phi \sharp_+$.

\item[(b)] If $\phi$ is order-preserving and bounded with respect to
the original norm, then it is bounded with respect to the
monotone companion norm and $\| \phi \|_+ = \sharp \phi \sharp_+$.
Further $\phi(x) \le \psi(x) \|\phi\|_+$ for all $x \in X_+$.
\end{itemize}
\end{proposition}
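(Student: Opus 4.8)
The plan is to exploit two facts from Theorem \ref{re:normal-equiv-norm}: that $\sharp x\sharp\le\|x\|$ for all $x\in X$, and that $\sharp x\sharp=\psi(x)$ whenever $x\in X_+$. Since $\phi$ maps into $\R_+\subseteq\R$ and the companion norm on $\R$ (with cone $\R_+$) is the absolute value, the companion operator norm is $\sharp\phi\sharp_+=\sup\{\phi(x);\ x\in X_+,\ \sharp x\sharp\le 1\}$, whereas $\|\phi\|_+=\sup\{\phi(x);\ x\in X_+,\ \|x\|\le 1\}$. Everything then reduces to comparing suprema of $\phi$ over two level sets and to a single pointwise estimate.

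For part (a), I would note that on the cone the inequality $\sharp x\sharp\le\|x\|$ forces the inclusion $\{x\in X_+;\ \|x\|\le 1\}\subseteq\{x\in X_+;\ \sharp x\sharp\le 1\}$. Taking the supremum of $\phi$ over the smaller set and over the larger set respectively yields $\|\phi\|_+\le\sharp\phi\sharp_+$; in particular, finiteness of $\sharp\phi\sharp_+$ gives finiteness of $\|\phi\|_+$, so boundedness in the companion norm implies boundedness in the original norm.

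For part (b), the crux is the pointwise estimate $\phi(x)\le\psi(x)\,\|\phi\|_+$ for all $x\in X_+$. Here I would use the representation $\psi(x)=\inf\{\|y\|;\ x\le y\in X\}$ from Proposition \ref{re:normal-mon-funct}. The one observation that makes the argument run is that for $x\in X_+$ every majorant $y\ge x$ automatically satisfies $y\ge x\ge 0$, hence $y\in X_+$; thus the original-norm bound $\phi(y)\le\|\phi\|_+\,\|y\|$ is available, and by order-preservation $\phi(x)\le\phi(y)\le\|\phi\|_+\,\|y\|$. Taking the infimum over all such majorants $y$ gives $\phi(x)\le\|\phi\|_+\,\psi(x)$, which is the asserted ``Further'' inequality.

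Finally I would deduce the companion bound: for $x\in X_+$ with $\sharp x\sharp\le 1$ we have $\psi(x)=\sharp x\sharp\le 1$, so the estimate just proved gives $\phi(x)\le\|\phi\|_+$; hence $\sharp\phi\sharp_+\le\|\phi\|_+$, and combining this with part (a) yields the equality $\|\phi\|_+=\sharp\phi\sharp_+$. The whole argument is essentially a packaging of the infimum representation of $\psi$ with order-preservation, so I do not expect a serious obstacle; the only point needing care is the remark that majorants of cone elements remain in the cone, since that is precisely what licenses applying the original boundedness of $\phi$ to the elements $y$ appearing inside the infimum defining $\psi(x)$.
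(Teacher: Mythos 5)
Your proof is correct and takes essentially the same route as the paper: part (a) is the same comparison based on $\sharp x \sharp \le \|x\|$ on $X_+$, and your infimum-over-majorants argument for part (b) (including the key observation that any majorant of a cone element lies in the cone) is precisely the paper's proof of Theorem \ref{re:mon-func-maps}, which the paper simply cites at this point with $Y=\R$, $Y_+=\R_+$. You merely inline that cited argument instead of invoking the theorem.
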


\begin{proof} (a)
Let $x \in X_+$. Since $\phi$ is bounded with respect to the
monotone companion norm,
\[
\phi (x) \le \sharp \phi \sharp_+ \; \sharp\,x \sharp
\le
\sharp \phi \sharp_+ \; \|x\|.
\]

(b) This follows from part (a) and Theorem \ref{re:mon-func-maps}.
\end{proof}

\begin{proposition}
\label{re:bounded-func-compan-lin}
 Let $\phi: X \to \R$ be linear.
\
\begin{itemize}
\item[(a)] If $\phi$ is bounded with respect to the monotone companion
norm, then it is bounded with respect to the original norm and
$\| \phi \| \le \sharp \phi \sharp$.

\item[(b)] If $\phi$ is positive and bounded with respect to
the original norm, then it is bounded with respect to the
monotone companion norm and $\| \phi \| = \sharp \phi \sharp$
and $\phi(x) \le \|\phi\| \psi(x)$ for all $x \in X$.

\end{itemize}
\end{proposition}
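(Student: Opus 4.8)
The plan is to treat this as the linear analogue of Proposition~\ref{re:bounded-func-compan}, with Theorem~\ref{re:bounded-linear-maps-compan} playing the role that Theorem~\ref{re:mon-func-maps} played there. Throughout I read the two functional norms as $\|\phi\| = \sup\{|\phi(x)|:\|x\|\le 1\}$ and $\sharp\phi\sharp = \sup\{|\phi(x)|:\sharp x\sharp\le 1\}$, the target $\R$ being ordered by $\R_+$ and normed by the absolute value (which is its own companion norm).

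First I would dispatch part (a). The single fact needed is $\sharp x\sharp\le\|x\|$ for all $x\in X$ (Theorem~\ref{re:normal-equiv-norm}): for every $x$ this gives $|\phi(x)|\le\sharp\phi\sharp\,\sharp x\sharp\le\sharp\phi\sharp\,\|x\|$, so $\phi$ is bounded with respect to the original norm and $\|\phi\|\le\sharp\phi\sharp$.

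For part (b) I would first record that a positive linear functional is order-preserving, hence a bounded positive linear map $X\to\R$, and then invoke Theorem~\ref{re:bounded-linear-maps-compan} with $Y=\R$. That theorem yields at once $\sharp\phi\sharp\le\|\phi\|$ (so $\phi$ is bounded for the companion norm) together with $\psi_{\R}(\phi(x))\le\|\phi\|\,\psi(x)$ for all $x$. The one computation to pin down is the companion half-norm on the scalar line: since the cone there is $\R_+$, one has $\psi_{\R}(t)=\max\{t,0\}=t^{+}$ (as in the first Example of Section~\ref{sec:companion}). Thus $(\phi(x))^{+}\le\|\phi\|\,\psi(x)$, and discarding the positive part gives the asserted estimate $\phi(x)\le\|\phi\|\,\psi(x)$ for all $x\in X$. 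Combining $\sharp\phi\sharp\le\|\phi\|$ with the reverse inequality $\|\phi\|\le\sharp\phi\sharp$ from part (a) closes the chain to $\|\phi\|=\sharp\phi\sharp$.

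If I preferred a self-contained argument not citing Theorem~\ref{re:bounded-linear-maps-compan}, I would instead use the infimum representation \eqref{eq:mon-compan-funct-incr} of $\psi$: for any $y\ge x$, order-preservation gives $\phi(x)\le\phi(y)\le\|\phi\|\,\|y\|$, and taking the infimum over such $y$ yields $\phi(x)\le\|\phi\|\,\psi(x)$ directly; replacing $x$ by $-x$ gives $\phi(-x)\le\|\phi\|\,\psi(-x)$, so $|\phi(x)|=\max\{\phi(x),-\phi(x)\}\le\|\phi\|\max\{\psi(x),\psi(-x)\}=\|\phi\|\,\sharp x\sharp$ and hence $\sharp\phi\sharp\le\|\phi\|$. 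Either way the work is genuinely routine; the only thing to watch is the bookkeeping that the two functional norms are suprema over different (nested) unit balls and that the companion half-norm on $\R$ is the positive part, after which every estimate is a one-liner.
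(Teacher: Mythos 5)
Your proposal is correct and follows the paper's own route: part (a) via $\sharp x\sharp\le\|x\|$ (exactly the argument behind Proposition \ref{re:bounded-func-compan}(a), which the paper cites), and part (b) by applying Theorem \ref{re:bounded-linear-maps-compan} with $Y=\R$ together with the observation that the companion half-norm on $\R$ is the positive part, then combining with (a) to get $\|\phi\|=\sharp\phi\sharp$. The alternative self-contained argument you sketch is also sound, but the main line matches the paper's proof.
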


\begin{proof} (a) The proof is similar to the one for
Proposition \ref{re:bounded-func-compan}.

(b)  follows from part (a) and Theorem \ref{re:bounded-linear-maps-compan}
and the fact that the monotone half-norm on $\R$ is given by the positive part.
\end{proof}

Let $X^*_+$ be the dual wedge of  positive linear functionals on $X$
that are bounded with respect to the original norm $\|\cdot\|$.
By Proposition \ref{re:bounded-func-compan-lin}, $X_+^*$ is also the dual wedge of
linear functions that are bounded with respect to the monotone
companion norm $\sharp \cdot \sharp$ and the norms induced on $X^*$ are
the same on $X_+^*$.

\begin{proposition}
Let $x \in X$. Then there exists $x^* \in X_+^*$ such that
$x^* x = \psi(x)$ and $\|x^*\| = \sharp x^* \sharp \le 1$.
If $x \in X_+$, $\|x^*\|=1$ can be achieved.

Actually
\begin{equation}
\label{eq:mon-compan-dual1}
\begin{split}
\psi(x) = & \max \{ x^* x; x^* \in X_+^*, \|x^*\| \le 1\}, \qquad x \in X,
\\
\psi(x) = &  \max \{ x^* x; x^* \in X_+^*, \|x^*\| = 1\}, \qquad x \in X_+.
\end{split}
\end{equation}

Further \cite[(4.3)]{KrLiSo}, for all $x \in X$,
\begin{eqnarray}
\sharp x \sharp = & \max \{|x^* x|; x \in X_+^*, \|x^*\|= 1 \}
= \max \{|x^* x|; x \in X_+^*, \|x^*\|\le 1 \}.
\label{eq:mon-compan-dual4}
\end{eqnarray}
\end{proposition}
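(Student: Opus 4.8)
The plan is to produce $x^*$ by a Hahn–Banach extension dominated by $\psi$, and then to read off every stated equality from the single elementary bound $y^* x \le \psi(x)$, valid for each positive functional $y^*$ of norm at most one. First I would record that $\psi$ is sublinear, being positively homogeneous and subadditive by Proposition \ref{re:normal-mon-funct}(a),(b). Fixing $x \in X$, I define a linear functional $f$ on the one-dimensional subspace $\R x$ by $f(tx) = t\,\psi(x)$ and check that $f \le \psi$ there: for $t \ge 0$ one has $f(tx) = \psi(tx)$ by positive homogeneity, while for $t < 0$ one has $f(tx) = t\,\psi(x) \le 0 \le \psi(tx)$. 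The Hahn–Banach theorem (applicable since $X$ is a real space) then yields a linear extension $x^* : X \to \R$ with $x^* y \le \psi(y)$ for all $y \in X$, and in particular $x^* x = \psi(x)$.

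Next I would extract the three required properties of $x^*$. For positivity, note that $z \in X_+$ gives $-z \in -X_+$, so $\psi(-z) = 0$ by Proposition \ref{re:normal-mon-funct}(c), whence $-x^* z = x^*(-z) \le \psi(-z) = 0$ and $x^* z \ge 0$; thus $x^* \in X_+^*$. For the norm bound, applying $x^* y \le \psi(y) \le \|y\|$ to $\pm y$ gives $|x^* y| \le \|y\|$, so $\|x^*\| \le 1$, and $\sharp x^* \sharp = \|x^*\|$ holds because $x^*$ is positive (Proposition \ref{re:bounded-func-compan-lin}(b)). This establishes the first assertion. The upper bound in the max formulas needs no extension at all: if $y^* \in X_+^*$ with $\|y^*\| \le 1$, then for every $y \ge x$ positivity gives $y^* x \le y^* y \le \|y\|$, and taking the infimum over such $y$ in (\ref{eq:mon-compan-funct-incr}) yields $y^* x \le \psi(x)$. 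Combined with the attainment $x^* x = \psi(x)$, this proves the first equality for all $x \in X$.

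It remains to upgrade $\|x^*\| \le 1$ to $\|x^*\| = 1$ whenever $\psi(x) > 0$, and this self-bounding step is exactly what I expect to be the main obstacle: a naive renormalization $x^*/\|x^*\|$ would overshoot the upper bound, so the equality must be argued differently. I would invoke Proposition \ref{re:bounded-func-compan-lin}(b) in the form $x^* x \le \|x^*\|\,\psi(x)$, so that $\psi(x) = x^* x \le \|x^*\|\,\psi(x)$ forces $\|x^*\| \ge 1$, hence $\|x^*\| = 1$. Since $x \in \dot X_+$ implies $\psi(x) > 0$ by strict positivity, this settles the norm-one form of the second $\psi$-formula (for $x = 0$ both sides vanish). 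Finally, for the companion norm I use $\sharp x \sharp = \max\{\psi(x), \psi(-x)\}$: for any $y^* \in X_+^*$ with $\|y^*\| \le 1$ one has $|y^* x| = \max\{y^* x,\, y^*(-x)\} \le \max\{\psi(x), \psi(-x)\} = \sharp x \sharp$, while for attainment I would apply the construction above to whichever of $\pm x$ realizes this maximum, producing a functional of norm $1$ when $\sharp x \sharp > 0$ (the case $x = 0$ being trivial). This simultaneously gives both the $\|x^*\| = 1$ and the $\|x^*\| \le 1$ versions of the last formula.
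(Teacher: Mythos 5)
Your proposal is correct and takes essentially the same route as the paper: the paper invokes \cite[IV.6]{Yos} for precisely the Hahn--Banach extension dominated by the sublinear functional $\psi$ that you construct by hand, then obtains positivity from $\psi$ vanishing on $-X_+$, the bound $\|x^*\| = \sharp x^* \sharp \le 1$ and the reverse (upper-bound) inequalities from Proposition \ref{re:bounded-func-compan-lin}, and handles (\ref{eq:mon-compan-dual4}) by the same case analysis on whichever of $\pm x$ realizes $\max\{\psi(x),\psi(-x)\}$. The only cosmetic difference is that the paper gets $\|x^*\|=1$ from attainment in the companion norm ($x^*u=\psi(u)=\sharp u\sharp$ together with $|x^*x|\le\sharp x\sharp$), while you use $x^*x \le \|x^*\|\,\psi(x)$ from Proposition \ref{re:bounded-func-compan-lin}(b); these are equivalent one-line arguments.
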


\begin{proof}
Recall that $X_+^*$ and its norm do not depend on whether we
consider $X$ with the original norm or its monotone companion norm
(Proposition \ref{re:bounded-func-compan-lin}).

Let $u \in X $.
By \cite[IV.6]{Yos}, we find $x^* \in X^*$ with
$x^* u  = \psi(u)$ and
\[
-\psi(-x) \le x^* x \le \psi (x) , \qquad x \in X,
\]
and so
\[
|x^* x | \le  \sharp x \sharp  \le \|x\|, \qquad x \in X.
\]
 Since $\psi(-x)=0$ for all $x\in X_+$, $x^* \in X_+^*$.
 By
Proposition \ref{re:bounded-func-compan-lin}, $\sharp x^* \sharp =
\|x^*\| \le 1$.

Now let $x \in X_+$. Then $x^*x = \psi(u) = \sharp u \sharp$ and so
$\sharp x^* \sharp =1$.

Notice that we have proved $\le $ in (\ref{eq:mon-compan-dual1}).

Suppose that $\psi(u) > \psi(-u)$.
Then there exists $x^* \in X_+^*$ with $\sharp x^* \sharp \le 1$
such that $x^* u = \psi (u) = \sharp u \sharp$. Hence $\sharp x^*\sharp =1 $.

If $\psi(u) < \psi (-u)$, there exists $x^* \in X_+^*$ with $\sharp x^* \sharp \le 1$
such that $x^* (-u) = \sharp u \sharp= |x^* u|$. Again $\sharp x^* \sharp =1$.

If $\psi(u) = \psi(-u) > 0$, then $u ,-u \not\in X_+$ and we can make
the same conclusion.

It remains the case
 $\psi(u) = \psi(-u) = 0$. But then $u=0$, and all equalities hold trivially.

So the $\le $ inequalities hold in  (\ref{eq:mon-compan-dual4}).

The $\ge$ inequalities follow from Proposition \ref{re:bounded-func-compan-lin}
(b).
\end{proof}

\begin{corollary}
\label{re:cone-closed-compan}
$X_+$ is closed with respect to the monotone
companion norm.
\end{corollary}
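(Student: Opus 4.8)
The plan is to argue directly with the companion half-norm rather than to invoke closedness in the original norm. Since $\sharp x \sharp \le \|x\|$ for all $x \in X$ (Theorem \ref{re:normal-equiv-norm}), the companion norm induces a \emph{coarser} topology than the original one, so the known closedness of $X_+$ in $\|\cdot\|$ does not by itself yield closedness in $\sharp \cdot \sharp$; the implication runs the wrong way. Instead I would exploit the subadditivity and the order-theoretic characterization of $\psi$ established in Proposition \ref{re:normal-mon-funct}.

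Concretely, let $(x_n)$ be a sequence in $X_+$ with $\sharp x_n - x \sharp \to 0$ for some $x \in X$; the goal is $x \in X_+$. By Proposition \ref{re:normal-mon-funct}(c), this is equivalent to $\psi(-x) = 0$. The subadditivity of $\psi$ (Proposition \ref{re:normal-mon-funct}(b)) gives
\[
\psi(-x) = \psi\big((-x_n) + (x_n - x)\big) \le \psi(-x_n) + \psi(x_n - x).
\]
Because $x_n \in X_+$ we have $-x_n \in -X_+$, so $\psi(-x_n) = 0$ by Proposition \ref{re:normal-mon-funct}(c). Moreover $\psi(x_n - x) \le \max\{\psi(x_n - x), \psi(x - x_n)\} = \sharp x_n - x \sharp \to 0$. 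Hence $\psi(-x) \le \psi(x_n - x) \to 0$, forcing $\psi(-x) = 0$, and therefore $x \in X_+$ again by Proposition \ref{re:normal-mon-funct}(c).

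I expect no serious obstacle here; the one point requiring care is the sign bookkeeping in the characterization $\psi(y) = 0 \iff y \in -X_+$, applied once to the tail terms $-x_n$ and once to the limit $-x$. An alternative route, should one prefer it, is to write $X_+ = \{x \in X : x^* x \ge 0 \text{ for all } x^* \in X_+^*\}$ (valid since $X_+$ is a closed convex cone) and to note, via the preceding proposition, that each $x^* \in X_+^*$ is also $\sharp \cdot \sharp$-continuous with the same norm; then $X_+$ is an intersection of companion-closed half-spaces, hence companion-closed. The subadditivity argument is the cleaner of the two and is the one I would record.
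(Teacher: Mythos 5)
Your proof is correct, and it takes a genuinely different route from the paper. The paper argues by contradiction via separation: supposing $x \notin X_+$, it invokes Mazur's theorem to produce a bounded linear functional $\phi$ with $\phi(x) < -1$ and $\phi \ge -1$ on $X_+$, deduces that $\phi$ is positive, and then uses Proposition \ref{re:bounded-func-compan-lin} to conclude that $\phi$ is $\sharp\cdot\sharp$-continuous, giving the contradiction $0 \le \lim \phi(x_n) = \phi(x)$. In fact, the ``alternative route'' you sketch at the end of your proposal (intersection of companion-closed half-spaces cut out by $X_+^*$) is essentially the paper's argument. Your primary argument is more elementary: it needs only the subadditivity of $\psi$ and the characterization $\psi(y) = 0 \iff y \in -X_+$ from Proposition \ref{re:normal-mon-funct}, with no appeal to Hahn--Banach/Mazur separation. (Note that the closedness of $X_+$ in the original norm, which you correctly observe does not transfer directly, still enters both proofs --- in yours through Proposition \ref{re:normal-mon-funct}(c), in the paper's through the hypothesis of Mazur's theorem.) Your computation even shows slightly more than is claimed: since $\psi(-x) \le \psi(-x_n) + \psi(x_n - x) = \psi(x_n - x)$, membership $x \in X_+$ already follows from $\psi(x_n - x) \to 0$ alone, i.e.\ $X_+$ is closed under convergence in the half-norm $\psi$, not just in the full companion norm. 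What the paper's duality argument buys in exchange is its fit with the surrounding dual-wedge material, which is used elsewhere; as a self-contained proof of this corollary, yours is the cleaner one.
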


\begin{proof} Let $(x_n)$ be a sequence in $X_+$, $x \in X$ and
$\sharp x_n - x \sharp \to 0$. Suppose that $x \notin X_+$.
By a theorem of Mazur \cite[IV.6.Thm.3']{Yos}, there exists a bounded linear $\phi: X \to \R$
such that $\phi(x) < -1$ and $\phi(y) \ge -1 $ for all $y \in X_+$.
Let $z \in X_+$, $n \in \N$. Then $nz \in X_+$ and $\phi(nz) \ge -1$.
So $\phi(z) \ge -1/n$. We let $n \to \infty$ and obtain $\phi(z) \ge 0$.
By Proposition \ref{re:bounded-func-compan-lin}, $\phi$ is continuous
with respect to the monotone companion norm. So $0 \le \phi(x_n) \to \phi(x)$
and $\phi(x) \ge 0$, a contradiction.
\end{proof}

\section{Positivity of solutions to abstract integral inequalities}
\label{sec:integral}

We consider integral inequalities of the following kind on an interval
$[0,b]$, $0 < b < \infty$,
\begin{equation}
\label{eq:int-ineq}
u(t) \ge \int_0^t K(t,s) u(s) ds, \qquad t \in [0,b].
\end{equation}
Here $u:[0,b) \to X$ is a continuous function, $K(t,s)$, $ 0 \le s \le t \le b$,
are bounded linear positive operators such that, for each $x \in X$,
$K(t,s) x$ is a continuous function of $(t,s)$, $0 \le s \le t \le b$.

\begin{theorem}
Let $X$ be an ordered Banach space.
Let $u: [0,b] \to X$ be a continuous solution of the inequality (\ref{eq:int-ineq}).
Then $u(t) \in X_+$ for all $ t \in [0,b]$.
\end{theorem}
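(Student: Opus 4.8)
The plan is to reduce the vector inequality to a scalar Gronwall argument by applying the companion half-norm $\psi$, exploiting that $\psi$ is order-preserving whether or not $X_+$ is normal. Set $w(t) = \psi(-u(t))$; since $u$ is continuous and $\psi$ is $1$-Lipschitz with respect to the norm (Proposition \ref{re:normal-mon-funct}(b)), the function $w:[0,b]\to\R_+$ is continuous. By Proposition \ref{re:normal-mon-funct}(c), $u(t)\in X_+$ is equivalent to $\psi(-u(t))=0$, so the theorem amounts to showing $w\equiv 0$.

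First I would rewrite the hypothesis (\ref{eq:int-ineq}) as $-u(t) \le \int_0^t K(t,s)(-u(s))\,ds$, using linearity of $K(t,s)$ and of the integral, and then apply the monotonicity of $\psi$ (Proposition \ref{re:normal-mon-funct}(a)) to obtain $w(t) \le \psi\big(\int_0^t K(t,s)(-u(s))\,ds\big)$. The central estimate is a Jensen-type inequality $\psi\big(\int_0^t g(s)\,ds\big) \le \int_0^t \psi(g(s))\,ds$ valid for continuous $g:[0,t]\to X$; this follows by applying subadditivity and positive homogeneity of $\psi$ to Riemann sums and passing to the limit, using the Lipschitz continuity of $\psi$ on the left side and the continuity of $s\mapsto\psi(g(s))$ on the right. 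Here I would take $g(s)=K(t,s)(-u(s))$, which is continuous in $s$ for each fixed $t$ because $K(t,s)x$ is jointly continuous in $(t,s)$ for each fixed $x$ and the operator norms $\|K(t,s)\|$ stay bounded.

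Next, I would apply $\psi(K(t,s)x)\le \|K(t,s)\|\,\psi(x)$ from Theorem \ref{re:bounded-linear-maps-compan} inside the integral and bound $\|K(t,s)\|\le\kappa$ uniformly on the compact triangle $\{0\le s\le t\le b\}$. This uniform bound $\kappa$ comes from the uniform boundedness principle: for each fixed $x$, the map $(t,s)\mapsto K(t,s)x$ is continuous on a compact set, hence bounded, so $\sup_{s,t}\|K(t,s)\|<\infty$. The result is the scalar inequality $w(t)\le\kappa\int_0^t w(s)\,ds$ for all $t\in[0,b]$. Finally, Gronwall's inequality with zero forcing term forces $w(t)\le 0$, and since $w\ge 0$ we conclude $w\equiv 0$, that is, $u(t)\in X_+$ for every $t\in[0,b]$.

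The hard part will be the Jensen-type inequality $\psi(\int g)\le\int\psi(g)$ together with the attendant continuity bookkeeping: one must justify that the $X$-valued Riemann sums converge to the integral, that $\psi$ commutes with this limit via its $1$-Lipschitz property, and that $s\mapsto K(t,s)(-u(s))$ is genuinely continuous so that $s\mapsto\psi(g(s))$ is Riemann integrable. Everything else — the reformulation through $\psi$, the operator estimate from Theorem \ref{re:bounded-linear-maps-compan}, the Banach--Steinhaus bound, and the Gronwall closing — is routine once this inequality is established.
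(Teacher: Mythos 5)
Your proposal is correct and follows essentially the same route as the paper: apply the monotone companion half-norm $\psi$ to $-u$, use its monotonicity together with subadditivity and positive homogeneity (the Jensen step for the integral), the estimate $\psi(K(t,s)x)\le\|K(t,s)\|\,\psi(x)$ from Theorem \ref{re:bounded-linear-maps-compan}, and the uniform boundedness principle to arrive at the scalar inequality $\psi(-u(t))\le \kappa\int_0^t\psi(-u(s))\,ds$. The only difference is cosmetic: where you invoke Gronwall's inequality, the paper proves the same zero-forcing conclusion directly via a weighted supremum argument, bounding $\alpha(\lambda)=\sup_{0\le t\le b}e^{-\lambda t}\psi(-u(t))$ by $c\,\alpha(\lambda)/\lambda$ and taking $\lambda$ large.
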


\begin{proof} We define $v: [0,b) \to X$ by $v(t) =- u(t)$. Then
\begin{equation}
\label{eq:int-ineq2}
v(t) \le \int_0^t K(t,s) v(s) ds, \qquad t \in [0,b].
\end{equation}
Since the monotone companion half-norm $\psi$ is order-preserving,
\begin{equation*}
\psi(v(t)) \le \psi \Big( \int_0^t K(t,s) v(s) ds\Big ), \qquad t \in [0,b].
\end{equation*}
Since $\psi$ is convex and homogeneous,
\begin{equation}
\label{eq:int-ineq3}
\psi(v(t)) \le  \int_0^t \psi ( K(t,s) v(s)) ds, \qquad t \in [0,b].
\end{equation}
By Theorem \ref{re:bounded-linear-maps-compan},
\[
\psi(K(t,s) u(s)) \le \|K(t,s)\| \psi(s), \qquad 0 \le s \le t \le b.
\]
By the uniform boundedness theorem, there exists some $c \ge 0$
such that $\|K(t,s)\| \le c$ whenever $0 \le s \le t \le b$. So
\begin{equation}
\label{eq:int-ineq4}
\psi (K(t,s) v(s) ) \le c \psi (v(s)), \qquad 0 \le s \le t \le b.
\end{equation}
We substitute the last inequality into (\ref{eq:int-ineq3}),
\begin{equation*}
\psi(v(t)) \le  c \int_0^t  \psi ( v(s)) ds, \qquad t \in [0,b].
\end{equation*}
Let $\lambda > 0$. Then
\begin{equation*}
e^{-\lambda t} \psi(v(t)) \le  c \int_0^t e^{-\lambda (t-s)} e^{-\lambda s} \psi ( v(s)) ds, \qquad t \in [0,b].
\end{equation*}
Define $\alpha(\lambda) = \sup_{0 \le t \le b} e^{-\lambda t} \psi(v(t))$. Then
\[
 e^{-\lambda t} \psi(v(t)) \le  c \int_0^t e^{-\lambda (t-s)} \alpha(\lambda) ds
\le
c \frac{\alpha(\lambda)}{\lambda}, \qquad 0 \le t \le b,
\]
and
\[
\alpha(\lambda) \le c\frac{\alpha(\lambda)}{\lambda}.
\]
Choosing $\lambda >0$ large enough, $\alpha (\lambda) \le 0$ and, since it is nonnegative,
$\alpha (\lambda) = 0$. This implies $\psi(v(t)) = 0$ for all $t \in [0,b]$.
By Proposition \ref{re:normal-mon-funct}, $v(t) \in -X_+$ and so $u(t)=-v(t) \in X_+$ for all $t \in [0,b]$.
\end{proof}


\section{Towards a compact attractor of bounded sets}
\label{sec:attractor}

The monotone companion half-norm can also be useful if the cone $X_+$ is normal.
The following result generalizes \cite[Prop.7.2]{SmTh}.

\begin{proposition}
\label{re:attr-exist}
Let $X_+$ be a normal cone and $F : X_+ \to X_+$.
Assume that there exist some linear, bounded, positive map $A:X \to X$,
 $\br(A) < 1$, and real numbers  $R, c > 0$ such that
\begin{equation}
\label{eq:attr-exist}
\begin{split}
& \hbox{
for any } x \in X_+ \hbox{ with } \|x\| \le R \hbox{ there exists some }y \in X
\\
& \hbox{ with } \|y\|  \le c \hbox{ and }F(x) \le A(x) + y.
\end{split}
\end{equation}
Then, after introducing an equivalent norm, there exists some $\tilde{R}>0$ such that \[
F(X_+\cap\bar{B}_s)\subset\bar{B}_s,\ \ s\ge \tilde{R},
\]
for all closed balls $\bar{B}_s$ with radius $s\ge \tilde{R}$ and the origin as center. Further, there exists some $\hat{R}>0$ such that
\[
\limsup_{n\rightarrow \infty} \|F^n(x)\| \le \hat{R},\ \ x\in X_+.
\]
\end{proposition}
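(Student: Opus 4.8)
The strategy is to renorm $X$ so that $A$ becomes a strict contraction and then to push the \emph{order} inequality $F(x)\le A(x)+y$ through the monotone companion half-norm. Even though $X_+$ is normal here, the companion half-norm is exactly the right device: it is order-preserving (Proposition~\ref{re:normal-mon-funct}), so it converts $F(x)\le A(x)+y$ into a scalar recursion, and by normality (Proposition~\ref{re:normal-mon-funct}(d)) it is equivalent to the norm on $X_+$, so scalar control reads back as control of $\|F^n(x)\|$.

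First I would fix $\kappa$ with $\br(A)<\kappa<1$ and set $\|x\|_\ast=\sum_{n=0}^\infty\kappa^{-n}\|A^n x\|$. Formula~(\ref{eq:specrad-norm}) makes the series converge, $\|\cdot\|_\ast$ is equivalent to $\|\cdot\|$, and shifting the index gives $\|Ax\|_\ast\le\kappa\|x\|_\ast$, i.e.\ $\|A\|_\ast\le\kappa$. Passing to an equivalent norm disturbs neither positivity of $A$ nor, by the characterization in Theorem~\ref{re:cone-normal}(ii), normality of $X_+$, so I work with $\|\cdot\|_\ast$ and write $\psi_\ast$, $\sharp\cdot\sharp_\ast$ for the induced companion half-norm and companion norm. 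Theorem~\ref{re:bounded-linear-maps-compan} then yields $\psi_\ast(Ax)\le\|A\|_\ast\,\psi_\ast(x)\le\kappa\,\psi_\ast(x)$ for all $x\in X$, and I take $\sharp\cdot\sharp_\ast$ (equivalent to the original norm by Theorem~\ref{re:normal-equiv-norm}) as the announced equivalent norm, so that $\bar B_s$ is its closed ball of radius $s$.

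Next, for $x\in X_+$ in the growth regime $\|x\|\ge R$ of Theorem~\ref{re:attractor}, choose $y$ with $\|y\|\le c$ and $F(x)\le A(x)+y$; applying the order-preserving, subadditive, homogeneous $\psi_\ast$ gives
\[
\psi_\ast(F(x))\le\psi_\ast(A(x))+\psi_\ast(y)\le\kappa\,\psi_\ast(x)+c_\ast ,
\]
where $c_\ast=\sup\{\|y\|_\ast:\|y\|\le c\}<\infty$ by equivalence of the norms. Since $F(x)\in X_+$, Theorem~\ref{re:normal-equiv-norm} gives $\sharp F(x)\sharp_\ast=\psi_\ast(F(x))$ and likewise $\sharp x\sharp_\ast=\psi_\ast(x)$ on $X_+$. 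Choosing $\tilde R$ larger than $c_\ast/(1-\kappa)$ and than the (finite) supremum of $\psi_\ast\circ F$ over the complementary bounded region $\{x\in X_+:\|x\|\le R\}$, I split any $x\in X_+\cap\bar B_s$ with $s\ge\tilde R$ into two cases: where the estimate applies, $\psi_\ast(F(x))\le\kappa s+c_\ast\le s$; otherwise $\psi_\ast(F(x))\le\tilde R\le s$. Hence $F(X_+\cap\bar B_s)\subset\bar B_s$. For an orbit $u_n=F^n(x)$, set $a_n=\psi_\ast(u_n)$: whenever $a_n>\tilde R$ the estimate applies and forces $a_{n+1}\le\kappa a_n+c_\ast<a_n$, while any step out of the bounded region lands at height $\le\tilde R$; thus $\limsup_{n\to\infty}a_n\le\tilde R$, and normality (Proposition~\ref{re:normal-mon-funct}(d)) converts this into $\limsup_{n\to\infty}\|F^n(x)\|\le\hat R$ for a suitable $\hat R$.

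The routine pieces are the renorming and the two cited companion-norm inequalities; the conceptual heart is that $\psi_\ast$ is what legitimately transforms the order inequality into the scalar recursion, which is also why the half-norm is useful here despite $X_+$ being normal. The one genuine obstacle is the bounded region on which (\ref{eq:attr-exist}) is silent: there one must know that $F$ carries $\{x\in X_+:\|x\|\le R\}$ into a bounded set, so that $\sup\{\psi_\ast(F(x)):\|x\|\le R\}$ is finite and can be absorbed into $\tilde R$. Establishing (or assuming) this boundedness, and then checking the dichotomy cleanly produces a uniform eventual bound, is where the care of the proof is concentrated.
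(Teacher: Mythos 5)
Your proposal is correct in substance and follows essentially the same route as the paper's proof: renorm so that $A$ becomes a strict contraction, push the order inequality $F(x)\le A(x)+y$ through the monotone companion half-norm to get the scalar recursion $\psi(F(x))\le \kappa\,\psi(x)+\mathrm{const}$, identify the half-norm with the norm on the cone, and run the dichotomy between the contraction regime and the bounded regime. The only technical difference is the renorming: the paper uses Krasnosel'skii's finite sum of companion norms of the iterates $A^k(x)$, $k=0,\dots,m$, which is itself order-preserving on $X_+$ and hence coincides with its own companion half-norm on the cone, whereas you use the classical infinite series $\sum_n \kappa^{-n}\|A^n x\|$, which need not be monotone; since you route every estimate through $\psi_\ast$ and $\sharp\cdot\sharp_\ast$ via Theorem \ref{re:bounded-linear-maps-compan}, this difference is immaterial.

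The ``genuine obstacle'' you flag at the end --- boundedness of $F$ on $\{x\in X_+:\|x\|\le R\}$ --- is exactly the point at which the paper's own proof is loose: the paper derives (\ref{eq:dissip}) and (\ref{eq:dissip1}) only for $\|x\|\ge R$ and then asserts $\|F(x)\|\le\max\{\tilde R,\xi\|x\|\}$ for \emph{all} $x\in X_+$. The reconciliation lies in the hypothesis: as printed, (\ref{eq:attr-exist}) covers $\|x\|\le R$, while the proof's own rewrite (\ref{eq:attr-exist1}) covers $\|x\|\ge R$; the intended reading (consistent with \cite[Prop.7.2]{SmTh}, which this proposition generalizes) is that $F(x)\le A(x)+y$ with $\|y\|\le c$ holds for all $x\in X_+$. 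Under that reading your supremum over the small ball is automatically finite (bounded by $\kappa\sup\{\psi_\ast(x):\|x\|\le R\}+c_\ast$), your proof closes, and so does the paper's. If instead the inequality is assumed only for $\|x\|\ge R$, as in the statement of Theorem \ref{re:attractor}, then the caveat you raise is a real lacuna --- for both proofs --- since nothing then prevents $F$ from mapping the small ball onto an unbounded set, and one can build counterexamples to the conclusion. One further small point: in the $\limsup$ argument you should also require $\tilde R\ge\sup\{\psi_\ast(x):x\in X_+,\ \|x\|\le R\}$, so that $\psi_\ast(F^n(x))>\tilde R$ really does place $F^n(x)$ in the regime where the contraction estimate applies.
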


\begin{proof}
Since $X_+$ is normal, the original norm is equivalent to its monotone companion
norm $\sharp \cdot \sharp$. Let $\psi$ be the monotone companion half-norm. Then
(\ref{eq:attr-exist}) can be rewritten as
\begin{equation}
\label{eq:attr-exist1}
\psi (F(x) - A(x))  \le c, \qquad x \in X_+, \|x\| \ge R.
\end{equation}
In a next step we can switch to an equivalent monotone norm,
denoted by $\|\cdot\|^\sim$, such that the associated operator norm
of $A$ satisfies $\|A\|^\sim < 1$,
\[
\|x\|^\sim =  \sum_{k=0}^m r^{-m} \sharp A^k (x) \sharp, \qquad x \in X,
\]
where $ \br_+(A) < r < 1 $ and $m \in \N$ is chosen large enough
such that $\|A^{m+1}\| < r^{m+1}$. See \cite[2.5.2]{Kra}. Then $\|A\|^\sim \le r < 1$.
Let $\tilde \psi$ be the monotone companion half-norm  of $\|\cdot\|^\sim$.
Since $\|\cdot \|^\sim$ is order-preserving on $X_+$, $\|x\|^\sim
= \tilde \psi(x)$ for $x \in X_+$.

The inequality (\ref{eq:attr-exist1})
remains valid for $\|\cdot \|^\sim $ and $\tilde \psi$
after possibly changing $R$ and $c$.

So, without loss of generality, we can assume that $\|\cdot\|$
is order-preserving on $X_+$, $\|A\| < 1$, and the monotone companion
half-norm of $\|\cdot\|$ satisfies (\ref{eq:attr-exist1}).

 Since $\psi$ is monotone and subadditive on $X$, for $x\in X_+$ with $\|x\| \ge R$,
\begin{equation}
\label{eq:dissip}
\begin{split}
\|F(x) \| = &  \psi(F(x))  \le \psi(F(x) - A x) +  \psi(Ax)
\\
\le & c + \|A x\|
\le
c + \|A\|\, \|x\|.
\end{split}
\end{equation}
Choose some $\xi \in (\|A\|,1)$. Let $\breve R > R$. Then, if $x \in X_+$ and $\|x\| \ge \breve R$,
\[
\|F(x)\| \le \xi \|x\| + c + (\|A\|-\xi) \|x\|
\le
\xi \|x\| + c + (\|A\|-\xi) \breve R.
\]
Choosing $\breve R > R$ large enough, we can achieve that
$c + (\|A\|-\xi) \breve R \le 0$ and
\begin{equation}
\label{eq:dissip1}
\|F(x)\| \le \xi \|x\|, \qquad \|x\| \ge \breve R.
\end{equation}
 Set $\tilde R = \max\{ \breve R, c + \|A\| \breve R\}$.
By (\ref{eq:dissip}) and (\ref{eq:dissip1}),
\[
\|F(x)\| \le \max \{ \tilde R, \xi \|x\| \}, \qquad x \in X_+.
\]
For any $s \ge \tilde R$,  $\|x\| \le s$ implies $\|F(x) \| \le s$.
By (\ref{eq:dissip1}) $\|F^n(x) \| \le \xi^n \|x\| $ as long as
$\|F^n(x)\|\ge \tilde R \ge \check R$. Once $\|F^m(x) \| \le \tilde R$
for some $m \in \N$, then $\|F^n(x) \| \le \tilde R$ for all $n \ge m$.
\end{proof}

\begin{proof}[Proof of Theorem \ref{re:attractor}]
Let $\| \cdot \|^\sim $ be an equivalent norm such that
there is some $\tilde R > 0$ such $F(X_+ \cap \bar B_s) \subseteq (\bar B_s)$ for all $s \ge \tilde R$ where $\bar B_s$ is the closed ball with radius $s$ and the origin as center
taken with respect to $\|\cdot \|^\sim$. Let $B$ be a bounded subset of $X_+$
with respect to the original norm. Then $B \subseteq \bar B_s$ for some $s \ge \tilde R$.
By Proposition \ref{re:attr-exist}, $F^n(B) \subseteq \bar B_s$ for all $n \in \N$. Notice that $\tilde B = B_s$
is bounded with respect to the original norm. The statement concerning
the compact attractor $K$ follows from \cite[Thm.2.33]{SmTh}.
\end{proof}

\section{The space of certain order-bounded \\ elements
and some functionals}
\label{sec:background-order-bounded}


\begin{definition}
\label{def:order-norm}
Let $x \in X$ and $u \in X_+$. Then $x$ is called $u$-{\em bounded} if there
exists some $c > 0$ such that $-cu \le x \le c u$. If $x$ is $u$-bounded, we define
\begin{equation}
\label{eq:order-max}
\|x\|_u = \inf \{c > 0; -cu \le x \le c u \}.
\end{equation}
The set of $u$-bounded elements in $X$ is denoted by $X_u$.
If $x,u \in X_+$ and $x $ is not $u$-bounded, we define
\[
\|x\|_u = \infty.
\]
Two elements $v$ and $u$ in $X_+$ are called {\em comparable} if $v$ is $u$-bounded
and $u$ is $v$-bounded, i.e., if there exist $\epsilon, c > 0$
such that $\epsilon u \le v \le c u$.
Comparability is an equivalence relation for elements of $X_+$,
and we write $u \sim v$ if $u$ and $v$ are comparable.
Notice that $X_u = X_v$ if
and only if $ u \sim v$.
\end{definition}

If $X$ is a space of real-valued functions on a set $\Omega$,
\[
\|x\|_u = \sup \Big \{ \frac{|x(\xi)|}{u(\xi)}; \xi \in \Omega, u(\xi) > 0\Big\}.
\]
Since the cone $X_+$ is closed,
\begin{equation}
\label{eq:order-max-for}
-\|x\|_u u \le x \le \|x\|_u u, \qquad x \in X_u.
\end{equation}
$X_u$ is a linear subspace of $X$,
$\|\cdot\|_u$ is a norm on $X_u$,
and $X_u$, under this norm, is an ordered normed vector space with
cone $X_+ \cap X_u$ which is normal, generating, and has nonempty
interior.

\begin{lemma}
\label{re:order-norm-prop}
 Let $u \in \dot X_+$. Then the following hold:

\begin{itemize}

\item[(a)]
$\sharp x \sharp \le \|x\|_u \; \sharp \,u \sharp= \|x\|_u \,
\max\{ d (u, -X_+), d(u,X_+)\}, \qquad x \in X_u.$

\item[(b)]  $u$ is a normal point of $X_+$ if and only if there exists some $c \ge 0$
such that $\|x\| \le c \|x\|_u $ for all $x \in X_u \cap X_+$.

\item[(c)] If $X_+$ is solid and $u$ in the interior of $X_+$,
then $X = X_u$, $d(u, X \setminus X_+)>0$, and
\[
\|x\| \ge \|x\|_u\, d(u, X \setminus X_+), \qquad x \in X.
\]

\item[(d)]
In turn, if $X_u = X$ and there exists some $\epsilon >0$
such that $\|x\| \ge \epsilon \|x\|_u$ for all $x \in X$,
then $u$ is an interior point of $X_+$.

\item[(e)] If $X_+$ is solid and $u$ is both an interior and a normal point of $X_+$,
then $\|\cdot\|$ and $\|\cdot\|_u$ are equivalent.
\end{itemize}
\end{lemma}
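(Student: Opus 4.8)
The plan is to treat the five statements in order, reserving (e) for last since it combines the estimates from (b) and (c). Throughout I would lean on the sandwich bound $-\|x\|_u u \le x \le \|x\|_u u$ from (\ref{eq:order-max-for}) and on the identities $\psi(u)=d(u,-X_+)$, $\psi(-u)=d(u,X_+)$ from Proposition \ref{re:normal-mon-funct}. For (a), I would apply the order-monotonicity of the companion norm directly: the chain $-\|x\|_u u \le x \le \|x\|_u u$ together with (\ref{eq:mod-norm}) gives $\sharp x\sharp \le \max\{\sharp(-\|x\|_u u)\sharp,\ \sharp(\|x\|_u u)\sharp\}$; absolute homogeneity yields $\sharp(\pm\|x\|_u u)\sharp = \|x\|_u\,\sharp u\sharp$, and $\sharp u\sharp=\max\{\psi(u),\psi(-u)\}=\max\{d(u,-X_+),d(u,X_+)\}$, which is exactly the claimed formula.

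For (b), the forward direction uses that a normal point $u$ makes $\{\|y\|;\,y\in X_+,\ y\le u\}$ bounded by some $M$; for $x\in X_u\cap X_+$ with $\|x\|_u>0$ the element $x/\|x\|_u$ lies in $X_+$ and satisfies $x/\|x\|_u\le u$, so $\|x\|\le M\|x\|_u$ (the case $\|x\|_u=0$ forces $x=0$ since the cone is closed). Conversely, if $\|x\|\le c\|x\|_u$ on $X_u\cap X_+$, then any $x\in X_+$ with $x\le u$ obeys $-u\le x\le u$, hence $\|x\|_u\le 1$ and $\|x\|\le c$, so the defining set of Definition \ref{def:normal-point} is bounded and $u$ is a normal point.

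The heart of (c) and (d) is the equivalence between interiority of $u$ and positivity of $\rho:=d(u,X\setminus X_+)$. For (c), $u$ interior gives $\rho>0$, and whenever $\|z\|<\rho$ both $u+z$ and $u-z$ remain in $X_+$ (otherwise one of them would witness a distance to $X\setminus X_+$ strictly below $\rho$), i.e.\ $-u\le z\le u$. Scaling a nonzero $x$ to $z=\varepsilon x/\|x\|$ and letting $\varepsilon\uparrow\rho$ gives $x\in X_u$ with $\|x\|_u\le\|x\|/\rho$, which is the asserted inequality and also shows $X=X_u$. For (d) I would reverse this: from $\|x\|\ge\varepsilon\|x\|_u$, any $y$ with $\|y-u\|<\varepsilon$ has $\|y-u\|_u<1$, so $-cu\le y-u\le cu$ for some $c<1$, whence $y=u+(y-u)\ge(1-c)u\ge 0$; thus the $\varepsilon$-ball about $u$ lies in $X_+$ and $u$ is interior.

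Finally, for (e), solidity with $u$ interior gives $X_u=X$ and, by (c), $\|x\|_u\le\|x\|/\rho$ for all $x$, one of the two required comparisons. The reverse bound $\|x\|\le C\|x\|_u$ is the one genuine obstacle, since (b) controls only \emph{positive} elements while here $x$ is arbitrary. I would resolve this with the splitting $x=(x+\|x\|_u u)-\|x\|_u u$: the first summand lies in $X_+$ with $0\le x+\|x\|_u u\le 2\|x\|_u u$, so its $u$-norm is at most $2\|x\|_u$ and (b) bounds its norm by $2c\|x\|_u$, while the second summand has norm $\|x\|_u\|u\|$; adding yields $\|x\|\le(2c+\|u\|)\|x\|_u$. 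The recurring difficulty across all parts is exactly this passage from an order-interval bound (which natively sees only the companion norm, as in (a)) to the \emph{original} norm on non-positive elements, which is precisely why the normal-point hypothesis together with the positive splitting is needed to close (e).
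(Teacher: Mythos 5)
Your proposal is correct and takes essentially the same route as the paper's proof: (a) via monotonicity of the companion (half-)norm applied to $-\|x\|_u u \le x \le \|x\|_u u$, (b) by normalizing positive $x \le \|x\|_u u$, and (c)--(d) by the same scaling arguments relating $d(u, X\setminus X_+)$ to the $u$-norm. The only place you go beyond the paper is (e), where the paper just says it "follows from combining (b) and (c)"; your splitting $x = (x+\|x\|_u u) - \|x\|_u u$, with $0 \le x+\|x\|_u u \le 2\|x\|_u u$ handled by (b), is precisely the standard argument needed to pass from positive elements to arbitrary ones and correctly fills in that omitted step.
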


\begin{proof} (a) By (\ref{eq:order-max-for}), if $x \in X_u$,
\[
x, -x \le \|x\|_u u .
\]
Since the companion functional is order-preserving on $X$,
\[
\psi (x) \le \|x\|_u \psi(u), \qquad \psi(-x) \le \|x\|_u \psi(u)
\]
and so $\sharp x \sharp \le \|x\|_u \sharp u \sharp$.

(b) Let $u$ be  normal point of $X_+$. Then there exists some
$c > 0$ such that $\|y\| \le c $ for all $y \in X_+$ with
$ y \le u $. For $x \in X_+ \cap X_u$, $ x \le \|x\|_u u $.
If $x \ne 0$ in addition, $\|x\|_u^{-1} x \le u$.
Hence $\big \| \|x\|_u^{-1} x \big \|\le c$.

The other direction is obvious.l

(c) Let $u$ be an interior element of $X_+$. Then
$d(u, X \setminus X_+) >0$. For any $ \delta \in (0, d(u, X \setminus X_+)) $,
we have $u \pm \frac{\delta }{\|x\|} x \in X_+$
for all $x \in \dot X$. So $ \pm x \le \frac{\|x\|}{\delta }u $
and so  $x \in X_u$ and $\|x\|_u  \le \frac{ \|x\|}{\delta}$.
Since this holds for any $\delta \in (0, d(u, X \setminus X_+) $,
it also holds for $\delta = d(u, X \setminus X_+)$.

(d) Assume that  $X_u = X$ and there exists some $\epsilon >0$
such that $\|x\| \ge \epsilon \|x\|_u$ for all $x \in X$.
This means that
\[
\pm x \le \|x\|_u u \le (1/\epsilon) \|x\| u.
\]
Hence
$u \pm \frac{\epsilon}{\|x\|} x \in X_+$ for all $x \in \dot X$.
This implies that $u$ is an interior point of $X_+$.

(e) follows from combining (b) and (c).
\end{proof}

If $X_+$ is normal, by Theorem \ref{re:cone-normal}, there
exists some $M \ge 0$ such that
\begin{equation}
\label{eq:order-max-for-norms}
\|x\| \le M \|x\|_u \|u\|, \qquad x \in X_u.
\end{equation}
If $X_+$ is a  normal and complete cone of $X$, then
$X_+ \cap X_u$ is a complete subset of $X_u$ with the metric
induced by the  norm $\|\cdot\|_u$. For more information
see \cite[1.3]{Kra} \cite[I.4]{Boh},  \cite[1.4]{KrLiSo}.

For $u \in X_+$, one can also consider the functionals
\[
\left . \begin{array}{cc}
(x/u)^\di = & \inf\{ \alpha \in \R; x \le \alpha u \}
\\
(x/u)_\di = & \sup\{ \beta \in \R; \beta u \le x \}
\end{array} \right \} x \in X,
\]
with the convention that $\inf (\emptyset) = \infty$ and
$\sup (\emptyset) =- \infty$.
If $X$ is a space of real-valued functions on a set $\Omega$,
\[
\left . \begin{array}{cc}
(x/u)^\di = & \sup\big \{  \frac{x(\xi)}{u(\xi)}; \xi \in \Omega, u(\xi) >0 \big\}
\\[2mm]
(x/u)_\di = & \inf\big\{ \frac{x(\xi)}{ u(\xi)}; \xi \in \Omega, u(\xi) >0 \big\}
\end{array} \right \} x \in X.
\]
Many other symbols have been used for these two functionals
in the literature; see Thompson \cite{Tho} and Bauer \cite{Bau} for some
early occurrences.
For $x \in X_+$, $\|x\|_u =(x/u)^\di$.
Since we will use this functional for $x \in X_+$ only, we will
stick with the notation $\|x\|_u$.
Again for $x \in X_+$, $(x/u)_\di$ is a nonnegative real number,
and we will use the leaner notation
\begin{equation}
\label{eq:order-min}
[x]_u = \sup \{\beta \ge 0; \beta u \le x \}, \qquad x, u \in X_+.
\end{equation}
Since the cone $X_+$ is closed,
\begin{equation}
\label{eq:order-min-form}
x \ge [x]_u u, \qquad x,u \in X_+ .
\end{equation}
Further $[x]_u$ is the largest number for which this inequality holds.

\begin{lemma}
\label{re:order-min}
Let $u \in \dot X_+$. Then the functional $\phi= [\cdot]_u: X_+ \to \R_+$ is
homogeneous, order-preserving and
concave. It is  bounded with respect to the original norm on $X$
and also to the monotone companion norm,
\[
[x]_u \le \frac{\sharp x \sharp }{\sharp u \sharp}
\le
\frac{\|x\|}{\sharp u \sharp}, \qquad x \in X_+,
\]
 and
\[
 \| \phi \|_+ = \sharp  \phi \sharp_+
\le \frac{1} {\sharp\, u \sharp}.
\]
$\phi$ is upper semicontinuous with respect to the original norm and
\[
\big | [y]_u - [x]_u \big | \le \|y -x \|_u, \qquad y,x \in X_u \cap X_+.
\]
\end{lemma}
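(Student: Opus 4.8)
The plan is to verify each asserted property directly from the defining relation $[x]_u u \le x$ (equation (\ref{eq:order-min-form})) together with the fact that $[x]_u$ is the \emph{largest} $\beta \ge 0$ with $\beta u \le x$, and the closedness of $X_+$.

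First I would dispatch the algebraic properties. Homogeneity on $X_+$ is immediate: for $\alpha > 0$ the substitution $\beta \mapsto \beta/\alpha$ gives $[\alpha x]_u = \alpha [x]_u$, while $[0]_u = 0$ because $u \in \dot X_+$ forces $\beta u = 0$ whenever both $\beta u \in X_+$ and $-\beta u \in X_+$. Order-preservation is equally direct: if $x \le y$ then every $\beta$ with $\beta u \le x$ also satisfies $\beta u \le y$, so the admissible sets are nested. For concavity, given $x, y \in X_+$ and $t \in [0,1]$, I would add the inequalities $t[x]_u u \le tx$ and $(1-t)[y]_u u \le (1-t)y$ to get $(t[x]_u + (1-t)[y]_u) u \le tx + (1-t)y$, exhibiting $t[x]_u + (1-t)[y]_u$ as an admissible $\beta$ for the convex combination.

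Next come the boundedness estimates. Since $\sharp \cdot \sharp$ is order-preserving and positively homogeneous on $X_+$ (Theorem \ref{re:normal-equiv-norm}), applying it to $[x]_u u \le x$ yields $[x]_u \sharp u \sharp \le \sharp x \sharp$; because $u \in \dot X_+$ gives $\sharp u \sharp = \psi(u) > 0$ (Proposition \ref{re:normal-mon-funct}(c)), division produces the displayed chain $[x]_u \le \sharp x \sharp / \sharp u \sharp \le \|x\| / \sharp u \sharp$, the last step being $\sharp x \sharp \le \|x\|$. Taking the supremum over $x \in X_+$ with $\|x\| \le 1$ then gives $\|\phi\|_+ \le 1/\sharp u \sharp$; and since $\phi$ is homogeneous, order-preserving, and now known to be bounded in the original norm, the equality $\|\phi\|_+ = \sharp \phi \sharp_+$ is exactly Proposition \ref{re:bounded-func-compan}(b).

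Finally I would treat the two continuity statements, which are where the few remaining subtleties lie. For upper semicontinuity in the original norm, suppose $x_n \to x$ in $X_+$ and set $L = \limsup_n [x_n]_u$, which is finite by the bound just proved. Passing to a subsequence along which $[x_n]_u \to L$, the relations $x_n - [x_n]_u u \in X_+$ pass to the limit $x - L u \in X_+$ because $X_+$ is closed, so $Lu \le x$ and hence $L \le [x]_u$. For the $\|\cdot\|_u$-Lipschitz estimate, from $-\|y - x\|_u u \le y - x$ (equation (\ref{eq:order-max-for}), valid since $y-x \in X_u$) and $[x]_u u \le x$ I obtain $([x]_u - \|y-x\|_u) u \le y$, so $[y]_u \ge [x]_u - \|y-x\|_u$ (trivially when the right-hand side is negative, as $[y]_u \ge 0$); interchanging $x$ and $y$ gives the matching bound and hence $|[y]_u - [x]_u| \le \|y-x\|_u$. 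The only thing to watch throughout is where $\sharp u \sharp > 0$ and the closedness of $X_+$ are invoked; there is no genuine obstacle.
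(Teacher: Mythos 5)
Your proof is correct and follows essentially the same route as the paper: the key steps (applying the monotone companion norm to $x \ge [x]_u u$ and dividing by $\sharp u \sharp > 0$, then invoking Proposition \ref{re:bounded-func-compan}(b) for $\|\phi\|_+ = \sharp \phi \sharp_+$) are exactly the paper's, and the remaining properties you verify in detail are the ones the paper dismisses as ``readily derived from the definitions.'' Your explicit treatment of upper semicontinuity via closedness of $X_+$ and of the $\|\cdot\|_u$-Lipschitz bound correctly fills in those omitted routine arguments.
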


Recall that $\sharp u \sharp = d(u, - X_+)$.

\begin{proof}
We apply the  monotone companion norm to  (\ref{eq:order-min-form}),
\[
\sharp\, x \sharp \ge [x]_u \, \sharp \, u\sharp, \qquad x \in X_+.
\]
The equality $ \| \phi \|_+ = \sharp  \phi \sharp_+$
 follows from Proposition \ref{re:bounded-func-compan}.
 The other properties are readily derived from the definitions.
\end{proof}
See
\cite{Krau} for an in-depth treatment of this functional.


\section{Lower Collatz-Wielandt numbers, bounds, and  radius}
\label{sec:CW-lower}

Let $B:X_+ \to X_+$ be homogeneous and order-preserving.
We do not assume that $B$ is continuous at 0; so concepts
like the cone spectral radius \cite{LeNu12, MPNu02, MPNu, Nus} may not apply.
For $x \in \dot X_+$,
the {\em lower Collatz-Wielandt number} of $x$ is defined as \cite{FoNa}
\begin{equation}
\label{eq:Col-Wie-num-low}
[B]_x = [Bx]_x = \sup \{\lambda \ge 0; Bx \ge \lambda x \}.
\end{equation}
By (\ref{eq:order-min-form}),
\begin{equation}
Bx \ge [B]_x x , \qquad x \in \dot X_+.
\end{equation}
The following result is immediate.

\begin{lemma}
Let $B,C: X_+ \to X_+$ be homogeneous, bounded, and order-preserving.
Let $x \in \dot X_+$. Then
$[CB]_x \ge [C]_x [B]_x $.
\end{lemma}
This implies
\begin{equation}
\label{eq:col-wie-low-ineq}
b_{n+m} \ge b_n b_m , \quad b_n = [B^n]_x, \qquad n, m \in \N.
\end{equation}
The {\em lower local Collatz-Wielandt  radius} of $x$ is defined
as
\begin{equation}
\label{eq:spec-rad-Col-Wie-loc}
\eta_x(B) = \sup_{n\in \N} [B^n]_x^{1/n} .
\end{equation}
This implies
\begin{equation}
\eta_x (B^n) \le (\eta_x(B))^n, \qquad n \in \N.
\end{equation}

The {\em lower Collatz-Wielandt bound} is defined as
\begin{equation}
cw(B) = \sup_{ x \in \dot X_+} [B]_x,
\end{equation}
and
the {\em Collatz-Wielandt  radius} of $B$ is defined
as
\begin{equation}
\label{eq:spec-rad-Col-Wie}
\br_{cw} (B) =\sup_{ x \in \dot X_+} \eta_x(B).
\end{equation}

\subsection{Companion spectral radii}

If $B:X_+ \to X_+$ is homogeneous and bounded with respect to
the original norm, then, by Proposition \ref{re:mon-func-maps}, $B$ is also bounded with respect to
the monotone companion norm $\sharp \cdot \sharp$ and $\sharp B \sharp_{\,+} \le \|B\|_+$. So, we can define {\em the companion cone spectral
radius}, the {\em companion growth bounds},  and the {\em companion orbital spectral radius} by
\begin{equation}
\br_+^\sharp(B) = \inf_{n\in \N} \sharp B^n \sharp_{\,+}^{1/n}
=
\lim_{n \to \infty} \sharp B^n \sharp_{\,+}^{1/n}
\end{equation}
and
\begin{equation}
\br_o^\sharp (B) = \sup_{x \in X_+} \gamma_B^\sharp(x),
\qquad \gamma_B^\sharp(x) = \limsup_{n\to \infty} \sharp B^n x \sharp^{1/n}.
\end{equation}
Since $\sharp \,x \sharp \le \|x\| $ for all $x \in X$, we have the estimates
\begin{equation}
\label{eq:geom-growth-companion-compare}
\begin{split}
\br_o^\sharp (B) \le \br_+^\sharp(B) \le \br_+ (B),
\\
\br_o^\sharp (B) \le \br_o(B) \le \br_+ (B).
\end{split}
\end{equation}
If the cone $X_+$ is normal, the companion norm is equivalent to
the original norm and the respective spectral radii equal
their companion counterparts. The following proposition implies
Theorem \ref{re:inequalities-radii}.

\begin{proposition}
\label{re:Col-Wie-lower}
Let $B$ be bounded, homogeneous and order-preserving.
Then, for all $x \in \dot X_+$,
\[
[B]_x \le \eta_x(B) \le \gamma_B^\sharp (x) \le \gamma_B (x).
\]
Further $cw(B) \le \sharp B \sharp \le \|B\|$ and
\[
cw(B) \le \br_{cw} (B) \le  \br_o^\sharp (B)
\le
\left \{ \begin{array}{c}\br_+^\sharp (B)  \\ \br_o (B) \end{array} \right \}
\le \br_+(B).
\]
\end{proposition}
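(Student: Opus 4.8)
The plan is to prove the two pointwise chains first, since all the inequalities among the global radii then follow by taking suprema and combining with the estimates already recorded in (\ref{eq:geom-growth-companion-compare}).

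For the pointwise chain $[B]_x \le \eta_x(B) \le \gamma_B^\sharp(x) \le \gamma_B(x)$, the two outer inequalities are immediate: $[B]_x$ is the $n=1$ term of the supremum defining $\eta_x(B)$, while $\sharp y\sharp \le \|y\|$ (Theorem \ref{re:normal-equiv-norm}) applied to $y = B^n x$ gives $\gamma_B^\sharp(x) \le \gamma_B(x)$ after taking $1/n$ powers and $\limsup$. The heart of the matter is $\eta_x(B) \le \gamma_B^\sharp(x)$. I would start from $B^n x \ge [B^n]_x\, x$, which is (\ref{eq:order-min-form}), and apply the order-preserving, positively homogeneous half-norm $\psi$; since both members lie in $X_+$, where $\sharp\cdot\sharp$ agrees with $\psi$ (Theorem \ref{re:normal-equiv-norm}), this yields $\sharp B^n x\sharp \ge [B^n]_x\, \sharp x\sharp$. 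As $x\in\dot X_+$ forces $\sharp x\sharp = \psi(x) > 0$ by strict positivity (Proposition \ref{re:normal-mon-funct}(c)), I may divide to obtain $[B^n]_x^{1/n} \le \sharp B^n x\sharp^{1/n}/\sharp x\sharp^{1/n}$ for every $n$.

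The one genuine subtlety, and the step I expect to be the main obstacle, is that $\eta_x(B)$ is a supremum over $n$ whereas $\gamma_B^\sharp(x)$ is a limit superior, so a termwise bound does not transfer directly. I would resolve this via the supermultiplicativity (\ref{eq:col-wie-low-ineq}), namely $[B^{n+m}]_x \ge [B^n]_x[B^m]_x$: by the Fekete-type lemma for supermultiplicative sequences this forces $\eta_x(B) = \sup_n [B^n]_x^{1/n} = \lim_n [B^n]_x^{1/n} = \limsup_n [B^n]_x^{1/n}$. Taking $\limsup$ in the displayed bound and using $\sharp x\sharp^{1/n}\to 1$ (as $\sharp x\sharp$ is a fixed positive number) then gives $\eta_x(B) \le \limsup_n \sharp B^n x\sharp^{1/n} = \gamma_B^\sharp(x)$.

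The inequality $cw(B) \le \sharp B\sharp_+ \le \|B\|_+$ follows the same template at the level $n=1$: from $Bx \ge [B]_x\, x$ and the half-norm argument I get $[B]_x\,\sharp x\sharp \le \sharp Bx\sharp \le \sharp B\sharp_+\,\sharp x\sharp$, the last step being the definition of the companion cone norm; dividing by $\sharp x\sharp>0$ and taking the supremum over $x\in\dot X_+$ gives $cw(B) \le \sharp B\sharp_+$, while $\sharp B\sharp_+ \le \|B\|_+$ is Theorem \ref{re:mon-func-maps}. Finally, for the global chain I would take suprema over $x\in\dot X_+$ in the two pointwise inequalities $[B]_x\le\eta_x(B)$ and $\eta_x(B)\le\gamma_B^\sharp(x)$ to obtain $cw(B)\le\br_{cw}(B)$ and $\br_{cw}(B)\le\br_o^\sharp(B)$ respectively, where enlarging the index set from $\dot X_+$ to $X_+$ is harmless since the $x=0$ term contributes $0$. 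The remaining inequalities $\br_o^\sharp(B)\le\br_+^\sharp(B)\le\br_+(B)$ and $\br_o^\sharp(B)\le\br_o(B)\le\br_+(B)$ are precisely those already collected in (\ref{eq:geom-growth-companion-compare}), so the proposition follows.
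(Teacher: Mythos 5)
Your proof is correct, and its skeleton (apply the order-preserving, homogeneous half-norm $\psi$ to lower-eigenvector inequalities, divide by $\psi(x)>0$, then take suprema and finish with (\ref{eq:geom-growth-companion-compare})) is the same as the paper's. Where you genuinely diverge is in the key middle inequality $\eta_x(B) \le \gamma_B^\sharp(x)$. The paper applies its one-step estimate $[C]_x \le \gamma^\sharp_x(C)$ to $C = B^n$ and then converts $\gamma^\sharp_x(B^n)$ into $\big(\gamma^\sharp_x(B)\big)^n$ via the power inequality (\ref{eq:eq:geom-fac-power-in}) established in Section \ref{sec:geometric}; this yields $[B^n]_x^{1/n} \le \gamma^\sharp_x(B)$ for each fixed $n$, so the sup-versus-limsup mismatch you identify never arises on the Collatz--Wielandt side. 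You instead keep the termwise bound $[B^n]_x\, \sharp x \sharp \le \sharp B^n x \sharp$ and regularize the left-hand side, using the supermultiplicativity (\ref{eq:col-wie-low-ineq}) to show that the supremum defining $\eta_x(B)$ is attained as a limit superior. Both auxiliary devices are available in the paper, so the arguments are of comparable weight: yours bypasses the Section \ref{sec:geometric} power law for the companion growth factor entirely (it is the classical Collatz--Wielandt/Fekete pattern), while the paper's avoids any Fekete-type consideration and is insensitive to vanishing Collatz--Wielandt numbers.

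One precision on that last point: your parenthetical claim that $\sup_n [B^n]_x^{1/n} = \lim_n [B^n]_x^{1/n}$ is too strong, because some of the numbers $[B^n]_x$ may be zero and then the limit need not exist. For instance, for the coordinate swap $B(x_1,x_2)=(x_2,x_1)$ on $\R^2_+$ and $x=(1,0)$ one has $[B^n]_x = 0$ for odd $n$ and $[B^n]_x = 1$ for even $n$, a supermultiplicative sequence whose $n$-th roots do not converge. What does survive --- and is all your argument actually uses --- is $\sup_n [B^n]_x^{1/n} = \limsup_n [B^n]_x^{1/n}$: for fixed $N$, (\ref{eq:col-wie-low-ineq}) gives $[B^{kN}]_x \ge \big([B^N]_x\big)^k$, hence $\limsup_n [B^n]_x^{1/n} \ge [B^N]_x^{1/N}$, and the reverse inequality is trivial. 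So replace ``$\lim$'' by ``$\limsup$'' in that step and your proof stands as written.
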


\begin{proof}
Let $x \in \dot X_B$. The first inequality follows
from (\ref{eq:spec-rad-Col-Wie-loc-intro}).
Since the cone is closed,  $B(x) \ge [B]_x x$.
By induction $B^n(x) \ge [B]_x^n x $.

We apply the monotone companion half-norm $\psi$,
which is homogeneous, and obtain
$\psi( B^n(x)) \ge [B]_x^{n} \psi(x)$.
Since $\psi(x) > 0$,
$\gamma_x(B) \ge \gamma_x^\sharp (B) \ge [B]_x$.
Since, for all $n \in \N$, $B^n$ is
homogeneous and order-preserving,
\[
[B^n]_x \le  \gamma_x^\sharp (B) \le  \gamma_x^\sharp (B)^n.
\]
The last equality follows (\ref{eq:eq:geom-fac-power-in}).
Since this holds for all $n \in \N$,
by (\ref{eq:spec-rad-Col-Wie-loc-intro})
$\eta_x(B) \le \gamma_x^\sharp(B)$.
The last inequality follows
from (\ref{eq:geom-growth-companion-compare}).

The remaining assertions follow directly from the definitions.
\end{proof}

The following criteria for the positivity of the lower Collatz-Wielandt
bound and the Collatz-Wielandt radius are obvious from their definitions.

\begin{lemma}
Let $B:X_+ \to X_+$ be homogeneous and order-preserving.

Then
$cw(B) >0$ if and only if there exist $\epsilon > 0$
and $x \in \dot X_+$
such that $ B(x) \ge \epsilon x$.

Further
$\br_{cw} (B) > 0$ if and only if there exists $\epsilon > 0$, $m \in \N$,
and $x \in \dot X_+$
such that $B^m (x) \ge \epsilon x$.
\end{lemma}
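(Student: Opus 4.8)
The plan is to prove both equivalences by directly unwinding the definitions of the suprema $cw(B)$ and $\br_{cw}(B)$; no analytic machinery is needed. The one fact I will lean on is that the lower Collatz-Wielandt numbers are \emph{attained}, not merely approached: since $X_+$ is closed, (\ref{eq:order-min-form}) gives $B(x) \ge [B]_x\, x$ for every $x \in \dot X_+$, and, because each iterate $B^n$ is again homogeneous and order-preserving, likewise $B^n(x) \ge [B^n]_x\, x$. This attainment is what lets me pass between the statement ``a supremum is positive'' and the existence of a genuine witness inequality with a strictly positive constant.

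For the first equivalence, suppose first that $B(x) \ge \epsilon x$ for some $\epsilon > 0$ and $x \in \dot X_+$. Then $\epsilon$ belongs to the set $\{\lambda \ge 0; B(x) \ge \lambda x\}$ defining $[B]_x$ in (\ref{eq:Col-Wie-num-low}), so $[B]_x \ge \epsilon$, and hence $cw(B) = \sup_{y \in \dot X_+}[B]_y \ge [B]_x > 0$. Conversely, if $cw(B) > 0$, then since $cw(B)$ is a supremum of nonnegative quantities there is some $x \in \dot X_+$ with $[B]_x > 0$; taking $\epsilon = [B]_x$ and invoking the attainment $B(x) \ge [B]_x\, x$ produces the required inequality.

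The second equivalence has the same shape, carried out at the level of $\eta_x(B) = \sup_{n \in \N}[B^n]_x^{1/n}$ from (\ref{eq:spec-rad-Col-Wie-loc}). If $B^m(x) \ge \epsilon x$ for some $\epsilon > 0$, $m \in \N$ and $x \in \dot X_+$, then $[B^m]_x \ge \epsilon$, so $\eta_x(B) \ge [B^m]_x^{1/m} \ge \epsilon^{1/m} > 0$, whence $\br_{cw}(B) \ge \eta_x(B) > 0$. Conversely, $\br_{cw}(B) > 0$ forces $\eta_x(B) > 0$ for some $x \in \dot X_+$, which in turn forces $[B^n]_x > 0$ for some $n \in \N$; setting $m = n$, $\epsilon = [B^n]_x$ and using $B^n(x) \ge [B^n]_x\, x$ finishes the argument.

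Since each step is a tautological reading of a definition, I do not expect a real obstacle. The only point deserving a moment's care is the forward direction, where positivity of a supremum must be converted into an honest witness inequality: I handle this not through any approximation argument but through the exact attainment $B^n(x) \ge [B^n]_x\, x$, which holds precisely because the cone is closed. It is worth recording that the witnessing constant $\epsilon$ can always be chosen to be the relevant Collatz-Wielandt number itself.
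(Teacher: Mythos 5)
Your proof is correct and is exactly the argument the paper has in mind: the paper states this lemma as ``obvious from the definitions'' without writing out a proof, and your unwinding—using closedness of the cone to get the attainment $B^n(x) \ge [B^n]_x\, x$ and then reading off the equivalences—is precisely that intended tautological argument. Nothing is missing; the attainment point you flag is the only step that uses anything beyond the definitions, and you justify it correctly via (\ref{eq:order-min-form}).
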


\begin{theorem}
\label{re:spec-rad-cone-orb}
Let $X$ be an ordered normed vector with complete cone $X_+$
and $B: X_+ \to X_+$ be homogeneous, continuous and
order preserving.
Then $\br_+^\sharp (B) \le \br_o(B)$.
\end{theorem}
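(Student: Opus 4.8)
The plan is to fix an arbitrary $r > \br_o(B)$ and prove that $\sharp B^n \sharp_+ \le C r^n$ for all $n$, with a constant $C = C(r)$ independent of $n$. Since $\br_+^\sharp(B) = \lim_{n\to\infty}\sharp B^n\sharp_+^{1/n}$, this forces $\br_+^\sharp(B)\le r$, and letting $r\downarrow\br_o(B)$ gives the theorem (if $\br_o(B)=\infty$ there is nothing to prove). Structurally this is a Banach--Steinhaus / uniform-boundedness argument, but since $B$ is only homogeneous, linearity has to be replaced by positive homogeneity together with the order-preservingness of both $B$ and the companion (half-)norm, and the uniform boundedness principle is replaced by a Baire category argument on the complete metric space $X_+$.

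First I would record pointwise finiteness. For each $x\in X_+$ the definition of $\br_o$ gives $\gamma_B(x)=\limsup_n\|B^n x\|^{1/n}\le\br_o(B)<r$, so $\sup_n\|B^n x\|/r^n<\infty$; since $\sharp\cdot\sharp\le\|\cdot\|$, also $\sup_n\sharp B^n x\sharp/r^n<\infty$. Set $\phi_n(x)=\sharp B^n x\sharp/r^n$. Each $\phi_n$ is continuous on $X_+$ for the original norm (because $B$ is continuous and $|\sharp a\sharp-\sharp b\sharp|\le\|a-b\|$), so the sets $F_k=\{x\in X_+:\phi_n(x)\le k\ \text{for all}\ n\}$ are closed, and by the previous sentence $X_+=\bigcup_k F_k$. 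As $X_+$ is complete it is a Baire space, so some $F_K$ has nonempty interior relative to $X_+$: there exist $x_0\in X_+$ and $\rho>0$ with $\sharp B^n x\sharp\le K r^n$ for all $n$ whenever $x\in X_+$ and $\|x-x_0\|\le\rho$.

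Next I would propagate this local bound to a bound near $0$ by monotonicity. For $w\in X_+$ with $\|w\|\le\rho$, the point $x_0+w$ lies in $X_+$ within distance $\rho$ of $x_0$, so $\sharp B^n(x_0+w)\sharp\le K r^n$; since $0\le w\le x_0+w$ and $B^n$ is order-preserving, $B^n w\le B^n(x_0+w)$, whence $\sharp B^n w\sharp\le\sharp B^n(x_0+w)\sharp\le K r^n$ by the monotonicity of the companion norm on $X_+$ (Theorem \ref{re:normal-equiv-norm}). Homogeneity then yields $\sharp B^n w\sharp\le(K/\rho)\,r^n\,\|w\|$ for every $w\in X_+$.

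The hard part is that this is a bound over original-norm balls, whereas $\sharp B^n\sharp_+$ is the supremum of $\sharp B^n z\sharp$ over $\{z\in X_+:\sharp z\sharp\le1\}$, and the two norms are not equivalent when $X_+$ is not normal, so $\|z\|$ can be arbitrarily large on the companion-unit ball. I would again invoke monotonicity to bridge the gap: if $z\in X_+$ and $\sharp z\sharp=\psi(z)\le1$, then by (\ref{eq:mon-compan-funct-incr}) for each $\epsilon>0$ there is $y\in X$ with $z\le y$ and $\|y\|<1+\epsilon$; but $y\ge z\ge0$ forces $y\in X_+$, so the estimate of the previous paragraph applies to $y$, and $B^n z\le B^n y$ gives $\sharp B^n z\sharp\le(K/\rho)(1+\epsilon)r^n$. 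Letting $\epsilon\to0$ produces $\sharp B^n\sharp_+\le(K/\rho)\,r^n$, the desired bound with $C=K/\rho$. The one point to keep straight is that the Baire argument must be run in the original norm (in which $X_+$ is complete, whereas it need not be complete in the companion norm), with continuity of $\phi_n$ and closedness of $F_k$ understood with respect to that norm.
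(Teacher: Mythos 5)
Your proof is correct, and it rests on the same two pillars as the paper's own argument: a Baire category argument on the complete metric space $(X_+,\|\cdot\|)$, followed by the use of order-preservation of $B^n$ together with monotonicity of the companion (half-)norm to convert a bound on an original-norm ball around a point $x_0\in X_+$ first into a bound near $0$ in $X_+$ and then into a bound on the companion unit ball of the cone. The difference is in how $X_+$ is written as a countable union of closed sets. You use the classical Banach--Steinhaus decomposition into level sets $F_k=\{x\in X_+:\ \sharp B^n x\sharp\le k\,r^n \hbox{ for all } n\}$, which requires the pointwise finiteness of $\sup_n\|B^n x\|/r^n$ (true, since $\gamma_B(x)<r$ handles large $n$ and each $\|B^n x\|$ is finite) and yields, after Baire and the two monotonicity steps, the uniform geometric bound $\sharp B^n\sharp_+\le (K/\rho)\,r^n$ for \emph{all} $n$. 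The paper instead fixes $m\in\N$ and covers $X_+$ by the sets $C_n=\{x\in X_+:\ \|B^n(x)\|\le r^n\}$, $n\ge m$, using only that each orbit satisfies the bound for \emph{some single} $n\ge m$; Baire then produces one exponent $n\ge m$ with $\sharp B^n\sharp_+\le r^n/\epsilon$, and a final limiting argument over $m$ finishes. Your variant buys a cleaner conclusion: since the bound holds for every $n$ simultaneously, the passage to $\br_+^\sharp(B)\le r$ is immediate, and it establishes along the way that $B$ itself is bounded for the companion norm, so that the identity $\br_+^\sharp(B)=\inf_n\sharp B^n\sharp_+^{1/n}=\lim_n\sharp B^n\sharp_+^{1/n}$ is legitimately available; the paper's variant extracts less information per point (each $C_n$ involves only one power of $B$) at the cost of the extra limit over $m$ at the end. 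Both routes use continuity of all the iterates $B^n$ and completeness of $X_+$ in the original norm only, exactly as you note.
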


\begin{proof}
Let $r > \br_o(B)$.
It is sufficient to show that $\br_+^\sharp(B) < r$.
By definition of $\br_o(B)$, $\gamma_B(x) < r $ for all $x \in X_+$.
Let $m \in \N$. For all $x \in X_+$, by definition of $\gamma_B(x)$, there exists some $n \in \N$,
$n \ge m$, such that $\|B^n (x)\| \le r^n$.
In other words,
\[
X_+ \subseteq  \bigcup_{n \ge m} C_n, \qquad C_n= \{x \in X_+; \|B^n ( x )\| \le  r^n \}.
\]
Since $B$ is continuous, the sets $C_n$ are closed with respect to $\|\cdot\|$.
Since $X_+$ is complete with respect to $\|\cdot\|$, by the Baire
category theorem, there exists some $n \in \N$, $n \ge m$, such that
$ C_n$
contains a  subset of $X_+$ that is relatively open with respect to $\|\cdot\|$. So there
exist some  $\epsilon > 0$ and $x \in X_+$ such that
$\|B^n (x + \epsilon z) \| \le  r^n $ whenever $z \in X$, $\|z\| \le 2$,
and  $x + \epsilon z \in X_+$. Let  $z \in X_+$, $\|z\| \le 2 $.
Since $x \in X_+$ and $B$ is order preserving,  $B^n (\epsilon z) \le B^n (x + \epsilon z)$.
Since the companion norm is monotone and $B$ is  homogeneous,
\[
 \epsilon \sharp B^n z\sharp   \le \sharp B^n (x + \epsilon z) \sharp
 \le \| B^n (x + \epsilon z) \| \le r^n, \qquad z \in X_+, \|z\|\le 2.
\]
Now let $ y \in X_+$ with $\sharp y \sharp = \psi(y) \le 1$.
By definition of the companion half-norm,  there exists some $z \in X$ with $z \ge y$ and $\|z \| \le 2$.
Then $z \in X_+$ and, since $B$ is order preserving, $ B^n(y) \le B^n(z)$. Since the
companion norm is monotone,
\[
\sharp B^n(y) \sharp \le \sharp B^n(z) \sharp
\le r^n/\epsilon.
\]
Since this holds for all $y\in X_+$ with $\sharp y \sharp \le 1$, $\sharp B^n  \sharp_+
\le r^n/\epsilon  $. Since $n \ge m$,
\[
\inf_{n\ge m} \sharp B^n\sharp_+^{1/n} \le r \inf_{n \ge m}  \epsilon^{-1/n}.
\]
Thus $\br_+^\sharp(B) = \lim_{n \to \infty} \sharp B^n\sharp_+^{1/n} \le r$.
\end{proof}

We conclude this section with a conditions under which  the lower KR property implies the KR property (see Definition \ref{def:KRprop}).
The following definition is similar to the one in \cite[III.2.1]{Boh}.

\begin{definition}
\label{def:stric-incr}
Let $\theta: X_+ \to \R_+$ be order-preserving and homogeneous.
An order-preserving map $B: X_+ \to X_+$ is called
strictly $\theta$-increasing if for any $x, y \in X_+$ with
$x \le y$ and $\theta(x) < \theta(y)$ there exists some $\epsilon > 0$
and some $m \in \N$ such that $B^m (y) \ge (1+\epsilon) B^m( x)$.

$B$ is called strictly increasing if $B$ is strictly $\theta$-increasing
where $\theta$ is the restriction of the norm to $X_+$.
\end{definition}

\begin{theorem}
\label{re:strictly-power-comp}
Let $\theta:X_+ \to X_+$
be order-preserving and homogeneous and  $B:X_+ \to X_+$
be continuous, homogeneous, and strictly $\theta$-increasing.
Assume that there is some $p \in \N$ such that $(B^p (x_n))_{n\in \N}$
has a convergent subsequence for any increasing  sequence $(x_n)$
in $X_+$ where $\{ \theta(x_n); n \in \N\}$ is bounded.
Then $B$ has the KR property whenever it has the lower KR
property.
\end{theorem}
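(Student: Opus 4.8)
The plan is to realize the eigenvector as the limit of the normalized orbit of the lower eigenvector furnished by the lower KR property. Write $r = \br_+(B) > 0$ and choose $v \in \dot X_+$ with $B(v) \ge r v$ (Definition \ref{def:KRprop}). Set $x_n = r^{-n} B^n(v)$. Applying $B^n$ to $B(v) \ge rv$ and using homogeneity and order-preservation gives $B^{n+1}(v) \ge r B^n(v)$, so $(x_n)$ is increasing with $x_n \ge x_0 = v$, and a direct computation gives $B(x_n) = r\, x_{n+1}$. Hence if $(x_n)$ converges to some $w$, then $x_{n+1} \to w$ and continuity of $B$ forces $B(w) = \lim_n r\, x_{n+1} = r w$, while $w \ge v$ together with closedness of $X_+$ yields $w \in \dot X_+$; this is exactly the KR property. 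The whole problem therefore reduces to proving that the increasing sequence $(x_n)$ converges.

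The hard part, and the only place where strict $\theta$-increase is used, is to show that $\{\theta(x_n)\}$ is bounded, which is precisely the hypothesis needed to invoke the compactness assumption. I would prove the stronger fact that $\theta(x_n)$ is constant. Suppose $\theta(x_n) < \theta(x_{n+1})$ for some $n$. Since $x_n \le x_{n+1}$, strict $\theta$-increase (Definition \ref{def:stric-incr}) supplies $\epsilon > 0$ and $m \in \N$ with $B^m(x_{n+1}) \ge (1+\epsilon) B^m(x_n)$, which after cancelling $r^m$ reads $x_{n+1+m} \ge (1+\epsilon) x_{n+m}$. Applying $B$ repeatedly and cancelling the resulting powers of $r$ propagates this to $x_{k+1} \ge (1+\epsilon) x_k$ for every $k \ge n+m$, whence $x_k \ge (1+\epsilon)^{k-(n+m)} x_{n+m}$. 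Now apply the order-preserving, positively homogeneous companion half-norm; since $\sharp x_k \sharp = r^{-k} \sharp B^k v \sharp$ and $\sharp B^{n+m} v \sharp = \psi(B^{n+m} v) > 0$ (the iterate is positive, Proposition \ref{re:normal-mon-funct}), one gets $\sharp B^k v \sharp^{1/k} \to (1+\epsilon) r$, so $\gamma_B^\sharp(v) \ge (1+\epsilon) r$. This contradicts $\gamma_B^\sharp(v) \le \br_o^\sharp(B) \le \br_+(B) = r$ from (\ref{eq:geom-growth-companion-compare}). Therefore $\theta(x_{n+1}) = \theta(x_n)$ for all $n$, and $\theta(x_n) = \theta(v)$ for every $n$.

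With $\{\theta(x_n)\}$ bounded I would extract convergence using the monotone convergence principle, Proposition \ref{re:monotone-converge}. The one subtlety is that the compactness hypothesis controls only $B^p$ of a sequence, producing an index shift by $p$; to dissolve this, I would apply the principle not to $(x_n)$ but to $(B^p(x_n))$, taking $S$ to be the family of all sequences of the form $(B^p(y_k))_k$ with $(y_k)$ increasing in $X_+$ and $\{\theta(y_k)\}$ bounded. This $S$ is closed under subsequences, and every member of $S$ has a convergent subsequence directly by the compactness hypothesis; since $(B^p(x_n))$ is increasing (as $B^p$ is order-preserving) and lies in $S$, Proposition \ref{re:monotone-converge} gives $B^p(x_n) \to \zeta$ for some $\zeta \in X_+$. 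Because $B^p(x_n) = r^p x_{n+p}$, the orbit itself converges, $x_n \to w := r^{-p}\zeta$, and the reduction of the first paragraph completes the argument.
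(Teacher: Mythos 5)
Your proposal is correct and takes essentially the same route as the paper's proof: iterate the lower eigenvector into an increasing normalized orbit, use strict $\theta$-increase to show $\{\theta(x_n)\}$ is bounded by deriving a contradiction with $\br_+(B)$, then apply Proposition \ref{re:monotone-converge} to the shifted sequences $(B^p(y_k))$ and finish by continuity. The only cosmetic difference is the contradiction used for $\theta$-boundedness: the paper gets $B(y)\ge(1+\epsilon)y$ for $y=B^m(x_{n-1})$ and invokes $cw(B)\le \br_+(B)$, while you propagate $x_{k+1}\ge(1+\epsilon)x_k$ and invoke $\gamma_B^\sharp(v)\le \br_o^\sharp(B)\le \br_+(B)$ --- both rest on the same companion-norm inequalities of Proposition \ref{re:Col-Wie-lower}.
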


\begin{proof} Since $B$ is homogeneous, we can assume that $\br_+(B) =1$
and that there exists some $x \in \dot X_+$
such that $B(x) \ge x$.
 Then the sequence $(x_n)$ defined
by $x_n = B^n(x)$ is increasing.
We claim that $(\theta(x_n))$ is bounded. If not, then there exists some $n \in \N$
with $\theta(x_{n-1}) <  \theta(x_n)$ where $x_0 =x$. Since $B$ is strictly
$\theta$-increasing, there exists some $\epsilon > 0$ and some
$m \in \N$ such that $B^m (x_n) \ge (1+\epsilon) B^m (x_{n-1})$.
By definition of $(x_n)$,
$B (y) \ge (1+\epsilon) y $ for $y = B^m( x_{n-1}) \ge x$.
Since $y \in \dot X_+$, $\br_+(B) \ge cw(B) \ge 1+ \epsilon$, a
contradiction.

Choose $p \in \N$ according to the assumption of the theorem.
We  apply the convergence principle in Proposition \ref{re:monotone-converge}.
Let $S$ be the set of sequences $(B^p (y_n))$ where $(y_n)$ is a
increasing sequence in $X_+$ such that $(\theta(y_n))$ is bounded.
Then $S$ has the property required in Proposition \ref{re:monotone-converge}
and so every increasing sequence in $S$ converges.

 Since $(x_n)$ is increasing and bounded,
 $(B^p(x_n))\in S$  converges  with limit $v$.
Since $B^p(x_n) = x_{n+p}$, $x_n \to v$.
 Since $x_{n+1} = B(x_n)$
and $B$ is continuous, $B(v)=v$.
\end{proof}

We mention some interesting properties of strictly $\theta$-increasing maps.

\begin{proposition}
\label{re:Perron}
Let $\theta: X_+ \to \R_+$ be order-preserving and homogeneous
and $B: X_+ \to X_+$ be homogeneous and strictly $\theta$-increasing.
Let $r,s > 0$ and $v,w \in  X_+$  with $\theta(v) >0$ and $\theta (w)>0$.

\begin{itemize}

\item[(a)] If
 $B(v) \ge r v$ and $B(w) \le s w$ and $v$ is $w$-bounded,
then $r \le s$ and $r=s$ implies $w \ge \frac{\theta(w)}{\theta(v)} v$.

\item[(b)] If
 $B(v) = r v$ and $B(w) = s w$ and $v$ and $w$ are comparable,
then    $r=s$ and $ w= \frac{\theta(w)}{\theta(v)} v$.

\end{itemize}
\end{proposition}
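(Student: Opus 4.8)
The plan is to prove (a) first and then obtain (b) by applying (a) to the pair in both orders. Throughout I would exploit that $B$ is order-preserving and homogeneous: from $B(v) \ge rv$ one gets $B^n(v) \ge r^n v$ for all $n \in \N$ by induction (apply $B$ to the inequality and use homogeneity at each step), and likewise $B(w) \le sw$ gives $B^n(w) \le s^n w$. I would also record that $v$ being $w$-bounded makes $c := \|v\|_w = \inf\{t>0; v \le tw\}$ finite, with $v \le cw$ (the bound holds at the infimum because $X_+$ is closed).

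First I would establish $r \le s$. Applying the order-preserving $B^n$ to $v \le cw$ and using homogeneity yields $r^n v \le B^n(v) \le c\,B^n(w) \le c\,s^n w$, hence $v \le c (s/r)^n w$ for every $n$. If $r > s$, then $(s/r)^n \to 0$, so $c(s/r)^n w \to 0$ in norm; since $c(s/r)^n w - v \in X_+$ and $X_+$ is closed, passing to the limit forces $-v \in X_+$, i.e. $v = 0$, contradicting $\theta(v) > 0$ (note $\theta(0)=0$ by homogeneity). Thus $r \le s$.

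The case $r = s$ is the heart of the argument and where I expect the main obstacle to lie; here the minimality built into $c = \|v\|_w$, combined with the strictly $\theta$-increasing hypothesis, does the work. Set $d := \theta(v)/\theta(w)$; applying the order-preserving homogeneous $\theta$ to $v \le cw$ gives $\theta(v) \le c\,\theta(w)$, i.e. $c \ge d$. Suppose for contradiction that $c > d$. Then with $x := v \le cw =: y$ we have $\theta(x) = \theta(v) < c\,\theta(w) = \theta(y)$, so the strictly $\theta$-increasing property supplies $\epsilon > 0$ and $m \in \N$ with $B^m(y) \ge (1+\epsilon)B^m(x)$. Chaining the iterated inequalities gives $c\,r^m w \ge c\,B^m(w) = B^m(cw) \ge (1+\epsilon)B^m(v) \ge (1+\epsilon)r^m v$, and dividing by $r^m$ yields $v \le \frac{c}{1+\epsilon} w$. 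This contradicts the minimality of $c = \|v\|_w$, so $c = d$ and $v \le d\,w$, which is exactly $w \ge \frac{\theta(w)}{\theta(v)} v$. This completes (a).

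Finally, (b) would follow by symmetry. Comparability means $v$ is $w$-bounded and $w$ is $v$-bounded, so (a) applies to the pair $(v,w)$ (with $B(w) = sw \le sw$) to give $r \le s$, and to the pair $(w,v)$ (with $B(v) = rv \le rv$) to give $s \le r$; hence $r = s$. The equality case of (a) then delivers both $w \ge \frac{\theta(w)}{\theta(v)} v$ and $v \ge \frac{\theta(v)}{\theta(w)} w$; multiplying the second by $\frac{\theta(w)}{\theta(v)}$ gives $\frac{\theta(w)}{\theta(v)} v \ge w$, which together with the first forces $w = \frac{\theta(w)}{\theta(v)} v$.
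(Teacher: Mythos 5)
Your proof is correct, and its engine is the same as the paper's: both apply the strictly $\theta$-increasing hypothesis to the extremal comparison between $v$ and $w$ --- you use $c=\|v\|_w$, the paper uses the reciprocal quantity $[w]_v$ after normalizing $\theta(v)=\theta(w)=1$ --- and both settle the equality statement by showing that a $\theta$-gap would generate the factor $(1+\epsilon)$ and hence a contradiction. The packaging differs in the inequality $r\le s$. The paper argues by a two-case analysis: if $\theta(w)=\theta([w]_v v)$ then $w\ge v$ and the one-step chain $rv\le B(v)\le B(w)\le sw$ plus monotonicity of $\theta$ gives $r\le s$; if $\theta(w)>\theta([w]_v v)$, the strict-increase property yields $r<s$ outright, so $r=s$ forces the first case and with it the desired inequality $w \ge v$. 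You instead prove $r\le s$ by iteration and a limit: from $v\le c(s/r)^n w$, if $r>s$ the right side tends to $0$ and closedness of the cone forces $v=0$, contradicting $\theta(v)>0$; you then invoke strict increase only once, to exclude $c>\theta(v)/\theta(w)$ when $r=s$, via minimality of $c$. Your version buys a small but real dividend in rigor: the definition of strictly $\theta$-increasing (Definition \ref{def:stric-incr}) only provides $B^m(y)\ge(1+\epsilon)B^m(x)$ for \emph{some} $m\in\N$, and you honestly chain this against the iterated bounds $B^m(w)\le s^m w$ and $B^m(v)\ge r^m v$, whereas the paper's displayed computation in Case 2 is written as if $m=1$ and strictly speaking needs exactly the iteration you carry out. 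The paper's route, in exchange, is shorter and avoids any limiting argument. Part (b) is handled identically in both proofs, by applying part (a) in both directions and combining the two resulting inequalities.
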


\begin{proof} We first assume that $\theta(v) =1= \theta (w)$.

(a)
Since $v$ is $w$-bounded, $w \ge [w]_v v$ with $[w]_v > 0$.

Case 1: $\theta(w) = \theta ( [w]_v v)$.

Then $1 = [w]_v$ and $w \ge v $. So $rv \le  B(v) \le B(w) \le s w$.
We apply $\theta$ and obtain $r \le s$.

Case 2: $\theta(w) > \theta ( [w]_v v)$

Since $B$ is strictly $\theta$-increasing, there exists some $\delta > 0$
such that
\[
sw \ge B(w) \ge (1+\epsilon) B ( [w]_v v) = (1+\epsilon) [w]_v B (  v)
\ge (1+\epsilon) [w]_v r  v,
\]
which implies that $[w]_v \ge (1+\epsilon) (r/s) [w]_v$
and so $ r < s$.

In either case $r \le s$. If $r =s$, the second case cannot occur
and the first case holds where $w \ge v$.

(b) From part (a), by symmetry, $r=s$ and $w \ge v$. Again, by symmetry,
$w=v$.

If just $\theta(v) >0$ and $\theta(w) >0$, we
set $\tilde v = \frac{1}{\theta(v)} v $
and $\tilde w = \frac{1}{\theta(w)} w $. Then $\theta(\tilde v) =1 =
\theta(\tilde w)$ and $B \tilde v \ge r \tilde v$ and $B\tilde w \le s \tilde w$.

We apply the previous considerations to $\tilde v$ and $\tilde w$
and obtain the results.
\end{proof}


\section{Order-bounded maps}
\label{sec:order-bounded}

The following terminology has been adapted from various works by Krasnosel'skii
\cite[Sec.2.1.1]{Kra} and
coworkers  \cite[Sec.9.4]{KrLiSo} though it has been modified.

\begin{definition}
\label{def:u-bounded}
Let $B: X_+ \to X_+$, $u \in X_+$. $B$ is called {\em pointwise $u$-bounded}
if, for any $x \in X_+$, there exist  some $n \in \N$
and $\gamma > 0$ such that $B^n (x) \le \gamma u$. The point $u$
is called a {\em pointwise order bound} of $B$.

$B$ is called {\em
uniformly $u$-bounded} if there exists
some $c > 0$ such that  $ B ( x )\le c \|x\| u$ for all
$x \in X_+$. The element $u$ is called a {\em uniform order bound} of $B$.

$B$ is called {\em uniformly order-bounded} if it is uniformly $u$-bounded
for some $u \in X_+$.
$B$ is called {\em pointwise order-bounded} if it is pointwise $u$-bounded for
some $u \in X_+$.

\end{definition}

If $B: X_+ \to X_+$ is bounded and $X_+$ is solid, then $B$
is uniformly $u$-bounded for every interior point $u$ of $X_+$.

Uniform order boundedness is preserved if the original norm $\|
\cdot \|$ is
replaced by its monotone companion norm $\sharp \cdot\sharp$.

\begin{proposition}
\label{re:order-bounded-compan}
Let $B: X_+ \to X_+$ be order-preserving.
Let $u \in X_+$ and $B$ be uniformly $u$-bounded. Then $B$ is also
uniformly $u$-bounded with respect to the monotone companion norm.
\end{proposition}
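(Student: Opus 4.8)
The plan is to show that the \emph{same} constant $c$ furnished by uniform $u$-boundedness with respect to $\|\cdot\|$ also works for the companion norm. Recall that for $x \in X_+$ the companion norm collapses to the half-norm, $\sharp x \sharp = \psi(x)$ (Theorem \ref{re:normal-equiv-norm}), and that by (\ref{eq:mon-compan-funct-incr}) we have $\psi(x) = \inf\{\|y\|;\ x \le y \in X\}$. The first observation I would record is that for $x \in X_+$ every majorant is automatically positive: if $x \le y$ then $y = (y-x)+x$ with both summands in $X_+$, so $y \in X_+$. Thus the infimum defining $\psi(x)$ may equally be taken over $y \in X_+$ with $y \ge x$.

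Next I would bring in the order-preservation of $B$. By hypothesis there is some $c>0$ with $B(x) \le c\|x\|u$ for all $x \in X_+$. For any majorant $y \ge x$ with $y \in X_+$, monotonicity of $B$ together with uniform $u$-boundedness gives $B(x) \le B(y) \le c\|y\|u$, so that $c\|y\|u - B(x) \in X_+$. This is the decisive step: order-preservation lets me replace the argument $x$ by the (potentially much larger-norm) majorant $y$ while only \emph{improving} the bound in the desired direction.

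Finally I would pass to the infimum over majorants. Fixing $\epsilon > 0$ and choosing $y \ge x$ in $X_+$ with $\|y\| \le \psi(x)+\epsilon$ yields $c(\psi(x)+\epsilon)u - B(x) \in X_+$. Letting $\epsilon \downarrow 0$, the left-hand side converges in norm to $c\,\psi(x)\,u - B(x)$, and since $X_+$ is closed this limit also lies in $X_+$; hence $B(x) \le c\,\psi(x)\,u = c\,\sharp x \sharp\, u$ for every $x \in X_+$. This is precisely uniform $u$-boundedness with respect to the companion norm, with the same constant $c$. The only delicate point is this concluding limit, which rests on the closedness of $X_+$; since no normality of the cone is invoked and the monotonicity of $B$ carries all the weight, I do not expect any real obstacle beyond that routine closure argument.
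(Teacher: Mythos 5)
Your proof is correct and follows essentially the same route as the paper's: pick positive majorants $y \ge x$ with $\|y\|$ within $\epsilon$ of $\psi(x)$, use order-preservation together with uniform $u$-boundedness to get $B(x) \le c\|y\|u$, and pass to the limit via closedness of $X_+$ (the paper does exactly this with a sequence $y_n$ and error $1/n$ in place of your $\epsilon$). The same constant $c$ survives in both arguments, so there is nothing to add.
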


\begin{proof}
Let $x \in X_+$. By (\ref{eq:mod-norm}), for each $n \in \N$ there
exists some $y_n \in X_+$ such that $x \le y_n$ and
$\sharp x\sharp \le \|y_n\| \le \sharp x\sharp + (1/n)$. Since $B$ is
order-preserving and uniformly $u$-bounded,
\[
B(x) \le B(y_n) \le c \|y_n\| u , \qquad n \in \N.
\]
We take the limit as $n \to \infty$ and obtain $B(x) \le c \sharp x\sharp u$.
\end{proof}

\begin{proposition}
\label{re:u-bounded-pointw-unif}
 Let $X_+$ be a  complete
cone, $u \in X_+$,   and $B: X_+ \to X_+$ be continuous, order-preserving and  homogeneous.
Then the following hold.

\begin{itemize}
 \item[(a)] $B$ is uniformly $u$-bounded if for any $x\in X_+$ there exists some $c =c_x \ge 0$
 such that $B (x) \le c u$.

\item[(b)] If $B$ be pointwise $u$-bounded,
then some power of $B$ is uniformly $u$-bounded.
\end{itemize}

\end{proposition}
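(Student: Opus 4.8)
The plan is to run a Baire category argument on the complete metric space $X_+$, in the same spirit as the proof of Theorem~\ref{re:spec-rad-cone-orb}, using order-preservation of $B$ in place of the linearity that powers the classical uniform boundedness principle. In both parts the scheme is identical: cover $X_+$ by a countable family of closed sets on which a fixed power of $B$ is dominated by a fixed multiple of $u$, extract one such set with nonempty interior via Baire, and then convert the resulting local bound into a global one using monotonicity and homogeneity.

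For (a), I would set, for $n \in \N$,
\[
C_n = \{x \in X_+ : B(x) \le n u\} = \{x \in X_+ : nu - B(x) \in X_+\}.
\]
Each $C_n$ is closed: $x \mapsto nu - B(x)$ is continuous because $B$ is, and $X_+$ is closed, so $C_n$ is the preimage of a closed set. The hypothesis says every $x \in X_+$ satisfies $B(x) \le c_x u$, hence lies in $C_n$ for every integer $n \ge c_x$; thus $X_+ = \bigcup_n C_n$. Since $X_+$ is complete, the Baire category theorem yields some $N$ for which $C_N$ has nonempty interior relative to $X_+$, i.e.\ there are $x_0 \in X_+$ and $\epsilon > 0$ (shrinking to a closed ball) with $B(y) \le Nu$ for all $y \in X_+$, $\|y - x_0\| \le \epsilon$. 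Now for $z \in X_+$ with $\|z\| \le \epsilon$ one has $x_0 + z \in X_+$ and $\|(x_0+z) - x_0\| \le \epsilon$, so $B(x_0 + z) \le Nu$; since $z \le x_0 + z$ and $B$ is order-preserving, $B(z) \le B(x_0 + z) \le Nu$. Homogeneity then upgrades this to a uniform bound: for $w \in \dot X_+$ apply the estimate to $z = (\epsilon/\|w\|)\,w$, giving $(\epsilon/\|w\|)B(w) = B(z) \le Nu$, hence $B(w) \le (N/\epsilon)\|w\| u$, which is uniform $u$-boundedness with $c = N/\epsilon$.

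For (b), the only new idea is to index by the power as well as the bound. I would put, for $m,k \in \N$,
\[
D_{m,k} = \{x \in X_+ : B^m(x) \le k u\},
\]
each closed because $B^m$ is continuous, whereas the single-power unions $\bigcup_k D_{m,k}$ need not be, which is precisely why both indices must be kept. Pointwise $u$-boundedness gives, for each $x$, some $n$ and $\gamma$ with $B^n(x) \le \gamma u$, so $x \in D_{n,k}$ for $k \ge \gamma$; hence $X_+ = \bigcup_{(m,k) \in \N \times \N} D_{m,k}$ is a countable cover by closed sets. Baire produces a single pair $(M,K)$ for which $D_{M,K}$ has nonempty interior, and then the argument of (a), applied to the continuous, homogeneous, order-preserving map $B^M$, shows that $B^M$ is uniformly $u$-bounded.

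I expect the main obstacle to be conceptual rather than computational: choosing the families of closed sets so that Baire applies, and in particular recognizing for (b) that one must parametrize simultaneously by the power $m$ and the constant $k$ in order to keep the sets closed. The passage from a bound on a neighborhood of an arbitrary $x_0$ to a bound near $0$, achieved by the monotonicity step $B(z) \le B(x_0 + z)$, is the cone-theoretic replacement for the subtraction $T(x) = T(x_0 + x) - T(x_0)$ of the linear theory, and is exactly where order-preservation is essential.
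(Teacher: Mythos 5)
Your proposal is correct and follows essentially the same route as the paper's own proof: the same closed sets $\{x \in X_+ : B^m(x) \le k u\}$, the same Baire category argument on the complete cone, and the same combination of order-preservation (to pass from $x_0+z$ to $z$) and homogeneity (to scale up to all of $X_+$). The only cosmetic difference is that the paper proves (b) directly with the doubly indexed family and remarks that (a) is similar, whereas you prove (a) first and then note that (b) follows by adding the power as a second index.
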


\begin{proof}
We prove (b); the proof of (a) is similar.
Define
\[
M_{n,k} = \{ x \in X_+; B^n (x) \le k u \}, \qquad n,k \in \N.
\]
Since $B$ is continuous and $X_+$ is closed,
 each set $M_{n,k}$ is closed. Since $B$ is assumed to be pointwise
 $u$-bounded, $X_+ = \bigcup_{k,n\in \N}
M_{n,k}$. Since $X_+$ is a complete metric space, by the Baire
category theorem, there exists some $n,k \in \N$ such that
$M_{n,k}$ contains a relatively open subset of $X_+$: There
exists some $y \in X_+$ and $\epsilon >0$ such that
$y + \epsilon z \in M_{n,k}$ whenever $z \in X$, $\|z\|\le 1$,
and $y + \epsilon z \in X_+$. Now let $z \in X_+$ and $\|z\|\le 1$.
Since $B$ is order-preserving and $y +\epsilon z \in X_+$, $ B^n (\epsilon z) \le B^n( y +  \epsilon z) \le k u$.
Since $B$ is  homogeneous, for all $x\in \dot X_+$,
\[
B^n (x) = \frac{\|x\|}{\epsilon}  B^n \Big (\frac{\epsilon}{\|x\|} x \Big) \le
\frac{k}{\epsilon} \|x\| u. \qedhere
\]
\end{proof}


\section{Upper semicontinuity of the companion spectral radius}
\label{sec:semicon}
In order to be able to compare the companion spectral radius to
the upper Collatz-Wielandt bound which will be defined later, some results on the
upper semicontinuity of the cone spectral radius are useful.
For more results of that kind see \cite{LeNu13}.

In the following, let $X$ be an ordered normed vector space with cone
$X_+$.

\begin{lemma}
\label{re:conv-power}
Let $B$ be  bounded  and homogeneous, $x \in X_+$, and $B$ be continuous
at $B^n(x)$ for all $n \in \N$.
Let $(B_k)$ be a sequence of
bounded homogeneous maps such that $\|B_k - B\|_+ \to 0$
as $k \to \infty$ and $(x_k)$ be a sequence in $X_+$ such that
$x_k \to x$. Then, for all $n \in \N$,  $B_k^n (x_k) \to B^n(x)$ as $k \to \infty$.
\end{lemma}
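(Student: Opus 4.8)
The plan is to induct on $n$, exploiting that the difference $B_k - B$ is itself a bounded homogeneous map on $X_+$, so that the cone-norm estimate (\ref{eq:operator-norm-est}) applies directly to it. Write $P(n)$ for the assertion $B_k^n(x_k) \to B^n(x)$ as $k \to \infty$. The base case $P(0)$ is just the hypothesis $x_k \to x$ (reading $B^0$ as the identity, so $B^0(x) = x$); first I would record this and then set up the passage from $n$ to $n+1$.

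For the inductive step, assume $P(n)$ and put $z_k = B_k^n(x_k)$ and $z = B^n(x)$; by the inductive hypothesis $z_k \to z$, and since a convergent sequence is bounded there is a constant $C$ with $\|z_k\| \le C$ for all $k$. As $z_k \in X_+$, I would split
\[
\|B_k^{n+1}(x_k) - B^{n+1}(x)\| = \|B_k(z_k) - B(z)\| \le \|(B_k - B)(z_k)\| + \|B(z_k) - B(z)\|.
\]
For the first term, since $B_k - B \in H(X_+,X)$ and $z_k \in X_+$, estimate (\ref{eq:operator-norm-est}) gives $\|(B_k - B)(z_k)\| \le \|B_k - B\|_+\,\|z_k\| \le C\,\|B_k - B\|_+ \to 0$. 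For the second term, $z_k \to z = B^n(x)$ and $B$ is continuous at $B^n(x)$ by hypothesis, so $B(z_k) \to B(z)$. Adding the two, $B_k^{n+1}(x_k) \to B^{n+1}(x)$, which is $P(n+1)$, and the induction closes.

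The argument is largely routine, and the points needing care are organizational. The first is that the perturbation $B_k - B$ must be controlled through the cone-operator-norm bound rather than through continuity; this is exactly why no continuity of the maps $B_k$ themselves is needed, only of the limit $B$. The second, which I expect to be the main (albeit minor) obstacle to state cleanly, is that the intermediate iterates $z_k = B_k^n(x_k)$ must stay bounded so that the factor $\|z_k\|$ can be absorbed into a constant; this boundedness is furnished for free by the inductive hypothesis, convergence implying boundedness. Finally, one should note that the single use of continuity in each step is at the point $B^n(x)$, so over the whole induction the continuity hypothesis is consumed precisely at the iterates $x = B^0(x), B(x), B^2(x), \dots$, matching the stated assumption.
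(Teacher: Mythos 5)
Your proof is correct and follows essentially the same route as the paper: the same splitting $\|B_k(z_k)-B(z)\| \le \|(B_k-B)(z_k)\| + \|B(z_k)-B(z)\|$, with the first term controlled by the cone-norm estimate $\|B_k - B\|_+\,\|z_k\|$ (using boundedness of the convergent sequence $(z_k)$) and the second by continuity of $B$ at the iterate $B^n(x)$. The paper merely writes out the base case and remarks that the induction step ``follows in the same way,'' which is exactly the step you have spelled out.
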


\begin{proof} For $k \in \N$,
\[
\begin{split}
\|B_k (x_k ) - B (x) \| & \le \|B_k (x_k) - B (x_k) \| + \|B(x_k) - B (x) \|
\\
\le &
\|B_k - B \|_+\; \|x_k\| + \|B(x_k) - B (x) \| \stackrel{k\to \infty}{\longrightarrow} 0. \qedhere
\end{split}
\]
This provides the basis step for an induction proof. The induction
step follows in the same way.
\end{proof}

\begin{theorem}
\label{re:specrad-semicon2}
 Let $u \in X_+$, $u \ne 0$. Let $B: X_+ \to X_+$
be homogeneous and bounded and $B$ be continuous at $B^n(u)$
for all $n \in \N$. Let $(B_k)$ be a sequence of
bounded, homogeneous, order preserving maps such that $\|B_k - B\|_+ \to 0$
as $k \to \infty$. Assume that there exist $m \in \N$ and $c \ge 0$
such that $B_k^m(x) \le c\|x\| u$ for all $k \in \N$ and all $x \in X_+$.
Then
\[
\limsup_{k\to \infty} \br_+^\sharp (B_k) \le \gamma_B^\sharp(u) \le \br_o^\sharp(B)\le \br_+^\sharp(B).
\]
\end{theorem}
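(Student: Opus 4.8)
The plan is to isolate the one substantive inequality, $\limsup_{k\to\infty}\br_+^\sharp(B_k)\le\gamma_B^\sharp(u)$, and dispatch the rest immediately. The middle inequality $\gamma_B^\sharp(u)\le\br_o^\sharp(B)$ holds because $u\in X_+$ and $\br_o^\sharp(B)=\sup_{x\in X_+}\gamma_B^\sharp(x)$, while $\br_o^\sharp(B)\le\br_+^\sharp(B)$ is precisely the first line of (\ref{eq:geom-growth-companion-compare}). If $\gamma_B^\sharp(u)=\infty$ there is nothing to prove, so I would assume it finite throughout the main argument.

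The engine is the uniform order bound, transported to the companion norm \emph{uniformly in} $k$. Each $B_k$ is order-preserving and homogeneous, hence so is $B_k^m$, and by hypothesis $B_k^m(x)\le c\|x\|u$ with a single constant $c$. First I would apply Proposition \ref{re:order-bounded-compan} to each $B_k^m$; since $c$ is independent of $k$, this upgrades the bound to $B_k^m(x)\le c\,\sharp x\sharp\,u$ for all $x\in X_+$ and all $k\in\N$, with the same $c$. Next I propagate this through higher iterates: applying the order-preserving homogeneous map $B_k^j$ gives $B_k^{m+j}(x)\le c\,\sharp x\sharp\,B_k^j(u)$, and applying the order-preserving half-norm $\psi$ (which equals $\sharp\cdot\sharp$ on $X_+$) yields $\sharp B_k^{m+j}(x)\sharp\le c\,\sharp x\sharp\,\sharp B_k^j(u)\sharp$, hence the operator estimate $\sharp B_k^{m+j}\sharp_+\le c\,\sharp B_k^j(u)\sharp$. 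Because $\br_+^\sharp(B_k)=\inf_n\sharp B_k^n\sharp_+^{1/n}$ is bounded above by any single term, I obtain, for every fixed $j$,
\[
\br_+^\sharp(B_k)\le\bigl(c\,\sharp B_k^j(u)\sharp\bigr)^{1/(m+j)}.
\]

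Now the convergence hypotheses enter. For each fixed $j$, Lemma \ref{re:conv-power} applied with $x_k\equiv u$ gives $B_k^j(u)\to B^j(u)$, and since $\sharp\cdot\sharp$ is $1$-Lipschitz against $\|\cdot\|$, also $\sharp B_k^j(u)\sharp\to\sharp B^j(u)\sharp$. Taking $\limsup_{k\to\infty}$ in the displayed estimate yields $\limsup_{k\to\infty}\br_+^\sharp(B_k)\le\bigl(c\,\sharp B^j(u)\sharp\bigr)^{1/(m+j)}$ for every $j\in\N$. To finish I would choose a subsequence $(j_i)$ realizing the limit superior defining $\gamma_B^\sharp(u)$, so that $\sharp B^{j_i}(u)\sharp^{1/j_i}\to\gamma_B^\sharp(u)$; rewriting the right-hand side as $c^{1/(m+j_i)}\bigl(\sharp B^{j_i}(u)\sharp^{1/j_i}\bigr)^{j_i/(m+j_i)}$ and using $c^{1/(m+j_i)}\to1$ together with $j_i/(m+j_i)\to1$ sends it to $\gamma_B^\sharp(u)$ (the cases $\gamma_B^\sharp(u)\in\{0,\infty\}$ being trivial). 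Since the bound holds for every $j$, passing to this subsequence gives $\limsup_{k\to\infty}\br_+^\sharp(B_k)\le\gamma_B^\sharp(u)$, completing the proof.

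The point requiring the most care is the uniformity in $k$: the whole argument collapses unless the order-bound constant stays a single $c$ for all of the $B_k^m$, so I would take care that the invocation of Proposition \ref{re:order-bounded-compan} preserves that one constant. The only other delicacy is the order of the two limits — first $k\to\infty$ at fixed $j$, then $j\to\infty$ along the subsequence extracting $\gamma_B^\sharp(u)$ — and the elementary exponent bookkeeping that converts $\sharp B^{j}(u)\sharp^{1/(m+j)}$ into $\gamma_B^\sharp(u)$ in the limit.
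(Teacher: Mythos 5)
Your proof is correct and follows essentially the same route as the paper's: the uniform order bound is transported to the companion norm via Proposition \ref{re:order-bounded-compan} with the constant $c$ kept uniform in $k$, propagated to $\sharp B_k^{m+j}\sharp_+\le c\,\sharp B_k^j(u)\sharp$ by order-preservation, and combined with Lemma \ref{re:conv-power} to pass to the limit in $k$. The only difference is cosmetic bookkeeping at the end: the paper fixes $r>\gamma_B^\sharp(u)$ and uses eventual bounds $\sharp B^n(u)\sharp<r^n$, whereas you take the $\limsup$ in $k$ at fixed $j$ and then extract a subsequence $(j_i)$ realizing $\gamma_B^\sharp(u)$ — both are valid.
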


\begin{proof}
We can also assume that $\|u\|=1$.
Choose $m\in \N$ and $c \ge 0$ as in the statement of the theorem.
By Proposition \ref{re:order-bounded-compan} and its proof,
\[
B^m_k (x) \le c \sharp x\sharp u , \qquad k \in \N, x \in X_+.
\]
Since $B_k$ is order-preserving and homogeneous,
\[
B_k^{n+m} (x) \le c \sharp x\sharp B_k^n (u) , \qquad k,n\in \N, x \in X_+.
\]
We apply the monotone companion norm,
\[
\sharp B_k^{n+m} x \sharp \le   \sharp x\sharp \; \sharp B_k^n (u)\sharp, \qquad k,n \in \N, x \in X_+ .
\]
So
\[
\sharp B_k^{n+m}\sharp_+ \le  \sharp B^n_k (u)\sharp, \qquad n \in \N.
\]
Let $r > \gamma_B^\sharp(u)$. Then there exists some $N \in \N$ such that
\[
\sharp B^n (u) \sharp < r^n, \qquad n > N.
\]
Let $n \in \N$, $n > N$.
Since $B_k^n (u) \stackrel{k \to \infty}{\longrightarrow} B^n (u)$ by Lemma \ref{re:conv-power}, there exists some $k_n \in \N$ such that
\[
\sharp B_k^n (u) \sharp < r^n, \qquad k \ge k_n.
\]
We combine the inequalities.
\[
\sharp B_k^{n+m}\sharp_+ \le  r^n, \qquad k \ge k_n.
\]
Then
\[
\br_+^\sharp(B_k) \le \sharp B_k^{n+m}\sharp_+^{1/(n+m)}  \le  r^{n/(n+m)}, \qquad k \ge k_n.
\]
So, for any $n > N$,
\[
\limsup_{k \to \infty}\br_+^\sharp(B_k) \le  r^{n/(n+m)} .
\]
We take the limit as $n \to \infty$ and obtain
\[
\limsup_{k \to \infty}\br_+(B_k) \le r.
\]
Since this holds for any $r > \gamma_B^\sharp(u)$, the assertion follows.
\end{proof}



\section{Upper Collatz-Wielandt numbers}
\label{sec:CWupper}


The {\em upper Collatz-Wielandt number} of $B$ at $x \in \dot X_+$ is defined
as
\begin{equation}
\label{eq:upper-CW-num}
\|B\|_x = \|B (x) \|_x = \inf \{r \ge 0; B (x) \le r x\}.
\end{equation}
where $\|B\|_x = \infty$ if $B (x) $ is not $x $-bounded.

{\em Collatz-Wielandt numbers} \cite{FoNa}, without  this name,
became more widely known when Wielandt used them to give a new proof of the Perron-Frobenius
theorem \cite{Wie}. We will use them closer to Collatz' original purpose,
namely to prove  inclusion theorems
(Einschlie\ss ungss\"atze) for $\br_+(B)$
 which generalize those in \cite{Col1, Col2}.


\subsection{Upper local Collatz-Wielandt radius}

The {\em upper local Collatz-Wielandt  radius} of $x \in X_+$ is defined
as
\begin{equation}
\label{eq:spec-rad-Col-Wie-loc-up}
\eta^x(B) = \inf_{n\in \N} \|B^n\|_x^{1/n}.
\end{equation}

\begin{lemma}
\label{re:Col-upp-loc}
Let $B,C: X_+ \to X_+$ be homogeneous and order-preserving.
Let
$x \in \dot X_+$,  and $B(x)$ and $C(x)$ be $x$-bounded.
Then $C(B(x))$ is $x$-bounded and $\|CB\|_x \le \|C\|_x \|B\|_x$.
\end{lemma}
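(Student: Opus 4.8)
The plan is to unwind the definitions of the upper Collatz-Wielandt numbers, using the closedness of the cone to turn the infima defining $\|B\|_x$ and $\|C\|_x$ into genuine order inequalities, and then to chain those inequalities through the order-preserving, homogeneous map $C$. First I would fix $x \in \dot X_+$ and write $b = \|B\|_x$, $c = \|C\|_x$, both finite by the hypothesis that $B(x)$ and $C(x)$ are $x$-bounded. Since $X_+$ is closed, the infimum in \eqref{eq:upper-CW-num} is attained in the sense of the order relation, so that $B(x) \le b\,x$; this is the analogue for the upper numbers of \eqref{eq:order-min-form}, and it is the crucial place where closedness of the cone is used.

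Next I would apply $C$ to the inequality $B(x) \le b\,x$. Because $C$ is order-preserving, $C(B(x)) \le C(b\,x)$, and because $C$ is homogeneous (with $b \ge 0$), $C(b\,x) = b\,C(x)$. Then I would invoke the corresponding inequality for $C$, namely $C(x) \le c\,x$, again valid because the cone is closed. Multiplying by $b \ge 0$ preserves the order, so $b\,C(x) \le bc\,x$. Combining the two steps yields
\[
C(B(x)) \le b\,C(x) \le bc\,x = \|C\|_x\,\|B\|_x\, x .
\]
In particular $C(B(x))$ is $x$-bounded, which is the first assertion.

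Finally, the displayed inequality exhibits $bc = \|C\|_x\,\|B\|_x$ as an admissible value of the parameter $r$ in the definition $\|CB\|_x = \inf\{r \ge 0 ; (CB)(x) \le r x\}$. Hence the infimum defining $\|CB\|_x$ is no larger than $bc$, giving $\|CB\|_x \le \|C\|_x\,\|B\|_x$, as required. I do not anticipate a genuine obstacle here: the argument is a direct submultiplicativity estimate, entirely parallel to the lower-number lemma proved earlier in the excerpt. The only point demanding a little care is the handedness of the composition — one must apply $C$ to $B(x)$ (not $B$ to $C(x)$), so that the outer map $C$ sees the bound $b\,x$ first and its own bound $c$ is applied afterward; keeping the order straight is what makes the two inequalities telescope correctly.
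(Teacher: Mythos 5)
Your proof is correct and follows essentially the same route as the paper: both deduce $B(x) \le \|B\|_x\, x$ and $C(x) \le \|C\|_x\, x$ from closedness of the cone (the paper cites its inequality (\ref{eq:order-max-for}) for this), then apply the order-preserving homogeneous map $C$ to the first inequality and chain with the second to get $CB(x) \le \|B\|_x \|C\|_x\, x$. Your additional remarks on finiteness of the two numbers and on the handedness of the composition are accurate but do not change the argument.
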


\begin{proof}
By (\ref{eq:order-max-for}),
$C (x) \le \|C\|_x x$ and $B(x) \le \|B\|_x x$ and
$CB (x) \le \|B\|_x C (x) \le \|B\|_x \|C\|_x x$; so
$\|CB\|_x \le \|B\|_x \|C\|_x$.
\end{proof}

\begin{lemma}
\label{re:Col-Wie-upp-loc}
Let $u \in \dot X_+$.

\begin{itemize}

\item[(a)] Then $\eta^u(B^m) \ge (\eta^u(B))^m$ for all $m \in \N$.

\item[(b)] If there exists some $k \in \N$ such that $B^m u $ is $u$-bounded for all $m \ge k$,
then
$\eta^u(B) =  \lim_{n\to \infty} \|B^n\|_u^{1/n}< \infty$
and $\eta^u(B^m) = (\eta^u(B) )^m$ for all $m \in \N$.
\end{itemize}
\end{lemma}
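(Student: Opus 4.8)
The plan is to build everything on the submultiplicativity furnished by Lemma~\ref{re:Col-upp-loc}, which gives $\|CB\|_x \le \|C\|_x \|B\|_x$ whenever $B(x)$ and $C(x)$ are $x$-bounded. Two consequences of it do the work. First, applying the lemma at $x=u$ with its ``$C$'' taken to be $B^n$ and its ``$B$'' taken to be $B^p$, I obtain the submultiplicativity $\|B^{n+p}\|_u \le \|B^n\|_u \|B^p\|_u$ whenever $B^n u$ and $B^p u$ are $u$-bounded; under the hypothesis of (b) this holds for all $n,p \ge k$. Second, iterating the lemma with $C=B^{n_0}$ shows $\|B^{jn_0}\|_u \le \|B^{n_0}\|_u^{\,j}$ for every $n_0$ with $B^{n_0}u$ $u$-bounded and every $j\in\N$, whence $\|B^{jn_0}\|_u^{1/(jn_0)} \le \|B^{n_0}\|_u^{1/n_0}$.

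Part (a) is a formal manipulation requiring no hypothesis. Since $(B^m)^n = B^{mn}$ and $t\mapsto t^m$ is increasing on $[0,\infty]$,
\[
\eta^u(B^m) = \inf_{n}\|B^{mn}\|_u^{1/n} = \Big(\inf_n \|B^{mn}\|_u^{1/(mn)}\Big)^m \ge \Big(\inf_{j}\|B^{j}\|_u^{1/j}\Big)^m = (\eta^u(B))^m,
\]
where the inequality is because the infimum over the subsequence $\{mn\}$ dominates the infimum over all indices $j$.

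For (b) I first note the dichotomy: either $\|B^{n_0}\|_u = 0$ for some $n_0 \ge k$, in which case $B^{n_0}u=0$ propagates to all higher powers and forces $\eta^u(B)=0=\lim_n\|B^n\|_u^{1/n}$ with every claimed (in)equality trivial, or $\|B^n\|_u \in (0,\infty)$ for all $n\ge k$. In the latter case $f(n):=\ln\|B^n\|_u$ is real-valued and, by the first consequence above, subadditive for $n\ge k$; the eventually-subadditive form of Fekete's lemma then shows that $\lim_{n\to\infty}\|B^n\|_u^{1/n}$ exists and equals $L:=\inf_{n\ge k}\|B^n\|_u^{1/n}<\infty$, the finiteness coming from $\|B^k\|_u<\infty$. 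The remaining---and really the only nonformal---point is to identify $L$ with $\eta^u(B)=\inf_{n\ge 1}\|B^n\|_u^{1/n}$; the obstacle is precisely that the defining infimum also ranges over the small indices $n<k$, where $\|B^n\|_u$ could a priori be smaller. The inequality $\eta^u(B)\le L$ is immediate, and here the second consequence is decisive for the reverse: for any $n_0$ with $\|B^{n_0}\|_u<\infty$, choosing $j$ with $jn_0\ge k$ gives $L \le \|B^{jn_0}\|_u^{1/(jn_0)} \le \|B^{n_0}\|_u^{1/n_0}$, while the bound $L\le\|B^{n_0}\|_u^{1/n_0}$ is trivial when $\|B^{n_0}\|_u=\infty$; taking the infimum over $n_0$ yields $L\le\eta^u(B)$. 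Hence $\eta^u(B)=\lim_n\|B^n\|_u^{1/n}<\infty$.

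Finally, for $\eta^u(B^m)=(\eta^u(B))^m$ I combine (a) with the convergence just established: part (a) supplies ``$\ge$'', while the computation of (a) also gives $\eta^u(B^m)=\big(\inf_n\|B^{mn}\|_u^{1/(mn)}\big)^m$, and since the sequence $\|B^{mn}\|_u^{1/(mn)}$ is a subsequence of the convergent sequence $\|B^j\|_u^{1/j}$ it inherits the limit $\eta^u(B)$, so its infimum is at most $\eta^u(B)$, giving the ``$\le$''. I expect the Fekete step together with the reconciliation of $\inf_{n\ge k}$ and $\inf_{n\ge 1}$ via the inequality $\|B^{jn_0}\|_u\le\|B^{n_0}\|_u^{\,j}$ to be the crux; the rest is bookkeeping with infima and the map $t\mapsto t^m$.
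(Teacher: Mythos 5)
Your proof is correct and follows essentially the same route as the paper's: both rest on the submultiplicativity $\|B^{n+p}\|_u \le \|B^n\|_u\,\|B^p\|_u$ supplied by Lemma \ref{re:Col-upp-loc}, a Fekete-type division-with-remainder argument valid for indices $\ge k$, and the power-iteration bound $\|B^{jn_0}\|_u \le \|B^{n_0}\|_u^{\,j}$ to bridge the infimum over all indices with the infimum over indices $\ge k$ (the paper performs this same boost in the order-theoretic form $B^m(u)\le r^m u \Rightarrow B^{jm}(u)\le r^{jm}u$ and proves the Fekete step inline rather than citing it). Your treatment of part (a), of the degenerate case $\|B^{n_0}\|_u=0$, and of the final equality $\eta^u(B^m)=(\eta^u(B))^m$ via the subsequence of a convergent sequence likewise matches the paper's argument.
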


\begin{proof} (a) Let $u \in \dot X_+$, $m \in \N$. Then
\[
\eta^u(B^m) = \inf_{n\in \N} \|B^{mn}\|_u^{1/n}=
(\inf_{n\in \N} \|B^{mn}\|_u^{1/(mn)})^m
\ge
(\inf_{k\in \N} \|B^{k}\|_u^{1/k})^m.
\]

(b) Now let $k \in \N$ such that $B^m(u)$ is $u$-bounded for all $m \ge k$.
By Lemma \ref{re:Col-upp-loc},
\[
c_{n+m} \le c_n c_m, \qquad n, m \ge k, \qquad c_n = \|B^n\|_u.
\]
Let $r $ be an arbitrary number such that
$\eta^u(B) = \inf_{n\in \N} \|B^n \|_u^{1/n} < r$.
Then there exists some $m \in \N$
 such that
$c_m^{1/m}= \|B^m\|_u^{1/m} \le  r$.
So $ B^m (u) \le r^m u$. By applying $B^m$ as often as necessary,
we can assume that $m \ge k$.

Any number $n \in \N$ with $n \ge 2m$
has a unique representation $n = pm + q $ with $p \in \N$ and  $m \le q < 2m$.
Then, for $n \ge 2m$,
\[
c_n \le  c_m^p c_q \le r^{mp} c_q.
\]
If $c_q=0$, then both the limit inferior and the limit are zero and equal.
So we can assume that $c_q \ne 0$. We have
\[
c_n^{1/n} \le r^{pm/n} c_q^{1/n}.
\]
As $n \to \infty$, $pm/n \to 1 $ and $\limsup_{n\to \infty} c_n^{1/n} \le r$.
Since $r$ was any number larger than the infimum , the limes
superior and inferior coincide and the limit exists and equals the infimum.

The second equality in (b) follows from the fact that every subsequence of a convergent
sequence converges to the same limit.
\end{proof}

\begin{remark} If $B(u)$ is $u$-bounded, $\|B\|_u$ is the cone norm
of $B$ in the space $X_u$ with $u$-norm. Then $\eta^u(B)$ is the
asymptotic growth factor of $u$ taken in $X_u$.
\end{remark}

\begin{theorem}
\label{re:growth-factor-upper-est-power}
Let $B: X_+ \to X_+$ be homogeneous, bounded, and order-preserving.
Let $u \in X_+$, $\alpha \in \R_+$ and $k \in \N$
such that $ B^k (u) \le \alpha^k u$.

Let $u$ be a normal point of $X_+$ or   $B$ be power-compact. Then $\gamma_B(u) \le \alpha $.
\end{theorem}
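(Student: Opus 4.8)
The plan is to first extract from the order relation the growth estimate that holds regardless of the extra hypothesis, and then to upgrade it to the original norm using either normality of the point $u$ or power compactness. First I would iterate the hypothesis: since $B$ is homogeneous and order-preserving, $B^k(u)\le\alpha^k u$ gives $B^{2k}(u)=B^k(B^k(u))\le B^k(\alpha^k u)=\alpha^k B^k(u)\le\alpha^{2k}u$, and by induction $B^{nk}(u)\le\alpha^{nk}u$ for all $n\in\N$. If $\alpha=0$ this forces $B^k(u)\in X_+\cap(-X_+)=\{0\}$, so $B^n(u)=0$ for $n\ge k$ and $\gamma_B(u)=0$; hence I may assume $\alpha>0$. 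Writing an arbitrary $n$ as $n=qk+r$ with $0\le r<k$ and using boundedness in the form $\|B^r(x)\|\le\|B^r\|_+\|x\|$ for $x\in X_+$, it suffices to bound $\|B^{qk}(u)\|$ up to the factor $\alpha^{qk}$ and a subexponential term, since then $\|B^n(u)\|^{1/n}\le(\text{const})^{1/n}\alpha^{qk/n}$ and $qk/n\to1$ as $n\to\infty$.

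For the case that $u$ is a normal point, Definition \ref{def:normal-point} furnishes a bound $M$ for $\{\|x\|;\,x\in X_+,\ x\le u\}$. Since $\alpha^{-nk}B^{nk}(u)\in X_+$ and $\alpha^{-nk}B^{nk}(u)\le u$, I obtain $\|B^{nk}(u)\|\le M\alpha^{nk}$, and combined with the reduction of the previous paragraph this yields $\gamma_B(u)\le\alpha$ directly.

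The power-compact case is the hard part, because without normality the order interval $\{x\in X_+;\,x\le u\}$ need not be norm-bounded, so the companion-norm control $\psi(B^{nk}(u))\le\alpha^{nk}\psi(u)$ coming from Proposition \ref{re:normal-mon-funct} cannot be transferred to the original norm by an inequality alone. I would remove this obstacle with compactness. After reducing, via the power formula \eqref{eq:geom-fac-power} and a rescaling (power compactness being inherited, as $B$ maps bounded sets to bounded sets), to the situation $B(u)\le u$, so that the whole orbit satisfies $0\le B^n(u)\le u$ and $\sharp B^n(u)\sharp=\psi(B^n(u))\le\psi(u)=:s$, and with $B^m$ mapping bounded sets to relatively compact ones, the key lemma is that
\[
g(\delta):=\sup\{\|B^m(w)\|;\ w\in X_+,\ \|w\|=1,\ \sharp w\sharp\le\delta\}\longrightarrow0\quad(\delta\to0^+).
\]
Indeed, if some $w_i$ with $\|w_i\|=1$ and $\sharp w_i\sharp\to0$ had $\|B^m(w_i)\|\ge c>0$, then compactness would give $B^m(w_i)\to y$ in the original norm along a subsequence, while Theorem \ref{re:mon-func-maps} gives $\sharp B^m(w_i)\sharp\le\|B^m\|_+\sharp w_i\sharp\to0$, forcing $\sharp y\sharp=0$, hence $y=0$, contradicting $\|y\|\ge c$.

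Finally I would feed this back into the orbit. Writing $f(n)=\|B^n(u)\|$ and $w_n=B^n(u)/f(n)$ (and disposing of the trivial case where some $f(n)=0$, which makes the orbit eventually $0$), homogeneity gives $f(n+m)=f(n)\,\|B^m(w_n)\|\le f(n)\,g(\sharp w_n\sharp)\le f(n)\,g(s/f(n))$, using $\sharp w_n\sharp\le s/f(n)$ and the monotonicity of $g$. Since $g(\delta)\to0$, there is a level $L$ with $g(s/t)\le\tfrac12$ whenever $t\ge L$, so $f(n)\ge L$ forces $f(n+m)\le\tfrac12 f(n)$; together with the crude bound $f(n+m)\le\|B^m\|_+f(n)$ this shows, along each residue class modulo $m$, that $f$ can never escape above $\max\{f(0),\dots,f(m-1),\ \|B^m\|_+L\}$. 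Thus the orbit is norm-bounded, whence $f(n)^{1/n}\to$ a limit $\le1$, and unwinding the rescaling gives $\gamma_B(u)\le\alpha$. As indicated, the crux is the companion-to-original upgrade, which the uniform smallness lemma $g(\delta)\to0$ is designed to overcome.
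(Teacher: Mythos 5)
Your proof is correct, and while your normal-point case is essentially the paper's (both are immediate from Definition \ref{def:normal-point} once the order bound $B^{nk}(u)\le\alpha^{nk}u$ is iterated), your power-compact case takes a genuinely different route. The paper argues by contradiction: if $\gamma_B(u)>1$, the sequence $a_n=\|B^n(u)\|$ is unbounded, and a lemma of Bonsall \cite{Bon58} supplies indices $n_j$ with $a_{n_j}\to\infty$ and $a_k\le a_{n_j}$ for $k\le n_j$; writing $v_j=a_{n_j}^{-1}B^{n_j}(u)=B^\ell(w_j)$ with $\|w_j\|\le a_{n_j-\ell}/a_{n_j}\le 1$, compactness of $B^\ell$ gives $v_j\to v$ with $\|v\|=1$ along a subsequence, while the order bound $v_j\le a_{n_j}^{-1}u$ forces $v\le 0$, contradicting that $X_+$ is a cone. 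You instead prove a uniform smallness lemma, $g(\delta)\to 0$: compactness of $B^m$ together with $\sharp B^m(x)\sharp\le\|B^m\|_+\sharp x\sharp$ (Theorem \ref{re:mon-func-maps}) and the fact that $\sharp\cdot\sharp$ is a genuine norm show that $B^m$ sends unit vectors of $X_+$ with small companion norm to vectors with small original norm; the recursion $f(n+m)\le f(n)\,g(s/f(n))$ then keeps the orbit norm-bounded along each residue class mod $m$. Your route avoids Bonsall's combinatorial lemma entirely, so it is more self-contained; it is also very much in the spirit of the paper, since it is an application of the monotone companion half-norm, which the paper's own proof of this particular theorem never invokes, and it yields an explicit a priori bound $\sup_n\|B^n(u)\|\le\max\{f(0),\dots,f(m-1),\|B^m\|_+L\}$ rather than a pure contradiction. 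The price is extra bookkeeping (the degenerate case $f(n)=0$, monotonicity of $g$, residue classes), and one cosmetic slip: at the end you get $\limsup_n f(n)^{1/n}\le 1$, not a limit, but that is exactly what $\gamma_B(u)\le 1$ requires, so nothing is lost.
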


\begin{proof}
Since $\gamma(u,B^k) = (\gamma(u,B))^k$ by  (\ref{eq:geom-fac-power}), we can assume that $k=1$.
Since $B$ is homogenous, it is enough to show that $B(u) \le u$
implies that $\gamma_B(u) = \gamma(u,B) \le 1$.

Let $B(u) \le u$. Then $B^n(u) \le u$ for all $n \in \N$.

We first assume that $u$ is a normal point of $X_+$.
By Definition \ref{def:normal-point},
there exists some $c > 0$ such that $\|B^n(u)\| \le c $
for all $n \in \N$. This implies $\gamma_B(u) \le 1$.

Now assume that $B^\ell$ is compact for some $\ell \in \N$ and
that $\gamma_B(u) > 1$. Then the
sequence $(a_n)$ with
\begin{equation}
a_n =\|B^n (u)\|
\end{equation}
is unbounded.
 By a lemma by Bonsall \cite{Bon58},
 there exists a subsequence $a_{n_j}$ such that
\[
a_{n_j} \to \infty, j \to \infty, \qquad a_k \le a_{n_j}, \qquad k=1, \ldots, n_j,
\quad j \in \N.
\]
Set $v_j = \frac{1} {a_{n_j}  }B^{n_j} (u) $. Then
\[
v_j = B^\ell (w_j), \qquad w_j = \frac{1} {\|B^{n_j} (u) \|}B^{n_j-\ell} (u).
\]
Now
\[
\|w_j\| \le \frac{a_{n_j-\ell} } {a_{n_j} }
\le 1.
\]
So, after choosing a subsequence, $(v_j)$ converges to some $v \in X_+$,
$\|v\|=1$. Since $B^n(u) \le u$ for all $n \in \N$,
\[
v_j \le \frac{1}{a_{n_j}} u.
\]
Since $a_{n_j } \to \infty$ and $X_+$ is closed, we have $v \le 0$, a
contradiction.
\end{proof}

\begin{corollary}
\label{re:growth-factor-upper-est-Coll}
Let $B: X_+ \to X_+$ be homogeneous, bounded, and order-preserving.
Let $u \in \dot X_+$ and $B^k(u)$ be $u$-bounded for all but finitely
many $k \in \N$. Assume that $u$ is a normal point of $X_+$ or that
$B$ is power-compact. Then $\gamma_B(u) \le \eta^u(B)$.
\end{corollary}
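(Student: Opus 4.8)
The plan is to reduce the corollary directly to Theorem \ref{re:growth-factor-upper-est-power}, which already packages the two alternative hypotheses (normal point or power compactness) needed to convert an inequality $B^k(u) \le \alpha^k u$ into the growth estimate $\gamma_B(u) \le \alpha$. The remaining work consists only in producing, for every $\alpha$ slightly larger than $\eta^u(B)$, a single exponent $k$ with $B^k(u) \le \alpha^k u$.

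First I would observe that the hypothesis---that $B^k(u)$ is $u$-bounded for all but finitely many $k$---is precisely what Lemma \ref{re:Col-Wie-upp-loc}(b) requires. Hence $\eta^u(B) = \lim_{n\to\infty}\|B^n\|_u^{1/n} < \infty$, so the defining infimum is in fact a genuine limit. Fix an arbitrary $\alpha > \eta^u(B)$. Because the limit exists and lies strictly below $\alpha$, there is some $N_1$ with $\|B^n\|_u^{1/n} < \alpha$ for all $n \ge N_1$; let $N_2$ be a threshold past which every $B^n(u)$ is $u$-bounded. Any $n \ge \max\{N_1,N_2\}$ then satisfies both $\|B^n\|_u < \alpha^n$ and $\|B^n\|_u < \infty$.

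Next, for such an $n$ the element $x = B^n(u)$ lies in $X_u \cap X_+$, so the order bound (\ref{eq:order-max-for}) gives $B^n(u) \le \|B^n\|_u\, u \le \alpha^n u$. This is exactly the hypothesis of Theorem \ref{re:growth-factor-upper-est-power} with $k=n$ and the same element $u$; since $u$ is a normal point or $B$ is power compact by assumption, that theorem yields $\gamma_B(u) \le \alpha$. As $\alpha > \eta^u(B)$ was arbitrary, letting $\alpha \downarrow \eta^u(B)$ gives $\gamma_B(u) \le \eta^u(B)$, as claimed.

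I do not anticipate a genuine obstacle: the substantive content has been front-loaded into Theorem \ref{re:growth-factor-upper-est-power} and Lemma \ref{re:Col-Wie-upp-loc}. The only point requiring a little care is choosing $n$ large enough to secure the $u$-boundedness of $B^n(u)$ \emph{simultaneously} with the radius estimate $\|B^n\|_u^{1/n}<\alpha$, which is possible precisely because $u$-boundedness of $B^k(u)$ fails for only finitely many $k$ and because Lemma \ref{re:Col-Wie-upp-loc}(b) upgrades the infimum defining $\eta^u(B)$ to a convergent limit.
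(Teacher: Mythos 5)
Your proof is correct and follows essentially the same route as the paper: both reduce the corollary to Theorem \ref{re:growth-factor-upper-est-power} via the order bound $B^k(u) \le \|B^k\|_u\, u$ from (\ref{eq:order-max-for}). The paper is slightly more direct---it applies that theorem with $\alpha = \|B^k\|_u^{1/k}$ for every $k$ with $\|B^k\|_u < \infty$ and then takes the infimum, so your detour through Lemma \ref{re:Col-Wie-upp-loc}(b) to upgrade the infimum to a limit is harmless but not needed.
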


\begin{proof}
By definition, $B^k (u) \le \|B^k\|_u u$ for all
$k \in \N$ with $\|B^k\|_u < \infty$. By Theorem \ref{re:growth-factor-upper-est-power}, $\gamma_B(u) \le \|B^k\|_u$
for all $k \in \N$. So $\gamma_B(u) \le \eta^u(B)$.
\end{proof}

\begin{theorem}
\label{re:orb-rad-upper-est-power}
Let $B: X_+ \to X_+$ be homogeneous, bounded,  and order-preserving.
Let $u \in X_+$, $\alpha \in \R_+$ and $k \in \N$
such that $ B^k (u) \le \alpha^k u$.

Let $u$ be a normal point of $X_+$
 Then $\br_o(B) \le \alpha $ if $B$ is pointwise $u$-bounded,
and  $\br_+(B) \le \alpha $ if some power of $B$ is uniformly $u$-bounded,

\end{theorem}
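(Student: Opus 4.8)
The plan is to reduce both conclusions to the single estimate $\gamma_B(u) \le \alpha$, which Theorem \ref{re:growth-factor-upper-est-power} already supplies: since $u$ is a normal point and $B^k(u) \le \alpha^k u$, that theorem gives
$\gamma_B(u) = \limsup_{n\to\infty}\|B^n(u)\|^{1/n} \le \alpha$.
Everything else amounts to transferring this growth bound on the $B$-orbit of $u$ to arbitrary orbits (for $\br_o$) and to the cone norm (for $\br_+$), using only that $B$ is order-preserving and homogeneous.

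For the orbital radius I fix $x \in X_+$ and invoke pointwise $u$-boundedness to obtain $n \in \N$ and $\gamma > 0$ with $B^n(x) \le \gamma u$. Applying $B^{m-n}$ and using order-preservation and homogeneity yields $B^m(x) \le \gamma B^{m-n}(u)$ for $m \ge n$, hence $\|B^m(x)\| \le \gamma \|B^{m-n}(u)\|$. To run the root test cleanly I would fix $\epsilon > 0$ and rewrite $\gamma_B(u) \le \alpha$ in the eventual form $\|B^j(u)\| \le (\alpha+\epsilon)^j$ for all $j \ge N$; then $\|B^m(x)\|^{1/m} \le \gamma^{1/m}(\alpha+\epsilon)^{(m-n)/m}$, whose limit superior is $\alpha+\epsilon$. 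Letting $\epsilon \downarrow 0$ gives $\gamma_B(x) \le \alpha$, and taking the supremum over $x \in X_+$ gives $\br_o(B) \le \alpha$.

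For the cone radius, suppose $B^p$ is uniformly $u$-bounded, so $B^p(x) \le c\|x\|u$ for all $x \in X_+$. For $m \ge p$, order-preservation and homogeneity give $B^m(x) = B^{m-p}(B^p(x)) \le c\|x\|\, B^{m-p}(u)$, hence $\|B^m\|_+ \le c\,\|B^{m-p}(u)\|$. Using the same $\epsilon$-form of $\gamma_B(u) \le \alpha$, for $m-p \ge N$ we obtain $\|B^m\|_+^{1/m} \le c^{1/m}(\alpha+\epsilon)^{(m-p)/m} \to \alpha+\epsilon$, so that $\br_+(B) = \lim_m \|B^m\|_+^{1/m} \le \alpha+\epsilon$; letting $\epsilon \downarrow 0$ yields $\br_+(B) \le \alpha$.

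The only delicate point is the passage through the limit superior: a naive attempt to read $\limsup_m \|B^{m-n}(u)\|^{1/m}$ directly off $\gamma_B(u)$ stumbles because the exponent is $1/m$ rather than $1/(m-n)$ and the base varies with $m$. Replacing $\gamma_B(u) \le \alpha$ by the eventual pointwise bound $\|B^j(u)\| \le (\alpha+\epsilon)^j$ sidesteps this, converting a limit superior of a product into a genuine limit with fixed base $\alpha+\epsilon$. The degenerate case $\alpha = 0$ is handled by the identical argument, since $\gamma_B(u) \le 0$ forces $\|B^j(u)\| \le \epsilon^j$ eventually for every $\epsilon > 0$.
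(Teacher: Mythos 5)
Your argument has a genuine gap, and it occurs at the same step in both halves: the passage from an order inequality to a norm inequality. From $B^m(x) \le \gamma B^{m-n}(u)$ you conclude $\|B^m(x)\| \le \gamma\,\|B^{m-n}(u)\|$, and in the cone-radius part you pass from $B^m(x) \le c\|x\|\,B^{m-p}(u)$ to $\|B^m\|_+ \le c\,\|B^{m-p}(u)\|$. The inference $0 \le a \le b \Rightarrow \|a\| \le \|b\|$ (even with a multiplicative constant) is exactly semi-monotonicity of the norm, which by Theorem \ref{re:cone-normal} is equivalent to normality of the cone $X_+$ --- and this theorem deliberately does not assume a normal cone. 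The only hypothesis of that type is that the single point $u$ is a \emph{normal point} (Definition \ref{def:normal-point}), which says that $\{\|y\|;\ y \in X_+,\ y \le \lambda u\}$ is bounded, i.e.\ it converts order bounds by \emph{multiples of $u$ itself} into norm bounds; it gives no control over elements dominated by $B^{m-n}(u)$, which is generally not a multiple of $u$ and not even $u$-bounded when $m-n$ is not a multiple of $k$. In a space such as $bv$ with the variation norm (whose cone is not normal, Theorem \ref{re:bv-normal-points}), one can have $0 \le a \le b$ with $\|a\|$ arbitrarily large compared to $\|b\|$, so the step is not just unjustified but false in this generality. Consequently your preliminary reduction of $\gamma_B(u) \le \alpha$ to the eventual norm bound $\|B^j(u)\| \le (\alpha+\epsilon)^j$ cannot help: norm information along the orbit of $u$ cannot be propagated to other orbits by order comparison alone. (The ``delicate point'' you flagged, the limsup bookkeeping, is harmless; the real delicate point is this order-to-norm passage.)

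The paper's proof avoids norms of $B^j(u)$ altogether. It first reduces to $k=1$ and $\alpha=1$, using $\br_o(B^k) = (\br_o(B))^k$ and $\br_+(B^k) = (\br_+(B))^k$ as in the proof of Theorem \ref{re:growth-factor-upper-est-power}, so that the hypothesis becomes $B(u) \le u$ and hence $B^j(u) \le u$ for all $j$. Then pointwise $u$-boundedness gives $B^n(x) \le \gamma u$, and chaining \emph{order} bounds yields $B^{n+j}(x) \le \gamma\,B^j(u) \le \gamma u$ for all $j$: the entire tail of the orbit of $x$ lies below the fixed element $\gamma u$, and only now is the normal point property of $u$ invoked, giving $\|B^{n+j}(x)\| \le \gamma\tilde c$ for all $j$ and hence $\gamma_B(x) \le 1$; in the uniform case $\gamma$ is replaced by $c\|x\|$, so $\|B^{n+j}\|_+ \le c\tilde c$ and $\br_+(B) \le 1$. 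If you want to salvage your write-up, you must replace the intermediate quantities $\|B^{m-n}(u)\|$ by the order estimate $B^{m-n}(u) \le u$ available after this reduction and apply the normal point property to $B^m(x) \le \gamma u$ directly --- which is, in effect, the paper's proof.
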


\begin{proof} As in the proof of Theorem \ref{re:growth-factor-upper-est-power},
we can reduce the proof to the implication
\[
B(u) \le u \implies \br_o(B) \le 1.
\]
Assume that $B(u) \le u$. Let $x \in X_+$. Since $B$ is pointwise
$u$-bounded, there exists some $m \in \N$ and $c > 0$ such that
$B^m(x) \le c u$. For all $n \in \N$, $B^{m+n} (x) \le c B^n(u) \le c u$.
Since $u$ is a normal point of $X_+$, by Definition \ref{def:normal-point},
there exists some $\tilde c > 0$ such that $\|B^{m+n}(x) \| \le c \tilde c$
for all $n \in \N$. This implies $\gamma_B(x) \le $1. Since $x \in X_+$
has been arbitrary, $\br_o(B) \le 1$.

If $B^m$  is uniformly $u$-bounded, we can replace $c$ by $c \|x\|$
and we obtain $\br_+(B) \le 1$.
\end{proof}

Similarly as for Corollary \ref{re:growth-factor-upper-est-Coll},
this yields the following result.

\begin{corollary}
\label{re:orb-rad-ColWie}
Let $B: X_+ \to X_+$ be homogeneous, bounded, and order-preserving.

Let $u$ be a normal point of $X_+$.
 Then $\br_o(B) \le \eta^u(B) $ if
 $B$ is pointwise $u$-bounded, and $\br_+(B) \le \eta^u(B) $ if
some power of  $B$ is uniformly $u$-bounded.
\end{corollary}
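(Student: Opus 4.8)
The plan is to mirror the derivation of Corollary~\ref{re:growth-factor-upper-est-Coll} from Theorem~\ref{re:growth-factor-upper-est-power}, now using Theorem~\ref{re:orb-rad-upper-est-power} as the engine. First I would fix any $k \in \N$ for which $\|B^k\|_u < \infty$, i.e.\ for which $B^k(u)$ is $u$-bounded. By the definition of the upper Collatz-Wielandt number together with (\ref{eq:order-max-for}) applied to $B^k(u)$, this gives $B^k(u) \le \|B^k\|_u\, u$. Setting $\alpha = \|B^k\|_u^{1/k}$, the inequality reads $B^k(u) \le \alpha^k u$, which is precisely the hypothesis of Theorem~\ref{re:orb-rad-upper-est-power} for this particular $k$.

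Next I would invoke that theorem in each of its two regimes, using that $u$ is a normal point by assumption. If $B$ is pointwise $u$-bounded, the theorem yields $\br_o(B) \le \alpha = \|B^k\|_u^{1/k}$; if some power of $B$ is uniformly $u$-bounded, it yields $\br_+(B) \le \|B^k\|_u^{1/k}$. Thus each finite value $\|B^k\|_u^{1/k}$ already bounds the relevant spectral radius, and the sharpest such bound is obtained by optimizing over $k$.

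To conclude, I would take the infimum over $k \in \N$. Indices $k$ with $\|B^k\|_u = \infty$ contribute $\|B^k\|_u^{1/k} = \infty$ and so do not lower the infimum, whence $\inf_{k\in\N}\|B^k\|_u^{1/k} = \eta^u(B)$; and if every $\|B^k\|_u$ is infinite, then $\eta^u(B) = \infty$ and the claimed inequalities hold trivially. This delivers $\br_o(B) \le \eta^u(B)$ under pointwise $u$-boundedness and $\br_+(B) \le \eta^u(B)$ when some power of $B$ is uniformly $u$-bounded.

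I do not anticipate a genuine obstacle here: essentially all the content is carried by Theorem~\ref{re:orb-rad-upper-est-power}, and the only care required is the bookkeeping with possibly infinite Collatz-Wielandt numbers, together with keeping the two conclusions tied to their respective order-boundedness hypotheses (pointwise $u$-boundedness for $\br_o$, uniform $u$-boundedness of some power for $\br_+$). The mild point worth double-checking is that passing from the per-$k$ estimates to the infimum is legitimate precisely because the infinite terms are harmless.
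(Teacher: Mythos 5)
Your proposal is correct and is essentially the paper's own argument: the paper derives this corollary from Theorem \ref{re:orb-rad-upper-est-power} exactly as Corollary \ref{re:growth-factor-upper-est-Coll} is derived from Theorem \ref{re:growth-factor-upper-est-power}, namely by applying the theorem with $\alpha = \|B^k\|_u^{1/k}$ for each $k$ with $B^k(u)$ $u$-bounded and then passing to the infimum defining $\eta^u(B)$. Your bookkeeping remark about indices with $\|B^k\|_u = \infty$ is the right (and only) point of care, and it is handled correctly.
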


For those $x\in X_+$ for which the sequence $\sharp B^n x \sharp^{1/n} $ is bounded,
we extend the definition of the {\em companion growth bound} of the $B$-orbit of $x$ by
\begin{equation}
\gamma_B^\sharp (x) := \limsup_{n\to \infty} \sharp B^n (x) \sharp^{1/n}
\end{equation}
and set it equal to infinity otherwise. We extend the definition of
 the {\em orbital companion spectral radius} of $B$ by
\begin{equation}
 \br_o^\sharp (B):= \sup_{x \in X_+} \gamma_B^\sharp(x).
\end{equation}

\begin{theorem}
\label{re:Col-Wie-companion}
Let $B: X_+ \to X_+$ be homogeneous, bounded,  and order-preserving, $u \in X_+$.
Let $B$ be pointwise $u$-bounded and assume that there is some $\ell \in \N$
such that $B^n( u)$ is $u$-bounded for all $n \ge \ell$.

Then
\[
\eta_x(B) \le \gamma_B^\sharp (x) \le \gamma_B^\sharp(u) \le \eta^u(B)
\]
for all $x \in X_+$ and
\[
cw(B) \le \br_{cw} (B) \le \br_o^\sharp (B)= \gamma_B^\sharp(u) \le \eta^u(B).
\]

If $B^m$ has the lower KR property for some $m \in \N$, $\gamma_B(u)\le
\br_+(B) \le \eta^u(B)$.
\end{theorem}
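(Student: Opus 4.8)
The plan is to establish the two displayed chains by proving, in order, the three inequalities $\eta_x(B)\le\gamma_B^\sharp(x)$, $\gamma_B^\sharp(x)\le\gamma_B^\sharp(u)$, and $\gamma_B^\sharp(u)\le\eta^u(B)$, then to pass to suprema; the final statement will follow quickly from Theorem~\ref{re:almost-all-equal}. It is convenient to treat the rightmost inequality first, since it also secures the finiteness the other estimates need. Because $B^n(u)$ is $u$-bounded for all $n\ge\ell$, Lemma~\ref{re:Col-Wie-upp-loc}(b) gives $\eta^u(B)=\lim_{n\to\infty}\|B^n\|_u^{1/n}<\infty$ and, for $n\ge\ell$, the order bound $0\le B^n(u)\le\|B^n\|_u u$. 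Applying the monotone companion norm (which equals $\psi$ and is order-preserving on $X_+$) gives $\sharp B^n(u)\sharp\le\|B^n\|_u\,\sharp u\sharp$; taking $n$-th roots, using $(\sharp u\sharp)^{1/n}\to1$, and passing to the limit superior yields $\gamma_B^\sharp(u)\le\eta^u(B)$, and in particular the sequence $\sharp B^n(u)\sharp^{1/n}$ is bounded.

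Next comes the key step $\gamma_B^\sharp(x)\le\gamma_B^\sharp(u)$ for all $x\in X_+$, where pointwise $u$-boundedness enters. Given $x\in\dot X_+$, choose $m\in\N$ and $\gamma>0$ with $B^m(x)\le\gamma u$. Order-preservation and homogeneity of $B^n$ then give $0\le B^{n+m}(x)=B^n(B^m(x))\le\gamma B^n(u)$ for every $n$, and monotonicity of the companion norm gives $\sharp B^{n+m}(x)\sharp\le\gamma\,\sharp B^n(u)\sharp$. Raising to the power $1/(n+m)$, shifting the index, and taking $\limsup_{n\to\infty}$ — where $\gamma^{1/(n+m)}\to1$ and the exponent $n/(n+m)\to1$ while the base sequence stays bounded by the previous step — produces $\gamma_B^\sharp(x)\le\gamma_B^\sharp(u)$.

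The remaining inequality $\eta_x(B)\le\gamma_B^\sharp(x)$, together with the bounds $cw(B)\le\br_{cw}(B)\le\br_o^\sharp(B)$, is already supplied by Proposition~\ref{re:Col-Wie-lower}. The genuinely new content of the second chain is the equality $\br_o^\sharp(B)=\gamma_B^\sharp(u)$: the inequality $\gamma_B^\sharp(x)\le\gamma_B^\sharp(u)$ for all $x$ gives $\br_o^\sharp(B)\le\gamma_B^\sharp(u)$, while $u\in X_+$ gives the reverse, and $\gamma_B^\sharp(u)\le\eta^u(B)$ completes the chain. For the final statement, $\gamma_B(u)\le\br_+(B)$ holds unconditionally from $\|B^n(u)\|\le\|B^n\|_+\,\|u\|$, since $\|u\|^{1/n}\to1$ and $\|B^n\|_+^{1/n}\to\br_+(B)$. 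When some $B^m$ has the lower KR property, Theorem~\ref{re:almost-all-equal} gives $\br_{cw}(B)=\br_+(B)$, and combining this with $\br_{cw}(B)\le\eta^u(B)$ from the second chain yields $\br_+(B)\le\eta^u(B)$; the two estimates together give the claimed sandwich.

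I expect the main obstacle to be the limit-superior bookkeeping in the step $\gamma_B^\sharp(x)\le\gamma_B^\sharp(u)$: one must justify the index shift $k=n+m$ and the fact that $\limsup b_n^{c_n}=\limsup b_n$ for the bounded nonnegative base $b_n=\sharp B^n(u)\sharp^{1/n}$ and exponents $c_n=n/(n+m)\to1$. This is exactly why establishing $\gamma_B^\sharp(u)\le\eta^u(B)<\infty$ first — which guarantees boundedness of $b_n$ — streamlines the argument. Everything else reduces to monotonicity of the companion norm on $X_+$ and the already-established comparison inequalities.
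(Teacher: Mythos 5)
Your proof is correct, and for the two displayed chains it is essentially the paper's own argument: pointwise $u$-boundedness gives $B^{n+m}(x)\le\gamma B^n(u)$, the order bound $B^n(u)\le\|B^n\|_u\, u$ (valid for $n\ge\ell$) is converted into companion-norm estimates via monotonicity of $\sharp\cdot\sharp$ on $X_+$, and Lemma \ref{re:Col-Wie-upp-loc}(b) plus Proposition \ref{re:Col-Wie-lower} finish; the paper merely runs both estimates in a single chain $\sharp B^{n}(x)\sharp^{1/n}\le c^{1/n}\sharp B^{n-k}(u)\sharp^{1/n}\le\|B^{n-k}\|_u^{1/n}(c\sharp u\sharp)^{1/n}$ rather than isolating $\gamma_B^\sharp(u)\le\eta^u(B)$ first, so your reorganization (which secures boundedness of $\sharp B^n(u)\sharp^{1/n}$ before the index-shift and limsup bookkeeping) is a cosmetic, if tidy, difference. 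Where you genuinely diverge is the final assertion. The paper proves it directly with the same machinery: it takes $v\in\dot X_+$ with $B^m(v)\ge r^m v$, $r=\br_+(B)$, applies pointwise $u$-boundedness to $v$ (some $B^k(v)\le cu$), and compares powers of $B$ applied to $v$ and to $u$ to obtain $r\le\lim_j\|B^j\|_u^{1/j}=\eta^u(B)$. You instead invoke Theorem \ref{re:almost-all-equal} to get $\br_+(B)=\br_{cw}(B)$ and combine this with $\br_{cw}(B)\le\eta^u(B)$ from the second chain. This is legitimate and not circular: Theorem \ref{re:almost-all-equal} rests only on Theorem \ref{re:inequalities-radii} (i.e., Proposition \ref{re:Col-Wie-lower}) and Proposition \ref{re:subeigen-power-self}, none of which depend on the present theorem. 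Your route is shorter and sidesteps the delicate indexing in the direct argument, where the lower-eigenvector inequality is only available along exponents that are multiples of $m$; the paper's route keeps the proof self-contained within the Collatz--Wielandt estimates of this section and does not lean on the introduction's theorem. Your observation that $\gamma_B(u)\le\br_+(B)$ follows unconditionally from $\|B^n(u)\|\le\|B^n\|_+\|u\|$ is exactly what is needed on that side.
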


\begin{proof}
Let $x \in X_+$. We can assume $x \ne 0$.
Since $B$ is pointwise $u$-bounded, there exists some $k = k(x)\in \N$ and
some $c = c(x) > 0$ such that
$ B^k(x) \le c u $. Since $B$ is order-preserving and homogeneous, $B^{n} (x) \le c
B^{n-k} (u)$ for all $n >k$.
This implies
\[
   B^{n} (x) \le  c B^{n-k} (u) \le c \|B^{n-k}\|_u u.
\]
We apply the monotone companion norm,
\[
\sharp B^{n} (x) \sharp \le  c \sharp B^{n-k} (u) \sharp
 \le c \|B^{n-k}\|_u \sharp u \sharp.
\]
So
\[
 \sharp B^{n} (x) \sharp^{1/n} \le
 c^{1/n} \sharp B^{n-k} (u) \sharp^{1/n}
 \le \|B^{n-k}\|_u^{\;1/n} \big ( c \sharp u \sharp\big )^{1/n}.
\]
We take the limit as $n \to \infty$,
use Lemma \ref{re:Col-Wie-upp-loc} (b),
recall Proposition
\ref{re:Col-Wie-lower} and obtain
the first inequality. The second then
follows by taking the supremum over $x \in X_+$
and recalling $cw(B) \le {\bf r}_{cw}(B)$
from Proposition \ref{re:Col-Wie-lower}.

Assume that $B$ is bounded and $B^m$ has the lower KR property for some $m \in \N$.
We can assume that $r = \br_+(B) > 0$.
Then $B^m(v) \ge r^m v$
with $r = \br_+(B)$ and some $v \in \dot X_+$. Since
$B$ is pointwise $u$-bounded, there exists some $c > 0$ and $k \in \N$
(which depend on $v$)
such that $B^k (v) \le c u$. So $r^{m+k} v = B^{m+k}(v) \le c B^m(u)$.
For all $n \in \N$, $r^{m+k +n }( v )\le c B^{m+n}( u)$. Then
\[
r^{k+j} \|v\|_u \le c \| B^{j}  \|_u, \qquad j \in \N, j \ge m.
\]
So
\[
r \le (c/ r^k \|v\|_u)^{1/j} \| B^{j}  \|_u^{1/j}, \qquad j \in \N, j \ge m.
\]
Taking the limit as $j \to \infty$ yields the desired result.
\end{proof}

\begin{theorem}
\label{re:Col-Wie-companion2}
Let $B: X_+ \to X_+$ be homogeneous, bounded and order-preserving, $u \in X_+$.
Let some power of $B$ be uniformly $u$-bounded.

Then
\[
cw(B) \le \br_{cw} (B) \le \br_o^\sharp (B)= \br_+^\sharp (B)= \gamma_B^\sharp(u) = \eta^u(B) \le \gamma_B (u) \le \br_+(B).
 \]
Under additional assumptions, the following hold:

\begin{itemize}

\item If $u$ is a normal point in $X_+$, then
$\displaystyle \eta^u(B) = \br_o(B)= \br_+(B)   =
\gamma_B(u) = \lim_{n\to \infty} \|B^n(u)\|^{1/n}$.

\item  If some power of $B$ has the lower KR property, then
$\br_{cw} (B) = \eta^u(B) = \gamma_B(u) = \br_o(B)= \br_+(B)$.

\item If $B$ has the lower KR property, then
$cw(B) = \br_{cw} (B) = \eta^u(B) = \gamma_B(u) = \br_o(B)= \br_+(B)$.

\end{itemize}
\end{theorem}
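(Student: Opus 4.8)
The plan is to bootstrap from Theorem \ref{re:Col-Wie-companion}, whose hypotheses I will check are automatically satisfied here, and then supply the two additional equalities that upgrade its conclusion to the full chain. First I would record the consequences of the standing assumption. Suppose $B^p$ is uniformly $u$-bounded, say $B^p(x) \le c\|x\| u$ for all $x \in X_+$. Then $B$ is pointwise $u$-bounded (take $n=p$, $\gamma = c\|x\|$), and for every $n \ge p$ the element $B^n(u) = B^p(B^{n-p}(u)) \le c\|B^{n-p}(u)\| u$ is $u$-bounded because $B$ is bounded. Hence Theorem \ref{re:Col-Wie-companion} applies with $\ell = p$ and already delivers $cw(B) \le \br_{cw}(B) \le \br_o^\sharp(B) = \gamma_B^\sharp(u) \le \eta^u(B)$, together with $\gamma_B^\sharp(u) \le \gamma_B(u) \le \br_+(B)$.

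The heart of the argument — and what I expect to be the main obstacle, since it must be done without normality or continuity — is to show $\br_+^\sharp(B) = \eta^u(B) = \gamma_B^\sharp(u)$. The device is the factorization $B^n = B^{n-p} \circ B^p$ combined with the companion form of the uniform bound, which by Proposition \ref{re:order-bounded-compan} reads $B^p(x) \le c\,\sharp x\sharp\, u$. Applying the order-preserving homogeneous map $B^{n-p}$ gives $B^n(x) \le c\,\sharp x\sharp\, B^{n-p}(u)$, and taking the monotone companion norm yields the two key estimates $\sharp B^n\sharp_+ \le c\,\sharp B^{n-p}(u)\sharp$ and (putting $x = B^{n-p}(u)$ and reading off the $u$-norm) $\|B^n\|_u \le c\,\sharp B^{n-p}(u)\sharp$. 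Taking $n$-th roots and letting $n \to \infty$ — note $(n-p)/n \to 1$, so the shift is harmless — gives $\br_+^\sharp(B) \le \gamma_B^\sharp(u)$ and $\eta^u(B) \le \gamma_B^\sharp(u)$. Since Proposition \ref{re:Col-Wie-lower} and Theorem \ref{re:Col-Wie-companion} already supply the opposite inequalities $\gamma_B^\sharp(u) = \br_o^\sharp(B) \le \br_+^\sharp(B)$ and $\gamma_B^\sharp(u) \le \eta^u(B)$, all four quantities coincide; appending $\gamma_B^\sharp(u) \le \gamma_B(u) \le \br_+(B)$ completes the displayed chain.

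For the three refinements I would argue as follows. If $u$ is a normal point, Corollary \ref{re:orb-rad-ColWie} gives $\br_+(B) \le \eta^u(B)$, which with the chain forces $\eta^u(B) = \gamma_B(u) = \br_+(B)$; since $\gamma_B(u) \le \br_o(B) \le \br_+(B)$, the orbital radius collapses to this value as well. The one genuinely subtle point is that $\lim_n \|B^n(u)\|^{1/n}$ \emph{exists}, as the chain only controls the $\limsup$; here I would use the \emph{original}-norm bound $\|B^n\|_u \le c\|B^{n-p}(u)\|$ to obtain $\eta^u(B) = \lim_n\|B^n\|_u^{1/n} \le \liminf_n \|B^n(u)\|^{1/n}$, and combine with $\limsup_n \|B^n(u)\|^{1/n} = \gamma_B(u) = \eta^u(B)$ to pin $\liminf = \limsup$, so the limit exists and equals $\gamma_B(u)$.

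Finally, if some power $B^m$ has the lower KR property, the last assertion of Theorem \ref{re:Col-Wie-companion} gives $\gamma_B(u) \le \br_+(B) \le \eta^u(B)$, which together with $\eta^u(B) \le \gamma_B(u)$ from the chain collapses the tail to a single value, while Theorem \ref{re:almost-all-equal} forces $\br_{cw}(B) = \br_o(B) = \br_+(B)$; combining yields $\br_{cw}(B) = \eta^u(B) = \gamma_B(u) = \br_o(B) = \br_+(B)$. If in addition $B$ itself has the lower KR property, Theorem \ref{re:almost-all-equal} with $m=1$ also gives $cw(B) = \br_+(B)$, adjoining $cw(B)$ to the string of equalities and finishing the proof.
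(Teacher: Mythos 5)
Your proof is correct and follows essentially the same route as the paper's: both rest on Proposition \ref{re:order-bounded-compan} to get the companion-norm form of the uniform bound, then exploit the two factorizations $B^{n}=B^{p}\circ B^{n-p}$ and $B^{n}=B^{n-p}\circ B^{p}$ to obtain $\|B^n\|_u \le c\,\sharp B^{n-p}(u)\sharp$ and $\sharp B^n \sharp_+ \le c\,\sharp B^{n-p}(u)\sharp$, which combined with Theorem \ref{re:Col-Wie-companion} collapse the chain, with the normal-point case handled via Corollary \ref{re:orb-rad-ColWie} plus the $\liminf$ estimate and the KR cases deferred to Theorems \ref{re:Col-Wie-companion} and \ref{re:almost-all-equal}. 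Your treatment of the existence of $\lim_n\|B^n(u)\|^{1/n}$ (pinning $\liminf$ against $\limsup$ through $\eta^u(B)$) is exactly the paper's argument via its inequality (\ref{eq:upperCWrad-growth}), merely phrased with the original norm instead of the companion norm.
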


\begin{proof}
Let $k \in \N$ such that $B^k$ is uniformly $u$-bounded.
By Proposition \ref{re:order-bounded-compan},
 $B^k$ is also uniformly $u$-bounded with respect to the
monotone companion norm.
Then there exists some $c > 0$ such that $B^k(x) \le c \sharp x \sharp u$
for all $x \in X_+$. For all $n \in \N$, since $B$ is order-preserving
and homogeneous,
\[
\begin{split}
B^{k+n} (x) = & B^k (B^n(x)) \le \sharp B^n(x)  \sharp c u,
\\
B^{k+n} (x)= & B^n (B^k(x)) \le c \sharp x \sharp B^n (u).
\end{split}
\]
By definition of upper Collatz-Wielandt numbers,
\[
\|B^{k+n}  \|_u \le \sharp B^n(u) \sharp c, \qquad n \in \N.
\]
By (\ref{eq:spec-rad-Col-Wie-loc-up}),
\[
(\eta^u(B))^{(k+n)/n} \le \sharp B^n(u) \sharp^{1/n} c^{1/n}, \qquad n \in \N.
\]
We take the limit as $n \to \infty$,
\begin{equation}
\label{eq:upperCWrad-growth}
\eta^u(B) \le \liminf_{n\to \infty} \sharp B^n(u) \sharp^{1/n}
\le
\liminf_{n\to \infty} \| B^n(u) \|^{1/n}.
\end{equation}
This implies $ \eta^u(B)\le \gamma_B^\sharp(u) $.
The other inequality follows from Theorem \ref{re:Col-Wie-companion}.

Let $u$ be a normal point. By Corollary \ref{re:orb-rad-ColWie},
\[
\eta^u (B) \ge \br_+(B) \ge \br_o(B) \ge \gamma_B(u) =
\limsup_{n\to \infty} \|B^n(u)\|^{1/n}.
\]
Together with (\ref{eq:upperCWrad-growth}), this implies equalities.

Since the companion norm is order-preserving,
\[
\sharp B^{k+n}(x) \sharp \le c \sharp x \sharp \; \sharp B^n (u) \sharp
\]
and
\[
\sharp B^{k+n} \sharp \le c  \sharp B^n (u) \sharp, \quad n \in \N.
\]
Since $B$ is bounded, $\br_+^\sharp (B) \le \gamma^\sharp_B(u)$.

The other statements now follow from the previous theorems.
\end{proof}

In view of estimating the cone spectral radius from above the following observation may be of interest.

\begin{corollary}
\label{re:radius-estimate-above}
Let $B: X_+ \to X_+$ be a  homogeneous, bounded, order-preserving map.
Assume that $X_+$ is normal and complete or  some power of $B$ has
the lower KR property.

Then $\br_+(B)$ is a lower bound for all upper Collatz-Wielandt numbers
$\|B\|_u$ where $u \in \dot X_+$, $B(u)$ is $u$-bounded and $B$ is
pointwise $u$-bounded.
\end{corollary}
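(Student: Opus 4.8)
The plan is to prove, for each admissible $u$, the single inequality $\br_+(B) \le \|B\|_u$, obtaining it through the chain $\br_+(B) \le \eta^u(B) \le \|B\|_u$. The right-hand step is the easy one and I would dispatch it first. Since $B(u)$ is $u$-bounded, the definition of the upper Collatz--Wielandt number together with closedness of $X_+$ gives $B(u) \le \|B\|_u u$; applying $B$ repeatedly and using that $B$ is order-preserving and homogeneous yields $B^n(u) \le \|B\|_u^n u$ for all $n$. Hence every $B^n(u)$ is $u$-bounded, $\|B^n\|_u \le \|B\|_u^n < \infty$, and therefore $\eta^u(B) = \inf_{n} \|B^n\|_u^{1/n} \le \|B\|_u$. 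This also verifies the hypothesis of Theorem~\ref{re:Col-Wie-companion} that $B^n(u)$ be $u$-bounded for all large $n$ (here with $\ell = 1$).

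It remains to establish $\br_+(B) \le \eta^u(B)$ under either hypothesis. If some power $B^m$ has the lower KR property, this is precisely the final assertion of Theorem~\ref{re:Col-Wie-companion}, whose remaining hypotheses---pointwise $u$-boundedness of $B$ and $u$-boundedness of the iterates $B^n(u)$---have just been checked. Composing the two inequalities gives $\br_+(B) \le \eta^u(B) \le \|B\|_u$, settling this case.

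For the case that $X_+$ is normal and complete, I would first note that $u$ is automatically a normal point of $X_+$ (Theorem~\ref{re:normal-cones-points}). The one genuinely new step is to upgrade the pointwise $u$-boundedness of $B$ to uniform $u$-boundedness of some power: by Proposition~\ref{re:u-bounded-pointw-unif}(b), completeness of $X_+$ together with pointwise $u$-boundedness forces $B^p$ to be uniformly $u$-bounded for some $p \in \N$. With $u$ a normal point and a power of $B$ uniformly $u$-bounded, Corollary~\ref{re:orb-rad-ColWie} delivers $\br_+(B) \le \eta^u(B)$, and again $\eta^u(B) \le \|B\|_u$.

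I expect the normal-and-complete case to be the main obstacle, and within it the passage from pointwise to uniform $u$-boundedness. This rests on the Baire-category argument underlying Proposition~\ref{re:u-bounded-pointw-unif}(b) and hence on continuity of $B$; pointwise $u$-boundedness by itself only controls the growth of individual orbits, so without this upgrade one can bound the orbital radius $\br_o(B)$ but not the cone spectral radius $\br_+(B)$. Equivalently, one could route this case through the equality $\br_+(B) = \br_o(B) = \br_o^\sharp(B)$, valid for normal complete cones (Theorem~\ref{re:spec-rad-alt-char}(i) together with equivalence of the companion norm), combined with the bound $\br_o^\sharp(B) \le \eta^u(B)$ of Theorem~\ref{re:Col-Wie-companion}; either way the essential input is that some form of continuity or compactness lets one move from orbitwise to uniform estimates.
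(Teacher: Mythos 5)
Your proposal is correct and, for most of its length, coincides with the paper's (very terse) proof: the paper likewise obtains $\eta^u(B)\le \|B\|_u$ from (\ref{eq:spec-rad-Col-Wie-loc-up}), with your induction $B^n(u)\le \|B\|_u^n\, u$ being exactly what makes Theorem \ref{re:Col-Wie-companion} applicable (your $\ell=1$ observation), and it settles the lower-KR case by the final assertion of that same theorem. The only divergence is the normal-and-complete case. The paper's one-line argument there is precisely your ``equivalent'' alternative: combine $\br_o^\sharp(B)\le\eta^u(B)$ from Theorem \ref{re:Col-Wie-companion} with the identity $\br_+(B)=\br_o(B)=\br_o^\sharp(B)$ for complete normal cones. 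Your primary route instead applies the Baire-category step earlier, upgrading pointwise to uniform $u$-boundedness of a power via Proposition \ref{re:u-bounded-pointw-unif}(b), and then invokes Corollary \ref{re:orb-rad-ColWie} with $u$ a normal point (Theorem \ref{re:normal-cones-points}). The two routes are genuinely parallel rather than different in substance: each consumes exactly one Baire-category input, either hidden inside the identity $\br_o^\sharp=\br_+$ (Theorem \ref{re:spec-rad-cone-orb}) or made explicit in Proposition \ref{re:u-bounded-pointw-unif}(b); your version has the small virtue of isolating where completeness actually enters. Finally, you are right to flag continuity: both Baire arguments require continuity of $B$, a hypothesis the corollary does not state, and the paper's own proof has the identical tacit dependence (Theorem \ref{re:spec-rad-alt-char}(i) and Theorem \ref{re:spec-rad-cone-orb} both assume $B$ continuous). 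So this is a defect inherited from the paper, not one your argument introduces, and your closing paragraph locates it precisely.
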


\begin{proof} Combine the previous theorems with (\ref{eq:spec-rad-Col-Wie-loc-up})
and recall that $\br_o^\sharp (B) = \br_+(B)$ if $X_+$ is normal and
complete.
\end{proof}


\subsection{The upper Collatz-Wielandt bound}


Let $u \in \dot B_+$ and $B(u)$ be  $u$-bounded. Then $B(x) $ is $x$-bounded
for any $u$-comparable $x \in X_+$.

So we define the upper Collatz-Wielandt
bound with respect to $u$ by
\begin{equation}
CW_{u}(B) = \inf \{ \|B\|_x; x \in X_+, x \sim u \}.
\end{equation}

If $x \in X_+$ and $x \sim u$, $\eta^x(B) = \eta^u (B)$. Since $\eta^x(B) \le
\|B\|_x$,
\begin{equation}
\label{eq:upperCWbound}
\eta^u(B) \le CW_u(B).
\end{equation}

We have the following inequalities from
Theorem \ref{re:Col-Wie-companion} and Theorem \ref{re:Col-Wie-companion2}.

\begin{theorem}
\label{re:Coll-upper-bound-gen}
Let $B$ be homogeneous and order-preserving.
Let $u \in \dot X_+$, $B(u) $ be $u$-bounded and $B$ be
pointwise $u$-bounded.
Then
\[
cw(B) \le \br_{cw}(B) \le  \eta^u(B) \le CW_u(B).
\]
\end{theorem}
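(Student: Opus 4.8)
The plan is to establish the four-term chain by disposing of the two outer inequalities at once and concentrating all the work on the middle one, $\br_{cw}(B)\le\eta^u(B)$. The leftmost inequality $cw(B)\le\br_{cw}(B)$ is purely formal: for every $x\in\dot X_+$ one has $[B]_x=[B^1]_x^{1/1}\le\sup_{n}[B^n]_x^{1/n}=\eta_x(B)$, so taking suprema over $x$ gives $cw(B)\le\br_{cw}(B)$. The rightmost inequality $\eta^u(B)\le CW_u(B)$ is exactly (\ref{eq:upperCWbound}), already recorded. Neither of these uses boundedness of $B$, which is deliberately absent from the present hypotheses; the real content, and the place where I must be careful \emph{not} to invoke the boundedness-dependent Proposition \ref{re:Col-Wie-lower}, is therefore the middle inequality, which I would prove directly via the order-preserving homogeneous half-norm $\psi$.

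For that middle step I would first record a preliminary reduction. Since $B(u)$ is $u$-bounded, say $B(u)\le c_0 u$, order-preservation and homogeneity give $B^{m+1}(u)=B(B^m(u))\le B(\|B^m\|_u u)=\|B^m\|_u B(u)\le c_0\|B^m\|_u u$, so inductively $B^m(u)$ is $u$-bounded for every $m\ge 1$. Hence Lemma \ref{re:Col-Wie-upp-loc}(b) applies with $k=1$, giving $\eta^u(B)=\lim_{m\to\infty}\|B^m\|_u^{1/m}<\infty$. Fixing $x\in\dot X_+$, pointwise $u$-boundedness then supplies $k\in\N$ and $c>0$ with $B^k(x)\le c\,u$.

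The core estimate is then carried out for each fixed power $n\in\N$ \emph{separately}. Iterating the lower bound $B^n(x)\ge[B^n]_x x$ (using that $B^n$ is order-preserving and homogeneous) gives $B^{nj}(x)\ge[B^n]_x^{\,j}x$, so applying $\psi$ yields $\psi(B^{nj}(x))\ge[B^n]_x^{\,j}\psi(x)$. For the matching upper bound, whenever $nj>k$ I use $B^{nj}(x)=B^{nj-k}(B^k(x))\le B^{nj-k}(c\,u)=c\,B^{nj-k}(u)\le c\|B^{nj-k}\|_u u$, whence $\psi(B^{nj}(x))\le c\|B^{nj-k}\|_u\psi(u)$. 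Combining and dividing by $\psi(x)>0$ (strict positivity, Proposition \ref{re:normal-mon-funct}(c)) gives $[B^n]_x^{\,j}\le\frac{c\,\psi(u)}{\psi(x)}\|B^{nj-k}\|_u$; taking $j$-th roots and letting $j\to\infty$, the prefactor tends to $1$ and $\|B^{nj-k}\|_u^{1/j}=\big(\|B^{nj-k}\|_u^{1/(nj-k)}\big)^{(nj-k)/j}\to(\eta^u(B))^n$. Thus $[B^n]_x\le(\eta^u(B))^n$, i.e. $[B^n]_x^{1/n}\le\eta^u(B)$ for every $n$, so $\eta_x(B)=\sup_n[B^n]_x^{1/n}\le\eta^u(B)$, and taking the supremum over $x$ yields $\br_{cw}(B)\le\eta^u(B)$.

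The step I expect to be the genuine obstacle—and the reason for running the argument power-by-power with the auxiliary index $j$ rather than simply letting $n\to\infty$—is that $\eta_x(B)$ is a \emph{supremum}, not a limit. Although (\ref{eq:col-wie-low-ineq}) makes $([B^n]_x)_n$ supermultiplicative, for such sequences $\sup_n b_n^{1/n}$ need not equal $\limsup_n b_n^{1/n}$ (witness a sequence vanishing on the odd indices), so merely proving $\limsup_n[B^n]_x^{1/n}\le\eta^u(B)$ would fail to control the defining supremum. Fixing $n$ and sending the iteration count $j\to\infty$ sidesteps this entirely, delivering the clean per-power bound $[B^n]_x^{1/n}\le\eta^u(B)$; this mirrors, without the boundedness hypothesis, the device used in the proof of Proposition \ref{re:Col-Wie-lower} (and of Theorem \ref{re:Col-Wie-companion}) of applying the growth estimate to each iterate rather than once.
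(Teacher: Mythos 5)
Your proof is correct, but it is not the route the paper takes: the paper disposes of this theorem in one line, reading the chain off from Theorem \ref{re:Col-Wie-companion} (and Theorem \ref{re:Col-Wie-companion2}) together with the already-recorded inequality (\ref{eq:upperCWbound}). In that route the middle inequality comes from inserting the companion growth bound, $\eta_x(B)\le\gamma_B^\sharp(x)\le\gamma_B^\sharp(u)\le\eta^u(B)$, with the first step supplied by Proposition \ref{re:Col-Wie-lower} and the last two by the pointwise $u$-bound plus Lemma \ref{re:Col-Wie-upp-loc}(b). You bypass $\gamma_B^\sharp$ altogether and obtain the per-power bound $[B^n]_x\le(\eta^u(B))^n$ directly, sandwiching $\psi(B^{nj}(x))$ between $[B^n]_x^{\,j}\psi(x)$ and $c\|B^{nj-k}\|_u\psi(u)$ and letting $j\to\infty$ for each fixed $n$. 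What each approach buys: the paper's detour also produces $\br_o^\sharp(B)=\gamma_B^\sharp(u)$, which is reused in Theorem \ref{re:Col-Wie-companion2} and later, whereas your argument yields only the stated chain but is self-contained and, as you note, never uses norm-boundedness of $B$. That last point is a genuine advantage here: Theorem \ref{re:Coll-upper-bound-gen}, unlike the two theorems the paper cites and unlike Proposition \ref{re:Col-Wie-lower}, does not assume $B$ bounded, so the paper's citation is formally loose (one must check that the relevant steps of those proofs never invoke boundedness --- they do not), while your proof establishes the theorem under exactly its stated hypotheses. Your two supporting observations are also correct and needed: the induction showing that every $B^m(u)$ is $u$-bounded, which legitimizes Lemma \ref{re:Col-Wie-upp-loc}(b) and hence the existence of $\lim_m\|B^m\|_u^{1/m}$ along the subsequence $m=nj-k$; and the insistence on a per-power bound rather than a $\limsup$ bound, since $\eta_x(B)$ is defined as a supremum --- the same subtlety the paper absorbs inside Proposition \ref{re:Col-Wie-lower} via the inequality $\gamma(x,B^n)\le(\gamma(x,B))^n$.
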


Lower KR property of the map turns some of the inequalities
in equalities (Theorem \ref{re:Col-Wie-companion2}).

\begin{theorem}
\label{re:Coll-upper-bound}
Let $B$ be homogeneous, bounded, and order-preserving
and some power of $B$ have the lower KR property. Let $u \in \dot X_+$
and some power of $B$ be uniformly $u$-bounded. Then
\[
\br_{cw}(B) = \br_o(B) = \br_+(B) = \eta^u(B) \le CW_u(B).
\]
\end{theorem}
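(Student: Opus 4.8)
The plan is to recognize that this statement is essentially the conjunction of two results already in hand, so the real work lies in checking that their hypotheses line up and in reconciling the definition of $CW_u(B)$ with our assumptions. First I would extract the chain of equalities $\br_{cw}(B) = \br_o(B) = \br_+(B) = \eta^u(B)$. These are precisely the content of the second bullet of Theorem \ref{re:Col-Wie-companion2}: its standing hypothesis (some power of $B$ uniformly $u$-bounded, with $B$ homogeneous, bounded, and order-preserving) is exactly what we assume here, and the extra requirement for that bullet, that some power of $B$ have the lower KR property, is also part of our hypothesis. Thus Theorem \ref{re:Col-Wie-companion2} delivers $\br_{cw}(B) = \eta^u(B) = \gamma_B(u) = \br_o(B) = \br_+(B)$, which already contains all the equalities we need (the intermediate value $\gamma_B(u)$ equals $\br_+(B)$ and may simply be omitted from the final display).

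It then remains only to establish $\eta^u(B) \le CW_u(B)$, which is inequality (\ref{eq:upperCWbound}). The underlying reasoning is short: for any $x \in X_+$ with $x \sim u$ one has $\eta^x(B) \le \|B\|_x$ straight from the definition $\eta^x(B) = \inf_{n} \|B^n\|_x^{1/n}$, while comparability forces $\eta^x(B) = \eta^u(B)$; taking the infimum over all such $x$ yields $\eta^u(B) \le CW_u(B)$.

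The one point that genuinely needs care, and the main obstacle, is that (\ref{eq:upperCWbound}) was set up under the assumption that $B(u)$ itself is $u$-bounded, whereas here we assume only that some power $B^k$ is uniformly $u$-bounded. I would bridge this by observing that uniform $u$-boundedness of $B^k$ gives $B^k(y) \le c\|y\| u$ for all $y \in X_+$, whence $B^{k+m}(u) \le c\|B^m(u)\| u$ is $u$-bounded for every $m \ge 0$; so $B^n(u)$ is $u$-bounded for all $n \ge k$, which is exactly the hypothesis of Lemma \ref{re:Col-Wie-upp-loc}(b), making $\eta^u(B) = \lim_{n\to\infty}\|B^n\|_u^{1/n}$ a genuine finite limit. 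For comparable $x$, with $\epsilon u \le x \le c u$, order-preservation and homogeneity give $\epsilon B^n(u) \le B^n(x) \le c B^n(u)$, so $\|B^n\|_x$ and $\|B^n\|_u$ differ only by the fixed factor $c/\epsilon$; taking $n$th roots and using that $(c/\epsilon)^{1/n}\to 1$ then yields $\eta^x(B) = \eta^u(B)$ with both limits existing. Finally, if $CW_u(B) = \infty$ the inequality is trivial since $\eta^u(B) = \br_+(B) \le \|B\|_+ < \infty$; otherwise some comparable $x$ has $\|B\|_x < \infty$, and the argument above applies to give $\eta^u(B) = \eta^x(B) \le \|B\|_x$, hence $\eta^u(B) \le CW_u(B)$. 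Combining this with the equalities from the first step completes the proof.
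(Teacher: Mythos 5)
Your proposal is correct and takes essentially the same route as the paper, which obtains the chain of equalities from the second bullet of Theorem \ref{re:Col-Wie-companion2} and the final inequality from (\ref{eq:upperCWbound}). Your extra care in checking that the comparability argument behind (\ref{eq:upperCWbound}) still works when only a power of $B$ is uniformly $u$-bounded (rather than $B(u)$ itself being $u$-bounded, as in the paper's setup of $CW_u$) fills in a detail the paper leaves implicit.
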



\subsection{Monotonicity of the  spectral radii and the Collatz-Wielandt radius}

If the cone $X_+$ is normal, the cone and orbital spectral radius
are increasing functions of the homogeneous bounded order-preserving maps
(cf. \cite[L.6.5]{AGN}). Collatz-Wielandt numbers, bounds, and radii
and the companion radii are increasing functions of the map even
if the cone is not normal.

\begin{theorem}
\label{re:spec-rad-increasing}
Let $A,B: X_+ \to X_+$ be bounded and  homogeneous.
Assume that $ A(x) \le B (x) $ for all $x\in X_+$ and that $A$ or $B$
are order-preserving.

Then $cw(A) \le cw(B)$,
 $\br_{cw}(A) \le \br_{cw} (B)$, $\br_+^\sharp (A) \le \br_+^\sharp (B)$
 $\br_o^\sharp (A) \le \br_o^\sharp (B)$. Further, for all $x \in X_+$,
 $\|A\|_x \le \|B\|_x$ and $\eta^x(A) \le \eta^x (B)$.

If $u \in \dot X_+$ and $A(u)$ and $B(u)$ are $u$-bounded,
then $CW_u(A) \le CW_u(B)$.

If $X_+$ is a normal cone, then also $\br_+(A) \le \br_+ (B)$ and $\br_o(A) \le \br_o(B)$.
\end{theorem}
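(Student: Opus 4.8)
The plan is to reduce every assertion to a single monotonicity statement about the iterates, namely that $A^n(x) \le B^n(x)$ for all $n \in \N$ and all $x \in X_+$, and then read off each inequality directly from the relevant definition. I would prove this iterate comparison by induction on $n$, the case $n=1$ being the hypothesis $A(x) \le B(x)$. The induction step is the \emph{only} place where the standing assumption that at least one of the two maps is order-preserving is used, and it must be split into two cases. If $B$ is order-preserving, then assuming $A^n(x) \le B^n(x)$ I write $A^{n+1}(x) = A(A^n(x)) \le B(A^n(x)) \le B(B^n(x)) = B^{n+1}(x)$, using $A \le B$ pointwise for the first step and monotonicity of $B$ for the second; if instead $A$ is order-preserving I route through $A^{n+1}(x) = A(A^n(x)) \le A(B^n(x)) \le B(B^n(x))$. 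Thus the comparison holds in either case, and since both maps send $X_+$ into $X_+$, the elements $A^n(x)$ and $B^n(x)$ lie in $X_+$ with $A^n(x) \le B^n(x)$.

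With the iterate comparison in hand, the Collatz-Wielandt inequalities follow immediately from the order structure of the defining sets. For the lower numbers, any $\lambda$ with $A^n(x) \ge \lambda x$ also satisfies $B^n(x) \ge A^n(x) \ge \lambda x$, so $[A^n]_x \le [B^n]_x$; taking $n=1$ and the supremum over $x \in \dot X_+$ gives $cw(A) \le cw(B)$, while $\eta_x(A) = \sup_{n} [A^n]_x^{1/n} \le \eta_x(B)$ and hence $\br_{cw}(A) \le \br_{cw}(B)$ follow by taking suprema. For the upper numbers the containment of defining sets runs the other way: any $r$ with $B^n(x) \le r x$ gives $A^n(x) \le B^n(x) \le r x$, whence $\|A^n\|_x \le \|B^n\|_x$ and therefore $\eta^x(A) \le \eta^x(B)$. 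Taking the infimum over $x \sim u$ yields $CW_u(A) \le CW_u(B)$; here I note that $A(x)$ is $x$-bounded whenever $B(x)$ is, since $0 \le A(x) \le B(x)$, so the index set is unchanged.

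For the companion radii I would invoke that $\sharp \cdot \sharp$ is order-preserving on $X_+$ (Theorem \ref{re:normal-equiv-norm}): from $0 \le A^n(x) \le B^n(x)$ we obtain $\sharp A^n(x) \sharp \le \sharp B^n(x) \sharp$. Taking the supremum over $x \in X_+$ with $\|x\| \le 1$ gives $\sharp A^n \sharp_+ \le \sharp B^n \sharp_+$, hence $\br_+^\sharp(A) \le \br_+^\sharp(B)$, while the pointwise inequality gives $\gamma_A^\sharp(x) \le \gamma_B^\sharp(x)$ and so $\br_o^\sharp(A) \le \br_o^\sharp(B)$. Finally, when $X_+$ is normal the companion norm is equivalent to the original norm, so the companion cone and orbital spectral radii coincide with their original counterparts; the two inequalities just established then give $\br_+(A) \le \br_+(B)$ and $\br_o(A) \le \br_o(B)$.

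The only genuinely structural step is the two-case induction for the iterate comparison, and the subtlety to watch there is precisely the asymmetry of the hypothesis: one must thread the intermediate term differently depending on which map is monotone, because $A \le B$ pointwise alone does not propagate to powers without a monotonicity anchor. Once that lemma is in place, every remaining assertion is a one-line consequence of a definition together with the monotonicity of $\sharp \cdot \sharp$ on the cone, and in particular no normality or completeness is needed except for the very last pair of inequalities.
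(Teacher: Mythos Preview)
Your proposal is correct and follows essentially the same approach as the paper: prove $A^n(x)\le B^n(x)$ by a two-case induction depending on which map is order-preserving, then read off every inequality from the definitions and the monotonicity of the companion norm on $X_+$. The only cosmetic slip is that $\sharp A^n\sharp_+$ is a supremum over $\{x\in X_+:\sharp x\sharp\le 1\}$ rather than $\{x\in X_+:\|x\|\le 1\}$, but since your pointwise inequality $\sharp A^n(x)\sharp\le\sharp B^n(x)\sharp$ holds for all $x\in X_+$, the conclusion is unaffected.
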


\begin{proof}  We claim that $A^n (x) \le B^n (x)$ for all $x \in X_+$
and all $n \in \N$. For $n =1$, this holds by assumption.
Now let $n \in \N$ and assume the statement holds for $n$.
If $A$ is order-preserving,
then, for all $x \in X_+$, since $B^n (x) \in X_+$,
\[
A^{n+1} (x) = A (A^n (x)) \le A (B^n (x))  \le B (B^n (x)) = B^{n+1} (x).
\]
If $B$ is order-preserving,
then, for all $x \in X_+$, since $A^n (x) \in X_+$,
\[
A^{n+1} (x) = A (A^n (x)) \le B (A^n (x))  \le B (B^n (x)) = B^{n+1} (x).
\]
Since the companion norm is order-preserving,
$\sharp A^n (x) \sharp \le  \sharp B^n (x) \sharp $ for all $x \in X_+$, $n \in \N$.
Further $\sharp A^n \sharp_+ \le  \sharp B^n \sharp_+$ for all $n \in \N$ and
$\br_+^\sharp(A) \le \br_+^\sharp(B)$.
Further $\gamma_A^\sharp(x) \le \gamma_B^\sharp(x)$
and so $\br_o^\sharp(A) \le \br_o^\sharp(B)$.

If $X_+$ is normal, the respective order radii taken with the original norm
coincide with those taken with the companion norm.

As for the Collatz-Wielandt radius,
\[
B^n x \ge A^n x \ge [A^n]_x x.
\]
By (\ref{eq:Col-Wie-num-low}), $[B^n ]_x \ge [A^n]_x$, and the claim follows from
(\ref{eq:spec-rad-Col-Wie}) and (\ref{eq:spec-rad-Col-Wie-loc}).
The proofs for $\|\cdot\|_x$, $\eta^x$, and $CW_u$ are similar.
\end{proof}

\begin{proof}[Proof of Theorem \ref{re:spec-rad-increasing2}]
By Theorem \ref{re:spec-rad-increasing}, ${cw} (A) \le {cw}(B)$.
The assertion now follows from Theorem \ref{re:almost-all-equal}.
\end{proof}


\subsection{The upper Collatz-Wielandt bound as eigenvalue}


Conditions which make $CW_u(B)$  an eigenvalue of $B$ with
positive eigenvector and  imply equality
between all these numbers including $CW_u(B)$ can be found in
\cite[Thm.7.3]{AGN}.
Using  the companion half-norm $\psi$, one can drop that the cone is normal
and complete provided that the map is compact. Solidity of the
cone can be replaced by the weaker assumption that the map
is uniformly $u$-bounded.

\begin{theorem}
\label{re:upperCW-eigen}
Let $B:X_+ \to X_+$ be continuous, compact, homogeneous, and order-preserving.
Let $u \in \dot X_+$ and $B$ be uniformly $u$-bounded.

Then $cw(B) = \br_{cw}(B) = \br_+(B) = \eta^u(B) = CW_u(B)$.

If $r = CW_u(B) > 0$, then there exists some $v \in \dot X_u$ such that
$B(v) =r v$.
\end{theorem}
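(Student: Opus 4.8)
The plan is to anchor everything on $\br_+(B)$ and show that the two outer quantities $cw(B)$ and $CW_u(B)$ both collapse onto it. Since $B$ is uniformly $u$-bounded, it is in particular pointwise $u$-bounded and $B(u)$ is $u$-bounded, so Theorems \ref{re:Col-Wie-companion2} and \ref{re:Coll-upper-bound-gen} apply to $B$ itself and give
\[
cw(B) \le \br_{cw}(B) \le \br_o^\sharp(B) = \br_+^\sharp(B) = \gamma_B^\sharp(u) = \eta^u(B) \le \gamma_B(u) \le \br_+(B), \qquad \eta^u(B) \le CW_u(B).
\]
Because $B$ is compact, the final clause of Theorem \ref{re:eigen-hist} (applied with $r=\br_+(B)$ when $\br_+(B)>0$, and trivially by squeezing in the chain when $\br_+(B)=0$) gives $cw(B) = \br_{cw}(B) = \br_+(B)$. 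Feeding this back squeezes $\eta^u(B) = \br_+^\sharp(B) = \gamma_B^\sharp(u) = \br_+(B)$, so every number except $CW_u(B)$ is already pinned to $\br_+(B)$, and since $\eta^u(B) \le CW_u(B)$ it remains only to prove the reverse estimate $CW_u(B) \le \br_+(B)$.

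For that crux I would perturb $B$ so as to force its positive eigenvectors to become comparable to $u$. With $\psi$ the companion half-norm of Proposition \ref{re:normal-mon-funct}, set for $\epsilon \in (0,1]$
\[
B_\epsilon(x) = B(x) + \epsilon\,\psi(x)\, u, \qquad x \in X_+.
\]
Each $B_\epsilon$ is homogeneous, order-preserving, continuous, and compact (the sum of the compact $B$ and the rank-one map $x \mapsto \psi(x) u$); moreover $B_\epsilon(x) \le (c+1)\|x\| u$ uniformly in $\epsilon$, so all $B_\epsilon$ are uniformly $u$-bounded with one common constant, and $\|B_\epsilon - B\|_+ \le \epsilon\|u\| \to 0$. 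Since $\psi(u)>0$ we have $B_\epsilon(u) \ge \epsilon\psi(u) u$, whence $\br_+(B_\epsilon) \ge cw(B_\epsilon) > 0$, and Theorem \ref{re:eigen-hist} provides $v_\epsilon \in \dot X_+$ with $B_\epsilon(v_\epsilon) = r_\epsilon v_\epsilon$, where $r_\epsilon := \br_+(B_\epsilon)$.

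The key observation is that $v_\epsilon \sim u$: from $r_\epsilon v_\epsilon = B_\epsilon(v_\epsilon) \ge \epsilon\psi(v_\epsilon) u$ with $\psi(v_\epsilon)>0$ we get $u \le (r_\epsilon/\epsilon\psi(v_\epsilon))\, v_\epsilon$, while uniform $u$-boundedness gives $v_\epsilon \le ((c+1)\|v_\epsilon\|/r_\epsilon)\, u$. Since $B(v_\epsilon) \le B_\epsilon(v_\epsilon) = r_\epsilon v_\epsilon$, the element $v_\epsilon$ is admissible in the infimum defining $CW_u(B)$ and $\|B\|_{v_\epsilon} \le r_\epsilon$, so $CW_u(B) \le r_\epsilon = \br_+(B_\epsilon)$. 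Running the argument of the first paragraph for the compact map $B_\epsilon$ yields $\br_+(B_\epsilon) = \br_+^\sharp(B_\epsilon)$, hence $CW_u(B) \le \br_+^\sharp(B_\epsilon)$ for every $\epsilon$. Applying the semicontinuity Theorem \ref{re:specrad-semicon2} along a sequence $\epsilon_k \downarrow 0$ (its hypotheses hold with $m=1$ and the common constant $c+1$) gives $\limsup_k \br_+^\sharp(B_{\epsilon_k}) \le \gamma_B^\sharp(u) = \br_+(B)$, so $CW_u(B) \le \br_+(B)$ and all five numbers coincide. Finally, if $r = CW_u(B) = \br_+(B) > 0$, Theorem \ref{re:eigen-hist} gives $v \in \dot X_+$ with $B(v) = rv$, and $rv = B(v) \le c\|v\| u$ shows $v \le (c\|v\|/r)\,u$, so $v \in \dot X_u$.

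The hard part is exactly the reverse estimate $CW_u(B) \le \br_+(B)$: the upper bound $CW_u$ only tests elements comparable to $u$, whereas a Krein-Rutman eigenvector of $B$ need not be comparable to $u$ (it may vanish where $u$ does not). The perturbation $B_\epsilon$ is engineered to cure this, since the term $\epsilon\psi(x)u$ forces every positive eigenvector of $B_\epsilon$ to dominate a multiple of $u$; the price is controlling the spectral radii of the $B_\epsilon$, which is precisely what the companion-norm upper semicontinuity of Theorem \ref{re:specrad-semicon2} supplies.
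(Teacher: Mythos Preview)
Your proof is correct and uses the same perturbation $B_\epsilon(x)=B(x)+\epsilon\psi(x)u$ as the paper, and for the same reason: to force the Krein--Rutman eigenvector to be $u$-comparable. The endgame, however, is different. The paper, having $\lambda_\epsilon \ge CW_u(B)$, normalizes the eigenvectors $\|v_n\|=1$ and uses compactness of $B$ directly to pass to a limit $v_n\to v$ along $\epsilon_n\downarrow 0$, obtaining in one stroke both $B(v)=\lambda v$ with $\lambda\ge CW_u(B)$ and hence $cw(B)\ge CW_u(B)$. You instead invoke the abstract upper-semicontinuity Theorem~\ref{re:specrad-semicon2} to bound $\limsup_k\br_+^\sharp(B_{\epsilon_k})$ by $\gamma_B^\sharp(u)=\br_+(B)$, and then call Theorem~\ref{re:eigen-hist} a second time for the eigenvector of $B$ itself. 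Your route is conceptually clean (it separates the numerical equalities from the eigenvector existence) and only needs the eigenvalue equation $B_\epsilon(v_\epsilon)=r_\epsilon v_\epsilon$, not convergence of the $v_\epsilon$; the paper's route is more self-contained (it does not need the semicontinuity machinery) and produces the eigenvector constructively as a limit. Either way your observation $\|B\|_{v_\epsilon}\le r_\epsilon$ (from $B\le B_\epsilon$) is a slightly slicker path to $CW_u(B)\le r_\epsilon$ than the paper's, which first argues $r_\epsilon\ge CW_u(B_\epsilon)$ and then uses monotonicity $CW_u(B_\epsilon)\ge CW_u(B)$.
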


\begin{remark}
If $X_+$ is complete, we also obtain this result
if we replace compactness of $B$ by the assumptions in Theorem
\ref{re:eigen-noncomp-special} with part (a) or by assumption (ii)
in \cite[Thm.7.3]{AGN}.
\end{remark}

More generally, the following holds.

\begin{theorem}
\label{re:robust}
Let $B: X_+ \to X_+$ be homogeneous and  order-preserving.
Let $u \in \dot X_+$ and $B$ be uniformly $u$-bounded
 and  continuous
at $B^n(u)$ for all $n \in \N$.
Assume there is some $\epsilon_0 > 0$ such that,
for all   $\epsilon \in (0, \epsilon_0)$,
 the maps $B_\epsilon$,
$B_\epsilon (x) = B(x) + \epsilon \psi (x) u$,
have eigenvectors $B_\epsilon (v_\epsilon) = \lambda_\epsilon v_\epsilon$
with $v_\epsilon \in \dot X_+$ and $\lambda_\epsilon > 0$.

Then $ \br_+(B) \ge \gamma_u(B) \ge  CW_u(B) = \eta^u(B)$ with equality holding
everywhere if $u$ is a normal point of $X_+$ or
some power of $B$ has the lower KR property.

Further, if $B$ has the KR property and $CW_u(B) > 0$, there
exists some $v \in \dot X_+$ such that $B (v) = CW_u(B) v$.

\end{theorem}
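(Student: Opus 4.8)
The plan is to reduce everything to the single new inequality $CW_u(B) \le \eta^u(B)$, after which the displayed chain, the equality cases, and the eigenvalue assertion all follow from results already in hand. Since $B$ is uniformly $u$-bounded it is pointwise $u$-bounded and $B(u) \le c\|u\| u$ is $u$-bounded, so Theorem \ref{re:Col-Wie-companion2} applies to $B$ and yields $\eta^u(B) = \gamma_B^\sharp(u) \le \gamma_B(u) \le \br_+(B)$, while (\ref{eq:upperCWbound}) gives $\eta^u(B) \le CW_u(B)$. Thus once $CW_u(B) \le \eta^u(B)$ is established we obtain $CW_u(B) = \eta^u(B)$, and since $\gamma_B(u) \ge \eta^u(B) = CW_u(B)$ the full chain $\br_+(B) \ge \gamma_u(B) \ge CW_u(B) = \eta^u(B)$ drops out.

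For the crucial inequality I would exploit the perturbed eigenvectors. Fix $\epsilon \in (0,\epsilon_0)$ and write $B_\epsilon(v_\epsilon) = \lambda_\epsilon v_\epsilon$ with $v_\epsilon \in \dot X_+$, $\lambda_\epsilon > 0$. The first step is to show $v_\epsilon \sim u$. Because $\epsilon\psi(v_\epsilon) u \ge 0$, the eigenvalue equation gives $B(v_\epsilon) \le \lambda_\epsilon v_\epsilon$; combining uniform $u$-boundedness $B(v_\epsilon) \le c\|v_\epsilon\| u$ with $\psi(v_\epsilon)\le\|v_\epsilon\|$ yields $\lambda_\epsilon v_\epsilon \le (c+\epsilon)\|v_\epsilon\| u$, so $v_\epsilon$ is $u$-bounded; conversely $\lambda_\epsilon v_\epsilon \ge \epsilon \psi(v_\epsilon) u$ with $\psi(v_\epsilon)>0$ (Proposition \ref{re:normal-mon-funct}(c)) shows $u$ is $v_\epsilon$-bounded, so $v_\epsilon \sim u$. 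Since comparability is an equivalence relation, the defining set for $CW_{v_\epsilon}$ coincides with that for $CW_u$, whence $CW_u(B) = CW_{v_\epsilon}(B) \le \|B\|_{v_\epsilon} \le \lambda_\epsilon$, the last step using $B(v_\epsilon) \le \lambda_\epsilon v_\epsilon$. Thus $CW_u(B) \le \lambda_\epsilon$ for every $\epsilon \in (0,\epsilon_0)$.

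It remains to drive $\lambda_\epsilon$ down to $\eta^u(B)$ as $\epsilon \to 0$, and this upper semicontinuity is the main obstacle. First I would identify $\lambda_\epsilon$ with a companion spectral radius of $B_\epsilon$: as $B_\epsilon$ is bounded, order-preserving and uniformly $u$-bounded, Theorem \ref{re:Coll-upper-bound-gen} and Theorem \ref{re:Col-Wie-companion2} give $[B_\epsilon]_{v_\epsilon} = \lambda_\epsilon \le cw(B_\epsilon) \le \br_{cw}(B_\epsilon) \le \eta^u(B_\epsilon) = \br_+^\sharp(B_\epsilon) \le CW_u(B_\epsilon) \le \|B_\epsilon\|_{v_\epsilon} = \lambda_\epsilon$, forcing $\lambda_\epsilon = \br_+^\sharp(B_\epsilon)$. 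Now take any sequence $\epsilon_k \downarrow 0$ and set $B_k = B_{\epsilon_k}$. One checks the hypotheses of Theorem \ref{re:specrad-semicon2}: each $B_k$ is bounded, homogeneous and order-preserving, $\|B_k - B\|_+ \le \epsilon_k \|u\| \to 0$, and $B_k(x) \le (c+\epsilon_0)\|x\| u$ uniformly in $k$ (so a single $m=1$ and constant work), while $B$ is continuous at $B^n(u)$ by hypothesis. That theorem then gives $\limsup_k \lambda_{\epsilon_k} = \limsup_k \br_+^\sharp(B_k) \le \gamma_B^\sharp(u) = \eta^u(B)$. Passing to a subsequence realizing this $\limsup$ and using $CW_u(B) \le \lambda_{\epsilon_k}$ yields $CW_u(B) \le \eta^u(B)$, which closes the equality $CW_u(B) = \eta^u(B)$ and hence the displayed chain.

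Finally the equality cases and the eigenvalue assertion are read off Theorem \ref{re:Col-Wie-companion2}. If $u$ is a normal point, its first bullet gives $\br_+(B) = \gamma_B(u) = \eta^u(B)$, so equality holds throughout the chain; if some power of $B$ has the lower KR property, its second bullet again gives $\br_+(B) = \gamma_B(u) = \eta^u(B)$. For the last statement, the KR property supplies $v\in\dot X_+$ with $B(v) = \br_+(B) v$, which in particular exhibits the lower KR property for $B$ itself; the third bullet of Theorem \ref{re:Col-Wie-companion2} then forces $\br_+(B) = \eta^u(B) = CW_u(B)$, so when $CW_u(B) > 0$ we have $\br_+(B) = CW_u(B) > 0$ and $B(v) = CW_u(B) v$, as required. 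The one genuinely delicate point throughout is the semicontinuity step: it is essential that the perturbation $\epsilon\psi(\cdot)u$ be small in the cone-norm $\|\cdot\|_+$ and that uniform $u$-boundedness hold with a constant independent of $\epsilon$, so that Theorem \ref{re:specrad-semicon2} applies with a single $m$ and $c$.
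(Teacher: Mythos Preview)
Your proof is correct and follows essentially the same route as the paper's: show $v_\epsilon \sim u$, squeeze $\lambda_\epsilon$ between $CW_u(B_\epsilon)$ and $cw(B_\epsilon)$ to identify $\lambda_\epsilon = \br_+^\sharp(B_\epsilon)$, and then invoke the upper semicontinuity result (Theorem~\ref{re:specrad-semicon2}) together with Theorem~\ref{re:Col-Wie-companion2} to push $CW_u(B)$ down to $\eta^u(B)$. The only cosmetic differences are that you bound $CW_u(B) \le \|B\|_{v_\epsilon} \le \lambda_\epsilon$ directly (the paper goes through $CW_u(B) \le CW_u(B_\epsilon)$ via monotonicity in the map) and that you take a $\limsup$ where the paper argues by contradiction.
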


\begin{proof}
Choose a sequence $(\epsilon_n)$ in $(0,\epsilon_0)$ with
$\epsilon_n \to 0$.  Set $B_n = B_{\epsilon_n}$. The maps
$B_n$ inherit uniform $u$-boundedness from $B$.

By assumption, there exist $v_n \in \dot X_+$ and $r_n > 0$ such
that $B (v_n) + \epsilon_n \psi( v_n) u =r_n v_n$.
Since $B$ is uniformly $u$-bounded and $\psi(v_n) >0$, $v_n$ is
$u$-comparable. By (\ref{eq:upperCWbound}), $ r_n \ge CW_u(B_n)$.
Also $r_n \le cw(B_n)$ by (\ref{eq:low-CW-bound}).
By Theorem \ref{re:Coll-upper-bound-gen} and Theorem \ref{re:Col-Wie-companion2},
$\eta^u(B_n) = \br_+^\sharp (B_n) = CW_u(B_n)$ for all $n \in \N$.
Further, $CW_u(B_n) \ge CW_u(B)$.

Suppose that $\br_+^\sharp (B)<CW_u(B)$.
Since $\epsilon_n \to 0$, $\|B_n - B \|_+ \to 0$.
By Theorem \ref{re:specrad-semicon2}, $\br_+^\sharp(B_n) < CW_u(B)$
for large $n$, a contradiction.

So $\br_+^\sharp (B) \ge CW_u(B)$.
By Theorem \ref{re:Col-Wie-companion2}, also $\eta^u(B) = \br_+^\sharp (B)$.
Since $\eta^u(B) \le CW_u(B)$, we have $\eta^u(B) = CW_u(B)$.
The other inequalities follow from Theorem \ref{re:Col-Wie-companion2}.

If some power of $B$ has the lower KR property,
equality holds by Theorem \ref{re:Coll-upper-bound}.

If $u$ is a normal point of $X_+$, equality follows from
Theorem \ref{re:Col-Wie-companion2}.

Assume that $B$ has the KR property and $CW_u(B)> 0$. Then $\br_+(B) =CW_u(B)> 0$
and there exists some $v \in \dot X_+$ such that $B(v) = \br_+(B) v$.
\end{proof}

The equality $\br_+(B) = CW_u(B)$
guarantees that, at least in theory,
 one can get arbitrarily sharp estimates of $\br_+(B)$
from above in terms of upper Collatz-Wielandt numbers $\|B\|_x$
by choosing an appropriate $x \in \dot X_+$ for which $B(x)$ is $x$-bounded
and $B$ pointwise
$x$-bounded (Corollary \ref{re:radius-estimate-above}).
Crude attempts in this direction are made for the rank-structured
discrete population model with mating in Section  \ref{sec:model}.

The idea of perturbing the map $B$ as above or in a similar way
is quite old; see \cite[Satz 3.1]{Sch55} and \cite[Thm.3.6]{Tho64}.

\begin{theorem}
\label{re:upperCW-eigen2}
Let $X_+$ be complete.
Assume that  $B = K + A$ where $K:X_+ \to X_+$ is compact, homogeneous, continuous
and order preserving and $A:X \to X$ is linear, positive and bounded
and $\br(A) < \br_+(B)$.
Let $u \in \dot X_+$ and $B$ be uniformly $u$-bounded.

Then $cw(B)= \br_{cw}(B) = \br_+(B) = \eta^u(B) = CW_u(B)$.

If $r = CW_u(B) > 0$, then there exists some $v \in \dot X_u$ such that
$B(v) =r v$.
\end{theorem}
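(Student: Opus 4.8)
The plan is to obtain the whole statement from Theorem~\ref{re:robust}. Its structural hypotheses on $B$ are immediate here: $B=K+A$ is homogeneous and order-preserving because a positive linear map is both, it is continuous on all of $X_+$ since $K$ is continuous and $A$ is a bounded linear map, and it is uniformly $u$-bounded by assumption (so in particular $B(u)$ is $u$-bounded). The only non-trivial hypothesis of Theorem~\ref{re:robust} is that for all small $\epsilon>0$ the perturbed maps $B_\epsilon(x)=B(x)+\epsilon\psi(x)u$ have eigenvectors $B_\epsilon(v_\epsilon)=\lambda_\epsilon v_\epsilon$ with $v_\epsilon\in\dot X_+$ and $\lambda_\epsilon>0$. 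Producing these eigenvectors, together with one eigenvector for $B$ itself, is the heart of the proof, and both will come from Theorem~\ref{re:eigen-noncomp-special}(a).

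Theorem~\ref{re:eigen-noncomp-special}(a) requires the linear part to have \emph{Lipschitz constant}, i.e.\ operator norm, strictly below the cone spectral radius, whereas we are only handed $\br(A)<\br_+(B)$ on the spectral radius. So first I would renorm: choosing $r$ with $\br(A)<r<\br_+(B)$, the classical norm $\|x\|^\sim=\sum_{n=0}^{\infty}r^{-n}\|A^n x\|$ converges (because $\br(A)<r$), is equivalent to the original norm, and satisfies $\|A\|^\sim\le r<\br_+(B)$. Passing to $\|\cdot\|^\sim$ is harmless for the conclusion: the order-theoretic quantities $cw(B)$, $\br_{cw}(B)$, $\eta^u(B)$, $CW_u(B)$ do not see the norm at all, while $\br_+(B)$ and $\br(A)$ are unchanged since an equivalent norm perturbs $\|B^n\|_+$ and $\|A^n\|$ only by constants that disappear under $n$-th roots; uniform $u$-boundedness is also preserved. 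Note that, because $X_+$ need not be normal, there is in general \emph{no} equivalent monotone norm, so I deliberately renorm with the original norm rather than with $\sharp\cdot\sharp$. Thus I may assume $\|A\|<\br_+(B)$.

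The order in which I apply Theorem~\ref{re:eigen-noncomp-special}(a) is important. I would apply it first to $B=K+A$ itself: here $\br_+(B)>\br(A)\ge0$ and $\|A\|<\br_+(B)$, so it yields $v\in\dot X_+$ with $B(v)=\br_+(B)v$; hence $B$ has the KR property and, a fortiori, the lower KR property with $m=1$. Theorem~\ref{re:almost-all-equal} then gives $cw(B)=\br_{cw}(B)=\br_+^\sharp(B)=\br_+(B)$. This identity $\br_+^\sharp(B)=\br_+(B)$ is exactly what lets me handle the perturbations without normality: writing $B_\epsilon=K_\epsilon+A$ with $K_\epsilon(x)=K(x)+\epsilon\psi(x)u$, the map $x\mapsto\epsilon\psi(x)u$ is homogeneous, order-preserving, Lipschitz (Proposition~\ref{re:normal-mon-funct}(b)) and compact (it sends bounded sets into a bounded segment of the ray $\R_+u$), so $K_\epsilon$ is again compact, homogeneous, continuous and order-preserving; and since $B\le B_\epsilon$ with both order-preserving, monotonicity of the companion cone spectral radius (Theorem~\ref{re:spec-rad-increasing}) gives $\br_+(B_\epsilon)\ge\br_+^\sharp(B_\epsilon)\ge\br_+^\sharp(B)=\br_+(B)>\|A\|$. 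Hence $\br_+(B_\epsilon)>\|A\|>0$, and Theorem~\ref{re:eigen-noncomp-special}(a) applied to $B_\epsilon$ supplies $v_\epsilon\in\dot X_+$ with $B_\epsilon(v_\epsilon)=\br_+(B_\epsilon)v_\epsilon$ and $\lambda_\epsilon=\br_+(B_\epsilon)>0$.

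With the eigenvectors of all $B_\epsilon$ in hand, Theorem~\ref{re:robust} applies; since $B$ has the lower KR property, its equality clause yields $\br_+(B)=\gamma_B(u)=CW_u(B)=\eta^u(B)$, which together with the previous paragraph gives $cw(B)=\br_{cw}(B)=\br_+(B)=\eta^u(B)=CW_u(B)$. Finally, if $r=CW_u(B)>0$ then $\br_+(B)=r>0$ and the eigenvector $v$ from the KR property satisfies $B(v)=rv$; uniform $u$-boundedness forces $rv=B(v)\le c\|v\|u$, so $0\le v\le(c\|v\|/r)u$ exhibits $v\in\dot X_u$. The main obstacle is precisely the mismatch between the spectral-radius hypothesis $\br(A)<\br_+(B)$ and the operator-norm form of Theorem~\ref{re:eigen-noncomp-special}(a), compounded by the absence of an equivalent monotone norm when $X_+$ is not normal; the device that resolves it is to secure the KR property (hence $\br_+^\sharp(B)=\br_+(B)$) for $B$ before comparing $\br_+(B_\epsilon)$ with $\|A\|$, so that the comparison can be run through the companion spectral radius, for which monotonicity needs no normality.
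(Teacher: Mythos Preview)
Your proof is correct and follows a genuinely different path from the paper's. Both arguments funnel through Theorem~\ref{re:robust}, but they differ in how they bridge the gap between the hypothesis $\br(A)<\br_+(B)$ and the operator-norm requirement of Theorem~\ref{re:eigen-noncomp-special}(a). You renorm $X$ via $\|x\|^\sim=\sum_{n\ge0}r^{-n}\|A^nx\|$ so that $\|A\|^\sim<\br_+(B)$ and then apply Theorem~\ref{re:eigen-noncomp-special}(a) directly to $B$ and to each $B_\epsilon$; the paper instead passes to powers, writing $B_\epsilon^n=K_{n,\epsilon}+A^n$ and choosing $n$ large enough that $\|A^n\|<\br_+(B^n)$, so it obtains eigenvectors only for $B_\epsilon^n$ and must then invoke strict $\psi$-increasingness of $B_\epsilon$ together with Proposition~\ref{re:Perron} to descend to an eigenvector of $B_\epsilon$ itself. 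Your renorming yields the KR property for $B$ outright (with $m=1$), which immediately gives $cw(B)=\br_+(B)=\br_+^\sharp(B)$ and lets you compare $\br_+(B_\epsilon)$ with $\|A\|$ through the monotone companion radius; the paper, having only the KR property for a power of $B$, routes the comparison $\br_+(B)\le\br_+(B_\epsilon)$ through the Collatz--Wielandt side. Finally, you get the eigenvector of $B$ for free from its KR property, whereas the paper constructs it by a separate limiting argument $\epsilon_k\to0$ using compactness of $K$ and the resolvent $(r-A)^{-1}$. Your route is shorter and avoids both Proposition~\ref{re:Perron} and the closing compactness argument; the paper's route has the (minor) advantage of never changing the ambient norm.
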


\begin{proof}
For $\epsilon \in [0,1]$, we define $B_\epsilon :X_+ \to X_+$
by $B_\epsilon (x) = B (x) + \epsilon \psi (x)u$ where $\psi$
is the companion half-norm. Then $B_\epsilon = K_\epsilon + A$
with $K_\epsilon(x) = K (x) + \epsilon \psi(x) u $.
Then $K_\epsilon $ is compact, continuous, order-preserving
and homogeneous.

Since $A$ is linear and bounded, $B_\epsilon^n = K_{n,\epsilon} + A^n$ with
compact, continuous, homogeneous, order-preserving maps $K_{n,\epsilon}$.

If $n$ is chosen large enough, $\|A^n\| < \br_+(B^n)$.
By Theorem \ref{re:eigen-noncomp-special} (a), some power of $B$ has the KR property.
Since $B(x) \le B_\epsilon (x)$ for all $x \in X_+$, $\br_+(B)= cw(B)\le
cw(B_\epsilon) \le \br_+(B_\epsilon)$ by Theorem \ref{re:spec-rad-increasing2}.

So $\br(A) < \br_+(B) \le \br_+(B_\epsilon)$.
By Theorem \ref{re:eigen-noncomp-special} (a), for large enough $n$,  there exist
eigenvectors $v_\epsilon \in \dot X_+$ such that $B_\epsilon^n (v_\epsilon)
= r_\epsilon^n v_\epsilon $ with $r_\epsilon = \br_+(B_\epsilon)$.

It is easy to see that, for $\epsilon \in (0,1]$, $B_\epsilon $
is strictly $\psi$-increasing and that $B_\epsilon^n$ inherits this
property. Set $w_\epsilon= B_\epsilon (v)$. Then $B_\epsilon^n (w_\epsilon)
= r_\epsilon^n w_\epsilon $. Since $B_\epsilon$ is uniformly $u$-bounded
and $\psi(v_\epsilon) >0$, $w_\epsilon$ is $u$-comparable.
Since $B_\epsilon^k(v_\epsilon) \in \dot X_+$ for all $k \in \N$,
$B^n_\epsilon (v_\epsilon)$ is $u$-comparable and so $v_\epsilon $
and $w_\epsilon$ are comparable.

By Lemma \ref{re:Perron},
 $B_\epsilon (v_\epsilon ) = \alpha_\epsilon v_\epsilon$ for some $\alpha_\epsilon >0$
 which must then equal $r_\epsilon$.

Since $v_\epsilon$ is $u$-comparable $r_\epsilon= cw(B_\epsilon) = \br_{cw}(B_\epsilon)
= \br_+(B_\epsilon) = CW_u(B_\epsilon)$ for all $\epsilon \in (0,1]$.

By Theorem \ref{re:robust}, $CW_u(B) = \br_+(B) = \eta^u(B) = \gamma^u(B)$.

Now choose a decreasing sequence $(\epsilon_k)$ in $(0,1]$ with $\epsilon_n
\to 0$. Then $r_k = \br_+(B_{\epsilon_k})$ form a decreasing
sequence with $r_k \ge CW_u(B)$. Suppose $CW_u(B) >0$.
Set $v_k = v_{\epsilon_k}$. We can assume that $\|v_k\|=1$
for all $k \in \N$. Then
\[
r_k v_k = K(v_k) + A(v_k) + \epsilon_k \psi(v_k) u .
\]
Let $r = \lim_{k \to \infty} r_k$. Since $K$ is compact,
\[
(r - A) v_k = (r-r_k)v_k + K(v_k) + \epsilon_k \psi(v_k) u
\]
converges as $k \to \infty$ after choosing a subsequence.
Since $r \ge  CW_u(B) =\br_+(B)>
br_+(A)$,  $(r-A)^{-1} =
\sum_{j=1}^\infty (1/r)^{j+1} A^k$ exists as a continuous additive
homogeneous map and acts as the inverse of $r -A$. This implies
that $v_k \to v $ for some $v \in X_+$, $\|v\|=1$. Since $B$ is continuous,
$rv = B(v)$ which implies that $r = \br_+(B)$. Then $r \le cw(B)$
and equality follows. If $CW_u(B) =0$, equality holds anyway.
\end{proof}


\section{Monotonically compact order-bounded\\ maps on semilattices}
\label{sec:mono-comp}
As before, let $X$ be an ordered normed vector space with cone $X_+$.

\begin{definition}
\label{def:mono-compact}
Let $u\in \dot X_+$. $B:X_+ \to X_+ \cap X_u$
is called {\em antitonically  $u$-compact}
if $(B(x_n))$  has a convergent subsequence for each decreasing sequence $(x_n)$ in $X_+$ for which
there is some $c >0$ such that $x_n \le c u$ for all $n \in \N$.

$B$ is called {\em monotonically  $u$-compact}
if $(B(x_n))$  has a convergent subsequence for each monotone sequence $(x_n)$ in $X_+$ for which
there is some $c >0$ such that $x_n \le c u$ for all $n \in \N$.

\end{definition}


If $X_+$ is regular (Section \ref{sec:cones}),
then every order-preserving homogeneous $B: X_+ \to X_+$ is
monotonically $u$-compact for any $u \in X_+$.

\begin{definition}
\label{def:right-contin}
Let $u \in \dot X_+$,  $B: X_+ \to X_+ $ and $B(u)$ be $u$-bounded. $B$
is called antitonically continuous at $x \in X_+$ if, for
for any decreasing sequence $(x_k)$ in $X_+$ with $x \le x_k \le c u$ for all $k \in \N$ (with some
$c > 0$ independent of $k$) and $\|x_k -  x\| \to 0$,  we have $\psi( B(x_k) - B(x)) \to 0$.

$B$ is called monotonically continuous at $x \in X_+$ with for every
monotone sequence $(x_k)$ in $X_u$ with $x_k \le c u$ for all $k \in \N$
(with some
$c > 0$ independent of $k$) and $\|x_k - x\| \to 0$,  we have $\sharp B(x_k) - B(x) \sharp \to 0$.
\end{definition}

Recall the definition of a minihedral cone in Section \ref{subsec-cone-expose}
( $x\land y = \inf\{x,y\}$ exists for all $x,y \in X_+$)
and the upper local Collatz-Wieland spectral radius at $u \in X_+$,
\[
\eta^u(B)= \inf_{n\in \N} \|B^n\|_u^{1/n}.
\]

\begin{theorem}
\label{re:minihed-subeigen}
 Let $X_+$ be a  minihedral  cone and $B: X_+ \to X_+$ be
 order-preserving and homogeneous. Let $u \in \dot X_+$, and $B(u)$
 be $u$-bounded.
Assume that $B$  is
antitonically $u$-compact and antitonically continuous. Finally assume that
$\eta^u(B) > 0$ and $\|B(y_n)\|_u \to 0$
 for any decreasing sequence $(y_n)$ in $X_u \cap X_+$ with
$\|y_n \| \to 0$.

Then there exists some $x \in \dot X_+$,
such that $B (x) \ge \eta^u (B) x$ and
$\eta^u(B)= \br_o^\sharp(B) = \br_{cw}(B)= cw(B)$.

\end{theorem}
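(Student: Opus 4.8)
The plan is to split the statement into two tasks: (A) produce a vector $x\in\dot X_+$ with $B(x)\ge\eta^u(B)\,x$, and (B) deduce that the four order spectral radii coincide. Write $r=\eta^u(B)$ and $c_n=\|B^n\|_u$. Since $B$ is antitonically $u$-compact it maps $X_+$ into $X_u$ (Definition~\ref{def:mono-compact}); together with the hypothesis that $B(u)$ is $u$-bounded this gives $B^n(u)\le c_n u$ with $c_n<\infty$, and by Lemma~\ref{re:Col-Wie-upp-loc} the sequence $(c_n)$ is submultiplicative with $c_n^{1/n}\to r>0$. Moreover, because $B(x)\in X_u$ for every $x\in X_+$ we have $B(x)\le\|B(x)\|_u u$, hence $B^{n+1}(x)\le\|B(x)\|_u\,c_n u$ and, since $\sharp\cdot\sharp$ is order-preserving, $\gamma_B^\sharp(x)\le\lim_n c_n^{1/n}=r$. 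Thus $\br_o^\sharp(B)\le r$, and the general comparison $cw(B)\le\br_{cw}(B)\le\br_o^\sharp(B)$ of Proposition~\ref{re:Col-Wie-lower} puts all four numbers $\le r$. Task (B) therefore collapses to the single inequality $cw(B)\ge r$: once $x$ as in (A) is found, $r\le[B]_x\le cw(B)$ and the four quantities are squeezed to $r$.

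For task (A) the idea is not to build $x$ directly but to produce, for some $m\in\N$, a vector $w\in\dot X_+$ satisfying the \emph{critical power inequality} $B^m(w)\ge r^m w$. A minihedral cone is in particular a sup-semilattice, since $x\vee y=x+y-(x\wedge y)$, so Proposition~\ref{re:subeigen-power-self} applies and delivers the lower eigenvector $x=\bigvee_{j=0}^{m-1}r^{-j}B^j(w)$, a \emph{finite} supremum that causes no growth problem. Everything thus reduces to realizing the Collatz--Wielandt value $r$ from below, i.e.\ to showing $\br_{cw}(B)\ge r$.

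To obtain $w$ I would work with the normalized orbit $v_n=B^n(u)/c_n$, which satisfies $v_n\le u$, $\|v_n\|_u=1$ and $B(v_n)=\theta_n v_{n+1}$ with $\theta_n=c_{n+1}/c_n$. Using the lattice structure I form the decreasing families of finite infima $d_{n,N}=\bigwedge_{k=n}^{N}v_k\le u$. Each $(d_{n,N})_N$ is decreasing and bounded above by $u$, so antitonic $u$-compactness produces a convergent subsequence of the images $B(d_{n,N})$; since these images are themselves decreasing in $N$, the convergence principle Proposition~\ref{re:monotone-converge} upgrades this to full convergence of $B(d_{n,N})$ as $N\to\infty$. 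Antitonic continuity then lets me commute $B$ with the limit, and tracking the ratios $\theta_n$, whose geometric means tend to $r$, yields a relation of the shape $B^m(w)\ge r^m w$ at the critical exponent. The nondegeneracy hypothesis---that $\|B(y_n)\|_u\to0$ whenever $(y_n)$ decreases with $\|y_n\|\to0$---is exactly what excludes the limit $w$ from collapsing to $0$: were the relevant infima to tend to $0$ in norm, their $B$-images would tend to $0$ in the $u$-norm, contradicting $\|v_n\|_u=1$ and $c_n^{1/n}\to r>0$.

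The main obstacle is precisely this limiting step at the critical value $r$, rather than at some $r-\epsilon$. The ratios $\theta_n$ oscillate around $r$ even though $c_n^{1/n}\to r$, so $B^m(w)\ge r^m w$ cannot be extracted from any single iterate; it must emerge from a monotone limit, and this is where the three structural hypotheses are jointly indispensable---antitonic $u$-compactness together with Proposition~\ref{re:monotone-converge} to force convergence without assuming regularity of the cone, antitonic continuity to pass $B$ through the limit, and $\|B(y_n)\|_u\to0$ to keep $w\ne0$. Once $B^m(w)\ge r^m w$ is secured, Proposition~\ref{re:subeigen-power-self} completes (A), the estimate $cw(B)\ge[B]_x\ge r$ completes (B), and the bound $\br_o^\sharp(B)\le r$ established in the first paragraph closes all the equalities $cw(B)=\br_{cw}(B)=\br_o^\sharp(B)=\eta^u(B)$.
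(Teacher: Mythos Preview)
Your reduction (B) is correct and matches the paper: once $x\in\dot X_+$ with $B(x)\ge r x$ is found, $r\le[B]_x\le cw(B)$; combined with $cw(B)\le\br_{cw}(B)\le\br_o^\sharp(B)\le\eta^u(B)$ (the last inequality via your argument that $B(x)\le\|B(x)\|_u u$ makes $B$ pointwise $u$-bounded, so Theorem~\ref{re:Col-Wie-companion} applies), all four numbers collapse to $r$. Your observation that a minihedral cone is automatically a sup-semilattice via $x\vee y=x+y-x\wedge y$ is also correct, so Proposition~\ref{re:subeigen-power-self} is available.

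The gap is in (A). Your normalized-orbit/infimum scheme does not produce the inequality $B^m(w)\ge r^m w$, and I do not see how it can. Order-preservation pushes the \emph{wrong way} through infima: from $d_{n,N}\le v_k$ you get $B^m(d_{n,N})\le B^m(v_k)$, i.e.\ upper bounds on $B^m(d_{n,N})$, not the lower bound you need. You acknowledge this is ``the main obstacle'' but offer only the phrase ``tracking the ratios $\theta_n$''; nothing in the construction converts the geometric-mean convergence $c_n^{1/n}\to r$ into a pointwise lower eigen-inequality. There is a second, smaller gap: antitonic $u$-compactness gives convergence of $B(d_{n,N})$, not of $d_{n,N}$ itself, and antitonic continuity (Definition~\ref{def:right-contin}) requires norm convergence of the \emph{arguments}, which you do not have.

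The paper's route for (A) is quite different and worth studying. After normalizing to $r=1$, it runs a Krasnosel'skii-type perturbed iteration
\[
x_0=u,\qquad y_k=B(x_{k-1})+2^{-k}u,\qquad x_k=y_k\wedge u,
\]
which is decreasing in $X_+\cap X_u$. Antitonic $u$-compactness and Proposition~\ref{re:monotone-converge} force $B(x_k)\to z$; the lattice inequality $y_k\wedge u+(z-y_k)\le z\wedge u$ then shows $0\le x_k-x\le y_k-z$ with $x:=z\wedge u$, so $x_k\to x$ in norm and antitonic continuity yields $B(x)=z\ge x$. The nondegeneracy hypothesis $\|B(y_n)\|_u\to0$ is used only at the end, by contradiction: if $x=0$ one shows that eventually $y_k\le u$, so the truncation is inactive, $x_k=y_k$, and iterating gives $B^{n+1}(u)\le(1/2)u$ for some $n$, contradicting $\eta^u(B)=1$. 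No passage to a power $B^m$ is needed; $x$ itself satisfies $B(x)\ge x$.
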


The first part of the proof has been adapted from \cite[L.9.5]{KrLiSo}
 where $B$ is assumed
to be a linear operator on the ordered Banach space $X$ and the cone $X_+$
to be normal. Use of the monotone companion metric allows to drop
normality as assumption. However, without $u$ being a normal point of the cone, compactness of $B$
may not imply monotonic compactness.

\begin{proof}[Proof of Theorem \ref{re:minihed-subeigen}]
Let $u \in X_+$ such that $B(u)$ is $u$-bounded.
Since $B$ is  homogeneous, we can assume that $\eta^u(B) =1$.
Otherwise, we consider $\frac{1}{\eta^u(B)} B$.
We define
\begin{equation}
\label{eq:mon-comp}
x_0= u, \qquad x_k = y_k \land u ,  \quad y_k =  B (x_{k-1}) + 2^{-k} u,\quad k \in \N.
\end{equation}
Then $x_k \le u=x_0$ for all $n\in \N$. By induction, since $B$ is
order-preserving, $x_{k+1}\le x_{k}$ for all $k \in \N$.
We apply the convergence principle in Proposition \ref{re:monotone-converge}
with $S$ being the set of sequences $(B(v_n))$ with $(v_n)$ being
decreasing and $v_1 \le c u$ for some $c >0$. $S$ has the properties
requested in Proposition \ref{re:monotone-converge}.
Since $B$ is antitonically
$u$-compact, every sequence in $S$ has a convergent subsequence.
So every sequence in $S$ converges.

Since $(B(x_n)) \in S$,  there exists some $z \in X_+$ such that  $(B (x_k))$
 converges to $ z $ as $k \to \infty$
and $B(x_k) \ge z$ for all $k \in \N$. By (\ref{eq:mon-comp}),
 $ y_k \to   z $. Further
\[
z \le B (x_{k-1} ) \le y_k \le B(u) + 2^{-k}u.
\]
By (\ref{eq:mon-comp}),
\[
x_k =  y_k \land  u \ge z \land  u=:x.
\]
 Notice that
\[
y_k \land  u + z - y_k \le z \land  u=x.
\]
So
\begin{equation}
\label{eq:minihed-subeigen}
0 \le x_k -x  \le y_k -z, \qquad k \in \N.
\end{equation}
Further
\[
\begin{split}
\sharp B(x_k) - B(x) \sharp = & \psi( B(x_k) - B(x)) \le \psi (B (y_k- z +x) -
B(x))
\\
\le &
\| B (y_k- z +x) -
B(x)\|.
\end{split}
\]
Also $x \le y_k - z + x \le B(u) + u + x \le (c\|u\| +2) u $. Recall that  $( y_k -z + x) $ converges $ x $
with respect to the the original norm.

Since $B$ is antitonically continuous,
  ($B(x_k))$ converges  to $B(x)$ with respect to the
monotone companion norm. Since $B(x_k) \to z$ with respect to the monotone
companion norm, we have $B(x) =z \ge x$.

Moreover,
$
x = z \land  u  = B(x) \land  u.
$

It remains to show that $x \ne 0$. Suppose that $x =0$. Then $z = B(x) =0$.
Recall that  $x_k =  y_k \land  u $ and $\|y_k\| \to 0 $.
Since $B$ is order-preserving, $\|B(x_k)\|_u \le \|B(y_k)\|_u \to 0$
with the latter holding by assumption.

So there exists some $m \in \N$ such
that $B (x_{k-1}) + 2^{-k} u  \le u$ for all $k \ge m$. Hence
\[
x_k = B (x_{k-1}) + 2^{-k} u  = y_k, \qquad k \ge m.
\]
In particular, $2^m x_m \ge u$ and $x_k \ge B (x_{k-1})$ for all $k \ge m$. Since $B$ is
order-preserving and  homogeneous,
\[
2^m x_{m +n} \ge B^n (2^m x_m ) \ge B^n (u)
\]
and
\[
2^m B (x_{m+n}) \ge B^{n+1} (u).
\]
Now $2^m B (x_{m+n}) \le (1/2) u$ for sufficiently large $n$. This shows
that, for some $n \in \N$, $ B^{n+1} (u) \le (1/2) u$ and $\|B^{n+1}\|_u \le 1/2$.
By Lemma \ref{re:Col-Wie-upp-loc},
 $\eta^u(B)= \inf_{n\in \N} \|B^n \|_u^{1/n}  < 1$, a contradiction.

 This shows that $x \ne 0$ and $B(x) \ge x$. Then $B^n (x) \ge x$
 for all $n \in \N$. By (\ref{eq:Col-Wie-num-low}),
 $[B^n]_x \ge 1$ and, by (\ref{eq:spec-rad-Col-Wie}), ${cw}(B) \ge 1
 = \eta^u(B) \ge \br_+^\sharp(B) \ge \br_{cw}(B)\ge cw(B)$.
\end{proof}

\begin{theorem}
\label{re:lattice-subeigen}
 Let the cone $X_+$ be a lattice and $B: X_+ \to X_+$ be
 order-preserving
and  homogeneous. Further let $u\in \dot X_+$ and
some power of  $B$ be monotonically $u$-compact
and  continuous
and some power of $B$
be uniformly $u$-bounded. Finally assume
that $r=\eta^u(B) > 0$.

Then there exists some $x \in \dot X_+$
such that $B (x) \ge r x$ and
$\eta^u(B)= \br_o^\sharp(B) = \br_{cw}(B)= cw(B)$.
\end{theorem}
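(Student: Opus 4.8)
The plan is to reduce to Theorem \ref{re:minihed-subeigen} applied to a suitably chosen power $C=B^m$, and then to transport the resulting lower eigenvector and the equalities back to $B$ via Proposition \ref{re:subeigen-power-self} and an elementary comparison of the radii. Throughout let $p$ denote a power with $B^p$ monotonically $u$-compact and continuous, and $q$ a power with $B^q$ uniformly $u$-bounded, so $B^q(x)\le c\|x\|u$ for all $x\in X_+$.

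First I would record the order-boundedness bookkeeping. From uniform $u$-boundedness of $B^q$ it follows that $B^n(u)=B^q(B^{n-q}(u))$ is $u$-bounded for every $n\ge q$, and that for $m\ge 2q$ the map $B^m=B^{\,m-q}\circ B^q$ is again uniformly $u$-bounded (since $B^{\,m-q}(u)$ is $u$-bounded). By Lemma \ref{re:Col-Wie-upp-loc}(b) this gives $\eta^u(B)=\lim_{n\to\infty}\|B^n\|_u^{1/n}<\infty$ and $\eta^u(B^m)=(\eta^u(B))^m=r^m$ for every $m\in\N$.

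Next I would fix $m=pk$ with $k$ so large that $p(k-1)\ge q$ and $pk\ge 2q$, and set $C=B^m$. I claim $C$ meets all hypotheses of Theorem \ref{re:minihed-subeigen} for the same $u$ (using that a lattice is in particular an inf-semilattice, i.e.\ minihedral). Indeed $C$ is order-preserving and homogeneous; being uniformly $u$-bounded ($m\ge 2q$) it maps $X_+$ into $X_u$, makes $C(u)$ $u$-bounded, and yields $\|C(y_n)\|_u\le c_C\|y_n\|\to 0$ whenever $\|y_n\|\to 0$; as $C=(B^p)^k$ it is norm-continuous, hence antitonically continuous because $\psi\le\|\cdot\|$; and writing $C=B^p\circ B^{\,p(k-1)}$, any monotone sequence $(x_n)$ with $x_n\le cu$ is carried by $B^{\,p(k-1)}$ to a monotone sequence bounded by $c\,B^{\,p(k-1)}(u)\le c'u$ (here $p(k-1)\ge q$), to which the monotone $u$-compactness of the outer $B^p$ applies, so $C$ is monotonically, hence antitonically, $u$-compact. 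Finally $\eta^u(C)=r^m>0$. Theorem \ref{re:minihed-subeigen} then produces $w\in\dot X_+$ with $B^m(w)=C(w)\ge\eta^u(C)\,w=r^m w$. Since $X_+$ is also a sup-semilattice, Proposition \ref{re:subeigen-power-self} (with this $w$, exponent $m$, and $r>0$) yields $x\in\dot X_+$ with $B(x)\ge r x$, the asserted lower eigenvector.

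It then remains to prove the equalities. The relation $B(x)\ge rx$ gives $[B]_x\ge r$, hence $cw(B)\ge r=\eta^u(B)$. For the reverse chain, $cw(B)\le\br_{cw}(B)$ holds by definition, and $\br_{cw}(B)\le\br_o^\sharp(B)$ follows from $\eta_x(B)\le\gamma_B^\sharp(x)$ (the inequality direction of the companion power formula, which uses only homogeneity and monotonicity). The last inequality $\br_o^\sharp(B)\le\eta^u(B)$ is exactly the first conclusion of Theorem \ref{re:Col-Wie-companion}, whose operative hypotheses hold here: $B$ is pointwise $u$-bounded (immediate from uniform $u$-boundedness of $B^q$) and $B^n(u)$ is $u$-bounded for all large $n$. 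Combining the two chains forces $cw(B)=\br_{cw}(B)=\br_o^\sharp(B)=\eta^u(B)=r$.

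I expect the main obstacle to be the bookkeeping of the third paragraph: one must exhibit a single exponent $m$ for which $B^m$ is simultaneously continuous (which pushes $m$ to be a multiple of $p$), monotonically $u$-compact (which forces a factorization $B^p\circ B^{\,j}$ with $B^{\,j}(u)$ order-bounded, i.e.\ $j\ge q$), and uniformly $u$-bounded (which needs $m\ge 2q$); the choice $m=pk$ with $k$ large reconciles these. A secondary point requiring care is that $B$ itself need not be bounded in the cone norm $\|\cdot\|_+$, so the inequality $\br_o^\sharp(B)\le\eta^u(B)$ should be taken from the part of the proof of Theorem \ref{re:Col-Wie-companion} that rests only on pointwise $u$-boundedness, via the companion half-norm estimate $\sharp B^n(x)\sharp\le c\,\|B^{\,n-k}\|_u\,\sharp u\sharp$, rather than from any statement presupposing $\|B\|_+<\infty$.
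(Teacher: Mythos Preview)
Your proof is correct and follows the paper's strategy: pass to a power of $B$ to which Theorem~\ref{re:minihed-subeigen} applies, then lift the lower eigenvector to $B$ via Proposition~\ref{re:subeigen-power-self}. The paper simply sets the exponent equal to the sum $m+\ell$ of the two distinguished powers and asserts without further comment that the resulting power is monotonically $u$-compact, uniformly $u$-bounded, and continuous; your choice $m=pk$ with $k$ large is a cleaner variant, since continuity of $C=(B^p)^k$ is then immediate from continuity of $B^p$, and you carefully arrange $p(k-1)\ge q$ and $pk\ge 2q$ so that the $u$-boundedness bookkeeping goes through. Your explicit derivation of the chain $cw(B)\le\br_{cw}(B)\le\br_o^\sharp(B)\le\eta^u(B)\le cw(B)$ for $B$ itself (rather than for the power) is also more complete than the paper's terse proof, and your observation that the relevant parts of Proposition~\ref{re:Col-Wie-lower} and Theorem~\ref{re:Col-Wie-companion} do not actually require $\|B\|_+<\infty$ is both correct and necessary here.
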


\begin{proof} Replacing $B$ by $\frac{1}{\eta^u(B)} B$,
we can assume that $\eta^u(B) =1$.
Let $B^m$ be monotonically $u$-compact and $B^\ell$ be uniformly
$u$-bounded. Set $p= m + \ell$. Then $B^p$ is monotonically $u$-compact
and uniformly $u$-bounded and continuous.
By Theorem \ref{re:minihed-subeigen}, there exists some $w \in \dot X_+$
 such that $B^p (w)\ge  w$.
By Proposition \ref{re:subeigen-power-self}, there exists some $v \in \dot X_+$ such that $B(v) \ge v$.
\end{proof}

Recall the definition of $B$ being strictly increasing in
Definition \ref{def:stric-incr}.

\begin{theorem}
\label{re:lattice-eigen}
 Let  $X_+$  be a lattice and $B: X_+ \to X_+$ be monotonically continuous, strictly
increasing,
and  homogeneous. Further assume that a power of  $B$ is  monotonically $u$-compact
and some power is uniformly $u$-bounded
  for some $u \in \dot  X_+$.
Finally assume
that $r= \eta^u(B) > 0$.

Then there exists some $x \in \dot X_+$
such that $B (x) = r x$.
\end{theorem}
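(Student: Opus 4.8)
The plan is to promote the sub-eigenvector supplied by Theorem \ref{re:lattice-subeigen} to a genuine eigenvector by following the increasing orbit it generates, using strict monotonicity to keep that orbit order-bounded above by a multiple of $u$ and monotone continuity to pass to the limit. Since $B$ is homogeneous, I may normalize $r = \eta^u(B) = 1$ by replacing $B$ with $\frac{1}{r}B$. Because a strictly increasing map is order-preserving, the hypotheses of Theorem \ref{re:lattice-subeigen} are satisfied, so $cw(B) = \br_{cw}(B) = \br_o^\sharp(B) = \eta^u(B) = 1$ and there is some $v \in \dot X_+$ with $B(v) \ge v$. As $B$ is order-preserving, the orbit $x_j := B^j(v)$ is increasing with $x_j \ge v$ for all $j$, so any limit will automatically be nonzero.

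The crucial step, and the place where strict increasingness is indispensable, is to show that $(x_j)$ is order-bounded above by a multiple of $u$. I first claim $\|x_{j+1}\| \le \|x_j\|$ for every $j$. Indeed, if $\|x_j\| < \|x_{j+1}\|$, then, since $x_j \le x_{j+1}$ and $B$ is strictly increasing (Definition \ref{def:stric-incr} with $\theta = \|\cdot\|$), there are $\epsilon > 0$ and $m \in \N$ with $B^m(x_{j+1}) \ge (1+\epsilon)B^m(x_j)$; that is, $B(y) \ge (1+\epsilon)y$ for $y = B^{m+j}(v) \in \dot X_+$. Hence $cw(B) \ge [B]_y \ge 1+\epsilon > 1$, contradicting $cw(B) = 1$. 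Thus $(\|x_j\|)$ is non-increasing, so $\|x_j\| \le \|v\|$ for all $j$. Now let $B^\ell$ be a uniformly $u$-bounded power, say $B^\ell(z) \le c_0\|z\|u$ for $z \in X_+$; writing $x_j = B^\ell(x_{j-\ell})$ for $j \ge \ell$ gives $x_j \le c_0\|v\|u =: cu$.

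The limit is then extracted by the convergence principle. Let $B^q$ be a monotonically $u$-compact power and set $S = \{(B^q(z_n))_n : (z_n)\ \text{monotone with}\ z_n \le c'u\ \text{for some}\ c'\}$. Then $S$ is closed under passing to subsequences and, by monotone $u$-compactness, every monotone sequence in $S$ has a convergent subsequence, so Proposition \ref{re:monotone-converge} shows every such sequence converges. For $j \ge q+\ell$ one has $x_j = B^q(x_{j-q})$ with $(x_{j-q})$ increasing and bounded above by $cu$, so the tail of $(x_j)$ lies in $S$; hence $x_j \to x^\ast$ in the original norm for some $x^\ast \in X_+$, and $x^\ast \ge v$, so $x^\ast \ne 0$. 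Finally $(x_j)$ is an increasing sequence in $X_u$ bounded above by $cu$ with $\|x_j - x^\ast\| \to 0$, so monotone continuity of $B$ at $x^\ast$ (Definition \ref{def:right-contin}) gives $\sharp B(x_j) - B(x^\ast)\sharp \to 0$; since $B(x_j) = x_{j+1} \to x^\ast$ and $\sharp\cdot\sharp \le \|\cdot\|$, I conclude $B(x^\ast) = x^\ast$. Undoing the normalization yields $B(x^\ast) = r\,x^\ast$ with $x^\ast \in \dot X_+$.

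I expect the order-boundedness of the increasing orbit to be the heart of the matter. Strict increasingness forces the norms $\|x_j\|$ along the orbit to be non-increasing, because any strict increase would push $cw(B)$ above $\eta^u(B)$; only after this norm bound is secured does uniform $u$-boundedness convert it into the order bound $x_j \le cu$ required to invoke monotone $u$-compactness. This is exactly the additional leverage over Theorem \ref{re:lattice-subeigen}, where, lacking strict monotonicity, one cannot control the growth of the orbit and therefore obtains only the sub-eigenvector relation $B(x) \ge r\,x$.
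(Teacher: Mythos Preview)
Your proof is correct and follows essentially the same route as the paper: obtain a sub-eigenvector from Theorem~\ref{re:lattice-subeigen}, use strict increasingness together with $cw(B)=\eta^u(B)$ to bound the norms along the increasing orbit $x_j=B^j(v)$, convert this to an order bound $x_j\le cu$ via a uniformly $u$-bounded power, extract convergence through monotone $u$-compactness and Proposition~\ref{re:monotone-converge}, and finish with monotone continuity. Your derivation is in places more explicit than the paper's---you spell out $cw(B)=1$ before the norm-bound step and verify $x^\ast\ge v\ne 0$ directly---while the paper simply refers back to the argument of Theorem~\ref{re:strictly-power-comp}; the substance is the same.
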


\begin{proof}
We can assume that $\eta_u(B) =1$.
By Theorem  \ref{re:minihed-subeigen}, there exists some $x \in X_+$,
$\|x\|=1$ such that $B (x) \ge  x$. Then the sequence $(x_n)_{n \in \Z_+}$
in $X_+$ defined by $x_n = B^n (x)$ is increasing.
The same proof as for Theorem \ref{re:strictly-power-comp} implies that
$\{\|x_n\|; n \in \N\}$ is a bounded set in $\R$.

Set $x_0 =x$. Then $x_n = B (x_{n-1})$ for $n \in \N$.
Since some power of $B$ is uniformly $u$-bounded, there exists some $c \ge 0$
and $m \in \N$
such that $x_n \le c \|x_{n-m}\| u = cu$ for $n \ge m$.
Since some power of $B$ is monotonically
$u$-compact and $(x_n) =(B^n(x))$ is increasing, a similar application
 of the convergence principle in Proposition \ref{re:monotone-converge}
 provides that $x_n \to y $ for some $y
\in X_+$ with
 $\|y\| =1$.
 Since $B$ is monotonically continuous, $\psi (B(y) - B(x_{n}))
\to 0$.
So $\psi(B(y) - x_{n+1} ) \to 0$. Then $\sharp y - B(y) \sharp
\le \sharp y - x_{n+1} \sharp + \sharp x_{n+1} - B(y) \sharp
= \psi(y -x_{n+1}) + \psi(B(y) - x_{n+1})
\to 0$. Thus $y = B(y)$.
\end{proof}

\begin{proposition}
\label{re:mon-comp-pert}
 Let $u \in \dot X_+$ and $B: X_+ \to X_+ \cap X_u$
be homogeneous and order-preserving. Further let
$B$ be monotonically $u$-compact, uniformly $u$-bounded and monotonically continuous.

 Let $\epsilon > 0$
and $\psi$ the  companion half-norm.
Set $B_\epsilon (x) = B(x) + \epsilon \psi (x) u$.

Then there exists
some $v \in \dot X_+$ such that $ B_\epsilon  (v)  = r_\epsilon v$
with $r_\epsilon = \eta^u(B_\epsilon)= CW_u(B_\epsilon)= \br_{cw}(B_\epsilon) =
cw(B_\epsilon)
>0$.
\end{proposition}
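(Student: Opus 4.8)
The plan is to transfer all structure from $B$ to $B_\epsilon$, to realize $r_\epsilon=\eta^u(B_\epsilon)$ as an eigenvalue by a monotone iteration, and then to read the equalities off the Collatz--Wielandt chain. First I would record what $B_\epsilon$ inherits. Since $\psi$ is homogeneous and order-preserving (Proposition~\ref{re:normal-mon-funct}), $B_\epsilon$ is homogeneous, order-preserving and maps $X_+$ into $X_+\cap X_u$. Uniform $u$-boundedness of $B$ and Proposition~\ref{re:order-bounded-compan} give a $c_0>0$ with $B_\epsilon(x)\le c_0\psi(x)u$, and the perturbation contributes the matching lower bound, so that $\epsilon\psi(x)u\le B_\epsilon(x)\le c_0\psi(x)u$ for all $x\in X_+$. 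From this two-sided bound $B_\epsilon$ is uniformly $u$-bounded, monotonically $u$-compact and monotonically continuous (the new term only adds the scalar $\epsilon\psi(x_n)$, which is monotone, bounded and convergent along with $(x_n)$), and --- replacing the last hypothesis of Theorem~\ref{re:minihed-subeigen} --- $\|B_\epsilon(y)\|_u\le c_0\psi(y)\le c_0\|y\|$, so that $\|B_\epsilon(y_n)\|_u\to0$ whenever $\|y_n\|\to0$. The lower bound also gives $cw(B_\epsilon)\ge[B_\epsilon]_u\ge\epsilon\psi(u)>0$, whence $r_\epsilon=\eta^u(B_\epsilon)\ge cw(B_\epsilon)>0$ by Theorem~\ref{re:Coll-upper-bound-gen}. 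Finally $B_\epsilon$ is strictly $\psi$-increasing, since for $x\le y$ with $\psi(x)<\psi(y)$ the inequalities $B_\epsilon(y)-B_\epsilon(x)\ge\epsilon(\psi(y)-\psi(x))u$ and $B_\epsilon(x)\le\|B_\epsilon(x)\|_u u$ yield $B_\epsilon(y)\ge(1+\epsilon')B_\epsilon(x)$ with $m=1$.

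Assuming for the moment a lower eigenvector, I would obtain the eigenvector by the argument of Theorem~\ref{re:lattice-eigen}, which needs no lattice. Normalizing $r_\epsilon=1$, suppose $w\in\dot X_+$ satisfies $B_\epsilon(w)\ge w$. Then $w_n=B_\epsilon^n(w)$ is increasing, and the boundedness step of Theorem~\ref{re:strictly-power-comp} applies: were $\{\psi(w_n)\}$ unbounded, strict $\psi$-monotonicity would produce $y\ge w$ with $B_\epsilon(y)\ge(1+\epsilon')y$, forcing $cw(B_\epsilon)\ge1+\epsilon'>\eta^u(B_\epsilon)=1$, a contradiction. Hence $\{\psi(w_n)\}$ is bounded and $w_n=B_\epsilon(w_{n-1})\le c_0\psi(w_{n-1})u\le c\,u$. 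The convergence principle (Proposition~\ref{re:monotone-converge}) applied to the monotonically $u$-compact $B_\epsilon$ then forces $(w_{n+1})=(B_\epsilon(w_n))$, and hence $(w_n)$, to converge to some $v$; monotone continuity yields $B_\epsilon(v)=v$, and $v\ge w\ne0$. Thus the entire difficulty concentrates in producing a lower eigenvector of $B_\epsilon$ for $r_\epsilon$ without a lattice.

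This lower eigenvector is the step I expect to be the main obstacle, because in Theorem~\ref{re:minihed-subeigen} it is built from the lattice infimum $y_k\wedge u$, which is unavailable here. The plan is to replace the truncation at $u$ by a renormalization adapted to the two-sided bound: with $r_\epsilon=1$, set $x_0=u$ and $x_k=(B_\epsilon(x_{k-1})+2^{-k}u)/\lambda_k$, where $\lambda_k=\|B_\epsilon(x_{k-1})+2^{-k}u\|_{x_{k-1}}$. As $x_{k-1}\sim u$, monotonicity of $\|\cdot\|_{x_{k-1}}$ on the cone together with $\eta^{x_{k-1}}(B_\epsilon)=\eta^u(B_\epsilon)=1$ gives $\lambda_k\ge\|B_\epsilon\|_{x_{k-1}}\ge1$, so $(x_k)$ is decreasing, bounded by $u$, and $u$-comparable. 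The images $(B_\epsilon(x_{k-1}))$ are decreasing and bounded by $c_0\psi(u)u$, hence converge, and monotone continuity identifies the limit with $B_\epsilon(x)$ for the limit $x$ of $(x_k)$; once $x\ne0$, the relation $\lambda_k x_k\to B_\epsilon(x)$ forces $\lambda_k\to\lambda$ with $B_\epsilon(x)=\lambda x$ and $\lambda=[B_\epsilon]_x\le cw(B_\epsilon)\le1\le\lambda_k$, so $\lambda=1$ and $x$ is the (exact) eigenvector. The delicate point is to rule out $x=0$: here the bridge $\|B_\epsilon(x_{k-1})\|_u\le c_0\|x_{k-1}\|\to0$ is exactly what makes the endgame of Theorem~\ref{re:minihed-subeigen} run, using $\Lambda_k x_k\ge B_\epsilon^k(u)$ (from $\lambda_j x_j\ge B_\epsilon(x_{j-1})$), the growth estimate $\|B_\epsilon^n\|_u\ge\eta^u(B_\epsilon)^n=1$, and the lower bound $B_\epsilon^k(u)\ge(\epsilon\psi(u))^k u$ to trap the products $\Lambda_k=\prod_{j\le k}\lambda_j$ and contradict $\eta^u(B_\epsilon)=1$. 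Should the renormalization prove too lossy to control $\Lambda_k$, the fallback is to run the Theorem~\ref{re:minihed-subeigen} construction inside $X_u$, whose cone is solid and normal with $u$ interior, using the order interval $[0,u]$ of $X_u$ as the truncation in place of $y_k\wedge u$.

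Finally I would collect the equalities. The eigenvector $v\sim u$ gives $[B_\epsilon]_v=r_\epsilon$, hence $cw(B_\epsilon)\ge r_\epsilon$, and $B_\epsilon(v)=r_\epsilon v$ gives $\|B_\epsilon\|_v=r_\epsilon$, hence $CW_u(B_\epsilon)\le r_\epsilon$. Combined with the chain $cw(B_\epsilon)\le\br_{cw}(B_\epsilon)\le\eta^u(B_\epsilon)\le CW_u(B_\epsilon)$ of Theorem~\ref{re:Coll-upper-bound-gen} and $\eta^u(B_\epsilon)=r_\epsilon$, all four numbers collapse to $r_\epsilon$, which is the assertion.
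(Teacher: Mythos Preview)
Your overall architecture matches the paper's proof: verify that $B_\epsilon$ inherits homogeneity, order-preservation, uniform $u$-boundedness, monotone $u$-compactness and monotone continuity; obtain a lower eigenvector $w$ with $B_\epsilon(w)\ge w$ (after normalizing $r_\epsilon=1$); iterate $w_n=B_\epsilon^n(w)$ and use strict $\psi$-monotonicity to keep $(\psi(w_n))$ bounded, hence $(w_n)$ order-bounded by a multiple of $u$; apply monotone $u$-compactness and the convergence principle to get $w_n\to v$ with $B_\epsilon(v)=v$; then collapse the Collatz--Wielandt chain via $v\sim u$. This is precisely what the paper does.

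The place you diverge is the lower-eigenvector step. The paper obtains it in one line by invoking Theorem~\ref{re:minihed-subeigen}, which does require $X_+$ to be an inf-semilattice; that hypothesis is not restated in Proposition~\ref{re:mon-comp-pert} but is the standing assumption of the section (``Monotonically compact order-bounded maps on semilattices''). You appear to have read the statement in isolation, concluded that the lattice truncation $y_k\wedge u$ is unavailable, and set out to engineer a lattice-free substitute. That substitute --- the renormalized iteration $x_k=(B_\epsilon(x_{k-1})+2^{-k}u)/\lambda_k$ --- is only a sketch: you yourself flag that controlling the products $\Lambda_k$ may fail, and the proposed fallback (working inside $X_u$) still needs an infimum to form the truncation, which $X_u$ need not provide either. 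So this part of your proposal is not a completed argument.

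In short: once you accept the section's semilattice hypothesis, your plan and the paper's proof coincide. Simply cite Theorem~\ref{re:minihed-subeigen} for $B_\epsilon$ (the inherited hypotheses you listed are exactly what is needed, including $\|B_\epsilon(y_n)\|_u\to0$), and drop the renormalization detour.
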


\begin{proof}
One readily checks that $B_\epsilon$ satisfies  the assumptions of
Theorem \ref{re:minihed-subeigen} and
$r_\epsilon = \eta^u (B_\epsilon) \ge \epsilon  > 0$.

We can assume that $r_\epsilon =1$. By Theorem \ref{re:minihed-subeigen},
 there exists some $w \in X_+$, $\psi(w)=1$,
such that $B_\epsilon w \ge w$. Let $w_n = B_\epsilon^n ( w)$ for $n \in \Z_+$. Then $(w_n)$ is
an increasing sequence in $X_+ \cap X_u$.  We claim that $\psi (w_n)  =1 $ for
all $n \in \N$. Suppose not. Then there exists some $n \in \N$ such that
$\psi (w_n) > \psi(w_{n-1})$. Then there exists some $\delta > 0$ such that
\[
B_\epsilon (w_n) \ge B_\epsilon (w_{n-1}) + \delta u.
\]
Since $B_\epsilon (w_{n-1}) \in X_u$, there exists some $\epsilon > 0$ such
that
\[
B_\epsilon (w_n) \ge (1+\epsilon) B_\epsilon (w_{n-1}).
\]
This implies $\eta^u(B_\epsilon) \ge 1+\epsilon$, a contradiction.
Since $B$ is uniformly $u$-bounded, it is also uniformly $u$-bounded with
respect to the  companion half-norm $\psi$ by Proposition
\ref{re:order-bounded-compan}. So there exists some $c >0$ such that
$w_n =  B (w_{n-1}) + \epsilon u \le c u$. Since $(w_n)$ is increasing
and $B_\epsilon$ is monotonically $u$-compact,
$w_{n+1} = B_\epsilon(w_n) \to v$ for some $v \in X_+$, $\psi(v) =1$. Since $B_\epsilon$
is monotonically continuous, $v = B_\epsilon (v)$ and $1 \le cw(B_\epsilon)
\le \br_{cw}(B_\epsilon) \le \eta^u(B_\epsilon)$. Since $v \ge \epsilon u$
and $B$ is uniformly $u$-bounded,
$v$ is $u$-comparable. This implies $CW_u(B_\epsilon) \le 1= \eta^u(B)$.
Since $CW_u(B_\epsilon) \ge \eta^u(B_\epsilon)$, we have equality.
\end{proof}

\begin{theorem}
\label{re:mon-comp-normal}
Let  $u \in \dot X_+$ and  $B: X_+ \to X_+ $
be homogeneous, order-preserving and continuous. Assume that
$B$ is uniformly $u$-bounded  monotonically $u$-compact.

 Then $\br_+(B) \ge   CW_u(B)= \eta^u(B)$
 with equality holding if $u$ is a normal point of $X_+$
 or a power of $B$ has the lower KR property. If $r:=CW_u(B) > 0$,
 there exists some $v \in \dot X_+$
 such that $B(v) \ge r v$.
\end{theorem}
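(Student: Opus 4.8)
The plan is to assemble the statement from three results already in this section: Proposition \ref{re:mon-comp-pert} to produce eigenvectors of the perturbed maps, Theorem \ref{re:robust} to turn those into the chain of (in)equalities among the radii, and Theorem \ref{re:minihed-subeigen} to extract the lower eigenvector of $B$ itself (all within the minihedral-cone setting required by these cited results). First I would record the routine reductions: uniform $u$-boundedness forces $B$ to map $X_+$ into $X_+ \cap X_u$ with $\|B(x)\|_u \le c\|x\|$; ordinary norm continuity of $B$ yields both antitonic and monotonic continuity because $\psi \le \sharp\cdot\sharp \le \|\cdot\|$; and monotonic $u$-compactness entails antitonic $u$-compactness. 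With these in hand $B$ meets the hypotheses of Proposition \ref{re:mon-comp-pert}, so for each $\epsilon > 0$ the perturbation $B_\epsilon(x) = B(x) + \epsilon\psi(x)u$ has an eigenvector $v_\epsilon \in \dot X_+$, $B_\epsilon(v_\epsilon) = r_\epsilon v_\epsilon$ with $r_\epsilon > 0$.

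Next I would invoke Theorem \ref{re:robust}, whose residual hypotheses (homogeneity, order-preservation, uniform $u$-boundedness, and continuity at each $B^n(u)$) are immediate, with the perturbed eigenvectors just produced supplying its perturbation assumption. This yields in one stroke
\[
\br_+(B) \ge \gamma_B(u) \ge CW_u(B) = \eta^u(B),
\]
together with equality throughout when $u$ is a normal point of $X_+$ or some power of $B$ has the lower KR property. This is the substantive step. Since $\eta^u(B) \le CW_u(B)$ is free from (\ref{eq:upperCWbound}), the genuine content is the reverse bound $CW_u(B) \le \eta^u(B)$, and this is exactly what the robustness argument delivers, by pairing the identities $CW_u(B_\epsilon) = \eta^u(B_\epsilon) = \br_+^\sharp(B_\epsilon) \ge CW_u(B)$ for the perturbed maps with the upper semicontinuity of the companion cone spectral radius from Theorem \ref{re:specrad-semicon2}.

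Finally, assuming $r = CW_u(B) = \eta^u(B) > 0$, I would apply Theorem \ref{re:minihed-subeigen} to $B$ directly for the lower eigenvector. Its hypotheses are available: $B(u)$ is $u$-bounded, $B$ is antitonically $u$-compact and antitonically continuous, and $\eta^u(B) = r > 0$. The one clause deserving comment is the requirement that $\|B(y_n)\|_u \to 0$ for every decreasing $(y_n)$ in $X_u \cap X_+$ with $\|y_n\| \to 0$; here uniform $u$-boundedness does precisely this, since $\|B(y_n)\|_u \le c\|y_n\| \to 0$, and this is the point where mere pointwise $u$-boundedness would fail and the uniform hypothesis becomes indispensable. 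Theorem \ref{re:minihed-subeigen} then produces $v \in \dot X_+$ with $B(v) \ge \eta^u(B)v = rv$. I expect the main obstacle to be not any single estimate but the bookkeeping of verifying that the monotone-compactness and uniform-boundedness hypotheses are exactly strong enough to trigger both the perturbation lemma and the minihedral subeigenvector construction; the companion half-norm is what permits every one of these verifications to go through without assuming the cone normal.
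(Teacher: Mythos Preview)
Your proposal is correct and follows essentially the same route as the paper: use Proposition \ref{re:mon-comp-pert} to produce eigenvectors of the perturbed maps $B_\epsilon$, feed these into Theorem \ref{re:robust} to obtain $\br_+(B) \ge CW_u(B) = \eta^u(B)$ with the stated equality cases, and finish with Theorem \ref{re:minihed-subeigen} for the lower eigenvector when $r>0$. Your write-up is in fact more explicit than the paper's about verifying the hypotheses of the cited results (notably the $\|B(y_n)\|_u \to 0$ clause via uniform $u$-boundedness), which is all to the good.
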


\begin{proof} By Theorem \ref{re:Col-Wie-companion2},
$\br_+(B) \ge \gamma_B(u) \ge \eta^u(B)$
with equality holding if $u$ is a normal point of $X_+$  or some power of $B$
has the lower KR property.
By
(\ref{eq:upperCWbound}), $\eta^u(B) \le CW_u(B)$.

So, if $CW_u(B) =0$, the assertion holds, and we can assume that $CW_u(B)$ $ >0$.

Choose a sequence $(\epsilon_n)$ in $(0,1)$ with $\epsilon_n \to 0$.
Let $B_n:X_+ \to X_+$ be given by $B_n(x) = B(x) + \epsilon_n \psi(x)u$.
We combine  Proposition \ref{re:mon-comp-pert} and Theorem \ref{re:robust}
and obtain $\eta^u(B) = CW_u(B) \le
\br_+(B)$.

If $u$ is a normal point of  $X_+$ or some power
of $B$ has the lower KR property,  $\eta_u(B) = \br_+(B)$ which implies $CW_u(B)
= \br_+(B)$.

Assume that $r:=CW_u(B) > 0$. Then $\eta^u(B) = r >0$ and there
exists some $v \in \dot X_+$ with $B(v) \ge rv$ by Theorem \ref{re:minihed-subeigen}.
\end{proof}


\section{Application to a rank-structured population model
with mating}
\label{sec:model}


Let $X\subseteq \R^\N$ be an ordered normed vector space with
cone $X_+ = X \cap \R^\N_+$. Assume that the norm has the property
that $x_j \le \|x\|$ for all $x = (x_j) \in X_+$ and all $j \in \N$.
This implies that $X \subseteq \ell^\infty$ and $\|x\|_\infty \le \|x\|$
for all $x \in X$.

Define a map $B:X_+ \to \R^\N_+$,
$B(x) = (B_j(x))$, by
\begin{equation}
\left .
\begin{array}{rl}
B_1(x) =& q_1 x_1 + \displaystyle \sum_{j,k=1}^\infty  \beta_{jk} \min\{x_j, x_k\}
\\[4mm]
B_j(x) = &  \max\{ p_{j-1} x_{j-1},  q_j x_j \} , \qquad j \ge 2
\end{array}
\right\}
x= (x_j) \in X_+.
\end{equation}

Here $p_j, q_j \ge 0$,  $\beta_{j,k} \ge 0$
for all $j,k \in \N$.

The dynamical system $(B^n)_{n\in \N} $ can be interpreted
as the dynamics of a rank-structured population and, in a way,
is a discrete version (in a double sense) of the two-sex models
with continuous age-structure in
\cite{Had89, IMM}. $B_1(x)$
is the number of newborn individuals who all have the lowest
rank 1. Procreation is assumed to require some mating.
Mating is assumed to be rank-selective and
is described by taking the minimum of individuals in two
ranks.
The numbers $\beta_{jk}$ represent the fertility of a pair
 when the female has rank $j$ and the male has rank $k$
where a 1:1  sex ratio is assumed at each rank.
 The maps $B_j$, $j \ge 2$,
describe how individuals survive and move upwards in the ranks from
year to year where one cannot move by more than one rank
within a year. If  the population size is modeled in number of
individuals one would assume that $p_j, q_j \le 1$, but if
it is modeled in biomass, then such an assumption would not be made.
However, an individuals at rank $j$ is at least $j-1$ years old, and
so mortality eventually wins the upper hand such that the
assumption $p_j \to 0$ and $q_j \to 0$ is natural.

Since $B_j(x) \le (p_{j-1} + q_j) \|x\|$ for $j \ge 2$,
 $B$ is $u$-bounded with respect to
$u = (u_j)$ with
\begin{equation}
\label{eq:order-bound-choice}
u_1 =1, \qquad u_j = p_{j-1} + q_j, \quad j \ge 2,
\end{equation}
provided that $u \in X$.

If we choose $X = \ell^1$, it is sufficient  to assume that
\begin{equation}
\label{eq:pop-ell2}
 \sup_{k\in \N} \sum_{j=1}^\infty \beta_{jk} < \infty
\qquad \hbox{ or } \qquad
\sup_{j\in \N} \sum_{k=1}^\infty \beta_{jk} < \infty
\end{equation}

If $X = \ell^\infty$, it is sufficient to assume that
\begin{equation}
\label{eq:pop-ell3}
\sum_{j,k=1}^\infty \beta_{jk} < \infty.
\end{equation}
It does not appear that (\ref{eq:pop-ell3})
can be improved for $X = c_0$ or $X = bv$.

$B$ is compact on $X$ and uniformly $u$-bounded if

\[
\begin{cases} u \in \ell^p,  &  X = \ell^p, p \in [1,\infty)
\\
u \in c_0 , & X = c_0, c, \ell^\infty
\\
(p_j), (q_j) \in bv \cap c_0, & X = bv, bv \cap c_0.
\end{cases}
\]
Notice that, if $x^n = (x^n_j)$ is a bounded sequence
in $X$, then it is a bounded sequence in $\ell^\infty$
and, after a diagonalization procedure, has
a subsequence $(y^m)$ such that $(y^m_j)_{m\in \N}$ converges
for each $j \in \N$.

Apparently, $X= \ell^\infty, c, c_0$ require the weakest assumption
in terms of $u$.
It is not clear, however, whether an eigenvector in any of these
three spaces would also be in $bv$ if $(q_j), (p_j) \in bv \cap c_0$.
If $u \in \ell^1$, then $u$ is also in all the other spaces we have
considered, and $cw(B)= \br_{cw}(B) = \eta^u(B) = CW_u(B)$ in all spaces
with the radii and bounds not depending on the space. If $X$ is normal,
then also $\br_+(B) = cw(B)$ does not depend on the space. For $X= bv$,
we have  $\br_+(B) = cw(B)$ as well provided that  $(p_j), (q_j) \in bv \cap c_0$ such that
$B$ is compact also in $bv$ or in $bv \cap c_0$.

To derive estimates for the cone spectral radius of $B$, one runs into algebraic
difficulties very soon except when attempting $cw(B)$. Let $x^n$ be the sequence
where the first $n$ terms are 1 and all others zero. Then,
\[
\begin{split}
[B]_{x^1} = &
q_1 + \beta_{11},
\\
[B]_{x^m} = & \min \Big \{ q_1 + \sum_{j,k=1}^m \beta_{j,k}, \; \inf_{j=2}^m \max \{p_{j-1}, q_j \} \Big \}
, \quad m \ge 2.
\end{split}
\]
Let $e^m$ be the sequence where the $m^{th}$ term is one and all other terms zero.
Then
\[
\begin{split}
[B]_{e^1} = &
q_1 + \beta_{11},
\\
[B]_{e^m} = & q_m, \qquad m \ge 2.
\end{split}
\]
Since $cw(B)$ is an upper bound for all these numbers, we obtain the
following two conditions for $cw(B)>0$ each of which is sufficient:

\begin{quote}
Condition 1: $q_1 + \beta_{11} > 0$ or $q_j > 0$ for some $j \in \N$, $j >1$.
\end{quote}

\begin{quote}
Condition 2: For some $j,k \in \N$ and $m= \max\{j,k\}$, we have $\beta_{jk} > 0$
and $p_i > 0 $ for $i=1,\ldots, m-1$ .
\end{quote}

Actually, at least one of these conditions must be satisfied for   $\br_+(B) >0$
to hold.
If condition 1 fails, then
\begin{equation}
\label{eq:mate-special}
\left .
\begin{array}{rl}
B_1(x) =&  \displaystyle \sum_{j,k=1}^\infty  \beta_{jk} \min\{x_j, x_k\}
\\[4mm]
B_j(x) = &   p_{j-1} x_{j-1} , \qquad j \ge 2
\end{array}
\right\}
x= (x_j) \in X_+.
\end{equation}

If $r=  \br_+(B)=cw(B) > 0$, then there exists some $x \in \dot X_+$ with
$(1/r) B(x) = x$. $(1/r) B$ corresponds to the map where all parameters are
divided by $r$. This division does not affect our assumptions and conditions.
So we can assume that $B(x)=x$, $x \in \dot X_+$. By (\ref{eq:mate-special}),
\[
x_j = p_{j-1} x_{j-1}, \quad j \ge 2.
\]
By recursion,
\[
x_j = \prod_{i=1}^{j-1} p_j x_1, \qquad j \ge 2.
\]
Since $x_1 = \sum_{j,k=1}^\infty \beta_{jk} \min \{x_j, x_k\}$, failure of condition 2
implies $x_1=0$ which implies $x=0$. This contradiction tells us that $\br_+(B) =0$
if both condition 1 and 2 fail.

In order to find estimates of $\br_+(B)$ from above,
we can use Corollary \ref{re:radius-estimate-above}.

Take the sequence $e$
where all terms are 1 for $X= \ell^\infty$ or $X = bv$ provided that
$\sum_{j,k=1}^\infty \beta_{jk} < \infty$.  Then
\[
\br_+(B) \le \eta^e (B) \le \|B(e)\|_e = \max \Big \{ q_1 + \sum_{j,k=1}^\infty \beta_{jk}, \sup_{m \ge 2} \max \{p_{m-1},
q_m\} \Big \}.
\]
We obtain this estimate also for $X = c_0, bv\cap c_0, \ell^1$
provided that there exist  $\theta \in (0,1)$ and $c > 0$ such that
$\max \{p_{m-1}, q_m\} \le c\theta^m $ for all $m \ge 2$.
Then $B$ is uniformly $w$-bounded for $w = (\theta^n)$ and
\[
\br_+(B) \le  \|B\|_w = \max \Big \{ q_1 +
\sum_{j,k=1}^\infty \beta_{jk} \theta^{j+k -1}, \sup_{m \ge 2} \max \{p_{m-1}/\theta,
q_m\} \Big \}.
\]
This estimate also holds for all $\tilde \theta \in (\theta, 1)$, and so
we can take the limit for $\theta \to 1$ and obtain the previous estimate.

We obtain another estimate if there exist
$\alpha >1$ and $c > 1$ such that
$\max \{p_{m-1}, q_m\} \le m^{-\alpha} $ for all $m \ge 2$.
Then $B$ is uniformly $w$-bounded for $w = n^{-\alpha}$ and
\[
\br_+(B) \le \|B\|_w = \max \Big \{ q_1 + \sum_{j,k=1}^\infty \beta_{jk} j^{-\alpha} k^{-\alpha}, \sup_{m \ge 2} \max \{p_{m-1} (m/m-1)^\alpha , q_m\} \Big \}.
\]
This estimate also holds for all $\tilde \alpha \in (1, \alpha)$, so we can take
the limit $\alpha \to 1$.

As an illustration of Theorem \ref{re:attractor}, we assume that
$\beta_{jk} $, $p_j$ and $q_j$ are functions of $x \in X_+$ and
let $F(x) $ be $B(x)$ with this functional dependence.

We assume that $p_{j-1}(x) + q_j(x) \to 0$ uniformly for $j \in \N$
as $\|x\| \to \infty$, further $\sum_{j,k=1}^\infty \beta_{jk}(x) \to 0$
as $\|x\| \to \infty$. Let $\epsilon > 0$ still to be chosen  and choose $c > 0$
such that
\[
\left.
\begin{array}{c}
p_{j-1}(x) + q_j(x) \le \epsilon
\\
\displaystyle \sum_{j,k=1}^\infty \beta_{jk}(x) \le \epsilon
\end{array}
\right \} \qquad \|x\| \ge c.
\]
Choose $A: X \to X $, $A(x) = (A_j(x))$,
\begin{equation}
\left .
\begin{array}{rl}
A_1(x) =& \epsilon x_1 + \displaystyle \sum_{j,k=1}^\infty  \tilde \beta_{jk} x_j
\\[4mm]
A_j(x) = &  \epsilon (x_{j-1} + x_j) , \qquad j \ge 2
\end{array}
\right\}
x= (x_j) \in X_+.
\end{equation}
Here $\tilde \beta_{jk} = \sup_{\|x\| \ge c} \beta_{jk}(x)$.
Then $B(x) \le A(x)$ whenever $\|x\| \ge c$. Choose $\epsilon < 1/2$ if $X = \ell^\infty$
and $\epsilon < 1/3$ if $X =\ell^1$. Then $\|A\| < 1$ and $F$ is point-dissipative
by Theorem \ref{re:attractor}.

To make $F$ compact in $X =\ell^1$, we
assume that for any $c > 0$ there exists $u \in \ell^1$ such that  $\sup_{k \ge 1}\sum_{j=1}^\infty \beta_{jk}(x)
\le u_1$ and $p_{j-1}(x) + q_j(x) \le u_j$ for $j \ge 2$ and all $x \in X_+$ with $\|x\| \le c$. Under these assumptions, the discrete semiflow $(F^n)$ has a compact attractor
of bounded sets.

\bigskip

\noindent
{\bf Acknowledgement.}
I thank Wolfgang Arendt, Karl-Peter Hadeler,  and Roger Nussbaum for useful
hints and comments.

 \bibliography{}

\end{document}